%% file: main.tex
\documentclass[11pt]{article}

\usepackage{import}
\import{Packages/}{packages.tex}

\import{Packages/}{macros.tex}

\title{A graph-theoretical characterisation \\ of subgroups of Thompson's group $V$}

\author{Corentin Bodart\footnote{
University of Oxford, United Kingdom, \texttt{corentin.bodart@maths.ox.ac.uk}}\;,
Daniele D'Angeli\footnote{
Università Niccolò Cusano, Rome, Italy, \texttt{daniele.dangeli@unicusano.it}}\;, \\
Davide Perego\footnote{
Université de Genève, Switzerland, \texttt{davide.perego@unige.ch}}\;
and Emanuele Rodaro\footnote{
Politecnico di Milano, Italy, \texttt{emanuele.rodaro@polimi.it.}}}

\date{\today \vspace*{-3mm}}

\usepackage[backend=biber, style=numeric, sorting=nyt, 
    doi=false,isbn=false,url=false,eprint=false, maxnames=50]{biblatex}
\begin{document}

\maketitle

\begin{abstract}
We prove a graph-theoretical characterisation of finitely generated subgroups of Thompson's group $V$: a finitely generated group embeds in $V$ if and only if it admits a faithful \emph{context-free action}, or equivalently if it belongs to the class \textbf{CF-TR} of transition groups of context-free graphs recently introduced by Matucci and the three last authors. \medbreak

\noindent Using this characterisation, we prove results in different directions:
\medbreak

\begin{itemize}[leftmargin=5mm]
    \setlength\itemsep{1.5mm}
    \item All known examples of groups with co-context-free Word Problem \emph{do} embed in $V$, providing evidence towards Lehnert's conjecture.
    \item The following alternative holds: each finitely generated subgroup of $V$ is either virtually abelian, or contains a free non-abelian semigroup. It follows that groups of intermediate growth \emph{do not} embed in Thompson's $V$.
\end{itemize}
\medbreak

\noindent We further study the relation between transition groups defined by graphs that are limits or covers of each others, and prove properties of transition groups of context-free graphs of polynomial growth. Finally, we prove that the Basilica and Hanoï Towers groups 
\emph{do not} embed in $V$. This uses the geometry of Schreier graphs of the natural actions of these groups and of Thompson's $V$. \vspace*{2mm}
\end{abstract}

\newpage

\renewcommand*{\thethm}{\Alph{thm}}
\import{Sections/}{sec0_intro.tex}
\renewcommand*{\thethm}{\arabic{thm}}
\counterwithin{thm}{section}

\section{Background} \label{sec:intro}
\import{Sections/}{sec1_background.tex}

\section{Thompson's \texorpdfstring{$V$}{V} and CF-TR groups}
\import{Sections/}{sec2_equiv.tex}

\section{Cloning systems and FSS groups}
\import{Sections/}{sec3_Lehnert.tex}

\section{Taking covers} \label{sec:covers}
\import{Sections/}{sec4_covers.tex}

\section{Going to the limits} \label{sec:limits}
\import{Sections/}{sec5_limits.tex}

\section{Graphs of polynomial growth} \label{sec:alternative}
\import{Sections/}{sec6_poly.tex}

\section{An obstruction using Schreier graphs}
\import{Sections/}{sec7_branch.tex}

\emergencystretch=1em
\AtNextBibliography{\small}
\printbibliography

\end{document}

%% file: Packages/packages.tex
\usepackage{amsmath, amsfonts, amsthm, amssymb}
\usepackage{mathrsfs}
\usepackage{mathtools}
\usepackage{hyperref}
\usepackage[margin=3.2cm]{geometry}
\usepackage{graphicx}
\usepackage{easy-todo}
\usepackage{enumitem}
\usepackage[hypcap=false]{caption}
\usepackage{parskip}
\usepackage[noabbrev,nameinlink,capitalise]{cleveref}
\usepackage[dvipsnames]{xcolor}
\definecolor{color0}{RGB}{216,27,96} 
\definecolor{color1}{RGB}{255,193,7}
\definecolor{color2}{RGB}{30,136,229}    

\usepackage{tikz-cd}
\usepackage{tikz}
\usetikzlibrary{shapes, shapes.geometric, arrows, arrows.meta, decorations.markings, decorations.pathreplacing, automata, positioning}
\pgfdeclarelayer{background} 
\pgfdeclarelayer{foreground}
\pgfsetlayers{background,main,foreground}

\topsep=7pt
\theoremstyle{plain}
\newtheorem{thm}{Theorem} 
\newtheorem*{thm*}{Theorem}
\newtheorem{lemma}[thm]{Lemma}
\newtheorem*{lemma*}{Lemma}
\newtheorem{cor}[thm]{Corollary}
\newtheorem*{cor*}{Corollary}
\newtheorem{prop}[thm]{Proposition}
\newtheorem*{prop*}{Proposition}

\theoremstyle{definition}
\newtheorem{defi}[thm]{Definition}
\newtheorem*{defi*}{Definition}
\newtheorem{rem}[thm]{Remark}
\newtheorem{exa}[thm]{Example}
\newtheorem*{conj*}{Conjecture}
\newtheorem{ques}[thm]{Question}

%% file: Packages/macros.tex
\newcommand{\say}[1]{``#1"}

\DeclareMathOperator{\id}{id}

\newcommand{\Bsc}{\mathscr B} 

\newcommand{\EG}{\mathbf{EG}}
\newcommand{\EA}{\mathrm{EA}}
\newcommand{\Sp}{\mathrm{Sp}}

\newcommand{\Cay}{\mathcal Cay}
\newcommand{\Sch}{\mathcal Sch}

\DeclareMathOperator{\Stab}{Stab}
\DeclareMathOperator{\GStab}{GeSt}
\DeclareMathOperator{\RStab}{RiSt}
\DeclareMathOperator{\SStab}{SetSt}
\DeclareMathOperator{\End}{End}
\DeclareMathOperator{\Aut}{Aut}
\DeclareMathOperator{\QAut}{QAut}
\newcommand{\Sim}{\mathrm{Sim}}
\newcommand{\Sym}{\mathrm{Sym}}
\newcommand{\FSym}{\mathrm{FSym}}
\newcommand{\Isom}{\mathrm{Isom}}
\newcommand{\Homeo}{\mathrm{Homeo}}
\newcommand{\Sub}{\mathrm{Sub}}
\newcommand{\Gc}{\mathcal G} 
\newcommand{\Ic}{\mathcal I} 
\newcommand{\F}{\mathsf F}
\DeclareMathOperator{\Wr}{\wr\wr} 

\newcommand{\start}{\mathsf{start}}
\newcommand{\accept}{\mathsf{accept}}
\newcommand{\q}{\mathsf q}

\newcommand{\Ac}{\mathcal A}
\newcommand{\Bc}{\mathcal B}
\newcommand{\Dc}{\mathcal D}
\newcommand{\Ec}{\mathcal E}
\newcommand{\Fc}{\mathcal F}
\newcommand{\Lc}{\mathcal L}

\newcommand{\Vc}{\mathcal V}


\newcommand{\supp}{\mathrm{supp}}


\DeclareMathOperator{\ord}{ord}


\newcommand{\norm}[1]{\left\|#1\right\|}
\newcommand{\abs}[1]{\left|#1\right|}
\newcommand{\la}{\left\langle}
\newcommand{\ra}{\right\rangle}

\renewcommand{\ge}{\geqslant}
\renewcommand{\le}{\leqslant}

\newcommand{\longto}{\longrightarrow}
\newcommand{\onto}{\twoheadrightarrow}
\newcommand{\into}{\hookrightarrow}
\newcommand{\acts}{\curvearrowright}
\newcommand{\racts}{\curvearrowleft}

\newcommand{\N}{\mathbb N}
\newcommand{\Z}{\mathbb Z}

\newcommand{\R}{\mathbb R}
\newcommand{\Ck}{\mathfrak C}   
\newcommand{\D}{\mathbb D}


\definecolor{newmagenta}{RGB}{216,27,96} 
\definecolor{newyellow}{RGB}{255,193,7}
\definecolor{newblue}{RGB}{30,136,229}

%% file: Sections/sec0_intro.tex
The interaction between formal language theory and combinatorial group theory has been a central theme since the pioneering work of Higman and Boone. Higman's embedding theorem \cite{Higman1961} established that a group has recursively enumerable Word Problem if and only if it embeds into a finitely presented group, while Boone and Higman later formulated their conjecture asserting that a finitely generated group has recursive Word Problem if and only if it embeds into a finitely presented simple group \cite{BooneHigman1974} (see \cite{BH_survey} for recent developments).

At the other end of the spectrum, Anisimov \cite{anisimov} and Muller and Schupp \cite{Muller_Schupp} gave an algebraic characterisation of groups with Word Problem languages belonging to the first two classes of the Chomsky Hierarchy. Among different intermediate classes between context-free and context-sensitive, groups with co-context-free Word Problem, first studied by Holt, Rees, R\"over and Thomas \cite{HoltReesRoverThomas2005}, rose to prominence. An algebraic characterisation was conjectured by Lehnert \cite{lehnert} (with the current form due to Bleak, Matucci, and Neunh\"offer \cite{BleakMatucciNeunhoffer2016}):

\textbf{Conjecture.} A group is co-context-free if and only if it embeds in Thompson's group $V$.

The present work aims at better understanding this conjecture. Note that Lehnert and Schweitzer proved that Thompson's group $V$ has co-context-free Word Problem \cite{LehnertSchweitzer2007}, taking care of one implication. Also, note that this conjecture mirrors the Boone-Higman conjecture, owing to the fact that Thompson's $V$ is a finitely presented simple group.

\bigskip

One of our main results is the following:
\begin{thm} \label{thm:intro_main}
	Let $G$ be a finitely generated group. The following are equivalent:
	\begin{itemize}[leftmargin=6mm]
		\item $G$ embeds in Thompson's group $V$, and
		\item $G$ is \textbf{CF-TR}, i.e.\ $G$ admits a faithful context-free action.
	\end{itemize}
\end{thm}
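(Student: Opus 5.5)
The plan is to prove the two implications separately. Both rest on one dictionary: a Schreier graph with finitely many cone types (end-isomorphism types) should correspond to an action by \emph{prefix replacements} on a regular language of words. I will use two standard facts: Lehnert's group $\QAut$ of quasi-automorphisms of a rooted tree and the larger group $QV$ embed in $V$ (Bleak--Matucci--Neunh\"offer), and the generalised Higman--Thompson groups $V_{n,r}$ embed in $V$.

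\emph{From $V$ to \textbf{CF-TR}.} First I produce a faithful context-free action of $V$ itself. Let $V$ act on the countable, dense, $V$-invariant set $\Omega$ of eventually constant infinite binary sequences; the union of the orbits of $0^\infty$ and $1^\infty$ will do. The action is faithful because $\Omega$ is dense in the Cantor set. Each standard generator of $V$ is a finite prefix replacement. Hence in the Schreier graph a point $w0^\infty$ is moved only by rewriting a prefix of bounded length, and the graph is quasi-isometric to the tree of finite words $w$. I then check directly that the cone at a vertex far from the base point is determined by a bounded suffix-type datum, so there are finitely many cone types and the graph is context-free.

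Next I restrict to a finitely generated $G\le V$. Each generator of $G$ is a word of bounded length in the generators of $V$, so it moves every point of $\Omega$ a bounded distance in the $V$-Schreier graph. The $G$-Schreier graph on $\Omega$ is therefore a bounded-displacement subgraph of a bounded-distance thickening of a context-free graph. I would prove a small lemma: cones of the new graph are determined by cones of the old graph together with finite labelled data of bounded radius near their frontier, so there are still finitely many types. This holds componentwise, and finitely many orbit types suffice. Faithfulness is inherited from the $V$-action.

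\emph{From \textbf{CF-TR} to $V$.} Let $G$ act faithfully on the vertex set of a context-free labelled graph $\Gamma$ with root $o$. Using the Muller--Schupp decomposition into cones, I encode each vertex by a word in a regular language. Such a word records the sequence of cone types along a chain of nested cones, followed by a position inside a finite frontier piece. This gives a bijection between vertices and the vertices of a rooted tree $T$ of finite type. Each generator moves vertices a bounded distance, and inside a cone at distance larger than that bound its local behaviour depends only on the cone type. So each generator acts on $T$ as a quasi-automorphism: away from finitely many vertices it is a prefix replacement of bounded length, given by a finite-state transducer. Composing, $G\into\QAut(T)$, and then $\QAut(T)\into QV\into V$ after recoding $T$ inside the binary tree (possibly through some $V_{n,r}$).

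I expect the main obstacle to be verifying that the action really is eventually a prefix replacement. Vertices on the frontiers between nested cones may be moved into a sibling or parent cone, which forces a nontrivial but bounded rewriting of the prefix. The first thing I would try is to choose the cone decomposition with frontier depth larger than the maximal displacement of the generators; then each generator changes at most the last two letters of the code, uniformly in the cone type. The cone-counting lemma in the first half is the secondary difficulty.
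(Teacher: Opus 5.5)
\textbf{The gap is in the direction CF-TR $\Rightarrow V$.} The failing step is your claim that each generator acts on the tree $T$ of nested cones as a quasi-automorphism, ``away from finitely many vertices a prefix replacement''. With your encoding (cone types read from the root, then a frontier position), a generator rewrites only the \emph{last} one or two letters of the code, as you note yourself at the end. That is a bounded \emph{suffix} rewriting, the opposite of a prefix replacement, and it does not preserve the tree structure.

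Here is a concrete counterexample. Let $D_\infty=\langle a,b\rangle$ act on $\N$ by $a=(0\,1)(2\,3)\cdots$ and $b=(1\,2)(3\,4)\cdots$. The Schreier graph is a half-line, context-free with finitely many cone types, and the tree of nested cones is the ray $0-1-2-\cdots$. The generator $a$ sends the edge $\{2k+1,2k+2\}$ to the non-edge $\{2k,2k+3\}$ for every $k$. In fact a quasi-automorphism of a ray is eventually a translation, necessarily by $0$. So $\QAut(\text{ray})=\FSym(\N)$ is locally finite and cannot contain $D_\infty$. Taking the frontier depth larger than the displacement does not help: it bounds how many letters change, but they are still the last ones.

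\textbf{The missing idea is to reverse the codes.} Read from the deep end back to the root, $\overleftarrow{w_x}$ is rewritten by a generator only in a bounded \emph{prefix}, and the new prefix depends only on the old one. But the reversed codes form a prefix code, so they are leaves of their prefix tree, not the vertex set of a tree on which this is a quasi-automorphism. The paper therefore proceeds as follows:
\begin{itemize}
\item It builds a one-sided edge shift $\Omega(\Sigma,v_0)$ whose first-return circuits at $v_0$ are exactly the reversed codes.
\item It identifies $x$ with the points $\overleftarrow{w_x}\eta$; these form a dense set.
\item Each generator then becomes an element of the topological full group $\F(\Sigma,v_0)$, and faithfulness follows from density and continuity.
\item It invokes Matui's theorem that topological full groups of \emph{irreducible} one-sided SFTs embed in $V$. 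This is why $\Sigma$ is engineered, with the extra vertex $v_0$, the loop and the edges into $v_0$, to be irreducible.
\end{itemize}
Your step $\QAut(T)\into QV\into V$ would in any case need its own proof for non-regular trees of finite type. You also need to handle several components, via $\Gc(\Gamma^{(1)}\sqcup\cdots\sqcup\Gamma^{(k)})\le\prod_i\Gc(\Gamma^{(i)})$ and closure of subgroups of $V$ under direct products.

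\textbf{Your first direction is a legitimate alternative,} namely the route of the paper's remark: show $V$ is \textbf{CF-TR}, then pass to subgroups. However, your cone lemma is not justified as stated. End-cones of a $G$-orbit are defined by its intrinsic metric, which the $V$-metric controls only from one side ($d_V\le C\,d_G$). So the orbit's cone types are not just ambient cones plus bounded data.

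Use languages instead. If $\phi$ sends each generator of $G$ to a word over the generators $\tilde\Ac$ of $V$, then $L(\Sch(\xi,G;\Ac),\xi)=\phi^{-1}\bigl(L(\Sch(\xi,V;\tilde\Ac),\xi)\bigr)$. This is context-free because context-free languages are closed under inverse homomorphisms. The paper instead writes a pushdown automaton directly for each eventually periodic $\xi$.

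Finally, the finiteness of $G$-orbit types must be proved, not just used. The paper's argument: a $G$-orbit lies in the $V$-end-cone of its vertex closest to the base point. Hence its isomorphism type is determined by that end-cone type together with a frontier vertex.
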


The result is built on the new graph-theoretical perspective developed by Matucci and the last three authors \cite{CFTR}. While constituting a natural subclass of co-context-free groups, \textbf{CF-TR} displays a rich combinatorial structure that opens avenues for deeper analysis.

\bigskip

The first main application of Theorem \ref{thm:intro_main} is a new formulation of Lehnert's conjecture.

\textbf{Conjecture*.} A group is co-context-free if and only if it is \textbf{CF-TR}.

Note that $V$ does not appear anywhere in this formulation, and both sides of the conjecture involve some context-free-ness. This makes the conjecture more believable. As further evidence, we observe that all known proofs that groups have co-context-free Word Problem \say{translate} (with some efforts) into proofs that these groups are \textbf{CF-TR}, hence embed in Thompson's $V$. More precisely, we prove the following embedding results:
\begin{thm} \label{thm:intro_coCF}
	The following groups embed in Thompson's group $V$:
	\begin{itemize}[leftmargin=6mm]
		\item $V_{(H,\theta)}$ with $H$ finite and $\theta\in\End(H)$,  introduced in \cite{ThompsonV_clones,cloning}.
		\item \emph{FSS} groups $G(\Sym_X)$ with $\Sym_X$ a finite similarity structure, introduced in \cite{Hughes,Farley_FFS}.
	\end{itemize}
\end{thm}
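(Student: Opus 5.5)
By Theorem \ref{thm:intro_main}, it suffices to show that each listed group is finitely generated and admits a faithful context-free action, i.e.\ is \textbf{CF-TR}. The plan is to follow the existing proofs that these groups are co-context-free, namely the arguments for $V_{(H,\theta)}$ via cloning systems and Farley's argument for FSS groups. In both cases the groups act faithfully on the space of ends of a tree, or on the boundary of a rooted tree with finitely many ``types'' of subtrees. The goal is to replace that boundary action by an action on the vertices of a context-free graph, and then check that the transitions preserve faithfulness.

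For $V_{(H,\theta)}$, I will realise the group as acting on $\{0,1\}^\omega\times H$, or on a set of finite labelled forests. A prefix-replacement $w\mapsto w'$ in the $V$-part is twisted by an element $h\in H$, and $h$ is propagated along the word via the cloning map $\theta$: cloning a caret sends $h$ to the pair $(\theta(h)$-induced data$)$ on the two children. Concretely:
\begin{enumerate}
\item Take the graph to be the binary tree $T_2$ with each vertex decorated by $H$, that is, vertex set $\{0,1\}^*\times H$, with edges $(w,h)\to(wa,\kappa_a(h))$ for the cloning maps $\kappa_a$.
\item Since $H$ is finite, this graph is quasi-transitive with finitely many cone types, hence context-free in the Muller--Schupp sense.
\item Check that the standard generators act by finitely many ``local rewrites near the root'', which are exactly the transitions allowed in the definition of \textbf{CF-TR}.
\end{enumerate}
Faithfulness then follows from faithfulness of the action on the boundary, since the boundary is the space of ends of this graph.

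For FSS groups $G(\Sym_X)$, $X$ is a compact ultrametric space with a finite similarity structure. Its ball hierarchy forms a rooted tree whose subtrees rooted at balls fall into finitely many similarity classes, precisely because $\Sym_X$ is finite. This tree, decorated by the finitely many classes, has finitely many cone types and is therefore context-free. An element of $G(\Sym_X)$ is locally a similarity from $\Sym_X$ between balls, so it acts by finitely many prefix-replacements, each twisted by one of finitely many local similarity maps. Farley's finite generation result supplies the finite generating set. Each generator corresponds to a transition of the context-free graph, and faithfulness on $X$ transfers to faithfulness on the graph.

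The main obstacle is matching precisely the definition of a ``context-free action'' in \cite{CFTR}: the transition group must be generated by partial maps given by the graph's structure, not just by an arbitrary action on ends. The most delicate part is the twisting by $\theta$ in $V_{(H,\theta)}$, or by the local similarities. I would first try to encode the twist by enlarging the vertex labels with the finite data, whether $H$ or the similarity class, so that the action becomes an honest prefix-replacement on a finitely labelled tree. I would then invoke whatever closure property of \textbf{CF-TR} under such finite decorations is established in Section~3.
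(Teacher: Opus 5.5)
There is a genuine gap. You never construct what \textbf{CF-TR} actually requires: a faithful action $X\racts G$ with finitely many orbits whose orbital Schreier graphs $\Sch(\xi,G;\Ac)$ are context-free, where the edges are labelled by the generators in $\Ac$.

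The decorated binary tree on $\{0,1\}^*\times H$ (or the ball tree of $X$) is not such a graph. Its edges are labelled by letters and cloning maps rather than by generators. Moreover, the group does not act on its vertices at all, because elements of $V$ split and merge cones. So the fact that this tree has finitely many cone types proves nothing. There is also no closure property of \textbf{CF-TR} under ``finite decorations'' in the paper to fall back on.

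Passing to the boundary does not rescue the argument. The boundary action has uncountably many orbits. For $V_{(H,\theta)}$ it is not even defined: the twist $(h_i)\theta^v$ only makes sense when $v$ contains finitely many $1$'s, since powers of $\theta$ need not stabilise. On $\Ck$ alone, the kernel $\bigoplus H$ of $\pi$ acts trivially.

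The missing idea is to choose a countable faithful orbit (or finitely many) and prove that its Schreier graph is context-free.
\begin{itemize}
\item \emph{For $V_{(H,\theta)}$.} The paper embeds the group in $H\Wr_\D V$ via $g\mapsto(\Phi_g,\pi(g))$ with $\Phi_g(a_iv0^\infty)=(h_i)\theta^v$. This makes the action on $H\times\D$ faithful and transitive. It then writes an explicit pushdown automaton for the stabiliser of $(1_H,0^\infty)$. The stack holds the current dyadic prefix, and each letter is decorated by the endomorphism accumulated from the letters beneath it, so that $\theta^v$ is visible at the top. The finite control holds the $H$-coordinate.
\item \emph{For FSS groups.} Your appeal to ``Farley's finite generation result'' is unjustified. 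The paper stresses that $G(\Sim_X)$ need not be finitely generated, and that its action on $X$ may have no dense orbit. So Theorem \ref{thm:intro_main} cannot be applied to it directly. Instead, the paper embeds $G(\Sim_X)$ into a R\"over--Nekrashevych group on an irreducible edge shift with a finite self-similar tuple. That group is finitely generated and acts minimally on $\Omega(\Sigma)$, so every orbit is faithful. A second explicit pushdown automaton, which reduces legal words $e_1s_1\ldots e_ns_n$, shows that the Schreier graph of the orbit of $u^\infty$ is context-free. The possibly non-finitely-generated FSS group then embeds in $V$ through this finitely generated overgroup.
\end{itemize}
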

The first part improves on partial results for $V_{(H,\theta)}$ when $\theta=\id_H$ \cite{ThompsonV_clones}, and $F_{(H,\theta)}$ with $H$ abelian and $\theta$ idempotent \cite{cloning_explained}. This also makes significant progress towards answering Question 2.16 of Zaremsky \cite{Zaremsky_Q}. The second part includes a family of Röver-Nekrashevych $V_d(H)$ with $H\le\Sym(d)$ acting self-similarly on $\Ck_d$ considered in \cite{aroca2022some}, and answer their Question 1.3 in a strong form. Combined, Theorems \ref{thm:intro_main} and \ref{thm:intro_coCF} prove that \emph{all} groups that were previously known to be co-context-free \emph{do} embed in Thompson's $V$.

\bigbreak

The second direction where Theorem \ref{thm:intro_main} can be useful is to find \emph{obstructions} to embedding in Thompson's $V$. A handful of obstructions are given in the literature, see the survey \cite{Burillo_Cleary_Rover_2017}. We note that some of them have been proven in parallel for \textbf{CF-TR} groups (sometimes with more transparent proofs):
\begin{itemize}[leftmargin=6mm]
	\item Röver proved that finitely generated torsion subgroups of $V$ are finite \cite[Theorem 3]{Rover}. An analogous result is proven for \textbf{CF-TR} groups \cite[Corollary 5.8]{CFTR}.
	\item The period growth function $p_G(n)=\max\bigl\{\ord(g) : \norm g\le n \text{ and }\ord(g)<\infty\bigr\}$ satisfies $p_G(n)\preceq \exp(n^2)$ for \textbf{CF-TR} groups \cite[Proposition 5.4]{CFTR} and for finitely generated subgroups of Thompson's $V$ \cite[Theorem 6.2]{Period}.
\end{itemize}
In Section \ref*{sec:alternative}, we use the characterisation with \textbf{CF-TR} groups to provide a \emph{new} obstruction, which presents itself as an alternative:
\begin{thm} \label{thm:alt_intro}
    Let $G$ be a finitely generated subgroup of $V$. Then either
    \begin{itemize}[leftmargin=6mm]
        \item $G$ is virtually abelian, or
        \item $G$ contains a non-abelian free semigroup.
    \end{itemize}
\end{thm}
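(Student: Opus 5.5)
By Theorem \ref{thm:intro_main}, it suffices to prove the alternative for \textbf{CF-TR} groups. So let $G=\langle S\rangle$ act faithfully on a context-free graph $\Gamma$, with $G$ realised as a group of transitions. The proof is a dichotomy on the growth of $\Gamma$. A context-free graph has finitely many end-isomorphism types of cones, so it is either quasi-isometric to a tree with infinitely many ends branching exponentially, or it has polynomial (in fact linear-type) growth, being quasi-isometric to a finite union of rays. The plan is to show that exponential branching of $\Gamma$ which is \emph{seen} by $G$ produces a free subsemigroup, while in the remaining case $G$ is virtually abelian.

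\emph{Step 1 (polynomial growth case).} Suppose $\Gamma$ has polynomial growth. Then, up to removing a finite set, $\Gamma$ is a finite union of ends, each eventually periodic: a ray built from finitely many repeated cone types. A transition acting on such a graph moves vertices only boundedly (the generators are finitely many edge labels). On each periodic end it therefore eventually acts as a translation by a bounded amount along the period, possibly composed with a permutation of the finitely many positions within a period. This gives a homomorphism from $G$ to a group of the form $\prod_{\text{ends}}(\Z\times\text{finite})$, up to a finite permutation of the ends. The kernel acts with support in a bounded region, hence lies in a finite group of permutations of a finite set. By faithfulness the kernel is finite. The main work here is showing that $G$ is then virtually abelian rather than merely finite-by-abelian. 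For this I would pass to the finite-index subgroup fixing each end and acting trivially on period positions, and check that it acts by commuting translations outside a finite set. Its intersection with the finite kernel can then be killed by passing to a further finite-index subgroup, using that a finitely generated abelian-by-finite extension with finite kernel is virtually abelian, via residual finiteness of the quotient.

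\emph{Step 2 (exponential growth case).} If $\Gamma$ has exponential growth, then there exist two disjoint cones $C_1,C_2$ of the same type with $C_1\cup C_2\subset C_0$, again of the same type. Here I would use the self-similarity of context-free graphs to find elements $a,b\in G$ that act as \say{ping-pong} maps: $a$ maps a region $X$ into $C_1$-part and $b$ maps $X$ into the $C_2$-part, with both images inside $X$. Then words in $a,b$ are distinguished by where they send a basepoint, giving a free subsemigroup. The obstacle is that $G$ may not act transitively on cones. The exponential growth of $\Gamma$ need not be reflected in $G$; for instance $G$ might act trivially deep inside cones. To handle this, I would replace $\Gamma$ by the union of orbits actually moved by $G$, and argue that the Schreier-type subgraph spanned by supports is again context-free. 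If it has polynomial growth, Step 1 applies. Otherwise I would apply ping-pong to an element with unbounded displacement together with a conjugate of it, the conjugate being obtained from a branching cone.

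I expect Step 2, that is, producing two genuinely independent positive ping-pong elements from exponential growth of the \emph{relevant} part of $\Gamma$, to be the hardest. My first attempt would be to build the semigroup from the generators themselves on an exponentially growing orbit. The orbit's growth is at most that of the semigroup generated by $S\cup S^{-1}$ restricted to the tree structure. A Rosenblatt-type argument would then upgrade exponential orbit growth to a free subsemigroup, using the fact that the action on ends of a context-free graph is by finitely many \say{local} patterns.
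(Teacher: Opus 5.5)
\textbf{There is a genuine gap in Step~1.} The dichotomy you set up, ``polynomial growth $\Rightarrow$ virtually abelian, exponential growth $\Rightarrow$ free semigroup'', is false, so Step~1 cannot be repaired as stated. It fails for two reasons.

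\emph{Polynomial growth is not linear growth.} By Lemma \ref{lem:growth_of_CF_graphs} a context-free graph can have growth $r^d$ for any $d$. The comb-like graph of Figure \ref{fig:polygamma} has quadratic growth and infinitely many ends, so it is not a finite union of rays.

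\emph{Even linear growth does not give virtually abelian.} The lamplighter group $C_2\wr\Z$ and $H_2=\FSym(\Z)\rtimes\Z$ are transition groups of two-ended context-free graphs (Figure \ref{fig:lamplighter}). They embed in $V$, are not virtually abelian, and contain free semigroups. The faulty step is ``the kernel acts with support in a bounded region, hence \ldots\ the kernel is finite''. Each kernel element has finite support, but these supports are not uniformly bounded: they grow with word length. So the kernel is only locally finite, namely $\bigoplus_\Z C_2$, respectively $\FSym(\Z)$, in these examples. The correct structural statement for $d=1$ is (locally finite)-by-(virtually abelian) (Theorem \ref{thm:Houghton_universal}), and both outcomes of the alternative genuinely occur among polynomial-growth graphs.

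\textbf{What is missing.} In the polynomial case you should aim not at virtual abelianity but at elementary amenability, and then invoke Chou's theorem.
\begin{itemize}
\item The paper proves that $\Gc(\Gamma)$ is elementary amenable by induction on the degree $d$ (Theorem \ref{thm:elementary_amenable}). It removes a finite set so that the remaining components are self-containing end-cones (Propositions \ref{prop:find_F_for_CF} and \ref{prop:cutting}). It replaces these by $\Z$-invariant periodic limits (Proposition \ref{prop:periodic_limits}). It then passes to the context-free quotients by $\Z$, which have growth $r^{d-1}$ and abelian kernels (Propositions \ref{prop:cover_Linvariant}, \ref{prop:cover_restricted}, \ref{prop:cover_is_CF}).
\item Chou's theorem then gives: virtually nilpotent, or a free semigroup.
\item Finitely generated virtually nilpotent subgroups of $V$ are virtually abelian, since otherwise they would contain a Heisenberg group and hence a distorted cyclic subgroup.
\end{itemize}
Your proposal contains none of this, and its fallback in Step~2 (``if it has polynomial growth, Step 1 applies'') inherits the same error.

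\textbf{Step~2.} Your worry that exponential growth might not be ``seen'' by $G$ is unfounded. Here $G=\Gc(\Gamma)$ and $\Gamma$ is its Schreier graph, so no restriction to supports is needed. Also, exponential orbit growth alone would not give a free semigroup in general, so a Rosenblatt-type upgrade is not the right tool. The direct argument is as follows.
\begin{itemize}
\item Take words $u_i$ labelling paths $y\to z_i$ inside $\Gamma(y,x_0)$.
\item Because the end-cone isomorphisms $\psi_i$ preserve labels, $u_{i_1}\cdots u_{i_m}$ sends $y$ to $\psi_{i_1}\cdots\psi_{i_m}(y)\in\Gamma(z_{i_1},x_0)$.
\item The cones $\Gamma(z_1,x_0)$ and $\Gamma(z_2,x_0)$ are disjoint, and $y\notin\Gamma(z_i,x_0)$. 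Together these force the first letters of two equal products to agree; left cancellation and induction then give freeness.
\end{itemize}
Finally, you never pass from $\Gc(\Gamma^{(1)}\sqcup\cdots\sqcup\Gamma^{(k)})$ to single graphs. This is easy via the subdirect product structure, but it should be stated.
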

As an immediate corollary, we get the following result:
\begin{cor}
    Groups of intermediate growth \emph{do not} embed in $V$. \vspace*{-5pt}
\end{cor}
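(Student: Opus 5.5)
The corollary follows from Theorem \ref{thm:alt_intro} by comparing the growth of the two alternatives with intermediate growth. Let $G$ be a finitely generated group of intermediate growth. We argue by contradiction and suppose that $G$ embeds in $V$. Since $G$ is finitely generated, we may view it as a finitely generated subgroup of $V$, so Theorem \ref{thm:alt_intro} applies. Hence either $G$ is virtually abelian, or $G$ contains a non-abelian free semigroup. We rule out each case separately.

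\emph{Case 1: $G$ is virtually abelian.} Then $G$ has a finite-index subgroup isomorphic to a finitely generated abelian group. Such an abelian group has the form $\Z^d\times F$ with $F$ finite, so its growth is polynomial of degree $d$. Growth type is invariant under passing to finite-index subgroups, since they are quasi-isometric to $G$. Therefore $G$ has polynomial growth, which contradicts intermediate growth.

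\emph{Case 2: $G$ contains a free semigroup on two generators $a,b\in G$.} Fix a finite generating set $S$ of $G$, and let $C=\max\{\norm a_S,\norm b_S\}$. There are $2^n$ distinct positive words of length $n$ in $a,b$. Because $a$ and $b$ generate a free semigroup, these words represent pairwise distinct elements of $G$. Each of them lies in the ball of radius $Cn$ with respect to $S$. Hence
\[ \beta_S(Cn)\ge 2^n, \]
so $G$ has exponential growth, again contradicting intermediate growth.

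Neither case can occur, so $G$ does not embed in $V$. All the real work lies in Theorem \ref{thm:alt_intro}; the two growth comparisons above are standard.
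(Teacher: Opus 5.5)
Your proof is correct and follows the paper's approach: the paper presents this as an immediate corollary of Theorem~\ref{thm:alt_intro}, since virtually abelian groups have polynomial growth and groups containing a free non-abelian semigroup have exponential growth. Your two cases simply spell out these standard growth comparisons, which the paper leaves implicit.
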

This contrasts with a series of results embedding groups of intermediate growth in topological full groups of (small) subshifts \cite{MatteBon_intermediate, GLN_Lysenok_subshift, Grigorchuk_ThueMorse_subshift}. In order to prove Theorem \ref*{thm:alt_intro}, we develop a machinery to compare transition groups of a (context-free) graph with transition groups of covers and limits, which could have wider applications (Sections \ref*{sec:covers} and \ref*{sec:limits}).

\bigbreak

In the last part of the paper, we prove that various groups of dynamical origin \emph{do not} embed inside $V$, by comparing the Schreier graphs of their natural actions (on the boundary $\partial T$
) with Schreier graphs of $V$ acting on $\Ck$. %
\begin{thm}
    The following groups do not embed in Thompson's $V$:
    \begin{itemize}[leftmargin=6mm]
        \item The Basilica group, introduced in \cite{Basilica},
        \item The Hanoï Towers groups, introduced in \cite{Hanoi_Schreier},
    \end{itemize}
\end{thm}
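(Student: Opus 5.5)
The plan is to argue by contradiction through Theorem~\ref{thm:intro_main}. Suppose the Basilica group $B$ (or a Hanoï Towers group $H^{(k)}$) embeds in $V$. Then $B$ is \textbf{CF-TR}, so it has a faithful action on the vertex set of a context-free graph $\Gamma$ by transition maps. Equivalently, restricting the natural action of $V$ on the Cantor set $\Ck$ to $B$, $B$ acts faithfully on $\Ck$ by finitely many prefix-replacement rules. Every orbital Schreier graph of this action is then context-free: it has finitely many end-cone types, and its ends have uniformly bounded size.

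The goal is to play this against the known geometry of the natural action on the boundary $\partial T$. The Schreier graphs of $B$ on $\partial T$ are well understood. Generic orbits give one-ended graphs, and there are orbits with $2$ or $4$ ends; in every case these graphs are built from cactus-like pieces containing cycles of length $2^n$ at all scales. For Hanoï Towers, the Schreier graphs are Sierpiński-gasket-like, with triangles at all scales. The main intermediate claim is that such self-similar "fractal" structure cannot occur inside a context-free graph. More precisely: if a group $G$ acts faithfully on a context-free graph, then every element $g$ whose support is small must move points only within a bounded-size portion of some finite collection of cones. For $B$ and $H^{(k)}$, in contrast, we can find commuting or nested elements (from the branch structure, e.g.\ rigid stabilisers $\RStab_G(v)$ for deep vertices $v$) whose orbits have cycles of unboundedly large diameter with bounded-degree pinch points.

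Concretely, the steps are as follows. First, use the fact that both groups are weakly branch: for each level $n$ there is a subgroup $K$ such that $K^{2^n}$ (resp. $K^{3^n}$) sits inside $G$ with the factors supported on distinct cones. Second, take the faithful context-free action and apply the finiteness of cone types: for large $n$, two of the copies of $K$ must act on isomorphic cones with identical "shape" but disjoint supports. Third, use a specific element such as $a$ (or $b$) of Basilica, whose action on the graph produces a long cycle. The goal is to show that the word length of some element grows only linearly, while in the context-free action the corresponding displacement is forced to grow at most logarithmically (or vice versa). This would contradict the contraction property of Basilica, namely $|g|_v| \le \tfrac12|g|+O(1)$, combined with the bounded-ends property. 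An alternative route is to consider the orbital graph of a point in the context-free action and show that it contains, quasi-isometrically, a Schreier graph of the boundary action, and then invoke that context-free graphs have bounded "treewidth-like" separators while cactus graphs with $2^n$-cycles nested at all scales, or Sierpiński graphs, do not.

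The main obstacle is the second and third steps. A faithful context-free action of $B$ need not be conjugate to, or even related to, the boundary action, so one must extract the boundary geometry from abstract data. I would first try to show that for an element $g$ of infinite order in the branch structure, its orbits in any context-free action have bounded-length cycles or are bi-infinite lines. This is a known rigidity for elements of $V$: orbits are eventually periodic or "attracted" to fixed points, so on finite-cycle orbits $g$ has bounded order. I would then combine this with the fact that in $B$ the elements $a^{2^n}$ restricted to deep levels reproduce $a$, which forces cycle lengths $2^n$ to appear for all $n$ in any faithful action. That yields the desired contradiction with the eventual periodicity in $V$.
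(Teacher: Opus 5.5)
There is a genuine gap, and it is the step you yourself call the main obstacle. Nothing in your proposal connects an arbitrary faithful context-free action of $G$ (equivalently, the action on $\Ck$ coming from an embedding $G\le V$) to the boundary action on $\partial T$, and the remedies you suggest do not work.

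Your claim that the self-similarity of $a$ ``forces cycle lengths $2^n$ to appear in any faithful action'' is false. The Basilica group is torsion-free, and faithfulness only requires each $a^k$ with $k\neq 0$ to move some point. That already holds if $\la a\ra$ acts freely on a single orbit, as in the regular action, where $a$ has no finite cycles at all. The $a$-cycles of length $2^n$ live only in the graphs $\Sch(\xi,\Bc;\Ac)$ with $\xi\in\partial T$, so the boundedness of periodic orbit lengths of a fixed element of $V$ contradicts nothing.

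Your other two routes have no mechanism behind them. The ``linear versus logarithmic displacement'' route is a hope rather than an argument. The claim that an orbital context-free graph contains a boundary Schreier graph quasi-isometrically is unsupported.

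Even a correct version of the link cannot conclude from geometry alone. A quasi-tree can cover a non-quasi-tree: the Cayley graph of $F_\Ac$ is a tree and covers every complete $\Ac$-graph. So the absence of free subgroups in these groups must enter the argument, and so must faithfulness; your sketch uses neither in an essential way. A minor point: for $G\le V$, orbital Schreier graphs on $\Ck$ are context-free only at eventually periodic points; at aperiodic points they are merely uniform quasi-trees.

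The missing idea is the rigidity of confined subgroups of weakly branch groups due to Le Boudec and Matte Bon. Take $H=\Stab_G(x)$ for a vertex $x$ of a context-free orbital graph $\Gamma$ of a faithful action. Since $K$-quasi-trees form a Chabauty-closed set and $G$ is not virtually free, $\Cay(G;\Ac)\not\preceq\Gamma$. So $H$ is confined, and $\overline{\{H^g\}}$ contains a URS $\tilde H$. Their dichotomy leaves two cases.

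\emph{First case:} $\tilde H\le\Stab_G(\xi)$ for some $\xi\in\partial T$. This gives $\Cay(G;\Ac)\onto\tilde\Gamma\onto\Sch(\xi,G;\Ac)$, where $\tilde\Gamma\preceq\Gamma$ is still a quasi-tree and $\Sch(\xi,G;\Ac)$ is not. Take a $K$-fat $R_2$-minor of the base, lift it, and push it into a tree approximating $\tilde\Gamma$; a ping-pong argument there produces a free subgroup. This contradicts the fact that these contracting or bounded-automata groups contain none.

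\emph{Second case:} $\tilde H\ge\RStab_G(\ell)'$. Finite generation of $\RStab_G(\ell)'$ makes $\{K:K\ge\RStab_G(\ell)'\}$ clopen, so some conjugate $H^g$ contains $\RStab_G(\ell)'$. Then this nontrivial normal subgroup lies in the kernel of the action, taking the largest $\ell$ over the finitely many orbits, which contradicts faithfulness.

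After that, what remains is to check the concrete inputs:
\begin{itemize}
\item finite generation of $\RStab_G(\ell)'$, which for Basilica goes through Francoeur's criterion, using that $\Bc'$ is finitely generated and proper quotients are polycyclic;
\item that no boundary Schreier graph is a quasi-tree, via arbitrarily long isometric cycles for Basilica, and via Manning's bottleneck criterion for $G_3=H^{(3)}\le H^{(m)}$ in the Hano\"{\i} case.
\end{itemize}
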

The proofs rely on a new lemma (Lemma \ref{lem:free_in_cover}) leveraging coverings from quasi-tree Schreier graphs (such as $\Sch(\xi, G;\Ac)$ for $G\le V$ and $\xi\in\Ck$) onto non-quasi-trees, combined with a \say{rigidity} result of Le Boudec and Matte Bon for weakly branch groups \cite{Branch_confined}. 

\medskip

\textbf{Important remark.} We completed our proof of Theorem \ref*{thm:intro_main} in March-April 2026, then learned in May about the existence of an independent proof by Henry Jaspars \cite{Jaspars}. He completed his proof around January 2026.

\smallskip

\textbf{Acknowledgements.} The authors are grateful to Dominik Francoeur for the proof of Lemma \ref{prop:Dominik_Basilica}, and to Jim Belk, Francesco Matucci, Nicolás Matte Bon and Tatiana Nagnibeda for discussions and encouragements at different stages of the project. DD and ER are members of the Gruppo Nazionale per le Strutture Algebriche, Geometriche e le loro Applicazioni (GNSAGA) of the Istituto Nazionale di Alta Matematica (INdAM). CB and DP thank Dipartimento di Matematica of Politecnico di Milano and of Università di Milano-Bicocca for the kind hospitality.

\smallskip

\textbf{Funding.} CB is funded by the Swiss NSF grant P500PT-225420. DP was supported by the Swiss NSF grant 200020-200400 and the research project PID2022-138719NAI00 financed by the Spanish Ministry of Science and Innovation.

%% file: Sections/sec1_background.tex
\subsection{Thompson's group \texorpdfstring{$V$}{V}}

Thompson's group $V$ is the group of homeomorphisms of the Cantor set that can be described by prefix replacement rules. Formally, given two partitions
$$\Ck = [a_1]\sqcup \ldots \sqcup [a_m] = [b_1]\sqcup \ldots \sqcup [b_m]$$
of the Cantor set $\Ck=\{0,1\}^\infty$, where $[a]:=\{a\eta:\eta\in\{0,1\}^\infty\}$ is the cone below $a$, we define an homeomorphism
\[ g \colon a_j\hspace{1pt}\eta\mapsto b_j\hspace{1pt}\eta \qquad \bigl(\eta\in\{0,1\}^\infty,\ j=1,\ldots, m \bigr). \]
The set of elements of this form is a group, called Thompson's group $V$.

\begin{center}
\begin{tikzpicture}[xscale = 7, yscale=2.1]
	\begin{scope}[red, thick]
		\draw (0,0) -- (.0256,0);
		\draw (.0384,0) -- (.064,0);
		\draw (.096,0) -- (.1216,0);
		\draw (.1344,0) -- (.16,0);
	\end{scope}
	
	\begin{scope}[blue, thick]
		\draw (.24,0) -- (.2656,0);
		\draw (.2784,0) -- (.304,0);
		\draw (.336,0) -- (.3616,0);
		\draw (.3744,0) -- (.4,0);
	\end{scope}
	
	\begin{scope}[green, thick]
		\draw (.6,0) -- (.6256,0);
		\draw (.6384,0) -- (.664,0);
		\draw (.696,0) -- (.7216,0);
		\draw (.7344,0) -- (.76,0);
		
		\draw (.84,0) -- (.8656,0);
		\draw (.8784,0) -- (.904,0);
		\draw (.936,0) -- (.9616,0);
		\draw (.9744,0) -- (1,0);		
	\end{scope}
	
	
	\begin{scope}[blue, thick, shift={(0,-1)}]
		\draw (0,0) -- (.0256,0);
		\draw (.0384,0) -- (.064,0);
		\draw (.096,0) -- (.1216,0);
		\draw (.1344,0) -- (.16,0);
		
		\draw (.24,0) -- (.2656,0);
		\draw (.2784,0) -- (.304,0);
		\draw (.336,0) -- (.3616,0);
		\draw (.3744,0) -- (.4,0);
	\end{scope}
	
	\begin{scope}[red, thick, shift={(0,-1)}]
		\draw (.6,0) -- (.6256,0);
		\draw (.6384,0) -- (.664,0);
		\draw (.696,0) -- (.7216,0);
		\draw (.7344,0) -- (.76,0);
	\end{scope}
	
	\begin{scope}[green, thick, shift={(0,-1)}]
		\draw (.84,0) -- (.8656,0);
		\draw (.8784,0) -- (.904,0);
		\draw (.936,0) -- (.9616,0);
		\draw (.9744,0) -- (1,0);		
	\end{scope}
	
	\begin{scope}
		\clip (0,-.95) rectangle (1,-.05);
		\draw[very thin, red, fill=red!20] (0,0) to[out=-90, in=90] (.6,-1) -- (.76,-1) to[out=90, in=-90] (.16,0);
		\draw[very thin, blue, fill=blue!40, opacity=.5] (.24,0) to[out=-90, in=90] (0,-1) -- (.4,-1) to[out=90, in=-90] (.4,0);
		\draw[very thin, green, fill=green!20] (.6,0) to[out=-90, in=90] (.84,-1) -- (1,-1) to[out=90, in=-90] (1,0);
	\end{scope}
\end{tikzpicture}
\captionsetup{font=small}
\captionof{figure}{An element of $V$, defined by $(a_j,b_j)=(00,10),(01,0),(1,11)$}
\end{center}

\subsection{Inverse graphs and their transition groups}

We recall the definition of (labelled) (di-)graphs, in the sense of Serre:
\begin{defi} An \emph{inverse $\Ac$-graph} is a tuple $\Gamma=(V, E, \iota, \tau, \lambda, \,\cdot^{-1})$ where
\begin{itemize}[leftmargin=6mm]
    \item $V$ is the vertex set and $E$ is the edge set,
    \item $\iota\colon E\to V$ (resp.\ $\tau$) maps each edge to its initial (resp.\ terminal) vertex,
    \item $\lambda\colon E\to \Ac$ is a labelling map,
    \item $\cdot^{-1}\colon E\to E$ is an involution satisfying
    \[ e^{-1}\ne e, \quad \iota(e^{-1})=\tau(e),\quad \tau(e^{-1})=\iota(e) \quad\text{and}\quad \lambda(e^{-1})=\lambda(e)^{-1}. \]
\end{itemize}
We will always assume that our graphs are
\begin{itemize}[leftmargin=6mm]
    \item \emph{deterministic}: for all $v\in V$ and $a\in\Ac$, there exists at most one $e\in E$ such that $\iota(e)=v$ and $\lambda(e)=a$, and 
    \item \emph{co-deterministic}: for all $v\in V$ and $a\in\Ac$, there exists at most one $e\in E$ such that $\tau(e)=v$ and $\lambda(e)=a$.
\end{itemize}
(Note that deterministic inverse graphs are also co-deterministic. However, we prefer to spell it out.) Except in \S\ref{sec:intro} and \S\ref{sec:limits_CF}, we will also assume
\begin{itemize}[leftmargin=6mm]
    \item \emph{completeness}: for all $v\in V$ and $a\in\Ac$, there exists at least one $e\in E$ such that $\iota(e)=v$ and $\lambda(e)=a$.
\end{itemize}
\end{defi}
Note also that, while in the literature an inverse graph is technically assumed to be connected, we do not make this implicit assumption. Instead, we will explicitly use the term ``connected" when we wish to emphasize this property.

\medskip

\begin{exa}
    Consider a group $G=\la \Ac\ra$ and a right action $X\racts G$. We define the \emph{Schreier graph} $\Sch(X,G;\Ac)$ of this action as the graph with vertex set $V=X$, and edges $x\overset{a}\longto xa$ for $a\in\Ac$ and $x\in X$. This graph is a complete inverse $\Ac$-graph. Often, we will want to restrict the action to a single orbit $\xi\cdot G$ for some $\xi\in X$, in which case we will simply write $\Sch(\xi,G;\Ac) \coloneqq \Sch(\xi\cdot G,G;\Ac)$. By construction, this graph is connected.
\end{exa}

\medskip

Schreier graphs of group actions are the prototypical example of complete inverse graphs. This is formalised by the following construction:

\begin{defi} Given $\Gamma$ a complete inverse $\Ac$-graph, its \emph{transition group} is
\[ \Gc(\Gamma) \coloneqq \la \Ac\ra \le \Sym(V\Gamma) \]
generated by the functions $a\colon x\mapsto xa$.
\end{defi}
These functions $a\colon X\to X$ are well-defined permutations of $X$ using that $\Gamma$ is complete, deterministic and co-deterministic. Moreover, $$\Sch(V\Gamma, \Gc(\Gamma);\Ac) \simeq\Gamma,$$ hence all complete inverse $\Ac$-graphs are Schreier graphs. Since the action is faithful, $\Gc(\Gamma)$ is the \say{smallest group} with this Schreier graph.

\newpage

Equivalently, transitions groups can be defined as quotients:
\begin{defi}
    Given a inverse $\Ac$-graph $\Gamma$ and $x\in V\Gamma$, we define
    \begin{align*}
        L(\Gamma,x) & \coloneqq \bigl\{ w\in\Ac^* \;\big|\; w\text{ labels a cycle } x\overset{w}\longto x \bigr\}, \\
        L(\Gamma) & \coloneqq \bigcap_{x\in V\Gamma} L(\Gamma,x).
    \end{align*}
\end{defi}
The language $L(\Gamma)$ is a subset of $\Ac^*$. When $\Gamma$ is complete, the map $\Ac^*\onto F_\Ac$ identifies it with the subgroup $\ker\bigl(F_\Ac\onto\Gc(\Gamma)\bigr)$, so that $\Gc(\Gamma)\simeq F_\Ac/L(\Gamma)$.

\subsection{Context-free graphs}

The article heavily depends on the notion of context-free languages. In some rare occasions (specifically in \S\ref{sec:cover_CF}), we will use the formalism of context-free grammars, see \cite{S2013} for background. More importantly, we will use the notion of pushdown automata, which we quickly recall. These are abstract machines which recognise context-free languages \cite{S2013}. 

\begin{defi}
A \emph{non-deterministic pushdown automaton (PDA)} is defined as a tuple
$(\mathsf{Q},\Ac , \mathcal{S}, \delta, \start, \#, \accept)$ where
\begin{itemize}[leftmargin=6mm]
    \item $\mathsf{Q}$ is a finite, non-empty set of \emph{states},
    \item $\Ac$ is a finite set of symbols constituting the \emph{alphabet},
    \item $\mathcal{S}$ is a finite set of symbols constituting the \emph{stack alphabet},
    \item $\delta \subset \mathsf{Q} \times (\Ac \cup \{\varepsilon\}) \times \mathcal{S}^* \times \mathcal{S}^* \times \mathsf{Q}$ is a finite set called \emph{transition function},
    \item $\start \in \mathsf{Q}$ is the \emph{start state},
    \item $\# \in \mathcal{S}$ is the \emph{start stack symbol}, and
    \item $\accept \in \mathsf{Q}$ is the \emph{accepting state}. 
\end{itemize}

An element $(\q_1, a, u, v, \q_2) \in \delta$ is understood as a transition from state $\q_1$ to state $\q_2$, where we read $a$, pop $u$ from the stack, and push $v$ onto the stack.
\end{defi}
\begin{rem}
Our definition of pushdown automaton is slightly more general than the
standard one, allowing transitions to inspect and replace a finite word on
top of the stack. This does not increase the expressive power, since it is
equivalent to the usual model of pushdown automata. We adopt this formalism
because it leads to substantially shorter descriptions of the automata
constructed later in the paper.
\end{rem}

\medskip

\begin{defi}
    A \emph{context-free graph} is a connected inverse $\Ac$-graph $\Gamma$ such that $L(\Gamma,x)$ is context-free for all (equivalently, for one) $x\in V\Gamma$.
\end{defi}
\begin{defi}
    Let $G=\la\Ac\ra$ be a finitely generated group.
    \begin{itemize}[leftmargin=6mm]
        \item An action $X\racts G$ is \emph{context-free} if it has finitely many orbits, and all orbital Schreier graphs $\Sch(\xi,G;\Ac)$ with $\xi\in X$ are context-free.
    \end{itemize}
    The group $G$ is \textbf{CF-TR} if the following equivalent conditions holds:
    \begin{itemize}[leftmargin=6mm]
        \item There exist complete context-free $\Ac$-graphs $\Gamma^{(1)},\ldots,\Gamma^{(k)}$ such that
        \[ G\simeq\Gc(\Gamma^{(1)}\sqcup \ldots\sqcup\Gamma^{(k)}). \vspace*{-2mm} \]
        \item $G$ admits a \emph{faithful} context-free action $X\racts G$.
    \end{itemize}
\end{defi}

This characterisation of \textbf{CF-TR} groups is sufficient to prove that many groups such as $V$ and $V_{(H,\theta)}$ are \textbf{CF-TR}. However, what makes this theory most fruitful is the equivalent geometric characterisation of context-free graphs, originating from seminal work of Muller and Schupp \cite{Muller_Schupp} (see also \cite{CF_pairs}, and \cite{Rodaro} for generalisation to the non-complete case).

Given an inverse graph $\Gamma$ and a fixed basepoint $x_0\in V\Gamma$, the \emph{end-cone} at $v\in V\Gamma$ is the connected component of $\Gamma \setminus D_{n-1}(x_0)$ containing $v$, where 
\[
D_n(x_0) = \bigl\{u \in V\Gamma \mid d(u,{x_0}) \le n \bigr\}
\]
and $n=d(v,{x_0})$. The end-cone is denoted $\Gamma(v, x_0)$. The \emph{set of frontier vertices} of $\Gamma(v, x_0)$ is 
$\Delta(v,x_0)=\Gamma(v, x_0) \cap D_n(x_0)$.

Two end-cones $\Gamma(v_1, x_0)$ and $\Gamma(v_2, x_0)$ are \textit{end-isomorphic} if there is a label- and direction-preserving graph isomorphism $\psi\colon \Gamma(v_1, x_0) \to \Gamma(v_2, x_0)$ mapping $\Delta(v_1, x_0)$ bijectively onto $\Delta(v_2, x_0)$. Equivalence classes of end-cones are \emph{end-cone types}.

\begin{thm}[Muller--Schupp \cite{Muller_Schupp}, Ceccherini-Silberstein--Woess \cite{CF_pairs}]
An inverse graph $\Gamma$ is context-free if and only if it has finitely many end-cone types $\{ \Gamma_0, \Gamma_1,\ldots, \Gamma_N\bigr\}$. This property does not depend on the basepoint $x_0$.
\end{thm}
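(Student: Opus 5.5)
The plan is to prove the two implications separately through the standard intermediate notion of \emph{$K$-triangulability} of closed paths, and to obtain basepoint independence from the language side.

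\textbf{Basepoint independence.} I will deduce this from the equivalence itself, so I handle it first. Fix $x,y\in V\Gamma$ and a path $x\overset{p}\longto y$. Then a word $w$ lies in $L(\Gamma,y)$ exactly when $pwp^{-1}$ lies in $L(\Gamma,x)$. By determinism of $\Gamma$, this means
\[ L(\Gamma,y)=\bigl\{w : pwp^{-1}\in L(\Gamma,x)\bigr\}. \]
Context-free languages are closed under concatenation with a fixed word and under left and right quotients by a regular language. Hence context-freeness of $L(\Gamma,x)$ does not depend on $x$. Once the equivalence is proved for every basepoint, the number of end-cone types is finite for one basepoint if and only if it is finite for all of them.

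\textbf{Finitely many types $\Rightarrow$ context-free.} The first step is to show that the frontier sets $\Delta(v,x_0)$ have uniformly bounded size. The graph is locally finite, with degree at most $2|\Ac|$, so each frontier is finite. A single end-cone type fixes the size of its frontier, so finitely many types give a uniform bound $M$.

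Next I build a context-free grammar. For each end-cone type $\Gamma_i$ and each pair of frontier vertices $u,u'$, I introduce a nonterminal $X_{i,u,u'}$. It should generate the labels of all paths from $u$ to $u'$ that stay inside the cone.

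The productions come from decomposing such a path. It either runs along edges inside the sphere at level $n$, or it makes excursions into the end-cones of the next level. Each excursion enters and leaves such a subcone through its frontier. There are finitely many subcones below a cone of a given type, and each of them has one of the finitely many types. Therefore finitely many productions suffice, of the form
\[ X_{i,u,u'}\to a_1\,X_{j_1,\cdot,\cdot}\,a_2\cdots. \]
Strictly, the alternation can be unbounded, so I would instead use recursive productions $X\to Y\,Z$ through intermediate frontier vertices. The start symbol handles the finite ball $D_0(x_0)$ in the same way. Finally, I check that this grammar generates exactly $L(\Gamma,x_0)$.

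\textbf{Context-free $\Rightarrow$ finitely many types.} Take a grammar for $L(\Gamma,x_0)$ in Chomsky normal form. A parse tree of a cycle label $w$ splits the cycle into subwords, each labelling a path in $\Gamma$ whose endpoints are joined by the word derived from a nonterminal. Following Muller--Schupp, this gives a constant $K$ such that every closed path admits a triangulation by chords of length at most $K$.

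The consequence I want is that any two vertices of a frontier $\Delta(v,x_0)$ are at distance at most some $K'$ in $\Gamma$. The argument is as follows. Connect both vertices to $x_0$ by geodesics, join them inside the cone, and triangulate the resulting loop. A chord of length at most $K$ must cross from one side to the other near level $n$.

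Then I show that an end-cone is determined up to end-isomorphism by finite data. This data is the labelled ball of radius about $K'$ around the frontier, together with the state information that the grammar or the PDA carries there. I would make this precise by showing that the cone is the quotient, by the relation of labelling cycles, of the words readable from the frontier. That relation is in turn determined by the triangulation structure read off from this finite data.

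This last step is the main obstacle: turning bounded frontiers into genuinely finitely many isomorphism types. My first attempt would be a Myhill--Nerode style argument using the finitely many PDA configurations restricted to the top $K'$ stack symbols.
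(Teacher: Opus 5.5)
There is a genuine gap, and it sits exactly where the content of the theorem lies. Note that the paper does not reprove this statement; it imports it from Muller--Schupp and Ceccherini-Silberstein--Woess.

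\textbf{What is fine.} Your direction ``finitely many types $\Rightarrow$ context-free'' is the standard grammar construction and is correct. The reduction of basepoint independence to the equivalence is also fine, but it uses the hard direction at the second basepoint, so it inherits the gap. In the converse, two steps are correct. The first is the $K$-triangulation; it needs a reduced grammar and co-determinism, so that the subword derived from a nonterminal can be replaced by a shortest word derivable from it without moving its endpoints. The second is the bound $\operatorname{diam}\Delta(v,x_0)\le 3K$, via a triangle of the triangulation meeting all three arcs.

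\textbf{Why these steps cannot finish the proof.} Both are purely coarse-geometric facts. They cannot yield finitely many end-cone types, and neither can any bounded ball around the frontier. For instance, let $t\colon k\mapsto k+1$ and let $a$ be the product of the transpositions $(k^2\;\,k^2{+}1)$, $k\ge1$, acting on $\Z$. The underlying simple graph is a line, so closed paths are triangulable with chords of length at most $1$, and every frontier is a single vertex. Yet the cones $\{m,m+1,\ldots\}$ for distinct $m\ge1$ are pairwise non-isomorphic, because the gaps between consecutive $a$-bigons beyond $m$ recover $m$. For Cayley graphs bounded frontiers do suffice, essentially because a group element translates each frontier into a fixed ball. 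A general inverse graph has no such symmetry.

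\textbf{The final step is missing.} Everything therefore rests on your last step, which is only a hope, and the suggested mechanism does not work as stated.
\begin{itemize}
\item The PDA is nondeterministic, and a vertex is reached by many runs with unrelated stacks.
\item A context-free language can have infinitely many distinct quotients, so counting top-of-stack data bounds nothing by itself.
\item Co-determinism does imply that a useful configuration determines a unique vertex. But this only exhibits $\Gamma$ as a label-preserving quotient of the configuration graph. Such quotients of context-free graphs can be arbitrary: every connected complete inverse $\Ac$-graph is a quotient of the Cayley tree of $F_\Ac$.
\end{itemize}

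\textbf{What is needed.} You need a cone invariant that provably takes only finitely many values. A natural candidate is the language $L_C(u)$ of cycles at a frontier vertex $u$ that stay inside the cone $C$. By determinism it determines $C$ as a labelled graph based at $u$, since the words readable inside $C$ from $u$ are exactly $\{w : ww^{-1}\in L_C(u)\}$. Proving that only finitely many such languages occur is the real work. For instance, one must cut derivations of cycles at $x_0$ along the frontier and transplant the inner sub-derivations into another cone with matching frontier data. One must also control confinement: another word derived from the same nonterminal has the same endpoints in $\Gamma$ but may leave the cone. Your outline supplies none of this, and it is the substantive part of the cited theorem.
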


Given $u,v\in V\Gamma$, we say that $\Gamma(u,x_0)$ is \emph{a second-level end-cone} of $\Gamma(v,x_0)$ if $\Gamma(u,x_0)\subsetneq\Gamma(v,x_0)$ and $d(u,x_0)=d(v,x_0)+1$. The number and types of second-level end-cones of an end-cone depends only on its type, therefore we can number the second-level end-cones of $\Gamma_i$ as $\Gamma_{1}^{(i)}$, $\Gamma_2^{(i)}$, ..., $\Gamma_{m_i}^{(i)}$. The \textit{end-cone type graph} is a finite directed graph $\mathcal{D}(\Gamma)$ where
\begin{itemize}[leftmargin=6mm]
    \item the set of vertices is the set of end-cone types, and
    \item the set of edges is $\bigl\{\Gamma_j^{(i)} \,\big|\, i\in\{0,\ldots, N\}, j\in\{1,\ldots,m_i\}\bigr\}$, with each edge $\Gamma_j^{(i)}$ going from $\Gamma_i$ to the type of $\Gamma_j^{(i)}$.
\end{itemize}
The regular language $\Lc$ recognised by this graph (viewed as an automaton, with starting vertex $\Gamma_0$) is the language of \emph{end-geodesics}, which are naturally associated with end-cones in $\Gamma$ (see \cite[Lemma 7.2]{CFTR}, modulo the first $\Gamma_0$ letter). Since $\Lc$ is prefix-closed, it can be turned into a tree using the edit distance
\[ d(w_1,w_2) = \min\bigl\{\abs{v_1}+\abs{v_2} : \exists u,\; w_i=uv_i \bigr\}\]
The map $f\colon \Gamma\to \Lc$ sending $v\mapsto \Gamma(v,x_0)$ satisfies
\[
\forall u,v\in V\Gamma,\quad d(u,v)-C\le d(f(u),f(v))\le d(u,v)
\]
(even $d(f(v),\Gamma_0)=d(v,x_0)$), and the fibres are the sets of frontier vertices $\Delta(v,x_0)$, hence have bounded size. It follows that the growth functions satisfies the following inequalities:
\[ \beta_{(\Lc,\varepsilon)}(r) \le \beta_{(\Gamma,x_0)}(r) \le \max_i\abs{\Delta_i}\cdot \beta_{(\Lc,\varepsilon)}(r).\]
Therefore, the growth type of $\Gamma$ can be recovered from the growth type of the regular language $\Lc$, which is well-understood, see for instance \cite{Tits_for_languages}.
\begin{lemma} \label{lem:growth_of_CF_graphs}
Let $\Gamma$ be a context-free graph, and $x_0\in V\Gamma$ a fixed basepoint.
\begin{itemize}[leftmargin=6mm]
    \item If $\mathcal D(\Gamma)$ contains two co-accessible cycles, then $\beta_{(\Gamma,x_0)}(r)\asymp 2^r$.
    \item Otherwise, if $\mathcal D(\Gamma)$ contains a sequence of $d$ cycles, each one accessible from the previous one, and $d$ is maximal, then $\beta_{(\Gamma,x_0)}(r)\asymp r^d$.
\end{itemize}
The same dichotomy holds for $\bar\beta_{\Gamma}(r)=\max_{x\in\Gamma}\# D_r(x)$.
\end{lemma}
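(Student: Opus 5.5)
The plan is to transfer the problem to the regular language $\Lc$ of end-geodesics, using the quasi-isometry-type comparison between $\Gamma$ and $\Lc$ set up just before the statement. Recall the map $f\colon V\Gamma\to\Lc$, $v\mapsto\Gamma(v,x_0)$. It satisfies $d(f(v),\varepsilon)=d(v,x_0)$, it is $1$-Lipschitz, and its fibres have size at most $M=\max_i\abs{\Delta_i}$. Consequently
\[ \beta_{(\Lc,\varepsilon)}(r)\le\beta_{(\Gamma,x_0)}(r)\le M\,\beta_{(\Lc,\varepsilon)}(r). \]
Here $\beta_{(\Lc,\varepsilon)}(r)$ is the number of words of $\Lc$ of length at most $r$, that is, the number of paths of length at most $r$ in $\mathcal D(\Gamma)$ starting at $\Gamma_0$. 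Up to the multiplicative constant $M$, it therefore suffices to estimate path counts in a finite directed graph.

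Every vertex of $\mathcal D(\Gamma)$ is accessible from $\Gamma_0$, since every end-cone type is realised. For the first bullet, suppose two distinct cycles $c_1,c_2$ are co-accessible, i.e.\ they are reachable from one another, so they lie in a common strongly connected component. Then there are two distinct closed paths $p,q$ based at a common vertex $u$, and we may take them of a common length $\ell$ (replace them by $pq$ and $qp$). Neither is a prefix of the other, so they generate a free monoid of paths. Prefixing a path from $\Gamma_0$ to $u$, we obtain at least $2^{\lfloor (r-k)/\ell\rfloor}$ distinct words of length at most $r$. This gives exponential growth, and the upper bound $\abs{E(\mathcal D)}^r$ is trivial.

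For the second bullet, the absence of co-accessible cycles means every strongly connected component is a single cycle or trivial. Choose a chain of $d$ cycles $C_1\to\dots\to C_d$, each accessible from the previous one. For the lower bound, consider the words that go from $\Gamma_0$ to $C_1$, loop $k_1$ times, move to $C_2$, loop $k_2$ times, and so on. These words are pairwise distinct, and there are $\asymp r^d$ of them with length at most $r$. For the upper bound, decompose any path into at most $d$ excursions in cycles, separated by transient segments. Since a path cannot revisit a component once it has left it, the transient segments have bounded total length and there are finitely many transient patterns. The path is then determined by the pattern together with the loop counts $(k_1,\dots,k_j)$, $j\le d$, plus a bounded offset, which gives $O(r^d)$. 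This is the standard growth dichotomy for regular languages, as in \cite{Tits_for_languages}. Note that $d\ge 1$ whenever $\Gamma$ is infinite; the finite case is degenerate.

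Finally, for $\bar\beta_\Gamma(r)=\max_x\#D_r(x)$, the lower bound is immediate from $\bar\beta_\Gamma\ge\beta_{(\Gamma,x_0)}$. The upper bound is the main obstacle, because $x$ varies and the ball $D_r(x)$ may reach back towards $x_0$ and into other branches. The approach is as follows. The image $f(D_r(x))$ lies in the $r$-ball around $f(x)$ in the tree $\Lc$, which has fibre multiplicity $M$. A vertex of this $r$-ball is reached by going up $s\le r$ steps from $f(x)$ and then down at most $r-s$ steps. The number of descendants of a node $w$ within depth $t$ depends only on the state of $\mathcal D(\Gamma)$ reached by $w$ and is bounded by the counts above, namely $O(2^t)$ or $O(t^{d})$, uniformly over the finitely many states. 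Summing over $s\le r$ gives $O(r\cdot r^{d})$, which needs care to sharpen to $O(r^d)$. For that step, note that going up $s$ steps and then down inside the ancestor's subtree is a sub-case of counting from that ancestor, whose descendants within depth $r$ number $O(r^{d'})$, with $d'$ the cycle-depth from its state. One then checks that the union over ancestors is still controlled by a single chain of cycles of length at most $d$, by reapplying the path-decomposition argument to paths that start anywhere in $\mathcal D(\Gamma)$.
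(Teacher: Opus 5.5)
Your arguments for $\beta_{(\Gamma,x_0)}$ in both bullets are correct. They are the standard growth dichotomy for the regular language $\Lc$, which the paper simply quotes from Trofimov, transported through $f$. The exponential-case upper bound for $\bar\beta_\Gamma$ is also immediate from bounded degree.

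The gap is in the one step the paper actually proves: the upper bound $\bar\beta_\Gamma(r)\preceq r^d$ in the polynomial case. Splitting $D_r^{\Lc}(f(x))$ by the height $s$ of the turning point and summing gives $O(r^{d+1})$, as you note. Your proposed sharpening is an assertion rather than an argument. You bound the $s$-th piece by the descendants of the $s$-th ancestor and reapply the path decomposition from each of those states separately. That reproduces exactly the same sum, and nothing in your sketch explains why the union should be smaller than the sum.

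The missing observation is that no splitting by $s$ is needed. Let $u$ be the shortest word in $D_r^{\Lc}(f(x))$, i.e.\ the prefix of $f(x)$ of length $\max(0,\abs{f(x)}-r)$. For any $w'$ in the ball, the longest common prefix of $f(x)$ and $w'$ lies at most $r$ steps above $f(x)$, so it has $u$ as a prefix. Hence $w'=uv$ with $\abs{v}=d(u,w')\le d(u,f(x))+d(f(x),w')\le 2r$.

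Thus the whole ball lies in $u\cdot\{v \text{ labelling a path from } \Gamma_j : \abs{v}\le 2r\}$, where $\Gamma_j$ is the state of $\mathcal D(\Gamma)$ reached by $u$. So $\#D_r^\Gamma(x)\le \max_i\abs{\Delta_i}\cdot\#\{v \text{ labelling a path from } \Gamma_j : \abs{v}\le 2r\}\preceq (2r)^d$. This uses your path-decomposition bound applied once, from the single start state $\Gamma_j$, whose cycle-depth is at most $d$. The constants are uniform because there are finitely many states.

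Your union idea can also be rescued, with more work. Each cyclic strongly connected component is a single cycle here. So a branch that leaves the ancestral chain where the chain runs along a cycle leaves that cycle forever, and has cycle-depth at most $d-1$. Hence all but boundedly many values of $s$ contribute only $O(r^{d-1})$, giving $O(r^d)$ in total. Neither this nor the containment above appears in your proposal.
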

\begin{center}
    \begin{tikzpicture}
        \clip (-1.5,-.8) rectangle (9.3,2);
        \node[circle, fill=black, inner sep=1.5pt] at (0,0) {};
        \draw[thick, rounded corners, -latex] (.03,0) --
            (.03,1) to[out=45, in=-45, looseness=5] (.07,0);
        \draw[thick, rounded corners, -latex] (-.03,0) --
            (-.03,1) to[out=135, in=-135, looseness=5] (-.07,0);
        \node at (-.65,1.6) {\small$v_1$};
        \node at (.65,1.6) {\small$v_2$};
        
        \begin{scope}[thick, -latex, shift={(3,.5)}]
        \draw (0,0) -- (1,0);
        \draw[out=90, in=95, looseness=2] (1,0) to (2,0);
        \draw[out=-90, in=-85, looseness=2] (2,0) to (1,0);
        \draw (2,0) -- (3,0);
        \draw[out=90, in=95, looseness=2] (3,0) to (4,0);
        \draw[out=-90, in=-85, looseness=2] (4,0) to (3,0);
        \draw (4,0) -- (5,0);
        \draw[out=125, in=65, looseness=20] (4.95,.05) to (5.05,.05);
        \draw (5,0) -- (6,0);
        \end{scope}
    \end{tikzpicture}
    \captionsetup{margin=2mm, font=small}
    \captionof{figure}{The two types of subgraphs of $\mathcal D(\Gamma)$ witnessing growth of $\Gamma$.}
\end{center}
\begin{proof}
    It only remains to justify $\bar\beta_\Gamma(r)\preceq r^d$ in the second case. Fix $r\ge 0$ and a vertex $x\in \Gamma$, and let $w=\Gamma(x,x_0)\in\Lc$ be the associated end-cone. Let $u$ be the shortest word in the ball $D^{\Lc}_r(w)$, of type $\Gamma_j$. Then
    \[
    f(D_r^\Gamma(x))
    \;\subseteq\; D_r^{\Lc}(w) 
    \;\subseteq \;D_{2r}^{\Lc}(u) \cap \{uv\in\Lc\} 
    \;=\; u\,\{v\text{ labels a path from }\Gamma_j: \abs{v}\le 2r\},
    \]
    hence $\#D^\Gamma_r(x)\le \max_i\abs{\Delta_i}\cdot \beta_{(\Lc_j,\varepsilon)}(2r)\preceq (2r)^d$, where $\Lc_j$ is one of finitely many languages $\{v\text{ labels a path from }\Gamma_j\}$. The constants can be made uniform in $r\ge 0$ and $x\in\Gamma$, since there are only finitely many end-cone types $\Gamma_j$.
\end{proof}
The first condition can be restated as \say{there exist $y,z_1,z_2\in V\Gamma$ such that
\[
\Gamma(y,x_0)\supset\Gamma(z_1,x_0),\Gamma(z_2,x_0), \quad\text{and}\quad
\Gamma(z_1,x_0)\cap \Gamma(z_2,x_0)=\emptyset,
\]
and the three end-cones have the same type}. Indeed, these two cycles give end-geodesics $u, uv_1, uv_2\in\Lc$ ending at the same vertex $\Gamma_i\in\mathcal D(\Gamma)$, with $v_1,v_2$ labelling distinct cycles so that neither is a prefix of the other. We recall that an end-geodesic is a prefix of another if and only if the associated end-cone contains the other. It follows that the end-cones associated to $u,uv_1,uv_2$ are of the same type $\Gamma_i$ and satisfies the incident conditions given.

On the other side, a typical example of context-free graph of polynomial growth is given on Figure \ref{fig:polygamma}. One can verify that the growth is quadratic, matching what is expected from the graph of end-cone types $\mathcal D(\Gamma)$.
\begin{center}
    \import{Pictures/}{tikz_polygamma.tex}
    \captionsetup{font=small}
    \captionof{figure}{A context-free graph $\Gamma$, and the associated end-cone type graph $\mathcal D(\Gamma)$.} 
    \label{fig:polygamma}
\end{center}

\subsection{Coarse geometry}

We recall the notions of $K$-fat and asymptotic minors, which were introduced in \cite{Asymptotic_minors_intro}, see also \cite{Asymptotic_minors_survey} for a more recent survey.
\begin{defi}
    Let $\Gamma$ and $\Sigma=(\Vc,\Ec)$ be two graphs, and $K\ge 1$. A \emph{$K$-fat $\Sigma$-minor} in $\Gamma$ is a tuple of connected subsets $S_p\subseteq\Gamma$ ($p\in\Vc\sqcup\Ec$) such that
    \[ d_{p,q} \coloneqq \min\bigl\{d(x,y) \;\big|\; x\in S_p,\ y\in S_q\bigr\} \]
    satisfies $d_{p,q}=0$ if $p,q$ are incident (i.e. $S_p\cap S_q\ne\emptyset$ if $p,q$ are an edge and one of its extremities), and $d_{p,q}\ge K$ otherwise. We say that $\Gamma$ contains an \emph{asymptotic $\Sigma$-minor} if it contains $K$-fat $\Sigma$-minors for all $K\ge 1$.
\end{defi}
\begin{rem}
    We may always assume that $S_e$ is a path for $e\in\Ec$.
\end{rem}
In particular, we will care about $\Sigma$-minors for the following two graphs:
\begin{center}
    \begin{tikzpicture}
        \node at (-.85,1.02) {$R_2=$};
        \node[circle, fill=black, inner sep=2pt] (0) at (0,0) {};
        \node[circle, fill=black, inner sep=2pt] (1) at (0,1) {};
        \node[circle, fill=black, inner sep=2pt] (2) at (0,2) {};
        \draw[thick, bend right] (0) to (1);
        \draw[thick, bend left] (0) to (1);
        \draw[thick, bend right] (1) to (2);
        \draw[thick, bend left] (1) to (2);

        \begin{scope}[shift={(5,0)}]
        \node at (-1.9,1.01) {$K_{2,3}=$};
        \node[circle, fill=black, inner sep=2pt] (0) at (0,0) {};
        \node[circle, fill=black, inner sep=2pt] (1) at (0,1) {};
        \node[circle, fill=black, inner sep=2pt] (2) at (0,2) {};
        \node[circle, fill=black, inner sep=2pt] (l) at (-1,1) {};
        \node[circle, fill=black, inner sep=2pt] (r) at (1,1) {};
        \draw[thick] (l) -- (0) -- (r) -- (1) -- (l) -- (2) -- (r);
        \end{scope}
    \end{tikzpicture}
\end{center}
\begin{prop}[{Fujiwara-Papasoglu $+\ \varepsilon$}] \label{prop:quasitree_iff_no_fat_rose}
    A connected graph $\Gamma$ is quasi-isometric to a tree if and only if it contains no $R_2$ asymptotic minors. \vspace*{-1mm}
\end{prop}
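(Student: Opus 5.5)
The statement is labelled ``Fujiwara--Papasoglu $+\,\varepsilon$''. The plan is therefore to take the theorem of Fujiwara and Papasoglu as the main input and supply the small extra argument ourselves. Their theorem, as usually stated, says that a geodesic metric space (in particular a connected graph) is quasi-isometric to a simplicial tree if and only if it has no \emph{$K$-fat theta-curves} for $K$ large. Concretely, there should be no three paths joining two points $x,y$ that are pairwise $K$-far apart outside small neighbourhoods of the endpoints. Equivalently, it is a strong form of Manning's bottleneck criterion. So the task is to show that \say{containing $K$-fat $R_2$-minors for every $K$} is equivalent to the failure of their criterion.

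\emph{Direction 1: quasi-tree $\Rightarrow$ no asymptotic $R_2$-minor.} Let $f\colon\Gamma\to T$ be a $(\lambda,C)$-quasi-isometry, and suppose $\Gamma$ contains a $K$-fat $R_2$-minor with $K\gg\lambda,C$. Call its pieces $S_0,S_1,S_2$ for the vertices and $P,P'$, $Q,Q'$ for the double edges. Then $P\cup S_1\cup P'\cup S_0$ contains a closed loop through $S_0$ and $S_1$ whose two \say{sides} stay $K$-apart. Push this loop forward by $f$ into $T$ and connect consecutive images by geodesics, obtaining a loop in $T$. A loop in a tree is contained in the union of tripod geodesics. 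Hence the image of $P$ must come within a bounded distance (depending only on $\lambda,C$) of the image of $P'$ at a point far from the images of $S_0$ and $S_1$, or else one of $S_0,S_1$ must come close to the non-incident edge. This uses the second cycle $Q,Q'$ together with $S_2$ to rule out the degenerate option where everything happens near $S_1$. Pulling back via the quasi-inverse gives a contradiction with the fatness once $K>\lambda(\text{const})+C$. This is essentially Manning's bottleneck property, and I would cite it: every path between $x$ and $y$ passes near the midpoint of a geodesic $[x,y]$.

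\emph{Direction 2: no asymptotic $R_2$-minor $\Rightarrow$ quasi-tree.} By Fujiwara--Papasoglu, if $\Gamma$ is not a quasi-tree then for every $K$ there is a $K$-fat theta curve, hence in particular a $K$-fat cycle. That is, there are two paths from $x$ to $y$ which are $K$-apart away from $D_K(x)\cup D_K(y)$. From it one obtains a $K'$-fat $\Theta$/$K_{2,3}$-type minor: take as branch sets small balls around $x$ and $y$, and as edge sets the middle portions of the paths. A single fat cycle is only an asymptotic $R_1$-minor, though, and the $\varepsilon$ consists in producing the second petal. For this I would use the stronger form of the theorem, with three fat paths from $x$ to $y$. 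The three sub-paths, suitably truncated, give pairwise $K$-far arcs $\alpha_1,\alpha_2,\alpha_3$ between neighbourhoods $B_x$ and $B_y$. Then set $S_0=B_x$, $S_1=B_y$, take $\alpha_1,\alpha_2$ as the first double edge, and cut $\alpha_3$ at its midpoint $m$. This gives a cycle $B_y\to m\to$ (back along $\alpha_3$)\dots. That is degenerate, so it does not work directly.

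Instead, I would apply the fat theta-curve theorem twice, at a scale $K_2\gg K_1$. Take a $K_2$-fat cycle, pick a point $z$ on it far from $x,y$, and note that the ball of radius $K_2/10$ around $z$ meets the cycle in an arc. Rescale the argument inside the cycle so that the two halves of the cycle and the points $x,z,y$ give $R_2$ with vertices $x,z,y$. The two double edges are then the two arcs of the cycle between $x$ and $z$, and likewise between $z$ and $y$. That gives the wrong incidence pattern, so the decisive step is to choose which pieces of the cycle serve as edges. The main obstacle is exactly this combinatorial bookkeeping, namely ensuring that the non-incident pairs are $K$-far. I expect this to follow by taking $z$ on a path at distance $\sim K_2/2$ from both endpoints and using the uniform separation of the cycle's two halves.
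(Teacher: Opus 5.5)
Your Direction~2 has a genuine gap, and it starts with the input you attribute to Fujiwara--Papasoglu. Their fat-theta criterion characterises graphs quasi-isometric to \emph{cacti}, not to trees. Quasi-trees are instead characterised by the absence of fat \emph{cycles}, via Manning's bottleneck property. So the implication ``$\Gamma$ not a quasi-tree $\Rightarrow$ $K$-fat theta curves for all $K$'' is false. For example, take a ray with a cycle of length $n$ glued at its $n$-th vertex. It is a cactus, hence has no fat theta curves. Yet it contains arbitrarily long isometrically embedded cycles, so it is not a quasi-tree.

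This quasi-cactus-with-unbounded-cycles case is exactly what the ``$+\varepsilon$'' must handle, and your fallback cannot handle it: a single cycle contains no $K$-fat $R_2$-minor for any $K\ge 2$. Indeed, let $S_0,S_1$ be disjoint arcs at distance $\ge 2$. Each of the disjoint connected sets $S_{e_1},S_{e_2}$ must contain one of the two complementary gaps. Then $S_0\cup S_1\cup S_{e_1}\cup S_{e_2}$ is the whole cycle, leaving no room for $S_2$. No choice of $x,z,y$ on one fat cycle, and no bookkeeping of which arcs serve as edges, can fix this. $R_2$ has two independent loops, so you need two genuinely different large cycles whose interaction you control.

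The paper obtains that control from the cactus structure. $R_2$ is a minor of $K_{2,3}$: contract the two edges at one vertex of the $3$-side. Hence having no asymptotic $R_2$-minor implies having no asymptotic $K_{2,3}$-minor. Fujiwara--Papasoglu's Proposition~1.10 then makes $\Gamma$ quasi-isometric to a cactus $\tilde\Gamma$.
\begin{itemize}[leftmargin=6mm]
\item If the cycles of $\tilde\Gamma$ have bounded length, collapsing them gives a quasi-isometry to a tree.
\item Otherwise, pick two distinct cycles of length $\ge 5K$. In a cactus they share at most one vertex, or else are joined by a path. Take as middle branch set the junction together with the arcs of radius $K$ around the attaching points. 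Take as outer branch sets arcs around the far sides of the two cycles, and as edges the four remaining arcs. This gives a $K$-fat $R_2$-minor, which passes back to $\Gamma$ through the quasi-isometry.
\end{itemize}

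The same contraction $K_{2,3}\to R_2$ would also have repaired your three-path attempt, had a fat $K_{2,3}$-minor been available. Merge $B_x\cup\alpha_2\cup B_y$ into the middle branch set and use middle portions of $\alpha_1,\alpha_3$ as the outer ones, rather than cutting $\alpha_3$. Your Direction~1 is fine in outline; the paper regards it as trivial.
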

\begin{proof}
    The \say{only if} is trivial. Let $\Gamma$ be a graph without $R_2$ asymptotic minors. Since $R_2$ is a minor of $K_{2,3}$, we deduce that $\Gamma$ is also without $K_{2,3}$ asymptotic minors. Therefore \cite[Proposition 1.10]{Asymptotic_minors_cacti} implies that $\Gamma$ is quasi-isometric to a cactus $\tilde\Gamma$. If $\tilde\Gamma$ contains arbitrarily large (simple) cycles, it contains a $R_2$ asymptotic minor. (Take two cycles of length $\ge 5K$, which either intersect in a point or are linked by a path, and build a $K$-fat minor.) Otherwise, there is a uniform bound on the length of cycles hence contracting them gives a quasi-isometry from $\tilde\Gamma$ (hence from $\Gamma$) to a tree.
\end{proof}
\begin{defi}
Let $\Gamma$ be a connected graph. Fix $C\ge 0$ and $K,\lambda\ge 1$.
\begin{itemize}[leftmargin=6mm]
    \item $\Gamma$ is a \emph{$(\lambda,C)$-quasi-tree} if $\Gamma$ is $(\lambda,C)$-quasi-isometric to a tree.
    \item $\Gamma$ is a \emph{$K$-quasi-tree} if $\Gamma$ contains no $K$-fat $R_2$ minor.
\end{itemize}
\end{defi}

Combining Proposition \ref*{prop:quasitree_iff_no_fat_rose} and \cite[Proposition 1.8]{kerr2023tree} gives
\begin{thm} Let $\Gamma$ be a connected graph. The following are equivalent
\begin{itemize}[leftmargin=6mm]
    \item There exist $\lambda\ge 1$ and $C\ge 0$ such that $\Gamma$ is a $(\lambda,C)$-quasi-tree.
    \item There exist $C\ge 0$ and a surjective map $f\colon \Gamma\to T$ to a $\R$-tree such that
    \[ d_\Gamma(x,y)-C \le d_T\bigl(f(x),f(y)\bigr) \le d_\Gamma(x,y). \]
    \item There exists $K\ge 1$ such that $\Gamma$ is a $K$-quasi-tree.
\end{itemize}
\end{thm}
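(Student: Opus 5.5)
We prove the cycle of implications (i)$\Rightarrow$(ii)$\Rightarrow$(iii)$\Rightarrow$(i). The statement says it follows by combining Proposition \ref{prop:quasitree_iff_no_fat_rose} with \cite[Proposition 1.8]{kerr2023tree}, and we use the second as a black box: for a connected graph that is quasi-isometric to a tree, it supplies a surjective map $f$ onto an $\R$-tree $T$ satisfying $d_\Gamma(x,y)-C\le d_T(f(x),f(y))\le d_\Gamma(x,y)$. That is, the quasi-isometry can be upgraded to a $1$-Lipschitz map with additive error only. Hence (i)$\Rightarrow$(ii) is a direct citation.

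For (ii)$\Rightarrow$(iii), take $K=3C+3$, say, and suppose $\Gamma$ contains a $K$-fat $R_2$-minor.

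\emph{Shape of the minor.} It consists of three connected vertex sets $S_0,S_1,S_2$ and four paths $P_1,P_1'$ (joining $S_0$ to $S_1$) and $P_2,P_2'$ (joining $S_1$ to $S_2$). Non-incident pieces are at distance at least $K$.

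\emph{Mapping to the tree.} Push everything forward by $f$, replacing each path by the union of geodesic segments in $T$ between images of consecutive vertices. The image of a connected set is then a connected subset of $T$, and the upper bound $d_T\le d_\Gamma$ keeps these images within distance $1$ of the images of the vertices. The lower bound gives $d_T\ge K-C$ between images of non-incident pieces.

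\emph{Contradiction.} Consider the cycle formed by $S_0$, $P_1$, $S_1$, $P_1'$. In the tree $T$, the images of $P_1$ and $P_1'$ both connect the subtree spanned by $f(S_0)$ to the one spanned by $f(S_1)$. Take a point $p$ on the geodesic between these two subtrees, away from both. Every path from $f(S_0)$ to $f(S_1)$ in $T$ must pass through $p$, so $f(P_1)$ and $f(P_1')$ both come within distance $1$ of $p$. This yields vertices of $P_1$ and $P_1'$ at $T$-distance at most $2$, hence at $\Gamma$-distance at most $C+2<K$, contradicting fatness.

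The delicate case is when the subtrees spanned by $f(S_0)$ and $f(S_1)$ intersect, so there is no separating point. Then pick points $a\in P_1$ and $a'\in P_1'$ that are midway, i.e. far from both $S_0$ and $S_1$. I would handle this case with a more careful coarse argument, e.g. a $3$-point separation argument on $f(S_0)\cup f(S_1)\cup f(P_1)$. This is the main obstacle.

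For (iii)$\Rightarrow$(i), we use that if $\Gamma$ is a $K$-quasi-tree for a single $K$, then it contains no asymptotic $R_2$-minor, since asymptotic minors require $K$-fat minors for every $K$. Proposition \ref{prop:quasitree_iff_no_fat_rose} then says $\Gamma$ is quasi-isometric to a tree, which is (i).

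If the direct argument for (ii)$\Rightarrow$(iii) becomes messy, the fallback is to prove (i)$\Rightarrow$(iii) instead. This is the trivial ``only if'' direction of Proposition \ref{prop:quasitree_iff_no_fat_rose}, made quantitative: a $(\lambda,C)$-quasi-tree has no $K$-fat $R_2$-minor for some $K$ depending on $\lambda$ and $C$. Since every step there is effective, the $K$ can be tracked explicitly. Combined with (i)$\Leftrightarrow$(ii) from \cite{kerr2023tree}, this closes the loop.
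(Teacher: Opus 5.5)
Your fallback is the paper's proof. There, (i)$\Leftrightarrow$(ii) is \cite[Proposition 1.8]{kerr2023tree}, and (i)$\Leftrightarrow$(iii) is Proposition \ref{prop:quasitree_iff_no_fat_rose} read literally: ``no asymptotic $R_2$-minor'' is, by definition, the negation of ``$K$-fat $R_2$-minors for every $K$'', i.e.\ ``some $K$ admits no $K$-fat $R_2$-minor''. So you do not need to track $K$ in terms of $(\lambda,C)$.

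Your main route is also complete, because the case you flag as the main obstacle never occurs. $S_0$ and $S_1$ are branch sets of two distinct vertices of $R_2$, hence non-incident, so $d_\Gamma(S_0,S_1)\ge K$. The union of the segments $[f(u),f(u')]$ over edges $uu'$ of $S_0$ is connected, hence convex in $T$. So it contains the subtree spanned by $f(S_0)$, and each of its points lies within $\tfrac12$ of some $f(u)$ with $u\in S_0$; the same holds for $S_1$. If the two subtrees met, you would get $u\in S_0$ and $u'\in S_1$ with $d_T(f(u),f(u'))\le 1$, hence $d_\Gamma(u,u')\le C+1<K$, a contradiction. In fact the two subtrees are at $T$-distance at least $K-C-1>0$. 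Your separating point $p$ then always exists, and your argument proves (ii)$\Rightarrow$(iii) with $K=3C+3$. It uses only $S_0,P_1,S_1,P_1'$, so it even rules out fat minors of the two-vertex graph with two parallel edges.

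Compared with the paper, this route proves the direction the paper calls trivial, but routes it through (ii). The substantive Fujiwara--Papasoglu direction, (iii)$\Rightarrow$(i), is still cited in both approaches.
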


\subsection{Chabauty topology}

We recall the \emph{local topology} (or \emph{Chabauty topology}) on rooted $\Ac$-graphs. A \emph{rooted $\Ac$-graph} is a pair $(\Gamma,\xi)$, where $\Gamma$ is a connected $\Ac$-graph and $\xi\in V(\Gamma)$ is a distinguished root. For $R\ge 0$, we denote by $(D_R^\Gamma(\xi),\xi)$ the rooted labelled ball of radius $R$ around $\xi$.
\begin{defi}
    A sequence of rooted $\Ac$-graphs $(\Gamma_n,\xi_n)$ converges to a rooted $\Ac$-graph $(\Gamma,\xi)$ if, for every $R\ge 0$, there exists $N_R$ such that for all $n\ge N_R$ one has an isomorphism of rooted labelled graphs
\[
\bigl(D_R^{\Gamma_n}(\xi_n),\xi_n\bigr)\cong \bigl(D_R^\Gamma(\xi),\xi\bigr).
\]
\end{defi}
This space is metrisable: two rooted graphs are close (say at distance $2^{-R}$) if they agree on a large rooted ball (say of radius $R$) around the basepoint.

\begin{defi}
    Given two connected $\Ac$-graphs $\Lambda,\Gamma$, we say that $\Lambda$ is \emph{a limit of} $\Gamma$, and write $\Lambda\preceq \Gamma$, if $(\Lambda,\lambda_0) \in \overline{\bigl\{(\Gamma,\xi):\xi\in V\Gamma\bigr\}}$
    for any (equivalently, all) $\lambda_0\in V\Lambda$.
\end{defi}

\begin{lemma} \label{lem:A-q-trees_are_closed}
The following sets are closed for the Chabauty topology:
    \begin{enumerate}[leftmargin=8mm, label={\normalfont(\alph*)}]
        \item For a fixed constant $K\ge 0$, the set of $K$-quasi-trees.
        \item For a fixed function $f\colon\N\to\N$, the set of graphs $\Gamma$ satisfying
        \[ \bar\beta_\Gamma(r)\coloneqq \sup_{y\in\Gamma}\#\bigl\{x\in\Gamma\;\big|\;d(x,y)\le r\bigr\} \le f(r). \]
    \end{enumerate}
\end{lemma}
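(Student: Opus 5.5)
Both properties are \emph{local}: each is a condition on balls of bounded radius, so membership only depends on the rooted balls $D_R(\xi)$. I would therefore show that the complement is open. Concretely, if a rooted graph $(\Gamma,\xi)$ fails the property, some finite ball around $\xi$ already witnesses the failure, and every rooted graph agreeing with $(\Gamma,\xi)$ on a large enough ball fails too. Here connectedness is built into the notion of rooted graph, so all distances are finite.

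For (b), suppose $\bar\beta_\Gamma(r)>f(r)$ for some $r$. Then there is $y\in\Gamma$ with $\#D_r(y)\ge f(r)+1$. Set $R=d(\xi,y)+r$. If $(\Gamma',\xi')$ agrees with $(\Gamma,\xi)$ on $D_R$, the isomorphism carries $y$ to some $y'$ with $D_r(y')\cong D_r(y)$. This uses that $D_r(y)\subseteq D_R(\xi)$ and that distances realised by paths of length at most $r$ inside $D_r(y)$ are seen by the ball; a path of length $\le r$ from $y$ stays in $D_R(\xi)$, so $d$ restricted to $D_r(y)$ is computed inside $D_R$. Hence $\bar\beta_{\Gamma'}(r)>f(r)$, which proves openness of the complement.

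For (a), suppose $\Gamma$ contains a $K$-fat $R_2$ minor $(S_p)_{p\in\Vc\sqcup\Ec}$. First I would make the sets finite. By the Remark, the edge sets $S_e$ can be taken to be finite paths. The vertex sets $S_v$ are connected but possibly infinite, so I replace each by a finite connected subset containing the needed intersection points with the incident $S_e$, joined by finitely many paths within $S_v$. Shrinking sets only increases the distances between non-incident pieces, while the incidences are kept, so the minor remains $K$-fat. Now the union $U$ of all the $S_p$ is finite and lies in $D_{R_0}(\xi)$ for some $R_0$.

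Take $R=R_0+K$ and let $(\Gamma',\xi')$ agree with $(\Gamma,\xi)$ on $D_R$. The images $S'_p$ are connected, and the incidences are preserved. The main obstacle is to check that the non-incident pairs stay at distance $\ge K$ in $\Gamma'$, since a shortcut in $\Gamma'$ might leave the ball. Any path of length $<K$ in $\Gamma'$ starting in $U'\subseteq D_{R_0}(\xi')$ stays inside $D_{R_0+K}(\xi')$. That region is isomorphic to the corresponding region of $\Gamma$, so such a short path would also exist in $\Gamma$, a contradiction. Hence $\Gamma'$ contains a $K$-fat $R_2$ minor, and the complement of the set of $K$-quasi-trees is open.

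Finally, I would note that closedness in the Chabauty topology is independent of the choice of root, since both properties are properties of the unrooted graph.
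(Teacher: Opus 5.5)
Your proof is correct and takes the same approach as the paper, which simply notes that both properties are conjunctions of conditions depending only on finitely many vertices (hence clopen), so the sets are closed. Your write-up spells out this locality, in particular the reduction to a finite $K$-fat $R_2$ minor and the choice of radius $R_0+K$, which keeps any short connecting path in $\Gamma'$ inside the ball where the two graphs agree.
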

\begin{proof}
    These properties are conjunctions of properties depending only on finitely many points (clopen properties), hence are closed.
\end{proof}

\medskip

The knowledgeable reader might remember that the \emph{Chabauty topology} usually refers to the topology on the space of subgroups $\Sub(G) \subset \{0, 1\}^G$. The two topologies are strongly linked, explaining the common name: \emph{complete} $\Ac$-graphs define transitive actions $V\Gamma\racts F_\Ac$. The stabiliser map 
\[ (\Gamma,\xi)\mapsto \Stab_{V\Gamma\racts F_\Ac}(\xi) \]
then defines an homeomorphism between the space of rooted complete $\Ac$-graphs and $\Sub(F_\Ac)$ with the usual Chabauty topology.

In particular, we can consider notions from the study of $\Sub(G)$, and translate them into properties of the space of rooted $\Ac$-graph, or more precisely the (closed) subspace of Schreier graphs of $G$:
\begin{defi} Let $G=\la\Ac\ra$ be a finitely generated group.
\begin{itemize}[leftmargin=6mm] 
    \item A subgroup $H\le G$ is \emph{confined} if $\{1\}\notin\overline{\{H^g:g\in G\}}$. Equivalently, the associated Schreier graph $\Gamma=\Sch(H\backslash G,G;\Ac)$ is confined if $\Cay(G;\Ac)\not\preceq\Gamma$.
    \item A confined subgroup $H\le G$ is a \emph{URS} if $\overline{\{H^g:g\in G\}}$ is minimal, i.e., all orbits are dense. A confined graph $\Gamma$ is a URS if $\Lambda\preceq \Gamma$ implies $\Gamma\preceq \Lambda$.
\end{itemize}
\end{defi}
A classical result is that any non-empty closed invariant subset $C\subseteq\Sub(G)$ such that $\{1\}\notin C$ contains an URS. This will be useful in Section \ref*{sec:branch}.

\subsection{Topological full groups of edge shifts} \label{sec:defi_tfg}

Given a finite oriented graph $\Sigma=\bigl(\Vc,\Ec\bigr)$ (with two maps $\iota,\tau\colon\Ec\to\Vc$) and $v_0\in \Vc$, we define the \emph{initial one-sided edge subshift} of $(\Sigma,v_0)$ as
\[ \Omega(\Sigma,v_0) = \Bigl\{ (e_n)_{n\ge 1}\in \Ec^\infty \;\Big|\; \iota(e_1)=v_0 \text{ and } \tau(e_n)=\iota(e_{n+1}) \Bigr\}.\]
In the literature, one often encounter the \emph{one-sided edge subshift}
\[ \Omega(\Sigma) = \Bigl\{ (e_n)_{n\ge 1}\in \Ec^\infty \;\Big|\; \tau(e_n)=\iota(e_{n+1}) \Bigr\}.\]
The subshifts are \emph{irreducible} if $\Sigma$ is strongly connected, and not reduced to a cycle. For $a=a_1\ldots a_\ell\in\Ec^+$, we define the cone $[a]=\{(e_n)_n\in\Omega(\Sigma)\mid \forall i=1,\ldots,\ell,\;e_i=a_i\}$.

For each $a \in \Ec^+$, one can define $\sigma_a: [a] \rightarrow \Omega(\Sigma,\tau(a))$ such that $(a\eta)\sigma_a=\eta$.

The \emph{topological full group} $\F(\Sigma,v_0)$ is the group given by the homeomorphisms of $\Omega(\Sigma,v_0)$ of the form
\[\phi: a_j\eta \mapsto b_j\eta \qquad \text{with } \tau(a_j)=\tau(b_j) \ \text{for } j=1,\ldots,m\]
where $a_1, \ldots, a_m,b_1, \ldots, b_m\in\Ec^+$ are such that
\[
\Omega(\Sigma,v_0)
= [a_1] \sqcup [a_2] \sqcup \ldots \sqcup[a_m]
= [b_1] \sqcup [b_2] \sqcup \ldots \sqcup [b_m].
\]

The same definition holds for $\F(\Sigma)$ if we consider $\Omega(\Sigma)$ instead of $\Omega(\Sigma,v_0)$.

\begin{thm}[{\cite[Theorem 6.21]{MatuiFP}}]
    Topological full groups of irreducible (initial) edge shifts are of type $F_\infty$, and in particular are finitely generated.
\end{thm}

\subsection{Wreath products}

Given two groups $G,L$ and an action $X\racts G$, the \emph{permutational wreath product} is $L \Wr_X G = \bigl(\prod_X L \bigr)\rtimes G$ with the operation
\[ (\Phi_1, g_1)(\Phi_2,g_2) = \bigl(\Phi_1(*)\cdot \Phi_2(*\cdot g_1),\; g_1g_2\bigr),\]
where $\Phi_i\colon X\to L$, and $g_i\in G$. A natural subgroup of $L\Wr_X G$ is the \emph{restricted} wreath product $L\wr_XG=\bigl(\bigoplus_XL\bigr)\rtimes G$, where $\Phi_i$ is finitely supported.

Given an action $Y\racts L$, there is a natural action $Y\times X\racts L\Wr_X G$ given by
\[ (y,x) (\Phi,g) = \bigl(y\cdot \Phi(x),x\cdot g\bigr).\]
If both actions $X\racts G$ and $Y\racts L$ are faithful (resp.\ transitive), then the action $Y\times X\racts L\Wr_X G$ is faithful (resp.\ transitive). This action is \emph{imprimitive}: the partition $Y\times X= \bigsqcup_{x\in X}Y\times\{x\}$ is preserved. Reciprocally, we can recover a wreath product from an imprimitive action:
\begin{thm}[Krasner-Kaloujnine] \label{thm:Krasner_Kaloujnine}
    Let $\Omega\racts W$ be a faithful transitive action. Suppose that the action is imprimitive, i.e.\ there exists a partition $\Omega=\bigsqcup_{x\in X}Y_x$ which is preserved by the action. Then $W\into L \Wr_X G$, where
    \begin{itemize}[leftmargin=6mm]
        \item $L$ is the image of the set-wise stabiliser of $Y_{x_0}$ in $\Sym(Y_{x_0})$, and
        \item $G$ is the image of $W$ in $\Sym(X)$.
    \end{itemize}
    Moreover, if for each $g\in W$ satisfying $\forall x\in X,\; Y_x\cdot g=Y_x$, there exists a finite subset $S_g\subseteq X$ such that $\supp(g)\subseteq \bigsqcup_{x\in S_g}Y_x$, then $W\into L\wr _X G$.
\end{thm}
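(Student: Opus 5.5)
We prove the Krasner--Kaloujnine embedding by an explicit construction. Fix $x_0\in X$. Since $W$ acts transitively on $\Omega$ and preserves the partition, it acts transitively on the blocks. So for each $x\in X$ we may choose $t_x\in W$ with $Y_{x_0}\cdot t_x=Y_x$, taking $t_{x_0}=1$. Write $\bar g\in\Sym(X)$ for the induced permutation of $g\in W$, so that $Y_x\cdot g=Y_{x\cdot\bar g}$. Let $G=\{\bar g:g\in W\}$. Let $H\le W$ be the set-wise stabiliser of $Y_{x_0}$, and let $L\le\Sym(Y_{x_0})$ be its image under restriction.

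For $g\in W$ and $x\in X$, the element $t_x\,g\,t_{x\cdot\bar g}^{-1}$ maps $Y_{x_0}\to Y_x\to Y_{x\bar g}\to Y_{x_0}$, so it lies in $H$. We define
\[ \Phi_g(x)\coloneqq \bigl(t_x\,g\,t_{x\cdot\bar g}^{-1}\bigr)\big|_{Y_{x_0}}\in L, \]
and set $\psi(g)=(\Phi_g,\bar g)\in L\Wr_X G$.

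\textbf{Homomorphism.} The cocycle identity is a telescoping product:
\[ \Phi_{g_1g_2}(x)=\Phi_{g_1}(x)\,\Phi_{g_2}(x\cdot\bar g_1), \]
since $t_xg_1t_{x\bar g_1}^{-1}\cdot t_{x\bar g_1}g_2t_{x\bar g_1\bar g_2}^{-1}=t_xg_1g_2t_{x\bar g_1\bar g_2}^{-1}$. This is exactly the multiplication rule $(\Phi_1,g_1)(\Phi_2,g_2)=(\Phi_1(*)\Phi_2(*\cdot g_1),g_1g_2)$, so $\psi$ is a homomorphism.

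\textbf{Injectivity.} It is cleanest to intertwine $\psi$ with the actions. The map $\beta\colon Y_{x_0}\times X\to\Omega$, $(y,x)\mapsto y\cdot t_x$, is a bijection. Using the action rule $(y,x)(\Phi,g)=(y\cdot\Phi(x),x\cdot g)$, we compute
\[ \beta(y\cdot\Phi_g(x),\,x\bar g)=y\,t_xgt_{x\bar g}^{-1}t_{x\bar g}=(y t_x)g=\beta(y,x)\cdot g. \]
So $\beta$ is $W$-equivariant through $\psi$. If $\psi(g)=1$, then $g$ acts trivially on $\Omega$, and faithfulness gives $g=1$.

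\textbf{Restricted version.} This is where the extra hypothesis is needed; the rest is routine. If $g$ fixes every block, we want $\Phi_g$ to be finitely supported. Here $\Phi_g(x)=(t_xgt_x^{-1})|_{Y_{x_0}}$, which is the conjugate via $t_x$ of $g|_{Y_x}$. So $\Phi_g(x)\ne1$ only when $g$ moves a point of $Y_x$, that is, only for $x\in S_g$. Thus the base group $\psi(W)\cap\prod_XL$ lands in $\bigoplus_XL$.

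It remains to handle a general $g$, for which $\Phi_g$ need not be finitely supported. The pair $(\Phi,\bar g)$ lies in $L\wr_XG$ exactly when $\Phi$ is finitely supported, and that depends on the choice of transversal $t_x$. So we cannot conclude by modifying the cocycle; the inclusion $\psi(W)\subseteq L\wr_XG$ has to be shown for every $g$. The plan is as follows.

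\begin{enumerate}
\item Check that $L\wr_XG$, for the action $X\racts G$, is a subgroup of $L\Wr_XG$. This holds because reindexing by $x\mapsto x\cdot g$ preserves finite support.
\item In the applications, $W$ is generated by a set $\mathcal A$, and for each generator $a$ the cocycle $\Phi_a$ is finitely supported once the $t_x$ are chosen suitably. A natural choice takes each $t_x$ to be a word read along a spanning tree of the Schreier graph of blocks, so that $t_xat_{xa}^{-1}=1$ on all tree edges. Then $\psi(\mathcal A)\subseteq L\wr_XG$, and hence $\psi(W)\subseteq L\wr_XG$.
\item The support hypothesis is used exactly to guarantee that only finitely many non-tree edges contribute.
\end{enumerate}

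I expect this support bookkeeping to be the main obstacle. In the generality stated, the finiteness of the non-tree contributions does not follow from the hypothesis alone. So we would need to record it explicitly where it is used, namely by checking that $\Phi_a$ is finitely supported for each generator $a$.
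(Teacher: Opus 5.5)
Your proof of the unrestricted embedding is the standard Krasner--Kaloujnine argument and is correct; the paper quotes the theorem without proof, so there is nothing further to compare. Your check that $\psi(K)\subseteq\bigoplus_X L$ for $K=\ker(W\to G)$ is also correct.

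The gap is the ``moreover'' clause, which you do not prove. As you suspect, step~3 of your plan cannot be justified. The holonomies $t_x a\,t_{xa}^{-1}$ along non-tree edges lie in the block stabiliser $H$, not in $K$, so the support hypothesis says nothing about them. Worse, under the stated hypothesis it can happen that no choice of transversal puts $\psi(W)$ inside $L\wr_X G$. The same holds for any block-compatible identification $\Omega\cong Y_{x_0}\times X$.

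Here is an example. Take $\Omega=C_2\times\Z^2$ with blocks $Y_x=C_2\times\{x\}$. Let $S=\{0\}\times\Z_{\ge0}$, and let $W=\la a,b\ra$ with $a=(\mathbf 1_S,e_1)$ and $b=(0,e_2)$ in the notation of the wreath product action.
\begin{itemize}
\item One computes $bab^{-1}a^{-1}=(\mathbf 1_{\{(0,-1)\}},0)$. Hence $W$ is transitive, $G=\Z^2$, $L=C_2$ and $K=\bigoplus_{\Z^2}C_2$, so the hypothesis holds.
\item Changing the identification by $\lambda\in\prod_{\Z^2}C_2$ turns the cocycles of $a$ and $b$ into $\lambda+\mathbf 1_S+\lambda(\cdot+e_1)$ and $\lambda+\lambda(\cdot+e_2)$.
\item Suppose both were finitely supported. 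The second would force $\lambda$ to equal some $u_m$ far up column $m$ and some $d_m$ far down it, with $u_m=d_m$ for $\abs m$ large.
\item The first would then give $u_{m+1}=u_m+\delta_{m,0}$ and $d_{m+1}=d_m$ for all $m$. This is incompatible with $u_m=d_m$ for $\abs m$ large.
\end{itemize}

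So the clause needs either a different argument or an extra hypothesis. A natural one is what you isolate at the end: a block-compatible identification in which each generator has a finitely supported cocycle.

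The example does not refute the clause read as an abstract embedding. This $W$ is the non-split extension of $\Z^2$ by $\mathbb F_2[\Z^2]$. It does embed in $C_2\wr\Z^2$, for instance as $\la(0,e_1),(\mathbf 1_{\{(0,0)\}},e_2)\ra$, just not through any Krasner--Kaloujnine map.

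The only place the paper uses the clause is Proposition~\ref{prop:cover_restricted}, and there your extra hypothesis does hold. Identify each fibre $\pi^{-1}(p)$ with the set of components of $\Gamma-\pi^{-1}(F)$; each component meets each fibre exactly once. Lifts of edges outside $F$ stay inside a component, so every generator's cocycle is supported on the finitely many vertices incident to $F$. Equivalently, run your spanning-tree construction with a spanning tree of the connected graph $\Lambda-F$.
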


%% file: Pictures/tikz_polygamma.tex
\begin{tikzpicture}[scale=1.1]

    \begin{scope}
        \clip (-4,-3.35) rectangle (4.9,4);
		\node[circle, draw, inner sep=1.8pt] (v0) at (0,0) {\footnotesize $x_0$};
		\begin{scope}[rotate=90]
			\draw[thick, purple, -latex] (v0) -- (.92,0);
			\foreach \x in {1,...,3}{
				\node[circle, fill=orange, inner sep=1.2pt] at (\x,0) {};
				\draw[thick, purple, -latex] ({\x+.1},0) -- ({\x+.9},0);
				
				\draw[thick, blue, -latex] (\x,.1) -- (\x,.9);
				\node[circle, fill=yellow, inner sep=1.2pt] at (\x,1) {};
				\draw[thick, purple, looseness=10, out=-150, in=150, -latex] ({\x-.07},.95) to ({\x-.07},1.05);
				\draw[thick, blue, -latex] (\x,1.1) -- (\x,1.9);
				\node[circle, fill=yellow, inner sep=1.2pt] at (\x,2) {};
				\draw[thick, purple, looseness=10, out=-150, in=150, -latex] ({\x-.07},1.95) to ({\x-.07},2.05);
				\draw[thick, blue, -latex] (\x,2.1) -- (\x,2.9);
				
				\draw[thick, blue, -latex] (\x,-.9) -- (\x,-.1);
				\node[circle, fill=purple, inner sep=1.2pt] at (\x,-1) {};
				\draw[thick, purple, looseness=10, out=-150, in=150, -latex] ({\x-.07},-1.05) to ({\x-.07},-.95);
				\draw[thick, blue, -latex] (\x,-1.9) -- (\x,-1.1);
				\node[circle, fill=purple, inner sep=1.2pt] at (\x,-2) {};
				\draw[thick, purple, looseness=10, out=-150, in=150, -latex] ({\x-.07},-2.05) to ({\x-.07},-1.95);
				\draw[thick, blue, -latex] (\x,-2.9) -- (\x,-2.1);}
		\end{scope}
		
		\begin{scope}[rotate=210]
			\node[circle, draw, inner sep=1.2pt] (v1) at (1,0) {\tiny 1};
			\draw[thick, purple, latex-] (v0) -- (v1);
			\draw[thick, purple, latex-] (v1) -- (1.92,0);
			\draw[thick, blue, looseness=10, out=120, in=60, -latex] (.95,.15) to (1.05,.15);
			\foreach \x in {2,...,4}{
				\node[circle, fill=blue, inner sep=1.2pt] at (\x,0) {};
				\draw[thick, purple, latex-] ({\x+.1},0) -- ({\x+.9},0);
				
				\draw[thick, bend left, blue, -latex] ({\x-.05},.1) to ({\x-.05},.9);
				\node[circle, fill=Turquoise, inner sep=1.2pt] at (\x,1) {};
				\draw[thick, purple, looseness=10, out=120, in=60, -latex] ({\x-.05},1.08) to ({\x+.05},1.08);
				\draw[thick, bend left, blue, -latex] ({\x+.05},.9) to ({\x+.05},.1);}
		\end{scope}
		
		\begin{scope}[rotate=330]
			\node[circle, draw, inner sep=1.2pt] (v2a) at (.86,-0.5) {\tiny 2};
			\node[circle, draw, inner sep=1.2pt] (v2b) at (.86,0.5) {\tiny 2};
			
			\draw[thick, blue, -latex] (v0) -- (v2a);
			\draw[thick, blue, -latex] (v2a) -- (v2b);
			\draw[thick, blue, -latex] (v2b) -- (v0);
			
			\draw[thick, purple, -latex] ({1.86-.08},.5) -- (v2b);
			\draw[thick, purple, bend right, -latex] (v2b) to (v2a);
			\draw[thick, purple, -latex] (v2a) -- ({1.86-.08},-.5);
			 
			\foreach \x in {0,1}{
				\node[circle, fill=Green, inner sep=1.2pt] (P\x) at ({2*\x+1.86},0.5) {};
				\node[circle, fill=green, inner sep=1.2pt] (Q\x) at ({2*\x+2.86},0.5) {};
				\node[circle, fill=green, inner sep=1.2pt] (R\x) at ({2*\x+2.86},-0.5) {};
				\node[circle, fill=Green, inner sep=1.2pt] (S\x) at ({2*\x+1.86},-0.5) {};
				
				\draw[thick, purple, latex-] (P\x) to (Q\x);
				\draw[thick, blue, bend left, -latex] (P\x) to (Q\x);
				\draw[thick, blue, -latex] (Q\x) to (R\x);
				\draw[thick, purple, latex-] (R\x) to (S\x);
				\draw[thick, blue, bend left, -latex] (R\x) to (S\x);
				\draw[thick, blue, -latex] (S\x) to (P\x);
				
				\draw[thick, purple, latex-] (Q\x) to ({2*\x+3.86-.08},.5);
				\draw[thick, purple, latex-] ({2*\x+3.86-.08},-.5) to (R\x);}
		\end{scope}
    \end{scope}
    
    \begin{scope}[shift={(5.8,2.2)}, scale=.9]
        \draw[rounded corners] (-2.25,-2.2) rectangle (2.85,1.7);
        \node[rounded corners, draw, fill=white] at (1.8,-2.2) {$\mathcal D(\Gamma)$};
        \node[circle, draw, inner sep=1.8pt] (v0) at (0,0) {\footnotesize $\Gamma_0$};
		\begin{scope}[rotate=90]
			
			\node[circle, fill=orange, inner sep=1.5pt] (orange) at (1,0) {};
			\draw[thick, looseness=10, out=-30, in=30, -latex] (1.07,-.05) to (1.07,.05);
			
			\node[circle, fill=yellow, inner sep=1.5pt] (yellow) at (1,1) {};
			\draw[thick, looseness=10, out=60, in=120, -latex] (1.05,1.07) to (.95,1.07);
			
			\node[circle, fill=purple, inner sep=1.5pt] (purple) at (1,-1) {};
			\draw[thick, looseness=10, out=-60, in=-120, -latex] (1.05,-1.07) to (.95,-1.07);
				
			\draw[thick, -latex] (v0) -- (orange);
			\draw[thick, -latex] (orange) -- (yellow);
			\draw[thick, -latex] (orange) -- (purple);
		\end{scope}
		
		\begin{scope}[rotate=210]
			\node[circle, draw, inner sep=1.2pt] (v1) at (1,0) {\footnotesize 1};
			\node[circle, fill=blue, inner sep=1.5pt] (blue) at (2,0) {};
			\node[circle, fill=Turquoise, inner sep=1.5pt] (lblue) at (2,1) {};
			
			\draw[thick, -latex] (v0) -- (v1);
			\draw[thick, -latex] (v1) -- (blue);
			\draw[thick, -latex] (blue) -- (lblue);
			
			\draw[thick, looseness=10, out=-30, in=30, -latex] (2.07,-.05) to (2.07,.05);
		\end{scope}
		
		\begin{scope}[rotate=330]
			\node[circle, draw, inner sep=1.2pt] (v2) at (1,0) {\footnotesize 2};
			\node[circle, fill=Green, inner sep=1.5pt] (green) at (2,0) {};
			\node[circle, fill=green, inner sep=1.5pt] (lgreen) at (3,0) {};
			
			\draw[thick, -latex] (v0) -- (v2);
			\draw[thick, -latex] (v2) -- (green);
			\draw[thick, bend right, -latex] (green) to (lgreen);
			\draw[thick, bend right, -latex] (lgreen) to (green);
		\end{scope}
    \end{scope}
\end{tikzpicture}

%% file: Sections/sec2_equiv.tex
In this section we prove the equivalence between the class of finitely generated subgroups of $V$ and the class of CF-TR groups. 

\subsection{Subgroups of \texorpdfstring{$V$}{V} are CF-TR groups}

\begin{prop} \label{prop:Schreier_of_GleV_are_CF}
    Let $G=\la \Ac\ra$ be a f.g.\ subgroup of Thompson's $V$ and $\xi=uv^\infty\in\{0,1\}^\infty$. Then the graph $\Sch(\xi, G;\Ac)$ is context-free.
\end{prop}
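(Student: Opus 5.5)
We will show that $L(\Sch(\xi,G;\Ac),\xi)=\{w\in\Ac^* : \xi\cdot w=\xi\}$ is recognised by a pushdown automaton. The idea is to track the point $\xi\cdot w$ as $w$ is read, keeping on the stack a finite-state "prefix" part, while the eventually periodic tail $v^\infty$ is never modified in any essential way. We will simulate the words on the stack and the periodic tail by finite control. Note that the orbit $\xi\cdot G$ consists of points of the form $u'v'^\infty$ with $v'$ a cyclic rotation of $v$.

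First, fix for each generator $a\in\Ac^{\pm1}$ a prefix-replacement description $a_j\eta\mapsto b_j\eta$ with partitions into cones. Let $M$ be the maximal length of the words $a_j,b_j$ over all generators. Every point of the orbit can be written as $p\,\zeta$, where $\zeta$ is one of the finitely many rotations $r^\infty$ of $v^\infty$ (rotations of $v$), and $p\in\{0,1\}^*$. This representation is not unique, since we can absorb or emit copies of the period. We normalise by requiring that $p$ does not end with the last letter of the period in the relevant way. Alternatively, and more simply, we allow the nondeterministic automaton to "unroll" the period on demand.

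The PDA keeps $p$ on the stack, top of stack being the first letter of $p$, and keeps the current rotation $r$ in its finite state. To apply a generator $a$, it needs to read the first $M$ letters of $p\zeta$. If $|p|\ge M$, it pops them and matches a rule $a_j$; otherwise it completes them using the known periodic tail. This is possible since the state stores $r$, and the remaining letters of $p$ (fewer than $M$) can be popped with the bottom symbol $\#$ detected. It then pushes $b_j$ followed by the unconsumed letters. Here the generalised transitions, which pop and push bounded words, are exactly what is needed. When a rule consumed letters from the periodic part, the new rotation is updated in the finite control: consuming $k$ letters of $r^\infty$ leaves a rotated tail. Thus the stack height stays equal to the length of the non-periodic prefix, up to bounded amounts.

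The main obstacle is acceptance, namely deciding whether $p\,r^\infty=u\,v^\infty$ using only the stack. Since $p\,r^\infty$ and $u\,v^\infty$ are equal iff, after stripping copies of the period, the canonical forms agree, we proceed as follows. At the end of the input, via $\varepsilon$-transitions, we nondeterministically guess that the stack content has the form $u\,s$, where $s r^\infty=v^\infty$. Checking $s\,r^\infty=v^\infty$ amounts to verifying that $s$ lies in a regular language (prefixes of powers of $v$ compatible with rotation $r$), which finite control can check while popping. After that, we pop exactly $u$ and accept on $\#$. We also handle the symmetric case where the tail part absorbs letters of $u$: $p\,r^\infty=u v^\infty$ with $|p|<|u|$ is a finitely many cases situation, since $u$ is fixed, and is checked directly. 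Soundness and completeness then follow from uniqueness of the eventually periodic representation modulo period shifts, giving that $L(\Sch(\xi,G;\Ac),\xi)$ is context-free.
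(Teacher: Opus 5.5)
Your proposal is correct and follows essentially the same route as the paper: a pushdown automaton keeps the finite prefix $p$ of the current point on the stack, applies each generator's prefix replacement by popping and pushing bounded words, unrolls the periodic tail when the stack is too short, and finally guesses nondeterministically and verifies that $p\,r^\infty=u\,v^\infty$ by stripping $u$ and a periodic part. The differences are only bookkeeping: the paper keeps the tail always equal to $v^\infty$, unrolling whole periods ($pv^n=a_{ij}r$, then pushing back $b_{ij}r$) and assuming $v$ primitive so that acceptance reduces to $x\in uv^*$, whereas you store the current rotation of $v$ in the finite control and test membership in a regular set directly; your version also explicitly handles the case $|p|<|u|$, which the paper's verification leaves implicit (it tacitly assumes $u$ does not end with $v$).
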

This result should be compared to \cite[Theorem 4.1]{ET0L}, and the proof with \cite[Lemma 2.11]{bennett2016demonstrative}.
\begin{proof} We assume without loss of generality that $v$ is not a proper power. We suppose that $G$ is generated by $\Ac=\{g_1,\ldots,g_\ell\}$ with
\[ g_i\colon a_{ij}\hspace{1pt}\eta \mapsto b_{ij}\hspace{1pt}\eta \quad \bigl(\eta\in\{0,1\}^\infty,\; j=1,\ldots,m_i\bigr). \]
Here is a pushdown automaton recognizing $\{w\in\Ac^*:\xi w=\xi\}$, see Fig.~\ref{fig: pushdown}. It is defined on four states $\mathsf Q=\{\start, \q_1, \q_2,\accept\}$, and has the following transitions:
\begin{itemize}[leftmargin=8mm]
    \item $(\start,\varepsilon, \#, u\#,\q_1)$,
    \item $(\q_1, g_i, a_{ij}, b_{ij},\q_1)$ for $g_i\in \Ac$,
    \item If $p$ is a prefix of $a_{ij}$, and $a_{ij}$ is a prefix of $pv^\infty$, say $pv^n=a_{ij}r$ with $n\ge 0$ minimal, then we add a rule $(\q_1, g_i, p\#,b_{ij}r\#,\q_1)$,
    \item $(\q_1,\varepsilon, \varepsilon,\varepsilon,\q_2)$
    \item $(\q_2,\varepsilon, uv,u,\q_2)$
    \item $(\q_2, \varepsilon, u\#, \#, \accept)$
\end{itemize}
We charge $u$ on the stack and move to the main state $\q_1$, this is the only state where we can read $w\in\Ac^*$. After $t$ steps, we have read a prefix $w_t$ of $w$ and the stack reads $x_t\#$ where $\xi\cdot w_t=x_tv^\infty$. This can be proven by induction on $t$. At the end of $w$, the stack therefore reads $x\#$ where $\xi\cdot w=xv^\infty$.

We now pass to the \say{verification}. We must accept $w$ if and only if $xv^\infty=\xi$ that is $x=uv^n$ for some $n\ge 0$. This is checked by removing as many copies of $v$ after a prefix $u$ as possible, and then checking if the stack reads $u\#$.
\end{proof}
\begin{center}
	\begin{tikzpicture}[scale=.7, thick]	
		\node[state, initial left, initial distance=6mm, minimum size=15pt] (v0) at (0,0) {};
		\node[state, minimum size=20pt] (v1) at (5,0) {$\q_1$};
		\node[state, minimum size=20pt] (v2) at (10,0) {$\q_2$};
		\node[state, accepting, minimum size=15pt] (v3) at (15,0) {};
		
		\path (v0)  edge[-latex] node [above] {$(\varepsilon,\#,u\#)$} (v1)
		(v1) edge[-latex] node [above] {$(\varepsilon, \varepsilon, \varepsilon)$} (v2)
		(v1) edge [loop above, -latex] node [above] {$(g_i, a_{ij}, b_{ij})$}  (v1)
        (v1) edge [loop below, -latex] node [below] {$(g_i, p\#, b_{ij}r\#)$}  (v1) (v1)
		(v2) edge [loop above, -latex] node [above] {$(\varepsilon, uv, u)$} (v2)
		(v2) edge [-latex] node [above] {$(\varepsilon, u\#, \#)$} (v3);
	\end{tikzpicture}
    \captionsetup{font=small}
    \captionof{figure}{The pushdown automaton recognizing $L\bigl(\Sch(\xi, G;\Ac),\xi\bigr)$.}\label{fig: pushdown}
\end{center}

We need a last lemma:
\begin{lemma} \label{lem:finitely_many_isomorphism_type}
Fix $G=\la\Ac\ra\le V$ and $v\in\{0,1\}^*$. The graphs $\Sch(\xi, G;\Ac)$ with $\xi\in X=\bigl\{uv^\infty \mid u\in\{0,1\}^*\bigr\}$ fall into finitely many isomorphism classes.
\end{lemma}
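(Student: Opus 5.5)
The plan is to show that, up to the action of $G$ and a bounded "renaming" of prefixes, every orbit of a point $uv^\infty$ contains a point whose deep tail behaves uniformly. Then a labelled isomorphism between the corresponding Schreier graphs is induced by a prefix replacement.

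First, fix the finite data of the generators: $g_i\colon a_{ij}\eta\mapsto b_{ij}\eta$. Let $N$ be larger than every $|a_{ij}|$ and $|b_{ij}|$. By the proof of Proposition \ref{prop:Schreier_of_GleV_are_CF}, for $\xi=uv^\infty$ every point of the orbit $\xi G$ has the form $xv^\infty$. Moreover, a generator acting on $xv^\infty$ only changes a prefix of length at most $N$; the remainder is untouched, up to reshuffling inside the periodic tail.

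The key observation is the following. For two points $\xi=uv^\infty$ and $\xi'=u'v^\infty$, the map
\[ \phi\colon u z\mapsto u' z \quad (z\in\{0,1\}^\infty) \]
restricted to the orbit of $\xi$ intertwines the generators on all points $uz$ for which $|u|$ and $|u'|$ are at least $N$. On such points each generator rewrites only inside the prefix $u$, so it does not matter whether the prefix is $u$ or $u'$. This alone does not give an isomorphism, because orbit points $xv^\infty$ may have $x$ short, so that the prefix $u$ is "eaten".

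So I will work more carefully, choosing a canonical form for points. Write every point of $X$ as $p\,c\,v^\infty$, where $c$ ranges over cyclic shifts so that only the phase of the tail matters. Two points $\xi,\xi'$ are called equivalent if $\xi=sw$ and $\xi'=s'w$ with a common tail $w$, and the generator rules never reach the tail. I then claim that the orbit Schreier graph is determined, up to isomorphism, by the finitely many phases of $v$: there are $|v|$ phases. The reason is that $\xi G$ consists exactly of the points $y v^\infty$ obtained from $\xi$ by finitely many rewrites. Applying an element of $V$ sending $uv^\infty$ to $u''v^\infty$, for a fixed representative $u''$ of the same phase class, conjugates $G$ to a different group, which is not what we want.

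So instead I use the following idea: the graph $\Sch(\xi,G;\Ac)$ is the Chabauty-style gluing of two parts. One part consists of the orbit points whose distinguished tail is "deep". That part is isomorphic to a graph depending only on the phase and on $v$. The other part is the finite set of orbit points lying within the finitely many small cones $[x]$ with $|x|<N$ that do not see the tail; membership there is determined by $u$ only up to finitely many possibilities.

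The main obstacle is making this decomposition precise. I would try the following. Use the pushdown automaton from Proposition \ref{prop:Schreier_of_GleV_are_CF}, started with stack $u\#$. Its configurations on the orbit are the words $x\#$, and their evolution under generators only depends on the top $N$ letters together with the "$p\#$" rules near the bottom. Hence the end-cone types of $\Sch(\xi,G;\Ac)$, taken with base point $\xi$, come from a single finite list that is independent of $u$, and the end-cone type graph $\mathcal D$ is independent of $u$ except for its root cone. Since the root cone is determined by a ball of bounded radius in which only boundedly many configurations of stack tops appear, there are finitely many root types. A context-free graph is determined up to isomorphism by its end-cone type graph together with the gluing data. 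Therefore only finitely many isomorphism classes occur.
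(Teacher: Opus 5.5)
Your argument breaks at the sentence ``Hence the end-cone types of $\Sch(\xi,G;\Ac)$, taken with base point $\xi$, come from a single finite list that is independent of $u$''. This does not follow from the locality of the pushdown transitions. End-cones are defined through graph distance to the base point, and turning bounded stack data into control of that metric structure is exactly the non-trivial content of the Muller--Schupp theorem. You would have to redo that argument with constants uniform in $u$.

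Worse, the claim is false as stated, because as $u$ varies the base point $\xi=uv^\infty$ runs through \emph{every} vertex of every orbit. Take $v=0$ and $G=\la a,b\ra$ with $a\colon 0\eta\mapsto 1\eta,\ 1\eta\mapsto 0\eta$ and $b\colon 0\eta\mapsto 11\eta,\ 11\eta\mapsto 0\eta,\ 10\eta\mapsto 10\eta$. The orbit of $10^\infty$ consists of $p_{2k}=1^{k+1}0^\infty$ and $p_{2k+1}=01^k0^\infty$ for $k\ge 0$. Its Schreier graph is the ray $p_0\overset a\longto p_1\overset b\longto p_2\overset a\longto p_3\overset b\longto\cdots$, with a $b$-loop at $p_0$. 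With base point $p_n$, the end-cone $\Gamma(p_j,p_n)$ for $j<n$ is the finite segment $\{p_0,\ldots,p_j\}$. These segments are pairwise non-isomorphic, so no finite list of end-cone types serves all $u$. Your reduction ``bounded ball at the root plus end-cone types determines the graph'' is fine, but only once such uniformity is available. Earlier ideas such as the ``finite set of orbit points lying within small cones'' are also incorrect, though you discard them.

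The missing idea is a canonical root for each orbit graph, chosen relative to a \emph{single} context-free graph, so that Muller--Schupp is applied only once. The paper enlarges $\Ac$ to a generating set $\tilde\Ac$ of $V$. It observes that $\tilde\Gamma=\Sch(X,V;\tilde\Ac)$ is one connected context-free graph (Proposition \ref{prop:Schreier_of_GleV_are_CF} applied to $V$), and that the graphs $\Sch(\xi,G;\Ac)$ are exactly the connected components of its $\Ac$-subgraph. For such a component $C$, choose $z\in C$ minimising $d(z,x_0)$ for a fixed $x_0\in\tilde\Gamma$. Then $C$ lies in the end-cone $\tilde\Gamma(z,x_0)$ with $z$ on its frontier. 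So $C$ is determined by the end-cone type together with the frontier vertex $z$, which leaves finitely many choices. If you prefer to stay with pushdown automata, you would likewise need to root each orbit canonically, e.g.\ at a point $xv^\infty$ with $\abs x$ minimal. You would then have to prove a uniform version of the Muller--Schupp theorem for the whole configuration graph, which your sketch does not do.
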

\begin{proof}
    Fix $\tilde\Ac\supseteq\Ac$ such that $V=\langle\tilde\Ac\rangle$. The graph $\tilde\Gamma=\Sch(X, V;\tilde\Ac)$ is context-free, and the Schreier graphs $\Sch(\xi, G;\Ac)$ are exactly the connected components of the subgraph $\Gamma$ obtained from $\tilde\Gamma$ by just keeping the $\Ac$-edges.

    Fix $x_0\in\tilde\Gamma$ and consider a connected component $C$ of $\Gamma$. Let $z\in C$ be a vertex minimising $d(z,x_0)$. Observe that $C\subseteq\tilde\Gamma(z,x_0)$, and the isomorphism type of $C$ is fully determined by the end-cone type $\tilde\Gamma_i$ of $\tilde\Gamma(z,x_0)$ and the choice of $v\in \tilde\Delta_i$. Since $\tilde\Gamma$ is context-free, this leaves finitely many choices.
\end{proof}

\medskip

Finally, we can prove the first half of our characterisation: \vspace*{1mm}
\begin{thm}
Finitely generated subgroups of Thompson's $V$ are \textbf{CF-TR}. \vspace*{-1mm}  
\end{thm}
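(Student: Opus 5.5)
We need a faithful context-free action of $G=\la\Ac\ra\le V$ with finitely many orbits. The natural plan is to restrict the action of $G$ on $\Ck$ to a suitable countable $G$-invariant subset of eventually periodic points. Fix a word $v\in\{0,1\}^*$, not a proper power; for concreteness take $v=0$, or $v=01$ if that is more convenient. Let
\[ X=\bigl\{uv^\infty \mid u\in\{0,1\}^*\bigr\}. \]
Elements of $V$ act by prefix replacement, so $X$ is $V$-invariant and hence $G$-invariant. The action $X\racts G$ is faithful because $X$ is dense in $\Ck$ and elements of $G$ are continuous: if $g$ fixes $X$ pointwise, it fixes its closure $\Ck$, so $g=1$.

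By Proposition \ref{prop:Schreier_of_GleV_are_CF}, every orbital Schreier graph $\Sch(\xi,G;\Ac)$ with $\xi\in X$ is context-free. The remaining requirement in the definition of a context-free action is that there be finitely many orbits, and this is the real obstacle, since $X$ typically splits into infinitely many $G$-orbits.

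I would handle this with Lemma \ref{lem:finitely_many_isomorphism_type}, which says these orbital Schreier graphs fall into finitely many isomorphism classes $\Gamma^{(1)},\ldots,\Gamma^{(k)}$. Let $Y=\bigsqcup_i V\Gamma^{(i)}$, so that the action on $Y$ has finitely many orbits, each with a context-free Schreier graph. I then need $\Gc(\Gamma^{(1)}\sqcup\ldots\sqcup\Gamma^{(k)})\simeq G$, which amounts to the following. A word $w\in\Ac^*$ acts trivially on all of $X$ if and only if it labels closed loops at every vertex of every orbital graph. Every orbital graph is isomorphic as a labelled graph to some $\Gamma^{(i)}$, and the loop condition is invariant under labelled isomorphism. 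Hence
\[ L\bigl(\Sch(X,G;\Ac)\bigr)=L\bigl(\Gamma^{(1)}\sqcup\ldots\sqcup\Gamma^{(k)}\bigr). \]
Passing to quotients of $F_\Ac$ gives $\Gc(\Gamma^{(1)}\sqcup\ldots\sqcup\Gamma^{(k)})\simeq\Gc(\Sch(X,G;\Ac))=G$, where the last equality uses faithfulness on $X$. So $G$ is \textbf{CF-TR} via the first equivalent condition in the definition.

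The points to double-check are the density and faithfulness argument, which is immediate, and that Lemma \ref{lem:finitely_many_isomorphism_type} is applied with the same fixed $v$ used to define $X$. Both look straightforward given the earlier results.
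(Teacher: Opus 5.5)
Your proposal is correct and follows the paper's proof essentially step for step. Density of $X=\{uv^\infty\}$ gives faithfulness, Proposition~\ref{prop:Schreier_of_GleV_are_CF} makes every orbital graph context-free, and Lemma~\ref{lem:finitely_many_isomorphism_type} lets you keep one orbit per isomorphism class without losing faithfulness; your identity $L(\Sch(X,G;\Ac))=L(\Gamma^{(1)}\sqcup\ldots\sqcup\Gamma^{(k)})$ just spells out the paper's ``restrict to finitely many orbits and conserve faithfulness'' step.
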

\begin{proof}
    Let $G=\la \Ac\ra\le V$. Since $X=\bigl\{uv^\infty \,\big|\, u\in\{0,1\}^*\bigr\}$ is dense in $\Ck$ and $G$ acts by homeomorphism, the support of any non-trivial element (a non-empty open set) intersects with $X$, i.e.\ the action on $X$ is faithful. Moreover, the graphs $\Sch(\xi,G;\Ac)$ are context-free for all $\xi\in X$ by Proposition \ref{prop:Schreier_of_GleV_are_CF}. The action on $X$ may have infinitely many orbits, however we can restrict to finitely many orbits and conserve faithfulness using Lemma \ref{lem:finitely_many_isomorphism_type}, hence $G$ admits a faithful context-free action; $G$ is \textbf{CF-TR}.
\end{proof}
\begin{rem}
    One could bypass Lemma \ref{lem:finitely_many_isomorphism_type} by proving that $V$ is \textbf{CF-TR} (since $V$ acts transitively on the dyadics $\D$), and using that finitely generated subgroups of \textbf{CF-TR} groups are \textbf{CF-TR} \cite[Proposition 4.17]{CFTR}.
\end{rem}
\subsection{All \texorpdfstring{$\Sch(\xi,G;\Ac)$}{Sch(xi,G;A)} are quasi-trees} \label{sec:quasi-trees}

Our Proposition \ref*{prop:Schreier_of_GleV_are_CF} is reminiscent of a recent result due to Hyde, Skipper and Zaremsky, stating that the graphs $\Sch(\xi, G;\Ac)$ are quasi-trees for \emph{all} $\xi\in\Ck$, see \cite[Theorem A]{hyde2026}. Indeed, our result implies their result whenever $\xi$ is eventually periodic, since context-free graphs are quasi-trees. We sketch a proof recovering a slight strengthening of their result:  

\begin{prop} \label{prop:Schreier_of_GleV_are_qtrees}
    Fix $G=\la\Ac\ra\le V$. There exists a constant $K\ge 0$ such that, for all aperiodic $\xi\in\Ck$, the graph $\Sch(\xi, G;\Ac)$ is a $K$-quasi-tree.
\end{prop}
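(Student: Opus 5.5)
We plan to deduce the statement from the eventually periodic case, using closedness of $K$-quasi-trees in the Chabauty topology (Lemma \ref{lem:A-q-trees_are_closed}(a)) together with a uniform version of Proposition \ref{prop:Schreier_of_GleV_are_CF}. The idea is that the local structure of $\Sch(\xi,G;\Ac)$ around a point $\eta$ in the orbit depends only on a bounded-length prefix of $\eta$ and a bounded-length window of the tail. We will show that every ball of $\Sch(\xi,G;\Ac)$ is a ball of some $\Sch(\zeta,G;\Ac)$ with $\zeta$ eventually periodic. This requires a uniform constant $K$ for all context-free graphs $\Sch(\zeta,G;\Ac)$, $\zeta\in X_v=\{uv^\infty\}$, and over all $v$ as well.

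\textbf{Step 1 (uniformity over eventually periodic points).} Let $N$ be the maximal length of the words $a_{ij},b_{ij}$. The generators act by rewriting a prefix of length at most $N$. A point of the orbit of $\zeta$ is therefore encoded as $x\,\zeta'$, where $\zeta'$ is a common tail, and only the prefix $x$ changes. The failure of the quasi-tree property can only come from two points of the orbit with the same tail differing in their prefix.

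Here I would reuse the structure in the proof of Lemma \ref{lem:finitely_many_isomorphism_type}, but inside the \emph{tree-like} prefix structure rather than inside $\Sch(X,V)$. Consider the graph $\Theta$ on vertex set $\{0,1\}^*$ (finite prefixes). The generator $g_i$ acts on a prefix $x$ whenever $x$ has length at least $N$, so the Schreier-type edges are defined on long enough prefixes, and the tree structure of $\{0,1\}^*$ controls distances.

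The cleaner route is to argue directly, as Hyde--Skipper--Zaremsky do. Fix $\xi$ aperiodic and look at the map $f$ sending $\eta=x\zeta'$ in the orbit to $x$, where the decomposition is chosen so that $\zeta'$ is the longest common tail with $\xi$ up to shift. The key is that aperiodicity makes this decomposition canonical up to bounded ambiguity. Then $f$ maps $\Sch(\xi,G;\Ac)$ into a graph on finite words in which each edge changes a word by rewriting a prefix of length at most $N$. This target is coarsely the rooted tree $\{0,1\}^*$ with bounded-length shortcuts near the root direction.

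\textbf{Step 2 (no fat $R_2$).} Suppose there is a $K$-fat $R_2$ minor. Take its three branch sets and the four connecting paths. Along a path, the lengths of prefixes $|x|$ vary by at most $N$ per step. Consider the point of minimal prefix length on each cycle of $R_2$. Every path between two points whose prefixes diverge must pass through prefixes shorter than their common prefix, up to an error $N$; this is the standard ``tree with bounded jumps is a quasi-tree'' argument, a bottleneck property (Manning). Concretely, we will check Manning's bottleneck criterion with constant depending only on $N$ and $|\Ac|$. For $\eta_1,\eta_2$ in the orbit, take the vertex on a geodesic realizing the shortest prefix. Every path from $\eta_1$ to $\eta_2$ must pass within bounded distance of it, because changing a deep prefix letter requires bringing that position within $N$ of the front. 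Positions are tracked by aperiodicity: the tail $\zeta'$ is determined up to bounded shift.

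\textbf{Main obstacle.} The main obstacle is making the ``position tracking'' rigorous: defining, for each vertex of the orbit, a well-defined depth function and showing that two vertices with different letters at a given depth are separated by a bounded set. Aperiodicity is exactly what guarantees that the tail cannot be shifted, so this separation holds. I expect the constant to come out as a function of $N$ only (times the generator count), giving the uniform $K$. Finally, Proposition \ref{prop:quasitree_iff_no_fat_rose}'s equivalence is not needed, since we bound fat $R_2$ minors directly.
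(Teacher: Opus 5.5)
\textbf{There is a genuine gap: neither of your two routes is completed.}

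\textbf{The limit route.} Your opening plan is to approximate $\xi$ by eventually periodic points and use closedness of $K$-quasi-trees, Lemma \ref{lem:A-q-trees_are_closed}(a). This is the paper's strategy, but you never supply the step that makes it work, and you misplace the difficulty.

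Uniformity over all periods $v$ is not needed. The dyadic points $\D=\{u0^\infty\}$ are already dense, and for them a uniform $K$ is immediate: by Lemma \ref{lem:finitely_many_isomorphism_type}, the graphs $\Sch(\zeta,G;\Ac)$ with $\zeta\in\D$ fall into finitely many isomorphism classes of context-free, hence quasi-tree, graphs.

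What is actually needed is the Chabauty convergence $\bigl(\Sch(\xi_n,G;\Ac),\xi_n\bigr)\to\bigl(\Sch(\xi,G;\Ac),\xi\bigr)$ for dyadic $\xi_n\to\xi$, i.e.\ $\Stab_G(\xi_n)\to\Stab_G(\xi)$. Your sentence ``every ball of $\Sch(\xi,G;\Ac)$ is a ball of some $\Sch(\zeta,G;\Ac)$'' is exactly this statement, asserted without proof.

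The missing idea is that $\Stab_G(\xi)=\GStab_G(\xi)$ when $\xi$ is aperiodic. Suppose $g$ acts by $a\eta\mapsto b\eta$ on a cone $[a]\ni\xi$ and fixes $\xi=a\eta$. If $|a|\ne|b|$, then $\eta=c\eta$ for a non-empty word $c$, so $\xi$ would be eventually periodic. Hence $a=b$ and $g$ is the identity on $[a]$. Combined with $\GStab_G(\xi)\le\liminf_n\Stab_G(\xi_n)\le\limsup_n\Stab_G(\xi_n)\le\Stab_G(\xi)$ (Lemma \ref{lem:GStab}), this gives the convergence, and Lemma \ref{lem:A-q-trees_are_closed}(a) concludes.

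\textbf{The direct route.} The argument you switch to leaves its central step open, as you acknowledge by calling it the main obstacle. Several of its ingredients would also not work as stated.

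The map $\eta=x\zeta'\mapsto x$, with $\zeta'$ the longest tail of $\xi$, is not a coarse embedding. For $G=V$, every tail $\zeta'$ of $\xi$ lies in the orbit and is sent to the empty word, so the fibres are infinite. The natural coarse model is instead the tree on the tail-equivalence class of $\xi$, with an edge between $\eta$ and $a\eta$ for each letter $a$. This is well defined by aperiodicity; each point has one ``parent'' and two ``children'', and there is no root. For a general $G\le V$, the orbit map from $\Sch(\xi,G;\Ac)$ to this tree is merely Lipschitz, with no a priori lower bound.

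Consequently, your pushdown observation has limited force. It shows that a path from $x_1\zeta'$ to $x_2\zeta'$ must meet $\{y\zeta' : |y|\le N\}$, but this is a separating set of bounded \emph{cardinality} only. A bottleneck needs it to have bounded \emph{diameter} in $\Sch(\xi,G;\Ac)$, uniformly in $\xi$ and $\zeta'$, and nothing in your sketch controls that.

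Two further points. Manning's criterion concerns midpoints of geodesics, not the vertex of shortest prefix. And even a bottleneck constant would still have to be converted into an explicit $K$ excluding $K$-fat $R_2$ minors.

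This uniform control is precisely what the paper obtains from context-freeness of the finitely many dyadic orbital graphs, via the limit argument above.
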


\begin{proof}
Using Lemma \ref{lem:finitely_many_isomorphism_type}, we know that all Schreier graphs $\Sch(\xi, G;\Ac)$ with $\xi\in\D:=\{u0^\infty\mid u\in\{0,1\}^*\}$ are $K$-quasi-trees for a fixed constant $K$. Since this set is closed in the Chabauty topology, this opens the possibility to take limits, which is formalised using the following notion:
\begin{defi}
    Let $\mathfrak X$ be a topological space and $G\le\Homeo(\mathfrak X)$. For each $\xi\in\mathfrak X$, we define its \emph{germ stabiliser} as
    \[ \GStab_G(\xi) = \bigl\{g\in G \;\big|\; \exists U\text{ neighbourhood of }\xi\text{ s.t.\ } g|_U=\id_U \bigr\}. \]
\end{defi}
The following lemma is an easy exercise.
\begin{lemma} \label{lem:GStab}
    It $\xi_n\to \xi$, then
    \[ \GStab_G(\xi) \le \liminf_{n\to\infty} \Stab_G(\xi_n) \le \limsup_{n\to\infty} \Stab_G(\xi_n) \le \Stab_G(\xi).\]
\end{lemma}
Fix $\xi\in\Ck$ an \emph{aperiodic / irrational} point. Elements of $V$ are piecewise linear with \emph{rational} coefficients, therefore elements fixing $\xi$ must also fix a neighbourhood of $\xi$, i.e.\ $\GStab_G(\xi)=\Stab_G(\xi)$. 

Consider points $\xi_n\in\D$ such that $\xi_n\to \xi$. Combining Lemma \ref*{lem:GStab} and the previous equality gives $\Stab_G(\xi_n)\to\Stab_G(\xi)$, i.e.
    \[ \bigl( \Sch(\xi_n, G;\Ac),\xi_n\bigr) \longto \bigl(\Sch(\xi, G;\Ac),\xi\bigr). \]
    Since graphs on the left-hand side are $K$-quasi-trees, Lemma \ref{lem:A-q-trees_are_closed}(a) concludes that the limit $\Sch(\xi, G;\Ac)$ is also a $K$-quasi-tree.
\end{proof}

\subsection{CF-TR groups are subgroups of \texorpdfstring{$V$}{V}} \label{subsection:CFTR-embedding}

We prove that every \textbf{CF-TR} group embeds in $V$; adapting the embedding of $\Gc(\Gamma)$ into the rational group \cite[Section 7]{CFTR}. We re-use some of their notations, notably the parametrisation $x\leftrightarrow w_x$ of vertices of $\Gamma$ by \emph{well-formed words}. There are two key differences:
\begin{itemize}[leftmargin=6mm]
    \item A key observation in \cite{CFTR} was that $w_x$ and $w_{xa}$ parametrising vertices of $\Gamma$ only differ in a short \emph{suffix}. We consider the reverse $\overleftarrow{w_x}$ to get words that only differ on a short \emph{prefix}, compatible with the definition of $V$.
    \item In the previous embedding, the flexibility of rational homeomorphisms was used to restrict the support to infinite words of the form $w_x\eta$. Here, we will instead restrict the space we act on, obtaining an embedding into the topological full group of a proper subshift. The last step is a theorem of Matui embedding such groups inside $V$.
\end{itemize}
Let us proceed with the proof.
\begin{prop} \label{prop:embedding-tfg}
    Let $\Gc(\Gamma)$ be the transition group of a context-free graph. Then $\Gc(\Gamma)$ embeds in the topological full group $\F(\Sigma,v_0)$ of a one-sided irreducible edge shift $\Omega(\Sigma,v_0)$. \vspace*{-1mm}
\end{prop}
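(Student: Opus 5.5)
The plan is to encode each vertex $x$ of $\Gamma$ by an end-geodesic together with a frontier vertex, and then extend these finite codes to infinite words. Fix a basepoint $x_0$ and the end-cone type graph $\mathcal D(\Gamma)$, with types $\Gamma_0,\dots,\Gamma_N$ and frontier sets $\Delta_i$. A vertex $x$ is determined by the end-geodesic $\ell_x\in\Lc$ of its end-cone $\Gamma(x,x_0)$ together with its position $p_x\in\Delta_i$, where $\Gamma_i$ is the type reached by $\ell_x$. This gives the well-formed word $w_x$. The key locality fact (as in \cite[Section 7]{CFTR}) is that $\ell_x$ and $\ell_{xa}$ differ only by a bounded suffix: since $d(xa,x)=1$ and the map $f$ is $1$-Lipschitz into the tree $\Lc$, the two end-geodesics share a common prefix up to distance $1$ from the end, and the common prefix has length at least $d(x,x_0)-C'$. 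Moreover, the change is dictated by a finite rule. The suffix replacement, the new frontier position, and the label $a$ depend only on the type reached at the last common vertex and on the bounded-length suffixes. This holds because end-isomorphisms preserve the frontier and therefore commute with the $a$-edges inside end-cones.

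Reverse these words so that the change happens on a bounded prefix, and build an edge shift on a graph $\Sigma$. Its vertices record the end-cone types together with auxiliary \say{root} data, and its edges are the reversed edges of $\mathcal D(\Gamma)$. Reading backwards, an infinite word over $\Sigma$ should be thought of as $\overleftarrow{w_x}$ followed by a tail. The tail must be an infinite continuation, which is the problem: $\overleftarrow{w_x}$ ends at $\Gamma_0$, a source of $\mathcal D(\Gamma)$, so a finite reversed geodesic cannot be continued naturally. To fix this, set $\Omega$ to consist of infinite words in the alphabet of (reversed) $\mathcal D$-edges decorated by frontier positions. These can be read as \say{vertices at infinity} that are limits of vertices of $\Gamma$ seen from varying basepoints. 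Concretely, add a loop and a return edge at $\Gamma_0$ so that after reaching $\Gamma_0$ the reversed word can restart along any reversed end-geodesic. Then define each generator $a$ by the finite prefix replacement rules from the first paragraph, applied to the first $O(1)$ letters. The resulting map is a homeomorphism given by finitely many cone replacements $[a_j]\to[b_j]$ with matching terminal vertices in $\Sigma$, so it lies in $\F(\Sigma,v_0)$.

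To show the map is faithful, embed $V\Gamma$ into $\Omega(\Sigma,v_0)$ as a set of points, for instance $x\mapsto \overleftarrow{w_x}\,z$ for a fixed periodic tail $z$ starting at $\Gamma_0$. The action of $a$ on these points then agrees with the action on $\Gamma$, so any word acting trivially on $\Omega$ is trivial in $\Gc(\Gamma)$. In the other direction, a word $g$ that is trivial in $\Gc(\Gamma)$ must act trivially on all of $\Omega$. The argument is that points of $\Omega$ are limits of the embedded vertices, and the local rules are continuous, so $g$ acts trivially on a dense set and hence everywhere. This density, together with well-definedness at the points where the rules see the inserted $\Gamma_0$-restarts, is the main obstacle. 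Points at infinity may realise configurations that occur in $\Gamma$ only approximately, so I would choose $\Sigma$ so that every finite admissible path corresponds to an actual configuration in $\Gamma$. This is ensured by restricting $\Sigma$ to the accessible and co-accessible part of $\mathcal D(\Gamma)$.

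Finally, irreducibility requires that $\Sigma$ be strongly connected and not a single cycle. Strong connectivity comes from the restart edges to $\Gamma_0$ and the fact that every type is accessible from $\Gamma_0$. If $\Sigma$ could degenerate to a cycle, I would add harmless extra edges, namely a duplicated restart edge on which all generators act trivially. This enlarges the shift without affecting the embedding.
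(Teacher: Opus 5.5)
Your architecture is the paper's. You reverse the well-formed words so that generators act by bounded prefix replacements. You add a restart vertex, which in the paper is $v_0$: it carries the loop $(\Gamma_0,x_0)$, receives the edge $\Gamma_0$, and emits the decorated edges $(\Gamma_i^{(j)},v)$. Then every path is a concatenation of blocks $\overleftarrow{w_x}$.

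The gap is in the direction ``$u=1$ in $\Gc(\Gamma)$ implies $\psi(u)=1$''. This direction is what makes $\psi$ factor through $\Gc(\Gamma)$; without it you only get $\Gc(\Gamma)$ as a quotient of $\psi(F_\Ac)$, not a subgroup of $\F(\Sigma,v_0)$. Your argument relies on the points $\overleftarrow{w_x}z$, for a \emph{fixed} tail $z$, being dense, and this is false. Paths decompose uniquely into first-return blocks, so the cylinder $[\overleftarrow{w_x}\,\overleftarrow{w_y}]$ contains some $\overleftarrow{w_{x'}}z$ only if $x'=x$ and $z$ begins with $\overleftarrow{w_y}$. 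Any $y$ for which $z$ does not begin with $\overleftarrow{w_y}$ gives a non-empty open set avoiding your copy of $V\Gamma$. Trimming $\mathcal D(\Gamma)$ to its accessible and co-accessible part does not change this, and ``configurations realised only approximately'' are not the issue.

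The missing idea is to let the tail vary, as the paper does. Each rule rewrites only a bounded prefix of the first block. By induction on $\abs u$ you get $(\overleftarrow{w_x}\eta)\cdot\psi(u)=\overleftarrow{w_{xu}}\eta$ for every $x\in V\Gamma$ and \emph{every} $\eta\in\Omega(\Sigma,v_0)$: the group acts on the first block and carries the tail along.
\begin{itemize}
\item The set $X$ of all such points is exactly the set of paths that return at least once to the restart vertex.
\item $X$ is dense, because every vertex of $\Sigma$ reaches the restart vertex along a reversed end-geodesic.
\item So if $u=1$ in $\Gc(\Gamma)$, then $\psi(u)$ fixes $X$ pointwise, hence everything by continuity. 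Paths that never return need not correspond to anything in $\Gamma$.
\end{itemize}
The same density argument is what shows the prefix rules define homeomorphisms: $\psi(a)\psi(a^{-1})$ is the identity on $X$, hence everywhere. This requires that only the first letter of each block carries a frontier position; decorating every letter would make the block-shortening rule non-injective.

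Your extra edge for irreducibility is unnecessary. Once $\Gamma$ has more than one vertex, the restart vertex already has two incoming edges, its loop and the edge from $\Gamma_0$.
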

\begin{proof}
If $\Gamma$ is reduced to a single vertex, then $\Gc(\Gamma)$ and the statement are both trivial. Otherwise, $\Gamma$ has at least two end-cone types. We consider the subshift $\Omega(\Sigma,v_0)$ defined by the following graph $\Sigma=(\Vc,\Ec)$:
\begin{itemize}[leftmargin=6mm]
    \item The vertex set is $\Vc = \bigl\{ v_0\}\cup \bigl\{\Gamma_j \;\big|\;  \Gamma_j\ne \Delta_j\bigr\}$. 
    \item The edge set is $\Ec = \bigl\{ \Gamma_0, (\Gamma_0,x_0), \Gamma_i^{(j)}, (\Gamma_i^{(j)},v)\bigr\}$ where
    \begin{itemize}[leftmargin=5mm]
        \item $\Gamma_0$ goes from $\Gamma_0$ to $v_0$,
        \item $(\Gamma_0,x_0)$ goes from $v_0$ to $v_0$,
        \item $\Gamma_i^{(j)}$ goes from $\Gamma_k$ to $\Gamma_j$, when $\Gamma_i^{(j)}$ is an end-cone of type $\Gamma_k$, and
        \item $(\Gamma_i^{(j)},v)$ goes from $v_0$ to $\Gamma_j$.
    \end{itemize}
\end{itemize}
For instance, for the line graph in Figure \ref*{fig:line_graph} (with $\Gamma_1=\Gamma^{(0)}_1=\Gamma(x_1,x_0)$ and $\Gamma_2=\Gamma^{(0)}_2=\Gamma(x_2,x_0)$), we get the graph $\Sigma$ given in Figure \ref*{fig:Sigma}:
\begin{center}
\begin{minipage}{.31\linewidth}
    \centering
    \begin{tikzpicture}[scale=.75]
    \clip (-2.8,-3) rectangle (2.8,3.7);
    \begin{scope}[rotate=45]
        \foreach \x in {-3,3}{
			\node[circle, fill=black, inner sep=1.5pt] at (\x,0) {};}
        \foreach \x in {-4,...,3}{
			\draw[blue, very thick, -latex] ({\x+0.09},0) -- ({\x+.91},0);}

        {\footnotesize
        \node[circle, fill=black, inner sep=1.5pt] at (-2,0) {};
        \node at (-2,-.5) {$y_2$};
        \node[circle, fill=black, inner sep=1.5pt] at (-1,0) {};
        \node at (-1,-.5) {$x_2$};
        \node[circle, fill=Green, inner sep=2pt] at (0,0) {};
        \node at (0,-.5) {$x_0$};
        \node[circle, fill=black, inner sep=1.5pt] at (1,0) {};
        \node at (1,-.5) {$x_1$};
        \node[circle, fill=black, inner sep=1.5pt] at (2,0) {};}
        \node at (2,-.5) {$y_1$};
    \end{scope}
    \end{tikzpicture}\vspace*{-3mm}
    \captionsetup{font=small}
    \captionof{figure}{A graph $\Gamma$.}
    \label{fig:line_graph}
\end{minipage}
\begin{minipage}{.66\linewidth}
    \centering
    \begin{tikzpicture}[scale=.6, thick]	
		\node[state, inner sep=3pt, minimum size=12pt] (v0) at (0,5) {$v_0$};
		\node[state, inner sep=3pt, minimum size=15pt] (G2) at (-5,0) {$\Gamma_2$};
		\node[state, inner sep=3pt, minimum size=15pt] (G0) at (0,0) {$\Gamma_0$};
		\node[state, inner sep=3pt, minimum size=15pt] (G1) at (5,0) {$\Gamma_1$};

        {\footnotesize
		\path (v0) edge[->, out=120, in=60, looseness=4] node[above] {$(\Gamma_0,x_0)$} (v0)
        (G0) edge[->] node[right] {$\Gamma_0$} (v0)
        (v0) edge[->, bend left=40] node[right] {$(\Gamma^{(0)}_1,x_1)$} (G0)
        (v0) edge[->, bend right=40] node[left] {$(\Gamma^{(0)}_2,x_2)$} (G0)
        (v0) edge[->, bend left=40] node[above right] {$(\Gamma^{(1)}_1,y_1)$} (G1)
        (v0) edge[->, bend right=40] node[above left] {$(\Gamma^{(2)}_1,y_2)$} (G2)
        (G1) edge[->] node[below] {$\Gamma^{(0)}_1$} (G0)
        (G2) edge[->] node[below] {$\Gamma^{(0)}_2$} (G0)
        (G1) edge[->, out=30, in=-30, looseness=4] node[right] {$\Gamma^{(1)}_1$} (G1)
        (G2) edge[->, out=-150, in=150, looseness=4] node[left] {$\Gamma^{(2)}_1$} (G2)
		;}
	\end{tikzpicture}\vspace*{-2mm}
    \captionsetup{font=small}
    \captionof{figure}{The graph $\Sigma$ defining the edge shift for $\Gc(\Gamma)$.}
    \label{fig:Sigma}
\end{minipage}
\end{center}

\newpage

By construction, each vertex is contained in a circuit $v_0\to v_0$:
\begin{itemize}[leftmargin=6mm]
    \item The vertex $v_0$ admits a loop labelled by $(\Gamma_0,x_0)$.
    \item By hypothesis, each vertex $\Gamma_j$ admits a second-level end-cone $\Gamma_1^{(j)}$, giving edges $v_0\to\Gamma_j$ labelled by $(\Gamma_1^{(j)},v)$ for any $v\in \Delta_1^{(j)}$. Moreover, each representative $\Gamma(y,v_0)\simeq\Gamma_j$ is connected to $\Gamma_0$ by an end-geodesic  (as defined in \cite[p.24]{CFTR}), which labels a path from $\Gamma_j$ to $v_0$ in $\Sigma$.
\end{itemize}
More generally, each minimal circuit $v_0\to v_0$ describes the reverse of a unique well-formed word $w \in \mathcal{F}(\Gamma)$, hence a unique vertex $x\in V\Gamma$ \cite[Lemma 7.3]{CFTR}. We denote by $\overleftarrow{w_x}$ that unique reversed word.

Moreover, the vertex $v_0$ has at least two ingoing edges (from itself and from $\Gamma_0$), hence $\Sigma$ is not reduced to a cycle, the subshift $\Omega(\Sigma,v_0)$ is irreducible.

The key observation is that the words $w_x$ and $w_{xa}$ (for $a\in\Ac$ and $x\in V\Gamma$) only differ on a short suffix. This is already used in \cite[Lemma 7.6]{CFTR}. More precisely, if $w_x=\Gamma_0\Gamma_{j_1}^{(i_1)}\ldots \Gamma_{j_{\ell-2}}^{(i_{\ell-2})}\Gamma_{j_{\ell-1}}^{(i_{\ell-1})}(\Gamma_{j_\ell}^{(i_\ell)},v)$, we have three cases:
\begin{itemize}[leftmargin=6mm]
    \item If $va\in \Delta_{i_\ell}$ (i.e.\ $d(xa,v_0)=d(x,v_0)-1$), then
    \[ w_{xa} = \Gamma_0\Gamma_{j_1}^{(i_1)}\ldots \Gamma_{j_{\ell-2}}^{(i_{\ell-2})}(\Gamma_{j_{\ell-1}}^{(i_{\ell-1})},v').\]
    \item If $va\in \Delta_{j_\ell}^{(i_\ell)}$ (i.e.\ $d(xa,v_0)=d(x,v_0)$), then
    \[ w_{xa} = \Gamma_0\Gamma_{j_1}^{(i_1)}\ldots \Gamma_{j_{\ell-2}}^{(i_{\ell-2})}\Gamma_{j_{\ell-1}}^{(i_{\ell-1})}(\Gamma_{j_\ell}^{(i_\ell)},v').\]
    \item Otherwise (i.e.\ $d(xa,v_0)=d(x,v_0)+1$), then
    \[ w_{xa} = \Gamma_0\Gamma_{j_1}^{(i_1)}\ldots \Gamma_{j_{\ell-2}}^{(i_{\ell-2})}\Gamma_{j_{\ell-1}}^{(i_{\ell-1})}\Gamma_{j_\ell}^{(i_\ell)}(\Gamma_{j}^{(i)},v').\]
\end{itemize}
The precise $v'$, and in the last case $\Gamma_{j}^{(i)}$, only depends on $a$ and on the suffix $\Gamma_{j_{\ell-1}}^{(i_{\ell-1})} (\Gamma_{j_\ell}^{(i_\ell)},v)$ (see the proof of \cite[Lemma 7.6]{CFTR}). We deduce that
\[ \psi(a) \colon\; \overleftarrow{w_x} \eta \longmapsto \overleftarrow{w_{xa}}\eta \qquad \text{for all} \  \eta \in \Omega(\Sigma,v_0) \]
extends to an element of $\F(\Sigma,v_0)$. The function $\psi$ extends to a morphism $\psi\colon F_\Ac\to \F(\Sigma,v_0)$, which we compare with $\varphi\colon F_\Ac\to \Gc(\Gamma)$. By induction on the length of $u\in F_\Ac$, we have $(\overleftarrow{w_x}\eta)\cdot \psi(u)=\overleftarrow{w_{xu}}\eta$. Consider
\[ X=\bigl\{\overleftarrow{w_x}\eta \;\big|\; x\in V\Gamma,\; \eta\in \Omega(\Sigma,v_0)\bigr\}.\]
It follows that
\begin{align*}
\varphi(u)=1
& \iff \forall \overleftarrow{w_x}\eta \in X,\quad  (\overleftarrow{w_x}\eta)\cdot   \psi(u)= (\overleftarrow{w_{xu}}\eta) \overset!= \overleftarrow{w_x}\eta, \\
& \iff \forall \xi\in\Omega(\Sigma,v_0),\quad \xi \cdot \psi(u)=\xi \\
& \iff \psi(u)=1,
\end{align*}
where the second equivalence follows from $X$ being a dense subset of $\Omega(\Sigma,v_0)$, and $\psi(u)$ being continuous. We conclude $\Gc(\Gamma)\simeq \psi(F_\Ac)\le \F(\Sigma,v_0)$.
\end{proof}

\begin{thm}\label{thm:CFTR-in-V}
Let $G=\Gc(\Gamma^{(1)} \sqcup \ldots \sqcup \Gamma^{(k)})$ be a finitely generated \textbf{CF-TR} group. Then $G$ is a finitely generated subgroup of $V$. \vspace*{-1mm}
\end{thm}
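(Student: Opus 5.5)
We reduce the statement to Proposition \ref{prop:embedding-tfg} in two moves: first we pass from a disjoint union of context-free graphs to products of transition groups, then we embed the relevant topological full groups into $V$ via Matui's theorem.

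\textbf{Step 1: reduce to a single graph.} Since $G=\Gc(\Gamma^{(1)}\sqcup\ldots\sqcup\Gamma^{(k)})$ acts on the disjoint union, restricting to each component gives a homomorphism
\[ G\to \Gc(\Gamma^{(1)})\times\cdots\times\Gc(\Gamma^{(k)}), \]
sending the generator $a\in\Ac$ to the tuple of its actions on the components. This map is injective, because an element acting trivially on every component acts trivially on the union. Hence $G$ embeds as the diagonal subgroup $\la (a,\ldots,a) : a\in\Ac\ra$. Each $\Gamma^{(i)}$ is a complete context-free graph, so by Proposition \ref{prop:embedding-tfg} each $\Gc(\Gamma^{(i)})$ embeds in some $\F(\Sigma_i,v_i)$ with $\Omega(\Sigma_i,v_i)$ irreducible. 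Components reduced to a single vertex contribute a trivial factor and are dropped.

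\textbf{Step 2: embed each topological full group in $V$.} For an irreducible initial edge shift, the space $\Omega(\Sigma_i,v_i)$ is a Cantor set. Here we invoke Matui's result: the topological full group of an irreducible one-sided shift of finite type embeds in Thompson's $V$. Concretely, following Matui's work on topological full groups of SFTs, $\F(\Sigma_i,v_i)$ embeds in (or is isomorphic to) a Higman--Thompson-type group $V_{n,r}$, and every $V_{n,r}$ embeds in $V=V_{2,1}$. If we want to avoid black-boxing, an alternative is to encode $\Omega(\Sigma_i,v_i)$ into $\Ck$ by a prefix code. Each edge out of a vertex of $\Sigma_i$ is replaced by a binary word, using a complete binary prefix code whose size equals the out-degree, possibly padded via an expansion argument. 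This turns cone-replacement maps into prefix-replacement maps. This encoding is the step where care is needed, since the out-degrees and the constraint $\tau(a_j)=\tau(b_j)$ must be handled consistently. I expect this to be the main obstacle, and I would rely on Matui's theorem as announced in the introduction to this subsection.

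\textbf{Step 3: products.} We use that $V\times V\le V$, obtained by letting the two factors act on the cones $[0]$ and $[1]$ respectively. By induction, $V^k$ embeds in $V$. Composing the embeddings from Steps 1 and 2 therefore gives
\[ G\into \prod_{i}\F(\Sigma_i,v_i)\into V^k\into V. \]
Since $G$ is finitely generated (by $\Ac$), its image is a finitely generated subgroup of $V$, which proves Theorem \ref{thm:CFTR-in-V}.
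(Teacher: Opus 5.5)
Your proof is correct and follows the paper's argument essentially verbatim: the diagonal embedding $G\into\prod_i\Gc(\Gamma^{(i)})$ (the paper cites \cite[Lemma 4.6]{CFTR}), then Proposition \ref{prop:embedding-tfg}, then Matui's embedding of topological full groups of irreducible one-sided SFTs into $V$, and finally closure of the class of subgroups of $V$ under direct products. Two small points. The paper makes explicit that Matui's result concerns the shift-invariant space $\Omega(\Sigma)$, so it first notes $\F(\Sigma,v_0)=\RStab_{\F(\Sigma)}\bigl(\Omega(\Sigma,v_0)\bigr)\le\F(\Sigma)$ by extending by the identity off the clopen set $\Omega(\Sigma,v_0)$; you should add this step. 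Also, drop your parenthetical ``or is isomorphic to a $V_{n,r}$'', which is not true for general SFTs and which you do not need.
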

\begin{proof}
Using Proposition \ref{prop:embedding-tfg} and \cite[Proposition 5.14]{Matui} (see also \cite[Corollary 11.15]{MatteBonTFG} or \cite[Section 8.3]{Tarrot}), we can embed each $\Gc(\Gamma^{(i)})$ into the topological full group of an (initial) irreducible edge shift, and then into $V$:
\[ \Gc(\Gamma^{(i)}) \,\into\, \F(\Sigma,v_0) = \RStab_{\F(\Omega(\Sigma))}(\Omega(\Sigma,v_0))  \le \F(\Sigma) \,\into\, V. \]
This concludes since  \cite[Lemma 4.6]{CFTR} gives
\[ \Gc(\Gamma^{(1)}\sqcup \ldots \sqcup \Gamma^{(k)}) \le \Gc(\Gamma^{(1)}) \times \cdots \times \Gc(\Gamma^{(k)}), \]
and the class of subgroups of $V$ is closed under direct products.
\end{proof}

\medskip

\begin{rem}
    The embedding $\iota_1\colon\Gc(\Gamma)\into \F(\Sigma,v_0)$ described in the proof of Proposition \ref{prop:embedding-tfg} satisfies 
    $\Sch\bigl(\xi,\iota_1(\Gc(\Gamma));\Ac\bigr)\simeq\Gamma$ for all $\xi$ in the dense open subset $X\subset \Omega(\Sigma,v_0)$. (Equivalently, for any path $\xi\in\Omega(\Sigma,v_0)$ returning to $v_0$.) Looking into the proof of \cite[Corollary 11.15]{MatteBonTFG}, the same holds for $\iota_2\colon \Gc(\Gamma)\into V$ and $\xi$ in a dense open subset of $\supp(\iota_2(\Gc(\Gamma))\subseteq \Ck$.

    Moreover, if $\xi\in\supp(\iota(\Gc(\Gamma)))$ is aperiodic, then $\Sch\bigl(\xi,\iota(\Gc(\Gamma));\Ac\bigr)\preceq\Gamma$, using the same argument with germ stabilisers as in Section \ref*{sec:quasi-trees}
\end{rem}

%% file: Sections/sec3_Lehnert.tex
In this section, we prove that two previously known classes of \textbf{coCF} groups are \textbf{CF-TR}, and therefore embed inside Thompson's $V$.

\subsection{Cloning systems \texorpdfstring{$V_{(H,\theta)}$}{V(H,theta)}}

Consider a group $H$ and $\theta\in\End(H)$. 
The group $V_{(H,\theta)}$ is an instance of cloning system as introduced in \cite{cloning, Clone_guide}. Elements of $V_{(H,\theta)}$ can be described by finitely many triplets $\{(a_i,b_i,h_i):i=1,\ldots,m\}$ with
\[ \Ck = [a_1]\sqcup \ldots \sqcup [a_m] = [b_1] \sqcup \ldots \sqcup [b_m], \]
and $h_i\in H$. Similarly to elements in $V$, this description is ambiguous: replacing a triplet $(a_i,b_i,h_i)$ by two triplets $(a_i0,b_i0,h_i),(a_i1,b_i1,(h_i)\theta)$ gives the same element. This can be understood pictorially; elements are tree pairs, with a permutation and elements of $H$ on each of the middle edges:
\begin{center}
    \import{Pictures/}{tikz_VGtheta.tex}
    \captionsetup{font=small}
    \captionof{figure}{Two elements $g_1,g_2$.} 
    \label{fig:VGtheta-elements}
\end{center}
To compose two elements $g_1=\{(a_i,b_i,h_i)\}$ and $g_2=\{(c_i,d_i,k_i)\}$, ensure that $b_i=c_i$ by taking the refinements, and then let $g_1g_2=\{(a_i,d_i,h_ik_i)\}$. That being said, composition is also best understood pictorially:
\begin{center}
 \begin{tikzpicture}[scale = 1.34, very thick]
    \node at (2.5,.77) {$g_1g_2$};
    \draw[-latex] (2.25,.5) -- (2.75,.5);
    
    \draw (0,0) -- (1,1) -- (2,0);
    \draw (1.5,.5) -- (1,0);
    \node at (0,-.22) {$0$};
    \node at (1,-.22) {$10$};
    \node at (2,-.22) {$11$};
    
    \draw (3,0) -- (4,1) -- (5,0);
    \draw (4.5,.5) -- (4,0);
    \node at (3,-.22) {$(10,g)$};
    \node at (4,-.22) {$0$};
    \node at (5,-.22) {$(11,(g)\theta h)$};
    
 \end{tikzpicture} 
    \captionof{figure}{The product $g_1g_2$.}
\end{center}
There is a map onto $V$, forgetting the labels. More strongly, combining Observations 2.28, 2.29 and Lemma 3.2 of \cite{cloning} gives a split exact sequence
\begin{center}
    \begin{tikzcd}
        1 \arrow[r] & \bigoplus_{i=1}^\infty H \arrow[r, hook] & V_{(H,\theta)} \arrow[r, two heads, swap, "\pi"] & V \arrow[l, dashed, bend right] \arrow[r] & 1.
    \end{tikzcd}
\end{center}
This can be improved into an embedding in an unrestricted wreath product. Let $\D=\{v0^\infty:v\in\{0,1\}^*\}$ be the set of dyadic points in the Cantor set.
\begin{prop}
The group $V_{(H,\theta)}$ embeds in the unrestricted permutational wreath product $H\Wr_\D V$ via the monomorphism
\[ \iota\colon g=\{(a_i,b_i,h_i)\} \longmapsto (\Phi_g, \pi(g)), \]
where $\Phi_g(a_iv\hspace{1pt}0^\infty)=(h_i)\theta^v$ and $\theta^{s_1s_2\ldots s_\ell} = \theta^{s_1} \theta^{s_2} \ldots \theta^{s_\ell}$ for $s_1s_2\ldots s_\ell\in\{0,1\}^*$.
\end{prop}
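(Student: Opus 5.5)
We need to check three things: that $\iota$ is well defined, that it is a homomorphism, and that it is injective.

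\emph{Well-definedness.} The description $\{(a_i,b_i,h_i)\}$ of $g$ is only defined up to elementary expansions. Every dyadic point $\xi\in\D$ lies in exactly one cone $[a_i]$, and there it is written uniquely as $\xi=a_iv0^\infty$, where we take $v$ minimal. Here we must be careful, because $v$ is not unique: $a_iv0^\infty=a_iv0^k0^\infty$ for every $k\ge 0$. So we need $(h_i)\theta^{v}=(h_i)\theta^{v0^k}$ for every $k$. Under the convention $\theta^{s_1\ldots s_\ell}=\theta^{s_1}\cdots\theta^{s_\ell}$, with $\theta^0=\id$ and $\theta^1=\theta$, appending $0$'s applies $\theta^0=\id$, so this holds. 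It also matches the splitting rule $(a,b,h)\mapsto(a0,b0,h),(a1,b1,(h)\theta)$. Given this, invariance under an elementary expansion is immediate. If $\xi\in[a_i0]$, say $\xi=a_i0v'0^\infty$, then the new triplet gives $(h_i)\theta^{v'}$ and the old one gives $(h_i)\theta^{0v'}=(h_i)\theta^{v'}$. If $\xi\in[a_i1]$, the new triplet gives $((h_i)\theta)\theta^{v'}$ and the old one gives $(h_i)\theta^{1v'}$, which agree.

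\emph{Homomorphism.} Take $g_1=\{(a_i,b_i,h_i)\}$ and $g_2=\{(c_j,d_j,k_j)\}$, refined so that $b_i=c_i$ for all $i$. Then $g_1g_2=\{(a_i,d_i,h_ik_i)\}$. In $H\Wr_\D V$ we have $(\Phi_1,\pi g_1)(\Phi_2,\pi g_2)=(\Phi_1(\ast)\Phi_2(\ast\cdot\pi g_1),\pi(g_1g_2))$. Evaluate at $\xi=a_iv0^\infty$: then $\xi\cdot\pi g_1=b_iv0^\infty$, so the product equals $(h_i)\theta^v(k_i)\theta^v$. This is $(h_ik_i)\theta^v$, because $\theta^v$ is an endomorphism (a composition of copies of $\id$ and $\theta$). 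That is exactly $\Phi_{g_1g_2}(\xi)$, so $\iota$ is a homomorphism. The only point to check is that the multiplication convention of the wreath product matches the paper's right-action convention, and it does.

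\emph{Injectivity.} Suppose $\iota(g)=1$. Then $\pi(g)=1$, so after refining we may assume $a_i=b_i$ for all $i$. Moreover $\Phi_g\equiv1$, and evaluating at $v=\varepsilon$ gives $h_i=\Phi_g(a_i0^\infty)=1$ for all $i$. Hence $g$ has representative $\{(a_i,a_i,1)\}$, which is the identity. This uses that the triplet representation is faithful up to expansion, i.e.\ that elements equal in $V_{(H,\theta)}$ are exactly those related by expansions, which is the definition from \cite{cloning}.

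The main obstacle is the first step: checking that $\Phi_g$ does not depend on the choice of representative and on the non-unique word $v$. This rests entirely on the convention $\theta^0=\id$. That convention must be read off from the splitting rule, and I would fix it explicitly before doing anything else. The remaining steps are routine computations.
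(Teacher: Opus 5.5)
Your proof is correct and follows the paper's argument: well-definedness under elementary splittings using $\theta^0=\id$, the homomorphism check at $\xi=a_iv0^\infty$ using that $\theta^v$ is an endomorphism, and injectivity from $\pi(g)=1$ together with $\Phi_g(a_i0^\infty)=h_i$. Your remark that $v$ is only determined up to trailing zeros is a detail the paper leaves implicit, and you handle it correctly.
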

\begin{proof}
    (1) We justify that the map $\iota$ is well-defined, i.e.\ that $\iota(g)$ remains unchanged when we replace $(a_i,b_i,h_i)$ by $(a_i0,b_i0,h_i), (a_i1,b_i1,(h_i)\theta)$:
    \begin{itemize}[leftmargin=8mm]
        \item Consider $\xi\in \D$ and suppose that $\xi=a_iv0^\infty$ with $v\in\{0,1\}^*$. Write $v=cv'$ with $c\in\{0,1\}$. We separate two cases:
        \begin{itemize}[leftmargin=5mm]
            \item if $c=0$, then $(h_i)\theta^v=(h_i)\theta^{v'}$.
            \item if $c=1$, then $(h_i)\theta^v = ((h_i)\theta)\theta^{v'}$.
        \end{itemize}
        That is, both definitions of $\Phi_g(\xi)$ coincide.
        \item $\pi(g)$ doesn't change, by definition of $V$.
    \end{itemize}

    \medskip
    
    (2) We prove that $\iota$ is a morphism. Consider two elements $h=\{(a_i,b_i,h_i)\}$ and $k=\{(b_i,c_i,k_i)\}$. Since $\pi$ is a morphism, we have to check that
    \[ \forall \xi\in\D,\quad \Phi_h(\xi)\cdot \Phi_k\bigl(\xi\cdot \pi(h)\bigr) = \Phi_{hk}(\xi). \]
    We suppose that $\xi=a_iv0^\infty$, so that $\xi\cdot\pi(h)=b_iv0^\infty$. Then \begin{itemize}[leftmargin=8mm]
        \item on the LHS, $\Phi_h(\xi)=(h_i)\theta^v$ and $\Phi_k(\xi\cdot \pi(h))=(k_i)\theta^v$, and
        \item on the RHS, $hk=\{(a_i,c_i,h_ik_i)\}$ so that $\Phi_{hk}(\xi)=(h_ik_i)\theta^v$.
    \end{itemize}
    The equality follows since $\theta$ (hence $\theta^v$) is an endomorphism.
    \medskip
    
    (3) We prove that $\iota$ is injective. Let $g=\{(a_i,b_i,h_i)\}$ and suppose that $\iota(g)$ is the neutral element. In particular, $\pi(g)=1_V$ gives $a_i=b_i$ for all $i$, and $\Phi_g(a_i0^\infty)=1_H$ gives $h_i=1_H$. We conclude that $g=1_{V_{(H,\theta)}}$.
\end{proof}
\begin{thm} \label{thm:V_Htheta_is_CFTR}
    If $H$ is finite, then the restriction to $V_{(H,\theta)}$ of the standard wreath product action $(H\times \D) \racts (H\Wr_\D V)$, given by
    \[ (h,\xi)\cdot (\Phi, g) = (h\cdot \Phi(\xi),\, \xi\cdot g), \]
    is context-free. In particular, the group $V_{(H,\theta)}$ embeds in Thompson's $V$.
\end{thm}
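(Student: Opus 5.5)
The plan is to verify the three requirements for a context-free action (faithfulness, finitely many orbits, context-free orbital Schreier graphs), and then apply Theorem \ref{thm:CFTR-in-V}.

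\emph{Faithfulness} follows from the previous proposition. $\iota$ is injective, and the action $(H\times\D)\racts H\Wr_\D V$ is faithful, because $H$ acts faithfully on itself and $V$ acts faithfully on $\D$.

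\emph{Finitely many orbits.} $V$ acts transitively on $\D$, and the orbits of $V_{(H,\theta)}$ on $H\times\D$ project onto $\D$. Hence every orbit meets the fibre $H\times\{0^\infty\}$, which is finite, so there are at most $|H|$ orbits.

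\emph{Context-freeness.} This is the main work. Fix generators $\Ac$ of $V_{(H,\theta)}$, each given by finitely many triplets $(a_{ij},b_{ij},h_{ij})$; such a generating set exists since $V$ is finitely generated, $H$ is finite and the sequence splits. Fix a base point $(h_0,\xi_0)$ with $\xi_0=u0^\infty$. I would adapt the pushdown automaton from Proposition \ref{prop:Schreier_of_GleV_are_CF}, taking $v=0$ and storing the current $H$-coordinate in the finite control. The stack holds $x\#$ with the current point equal to $x0^\infty$, and the state holds the current $h\in H$.

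When reading $g_i$ in the "prefix on stack" case, the automaton pops $a_{ij}$, pushes $b_{ij}$ and replaces $h$ by $h\cdot (h_{ij})\theta^{v}$, where $x=a_{ij}v$. The difficulty is that $v$ is the remaining stack content, which is unbounded, and the automaton cannot read it. I would handle this by noting that the map $v\mapsto \theta^v\in\End(H)$ is computed by a finite automaton over the finite monoid $\End(H)$. So I would enrich the stack alphabet: each stack symbol $s$ is stored together with the element $\theta^{w}$, where $w$ is the portion of the stack below that symbol, read in the appropriate order. Since $\theta^{w}$ is a product in $\End(H)$, this annotation can be updated locally whenever a symbol is pushed.

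The point to check carefully is orientation. $\theta^v$ composes along $v$ read from top to bottom of the stack, so the annotation on the top symbol should encode the product over the whole stack below it. Pushing a letter $c$ updates the annotation to $\theta^c\theta^{(\text{old})}$, which is compatible with the formula. In the "$p\#$" case, the trailing zeros $0^n$ contribute $\theta^{0^n}$, and this only depends on $n$ modulo the eventual period of $\theta^0$ in the finite monoid. It can therefore be handled with finitely many rules.

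The verification phase first checks that the stack reduces to $u\#$, as before, and then that the state stores $h_0$. Hence $L(\Sch((h_0,\xi_0),V_{(H,\theta)};\Ac),(h_0,\xi_0))$ is context-free. The action is context-free, so $V_{(H,\theta)}$ is \textbf{CF-TR} and embeds in $V$ by Theorem \ref{thm:CFTR-in-V}. I expect the main obstacle to be this stack-annotation bookkeeping, in particular matching the order of composition in $\theta^v$ with the stack direction.
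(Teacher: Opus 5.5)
Your proposal is correct and is essentially the paper's proof. The paper uses the same pushdown automaton: it keeps the current $H$-coordinate in the finite control and decorates stack letters as $0_\kappa,1_\kappa$ with the suffix product $\kappa_k=\theta^{s_k}\circ\kappa_{k+1}$. It differs only in proving transitivity directly, via an element $\{(a_i,b_i,h)\}$ with $a_1=0$ and $b_1=v0$, instead of your bound of $|H|$ orbits.

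Two small corrections. First, your worries about orientation and about ``the eventual period of $\theta^0$'' are moot, since $\theta^0=\id_H$ and $\theta^1=\theta$ commute, so in the $p\#$ case the update is simply $h\cdot h_{ij}$. Second, your justification of finite generation is too quick, since a split extension of a finitely generated group by $\bigoplus_{\N}H$ need not be finitely generated. One also needs that the kernel is generated by conjugates, under the split copy of $V$, of the finitely many elements $\{(0,0,h),(1,1,1_H)\}$. The paper itself simply takes finite generation for granted.
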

\begin{proof}
First, the action is transitive, in particular it has finitely many orbits: fix $h\in H$ and $\xi=v0^\infty\in\D$. We find two partitions
\[ \Ck = [a_1]\sqcup \ldots \sqcup [a_m] = [b_1]\sqcup \ldots \sqcup [b_m] \]
with $a_1=0$ and $b_1=v0$, and consider the element $g=\{(a_i,b_i,h)\}\in V_{(H,\theta)}$. We have $\Phi_g(0^\infty)=h$ and therefore
\[ (1_H,0^\infty) \cdot g = (1_H\cdot \Phi_g(0^\infty),\; 0^\infty\cdot \pi(g)) = (h,\;v00 ^\infty) = (h,\xi). \]

\medskip

We fix a set $\Ac=\{g_1,\ldots,g_k\}$ generating $V_{(H,\theta)}$, with $g_i=\{(a_{ij},b_{ij},h_{ij}):j=1,\ldots,m_i\}$. We construct a pushdown automaton recognising
\[ L=\bigl\{w\in\Ac^*:(1_H,0^\infty)\cdot w=(1_H,0^\infty)\bigr\}. \]
\begin{itemize}[leftmargin=8mm]
    \item The state space is $\mathsf Q=H\sqcup\{\q_2,\accept\}$.
    \item The start state is $\start=1_H$.
    \item The stack alphabet is $\{0_\kappa,1_\kappa,\#:\kappa\in\End(H)\}$. A word $s_{1,\kappa_1}\ldots s_{\ell,\kappa_\ell}(\#)$ with $s_{k,\kappa_k}\in\{0_\kappa,1_\kappa:\kappa\in\End(H)\}$ is \emph{correctly decorated} if 
    \[ \forall k=1,\ldots,\ell-1, \quad \kappa_k = \theta^{s_k}\circ \kappa_{k+1}, \]
    and $\kappa_\ell=\theta^{s_\ell}$ if the word ends in $\#$.
    \item The transitions are
    \begin{itemize}[leftmargin=3mm]
        \item $(h,g_i,\mathbf a c_\kappa, \mathbf b c_\kappa, h\cdot \kappa(h_i))$ if $\mathbf ac_\kappa$ and $\mathbf bc_\kappa$ are correctly decorated versions of $a_{ij}c$ and $b_{ij}c$ for some $c\in\{0,1\}$.
        \item If $a_{ij}=p0^n$ for $n\ge 0$, then we add a transition $(h, g_i, \mathbf p\#,\mathbf b\#,h\cdot h_i)$, where $\mathbf p\#$ and $\mathbf b\#$ are the correctly decorated versions of $p\#$ and $b_{ij}\#$. 
        \item $(1_H,\varepsilon,\varepsilon,\varepsilon,\q_2)$,
        \item $(\q_2,\varepsilon,0_{\id},\varepsilon,\q_2)$,
        \item $(\q_2,\varepsilon,\#,\#,\accept)$.
    \end{itemize}
\end{itemize}
After $t$ first steps, we have read a prefix $w_t$ of $w$, the automaton is at a state $h_t$ and the stack reading $\mathbf x_t\#$, such that $\mathbf x_t\#$ is correctly labelled and
\[ (1_H, 0^\infty) w_t = (h_t, x_t0^\infty). \] This can be proven by induction on $t$. At the end of $w$, the state is $h$ with stack and the stack reads $\mathbf x\#$ where $(1_H,0^\infty)\cdot w=(h,x0 ^\infty)$.

For the \say{verification}, we must accept $w$ if and only if $h=1_H$ and $x0^\infty=0^\infty$ that is $\mathbf x=0_{\id}^n$ for some $n\ge 0$. This is checked moving to $\q_2$ if $h=1_H$, then removing as many copies of $0_{\id}$ as possible and checking if the stack reads $\#$.

\medskip

Since the action is faithful (restriction of the wreath product action), we conclude that $V_{(H,\theta)}$ is \textbf{CF-TR} hence embeds in $V$ by Theorem \ref{thm:CFTR-in-V}.
\end{proof}

\subsection{FSS groups}

In 2009 Hughes \cite{Hughes} introduced \emph{groups defined by finite similarity structures} (or FSS groups) and, lately, Farley \cite{Farley_FFS} proved that a subclass of FSS groups is co-context-free. A finite similarity structure $\Sim_X(-,-)$ on a compact ultrametric space $X$ is a groupoid where objects are open balls in $X$, morphisms are surjective similarities between balls, $\Sim_X(B_1,B_2)$ is finite for any pair of balls $B_1,B_2$ and restriction closed, i.e. $h|_{B_3} \in \Sim_X(B_3,B_3h)$ for any $h \in \Sim_X(B_1,B_2)$ and any ball $B_3 \subseteq B_1$. Furthermore, Farley assumed that there are finitely many similarity classes, where two balls $B_1,B_2$ belong to the same class whenever $\Sim_X(B_1,B_2)$ is non-empty. The FSS group $G(\Sim_X)$ is the set of homeomorphisms of $X$ that are locally determined by $\Sim_X$, more formally $h \in G(\Sim_X)$ if for every $x \in X$ there exists a ball $B'$ such that $x\in B'$, the image $B'h$ is a ball in $X$, and $h|_{B'} \in \Sim(B',B'h)$. 

\medskip

Our goal is to embed these groups $G(\Sym_X)$ inside Thompson's $V$. Two main issues arise to apply Theorem \ref*{thm:intro_main}: first the group $G(\Sym_X)$ might not be finitely generated, and its natural action on $X$ might not have any dense orbit, making it hard to find a finite union of orbits where the action is faithful. Our strategy separates into two steps:
\begin{itemize}[leftmargin=6mm]
    \item First, we prove that these FSS groups embed inside \emph{Röver-Nekrashevych groups on irreducible edge shifts} \cite{Deaconu, ESS_groups} where the groups in the self-similar tuple are finite. 
    \item Second, we prove that these Röver-Nekrashevych groups are \textbf{CF-TR}.
\end{itemize}
Röver-Nekrashevych groups on an irreducible edge shifts generalise the classical ones acting on the full $n$-ary shift, and generated by the Higman-Thompson group $V_n$ and a self-similar group. In this case, we consider the topological full group of the shift $\Omega(\Sigma)$ together with a self-similar tuple $(H_v)_{v \in \Vc}$, meaning that 
\begin{enumerate}[leftmargin=6mm]
    \item $H_v$ are subgroups of $\Isom(\Omega(\Sigma,v))$ (equivalently, subgroups of the automorphisms of the underlying self-similar tree) for each $v \in \Vc$, and
    \item for every $a \in \Ec^*$ describing a path $u\to v$ in $\Sigma$, and for every $h\in H_u$, one has $h|_{a} \in H_{v}$. 
\end{enumerate}
See \cite{ESS_groups} for more precise definitions. The Röver-Nekrashevych group on $\Omega(\Sigma)$ with respect to $(H_v)_{v \in \Vc}$ is the group of the homeomorphisms of $\Omega(\Sigma)$ of the form
\[
\phi: a_j\hspace*{.5pt}\eta \mapsto b_j\hspace*{.5pt}(\eta)h_j \qquad \text{with } \tau(a_j)=\tau(b_j) \ \text{for } j=1,\ldots,m
\]
where the notations are as in the definition of topological full groups (see Section \ref{subsection:CFTR-embedding}) and $h_j\in H_{\tau(a_j)}$. Equivalently, this the subgroup of $\Homeo(\Omega(\Sigma))$ generated by $\F(\Sigma)$ and
\[ H_v@ a_v = \Bigl\{ g \;\Big|\; \supp(g)\subseteq [a_v] \text{ and } g|_{[a_v]} = \sigma_{a_v}^{-1}H_v\sigma_{a_v} \Bigr\} \quad\text{for }v\in\Vc,\] 
where $a_v$ are fixed non-empty words with $\tau(a_v)=v$. In particular, if $\Omega(\Sigma)$ is irreducible and $(H_v)_{v\in\Vc}$ are finite, then $\F(\Sigma)$ and the Röver-Nekrashevych group are finitely generated.

\begin{prop}
Let $G(\Sim_X)$ be a FSS group where $\Sim_X$ has finitely many similarity classes. Then $G(\Sim_X)$ embeds into a Röver-Nekrashevych groups on an irreducible edge shift with a finite group self-similar tuple.
\end{prop}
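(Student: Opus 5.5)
The plan is to encode the compact ultrametric space $X$ as the boundary of its tree of balls, and then to compress that tree using the finitely many similarity classes into a finite graph $\Sigma$.

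\emph{Step 1: the edge shift.} Since $X$ is compact and ultrametric, its open balls form a rooted, locally finite tree $\mathcal T$. The root is $X$ itself (we may assume $X$ is a ball, or else add a root). The children of a ball $B$ are its maximal proper sub-balls, and $X$ is homeomorphic to $\partial\mathcal T$. Fix one representative ball $B_c$ in each similarity class $c$, and for every ball $B$ in class $c$ fix a chosen similarity $\phi_B\in\Sim_X(B_c,B)$. Define $\Sigma$ as follows. Its vertices are the classes $c$, together with a root vertex $v_0$ if $X$ is not itself one of the representatives. For each child $B'$ of $B_c$ there is an edge $c\to[B']$.

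Each point $x$ then gives a path in $\Sigma$. Write $x$ as a decreasing chain of balls, and transport each step back to the representative through the chosen similarities, i.e.\ use $\phi_{B}^{-1}$ to identify the children of $B$ with the children of $B_c$. This yields a homeomorphism $\Psi\colon X\to\Omega(\Sigma,v_0)$ that sends balls to cones. We may pass to an irreducible component, or replace $\Sigma$ by its strongly connected core plus the initial part. If $\Sigma$ fails to be irreducible, I would embed the group into $\F$ of a larger irreducible shift (for instance by adding edges), using that restriction to a clopen piece embeds in the bigger group.

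\emph{Step 2: the self-similar tuple.} Take $H_c=\Psi_{B_c}\,\Sim_X(B_c,B_c)\,\Psi_{B_c}^{-1}$, which is a finite group acting isometrically on $\Omega(\Sigma,c)$. To get self-similarity, I need that the restriction of $h\in\Sim_X(B_c,B_c)$ to a child $B'$, conjugated by the chosen similarities $\phi_{B'}$ and $\phi_{B'h}$, lands in $H_{[B']}$. This holds because the groupoid is closed under restriction and composition: $\phi_{B'}^{-1}\,h|_{B'}\,\phi_{B'h}\in\Sim_X(B_{[B']},B_{[B']})$.

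\emph{Step 3: elements go into the Röver-Nekrashevych group.} Take $g\in G(\Sim_X)$. By compactness, $X$ is a finite disjoint union of balls $B_j$ such that $g|_{B_j}\in\Sim(B_j,B_jg)$. Then $B_j$ and $B_jg$ lie in the same class $c$, so the corresponding cones $[a_j]$ and $[b_j]$ both end at $c$. Moreover $\phi_{B_j}\,g|_{B_j}\,\phi_{B_jg}^{-1}$ lies in $\Sim(B_c,B_c)$, which gives $h_j\in H_c$, so $\Psi^{-1}g\Psi$ has exactly the required form $a_j\eta\mapsto b_j(\eta)h_j$. The map is clearly an injective homomorphism, since it is conjugation by $\Psi$.

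\emph{Main obstacle.} The delicate step is the coherence of the coding $\Psi$. The chosen similarities $\phi_B$ are not compatible across levels, so the coding of a deep ball composes many restriction-twists. I must check that $\Psi$ is well defined, and that cones correspond exactly to balls, with the twist absorbed in $H_c$. This is the same bookkeeping as the cocycle $h|_a$, and I would handle it by induction on depth. A secondary issue is irreducibility, including the degenerate cases where $\Sigma$ contains a single cycle (finite $X$, or isolated points) and those need separate, easy treatment.
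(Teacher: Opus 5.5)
Your construction is the paper's: $\Sigma$ on similarity classes with edges the maximal proper sub-balls of the representatives, $H_c=\Sim_X(B_c,B_c)$, and each $g$ written on a ball partition through $\phi_{B_j}\,g|_{B_j}\,\phi_{B_jg}^{-1}$ (composition read left to right).

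The gap is the point you flag as the main obstacle. It is not bookkeeping that an induction can absorb into $H_c$: for an \emph{arbitrary} choice of the $\phi_B$, Steps 2 and 3 are false. Closure of the groupoid only gives $\phi_{B'}\,h|_{B'}\,\phi_{B'h}^{-1}\in\Sim_X(B_{[B']},B_{[B']})$. But the section at $B'$ of the coded $h$ is $h|_{B'}$ conjugated by the codings of $B'$ and $B'h$. These codings differ from the $\phi_{B'}$- and $\phi_{B'h}$-transports of the coding of $B_{[B']}$ by twists accumulated at all deeper levels.

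Here is an example. Let $X=\{0,1\}^\infty$ with $\Sim_X([v],[w])=\{v\eta\mapsto w\eta,\ v\eta\mapsto w\bar\eta_1\eta_2\eta_3\cdots\}$. There is one class, $G(\Sim_X)=V$, and $\Sigma$ is a vertex with two loops. Take $\phi_{[w]}$ to be the flipping map exactly when $w\in 10^*1$. Then $\Psi$ is the identity on $[0]$ and $1\eta\mapsto 1\rho(\eta)$ on $[1]$, where $\rho$ flips the letter following the first $1$. Hence the swap $0\eta\leftrightarrow 1\eta$ becomes $0\eta\mapsto 1\rho(\eta)$ on $[0]$. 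Its section $\rho$ is not in your order-two $H_c$, so $H_c$ is not self-similar. Worse, near $0^\infty$ this element acts as $0^{m+1}\eta\mapsto 10^m\rho(\eta)$. So any finite self-similar tuple realising it would contain $\rho$ and its section at $1$ (the first-letter flip), and these generate an infinite dihedral group.

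The missing idea is to choose the similarities compatibly with restriction.
\begin{itemize}
\item Fix $\psi_e\in\Sim_X(B_{[e]},e)$ only for the finitely many children $e$ of the representatives.
\item Code each representative by $M_c\colon B_c\to\Omega(\Sigma,c)$, sending $y\in e$ to $e$ followed by $M_{[e]}(y\psi_e^{-1})$.
\item Starting from $\phi_X$, define $\phi_B$ along the address of $B$ by $\phi_{e\phi_B}:=\psi_e\,\phi_B|_e$ for each child $e$ of $B_c$.
\end{itemize}
Induction on depth shows that $\Psi$ restricted to a ball $B$ of class $c$ is $\mathrm{addr}(B)$ followed by $M_c(\,\cdot\,\phi_B^{-1})$. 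Take $H_c=M_c^{-1}\Sim_X(B_c,B_c)M_c$. Then the section of $k$ at $e$ is the coded $\psi_e\,k|_e\,\psi_{ek}^{-1}$. Also, $g$ acts on $[\mathrm{addr}(B_j)]$ through the coded $\phi_{B_j}\,g|_{B_j}\,\phi_{B_jg}^{-1}$, so Steps 2 and 3 now hold. This compatibility is what the paper tacitly uses in its identification $f_{B_j}^{-1}H_{\widetilde B_i}f_{B_jg}\simeq\sigma^{-1}_{\mathrm{addr}(B_j)}H_{\widetilde B_i}\sigma_{\mathrm{addr}(B_jg)}$.

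On irreducibility, passing to an irreducible component would discard part of $X$. The paper takes your other option, adjoining an auxiliary vertex $\widetilde B_\emptyset$ with extra edges. Note that once class vertices acquire new outgoing edges, $\Psi(X)$ is the closed but not open set of paths avoiding them. So ``restriction to a clopen piece'' is not the right justification: each element has to be extended over the new cones.
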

\begin{proof}
We reuse some notations and definitions of \cite[Section 5.2.4 and 5.4]{Farley_FFS}. that provides a natural alphabet for an edge shift: consider a set of representatives for the similarity classes, thanks to the ultrametric properties each representative $\widetilde{B}_i$ has finitely many maximal proper sub-balls $\widetilde{B}_{i1}, \ldots, \widetilde{B}_{i\ell_i}$ with $i=1, \ldots, k$. Now we define the graph $\Sigma=(\Vc,\Ec)$ where
\begin{enumerate}[leftmargin=6mm]
    \item $\Vc$ is the set of similarity classes, together with an extra vertex $\widetilde{B}_\emptyset$, and
    \item $\Ec$ is the set of sub-balls, so that $\widetilde{B}_i \xrightarrow{\widetilde{B}_{it}} \widetilde{B}_j$ happens whenever $\widetilde{B}_{it}$ is in the similarity class of $\widetilde{B}_j$. We also introduce an edge $\widetilde{B}_i \xrightarrow{\widetilde{B}_{i}} \widetilde{B}_\emptyset$ for each vertex $\widetilde{B}_i$.
\end{enumerate} 
Each ball correspond to a unique path $addr(B)$ in $\Sigma$ starting at $\tilde X$ and not passing by $\tilde B_\emptyset$. The reason to add the vertex $\widetilde B_\emptyset$ and the edges $\widetilde B_i$ is to get an irreducible shift. For each $B$ in the similarity class $\widetilde{B}_i$, we also fix $f_B \in \Sim_X(\widetilde{B}_i,B)$.

Exploiting compactness, if $g \in G(\Sim_X)$, one gets two finite partitions $\{B_j\}_{J}$ and $\{B_jg\}_{J}$ of balls of $X$ such that $g|_{B_j} \in \Sim(B_j,B_jh)$ (see \cite[Definition 5.16]{Farley_FFS}). The permutation induced by such partitions can  be described as an element of $\F(\Sigma)$ \cite[Proposition 5.37]{Farley_FFS}. We define each group $H_{\widetilde{B}_i}$ of the self-similar tuple as $\Sim_X(\widetilde{B}_i, \widetilde{B}_i)$ (this group does not \say{split} balls, hence acts on the tree), and $H_{\widetilde{B}_\emptyset}$ as the trivial group. Thanks to the axioms of similarity structure, these groups satisfy conditions 1 and 2 above. Finally
\[ g|_{B_j} \in f_{B_j}^{-1} H_{\widetilde{B}_i}f_{B_jg} \simeq \sigma_{addr(B_j)}^{-1}H_{\widetilde{B}_i}\sigma_{addr(B_jg)} \]
where $B_j$ and $B_jg$ are in the similarity class of $\widetilde{B}_i$. This provides the desired embedding. 
\end{proof}

We can proceed with the second step.
\begin{prop}
    Röver-Nekrashevych groups on irreducible edge shifts with finite group tuples are \textbf{CF-TR}, and therefore embed in Thompson's $V$. \vspace*{-1mm}
\end{prop}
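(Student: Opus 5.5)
We follow the same template as in Proposition \ref{prop:Schreier_of_GleV_are_CF} and Theorem \ref{thm:V_Htheta_is_CFTR}: exhibit a faithful action with finitely many orbits whose orbital Schreier graphs are recognised by pushdown automata, and then conclude with Theorem \ref{thm:CFTR-in-V}. Let $G$ be the Röver--Nekrashevych group on $\Omega(\Sigma)$ with finite tuple $(H_v)_{v\in\Vc}$. Since $\Sigma$ is irreducible and the $H_v$ are finite, $G$ is finitely generated; fix generators $\Ac=\{g_1,\ldots,g_k\}$ with
\[ g_i\colon a_{ij}\eta\mapsto b_{ij}(\eta)h_{ij}, \qquad h_{ij}\in H_{\tau(a_{ij})}. \]
For the action we take points of the form $X_c=\{u c^\infty\}$, where $c$ is a fixed cycle in $\Sigma$ and $u$ is a path ending at the base of $c$. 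Irreducibility makes $X_c$ dense in $\Omega(\Sigma)$. Each element of $G$ acts by homeomorphisms, so any non-trivial element has non-empty open support meeting $X_c$; hence the action on $X_c$ is faithful.

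\textbf{Step 1: the automaton.} Fix $\xi=uc^\infty$. We build a PDA recognising $\{w:\xi w=\xi\}$ whose stack holds a finite path $x$ and whose state holds an element $h\in H_{v}$, with $v$ the base of $c$. The invariant is that the current point is $x\,(c^\infty)h$. The key point is finiteness: the orbit of $c^\infty$ under $H_v$ is finite, and each of its points $(c^\infty)h$ is again eventually periodic. By self-similarity, the sections $h|_{c^n}$ lie in the finite group $H_v$, so they are eventually periodic in $n$. Hence, after replacing $c$ by a suitable power, we may assume $(c^\infty)h=c_h c_h'^{\infty}$ for finitely many words, and that the sections of $h$ along $c$ are periodic.

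The transitions are as follows. Reading $g_i$ either rewrites a stack-top prefix $a_{ij}$ into $b_{ij}$ and applies $h_{ij}$ to the next portion of the point, or, when the stack is shorter than $a_{ij}$, reaches into the periodic tail, as in the third rule of Proposition \ref{prop:Schreier_of_GleV_are_CF}. Applying $h_{ij}$ to a point $y\,x'(c^\infty)h$ only modifies a bounded prefix of the stack (bounded by the length of the prefixes involved), and it updates the finite-state tail data through the sections $h_{ij}|_{\cdot}\in H$. To keep these sections available locally, we decorate stack letters with the state of the self-similar automaton, exactly as we decorated with endomorphisms in Theorem \ref{thm:V_Htheta_is_CFTR}. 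Since the $H_v$ are finite, this decoration alphabet is finite.

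\textbf{Step 2: verification and finiteness of orbits.} The verification phase pops copies of $c$ and then checks that the stack reads $u\#$ and that the state tail matches $c^\infty$, as in Proposition \ref{prop:Schreier_of_GleV_are_CF}. To obtain finitely many orbits, we adapt Lemma \ref{lem:finitely_many_isomorphism_type}. Embed $G$ into a larger group of the same type that acts with finitely many orbits on $X_c$ (or on a finite union of such sets), for instance the full Röver--Nekrashevych group. Its Schreier graph is context-free by Step 1, so the components of the $\Ac$-subgraph fall into finitely many isomorphism types. Finitely many orbits therefore already give a faithful action.

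\textbf{Main obstacle.} The main obstacle is Step 1: a finite-group element acting deep inside the tree must not require unbounded stack rewriting. This is exactly where finiteness of the $H_v$ enters. An element of a finite self-similar group has only finitely many sections, and each section at a position depends only on the section one level up and the letter read there. A pointwise rewriting $x\mapsto (x)h$ can touch the whole word, but it is a sequential finite-state transducer. The difficulty is that such a transducer reads top-down, while the stack is accessed from the top. We plan to resolve this by storing the point together with a lazily applied element of $H$: instead of rewriting the word, we keep the pending element on the stack boundary and push it down one letter at a time when a later generator needs to inspect deeper letters. Each inspection needs only a bounded depth, and each push-down step is a local finite rewriting, so this is compatible with the generalised transitions.
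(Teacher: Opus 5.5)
\textbf{Step 1.} Your final mechanism is essentially the paper's. The paper stores pending elements of the finite groups $H_v$ lazily on the stack and pushes them down one letter at a time by bounded local rewrites. It interleaves group elements with edges in \emph{legal words} $e_1s_1\ldots e_ns_n$, and after each generator replaces the top legal word of bounded length by its reduction $x_{\mathrm{red}}$.

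However, this mechanism must \emph{replace} what you wrote earlier in Step 1, not supplement it.
\begin{itemize}
\item Your claim that applying $h_{ij}$ to $y\,x'(c^\infty)h$ only modifies a bounded prefix of the stack is false. On the full binary shift, the finite self-similar group $\{1,f\}$ with $f=(f,f)\sigma$ ($\sigma$ the transposition) is allowed, and $f$ flips every letter.
\item Decorating stack letters as in Theorem~\ref{thm:V_Htheta_is_CFTR} does not help. There, the decoration $\theta^{v}$ is a function of the suffix below each letter, so it survives rewriting of the top. Here, the pending element changes the suffix itself.
\end{itemize}
Your treatment of the tail is a workable variant: keep an element of $H_v$ in the state, allow rules to reach into the periodic tail, and merge pending elements that reach the bottom into the state. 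The paper avoids this by nondeterministically pushing $u^\ell$ at the start, so that generators never look below the stack. At the end it accepts iff the popped path is a power of $u$ and the element left just above $\#$ lies in $K=\Stab_{H_{v_0}}(u^\infty)$.

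\textbf{Step 2 is where the real gap lies.}
\begin{itemize}
\item $X_c=\{uc^\infty\}$ is not $G$-invariant. In the example above, $(0^\infty)f=1^\infty$, and you noted yourself that $(c^\infty)h$ may have a different period. So ``the action on $X_c$'' is not an action, and you would have to work with $X_c\cdot G$.
\item Finiteness of the number of orbits rests on an undefined ``full R\"over--Nekrashevych group'' containing $G$ that acts with finitely many orbits. You never justify this, and $G$ already is the R\"over--Nekrashevych group of the given tuple.
\end{itemize}
The missing idea is minimality. Since $\Sigma$ is irreducible, $\F(\Sigma)\le G$ acts minimally on $\Omega(\Sigma)$, so every $G$-orbit is dense. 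Because $G$ acts by homeomorphisms, a nontrivial element has nonempty open support, which meets any dense orbit. Hence the action on the single orbit $\xi\cdot G$ is already faithful. This is how the paper concludes: one context-free orbital Schreier graph suffices, with no analogue of Lemma~\ref{lem:finitely_many_isomorphism_type} needed.
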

\begin{proof}
We consider a finite generating set of the form
	\[ \Ac = \Ac_0 \sqcup \bigsqcup_{v\in\Vc} H_v @ a_v \]
	where $\Ac_0=\{g_1,\ldots,g_k\}$ generates the topological full group $\F(\Sigma)$, with $g_i=\{(a_{ij},b_{ij})\mid j=1,\ldots,m_i\}$, and $a_v \in \Ec^*$ are fixed non-empty paths in $\Sigma$ with $\tau(a_v)=v$. Consider a path $u=u_1u_2\ldots u_\ell\in\Ec^*$ from $v_0$ to $v_0$, which is not a proper power. We prove that $\Sch(\xi,G;\Ac)$ is context-free, for $\xi= u^\infty \in\Omega(\Sigma)$. Let $K$ be the stabiliser of $\xi$ in $H_{v_0}$.
	\begin{itemize}[leftmargin=6mm]
		\item The state space is $\mathsf Q= \{\start,\q_1,\q_1',\q_2,\accept\}$.
		\item The start vertex is $\start$.
		\item The stack alphabet is $\Ec\sqcup \bigsqcup_{v\in\Vc} H_v\sqcup\{\#\}$.
	\end{itemize}
	Of special importance will be words of the form $x=e_1s_1\ldots e_ns_n$, where $e_1\ldots e_n$ labels a path in $\Sigma$, and $s_i\in H_{\tau(e_i)}\cup\{\varepsilon\}$, which will be called \emph{legal words}. We describe how to \emph{reduce} legal words. We initialise with $x_1=x$, $p_1=e_1$ and $\bar s_1=s_1$, and then define recursively
	\[ x_k=p_k\bar s_ke_{k+1}s_{k+1}\ldots e_ns_n \longto x_{k+1}= p_{k+1}\bar s_{k+1}\ldots e_ns_n\]
	with $p_{k+1}=p_k\cdot (e_{k+1})\bar s_k$ and $\bar s_{k+1}=s_{k+1}\cdot \bar s_k|_{e_{k+1}}$. At the end of this process, we get a legal word $x_{\mathrm{red}}\coloneqq x_n=p_n\bar s_n$ with $p_n\in\Ec^*$ and $\bar s_n\in H_{\tau(p_n)}$.
	\begin{itemize}[leftmargin=6mm]
		\item The transition function is given by
		\begin{itemize}[leftmargin=3mm]
			\item $(\start,\,\varepsilon,\,\varepsilon,\, u ,\,\start)$
			\item $(\start,\, \varepsilon,\, \varepsilon,\, \varepsilon,\, \q_1)$
			\item $(\q_1,\, g_i,\, a_{ij},\, b_{ij},\, \q_1')$ if $g_i=\{(a_{ij},b_{ij}) \mid j=1,\ldots,m_i\} \in \Ac_0$
			\item $(\q_1,\, h@a_v,\, a_vs,\, a_v(sh),\, \q_1')$ if $h@a_v\in H_v@a_v$
			\item $(\q_1',\, \varepsilon,\, x,\, x_{\mathrm{red}},\, \q_1)$ where $x=e_1s_1\ldots e_ns_n$ is a legal word of length 
			\[ n= \max\{\abs{a_{ij}},\abs{a_v},\abs u\}+1. \]
			\item $(\q_1,\, \varepsilon,\, \varepsilon,\, \varepsilon,\, \q_2)$
			\item $(\q_2,\, \varepsilon,\, u,\, \varepsilon,\, \q_2')$
			\item $(\q_2',\, \varepsilon,\, x,\, x_{\mathrm{red}},\, \q_2)$ where $x=e_1s_1\ldots e_ns_n$ is a legal word of length $n=\abs u$.
			\item $(\q_2',\, \varepsilon,\, s\#,\, \#,\, \accept)$ for $s\in K$.
		\end{itemize}
	\end{itemize}
    We start by pushing a power $u^\ell$ on the stack (non-deterministically). After moving to $\q_1$, we read the word $w\in\Ac^*$. After $2t$ first steps (if $\ell$ was large enough), we have read a prefix $w_t$ of $w$ and the stack reads $x_t\#$ with $(x_t)_\mathrm{red}=p_ts_t$ such that $(u^\ell\cdot \eta)w_t=p_t\cdot (\eta)s_t$. At the end of $w$, the stack reads $x\#$, we need to check if
    $(u^\infty)w = p\cdot (u^\infty)s$
    where $x_{\mathrm{red}}=ps$.
    
    For the \say{verification}, we must accept $w$ if and only if $p\cdot (u^\infty)s=u^\infty$, i.e.\ if $p=u^k$ and $s\in K$. This is done by reducing $x$ gradually, removing copies of $u$, and checking if the stack reads $s\#$ with $s\in K$.

    Recall that $\F(\Sigma)$, and therefore the Röver-Nekrashevych group, acts minimally by homeomorphisms on $\Omega(\Sigma)$ (irreducible edge shift), hence they act faithfully on each orbit. It follows that our Röver-Nekrashevych group is \textbf{CF-TR}. \vspace*{1mm}
\end{proof}

Combining the two results, with get the following theorem.
\begin{thm}
Farley's FSS groups embed in Thompson's $V$. \vspace*{-1mm}
\end{thm}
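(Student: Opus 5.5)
The theorem follows by chaining the two preceding propositions. Let $G(\Sim_X)$ be an FSS group of the kind Farley considers, so $\Sim_X$ has finitely many similarity classes. By the first proposition of this subsection, $G(\Sim_X)$ embeds into a Röver-Nekrashevych group $RN$ on an irreducible edge shift $\Omega(\Sigma)$. The self-similar tuple of $RN$ consists of the finite groups $H_{\widetilde B_i}=\Sim_X(\widetilde B_i,\widetilde B_i)$ together with the trivial group at $\widetilde B_\emptyset$. These groups are finite because finiteness of each $\Sim_X(B_1,B_2)$ is part of the axioms. By the second proposition, $RN$ is \textbf{CF-TR}, and Theorem \ref{thm:CFTR-in-V} then gives an embedding $RN\into V$. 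Composing the two embeddings yields $G(\Sim_X)\into V$.

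Two points need checking. First, the second proposition is stated for finitely generated groups, since \textbf{CF-TR} is defined via a finite generating set $\Ac$. This is fine: $\Omega(\Sigma)$ is irreducible and the tuple is finite, so $\F(\Sigma)$ is finitely generated by Matui's theorem. The group $RN$ is then generated by finitely many generators of $\F(\Sigma)$ together with the finite sets $H_v@a_v$. Second, we must confirm that the shift built from $\Sigma$ really is irreducible, meaning strongly connected and not a cycle. The added vertex $\widetilde B_\emptyset$ and the edges $\widetilde B_i\to\widetilde B_\emptyset$ are there exactly to guarantee this, once $\widetilde B_\emptyset$ is linked back to the class of $X$. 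I would verify this directly from the construction, and handle degenerate cases (for instance $X$ finite) separately, since there the group is finite and embeds in $V$ trivially.

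The main obstacle is whether $G(\Sim_X)$ itself is finitely generated. This matters only if ``embeds in $V$'' is read as requiring a finitely generated image. If not, the argument is complete anyway: any subgroup of a group embedding in $V$ also embeds in $V$. So the statement follows directly from $G(\Sim_X)\le RN\into V$, and finite generation of $G(\Sim_X)$ is never needed.
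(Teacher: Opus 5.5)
Your proposal is correct and is the paper's argument: the theorem is obtained by chaining the embedding of $G(\Sim_X)$ into a R\"over--Nekrashevych group on an irreducible edge shift with finite tuple, and the proposition that such groups are \textbf{CF-TR} and hence embed in $V$ by Theorem \ref{thm:CFTR-in-V}. Your side remarks are also consistent with the paper's two-step strategy, which exists precisely to get around the fact that $G(\Sim_X)$ may not be finitely generated: finite generation is needed only for the R\"over--Nekrashevych group, and subgroups of $V$ are closed under passing to subgroups.
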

We believe that a generalised approach for both $V_{(H,\theta)}$ groups and FSS groups is possible, though one would need to introduce cloning systems for edge shifts, which goes beyond the scope of this paper.

%% file: Pictures/tikz_VGtheta.tex
\begin{tikzpicture}[scale = 1.34, very thick]
    \clip (-.5,-1) rectangle (5.5,2);
   \node at (2.5,.77) {$g_1$};
   \draw[-latex] (2.25,.5) -- (2.75,.5);
    
    \draw (0,0) -- (1,1) -- (2,0);
    \node at (0,-.22) {$0$};
    \node at (2,-.22) {$1$};
    
    \draw (3,0) -- (4,1) -- (5,0);
    \node at (3,-.22) {$(1,g)$};
    \node at (5,-.22) {$0$};
    
\end{tikzpicture}
\begin{tikzpicture}[scale = 1.34, very thick]
    \clip (-.5,-1) rectangle (5.5,2);
    \node at (2.5,.77) {$g_2$};
    \draw[-latex] (2.25,.5) -- (2.75,.5);
    
    \draw (0,0) -- (1,1) -- (2,0);
    \draw (0.5,.5) -- (1,0);
    \node at (0,-.22) {$00$};
    \node at (1,-.22) {$01$};
    \node at (2,-.22) {$1$};
    
    \draw (3,0) -- (4,1) -- (5,0);
    \draw (4.5,.5) -- (4,0);
    \node at (3,-.22) {$00$};
    \node at (4,-.22) {$1$};
    \node at (5,-.22) {$(01,h)$};
    
 \end{tikzpicture} 

%% file: Sections/sec4_covers.tex
In this section, we study the relation between $\Gc(\Gamma)$ and $\Gc(\Lambda)$ when we have a covering $\pi\colon \Gamma\onto \Lambda$. In this case, \cite[Proposition 2.5]{CFTR} implies $\Gc(\Gamma)\onto \Gc(\Lambda)$. We give a more precise description using permutational wreath products.

\subsection{General case}

Suppose that $\Gamma$ is connected, so that the action $\Gamma\racts \Gc(\Gamma)$ is transitive. Note that the action is imprimitive, with imprimitivity blocks $B=\pi^{-1}(p)$. By the Krasner-Kaloujnine theorem (Theorem \ref{thm:Krasner_Kaloujnine}), we have
\[ \Gc(\Gamma)\into \Sym(B) \Wr_\Lambda \Gc(\Lambda). \]
We give two conditions under which this statement can be improved. Some computations prefacing these results are found in \cite[Proposition 10.6]{CFTR}.
\begin{prop} \label{prop:cover_Linvariant}
    Suppose that $L\acts \Gamma$ freely by automorphisms, and that the graph it covers is specifically $\Lambda=L\backslash \Gamma$. Then $\Gc(\Gamma)\into L\Wr_\Lambda \Gc(\Lambda)$. \vspace*{-2mm}
\end{prop}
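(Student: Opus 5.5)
The plan is to build the embedding explicitly. We trivialise the free $L$-action by choosing a section, and we read off the $L$-valued cocycle. Fix a fundamental domain: for each $p\in V\Lambda$ choose a lift $s(p)\in\pi^{-1}(p)$. Since $L$ acts freely and $\Lambda=L\backslash\Gamma$, every vertex of $\Gamma$ is written uniquely as $\ell\cdot s(p)$ with $\ell\in L$ and $p\in V\Lambda$. We write the $L$-action on the left, so that it commutes with the right action of $\Gc(\Gamma)$. This gives a bijection $V\Gamma\simeq L\times V\Lambda$.

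The key observation is that $L$ acts by label-preserving automorphisms. Each generator $a\in\Ac$ therefore commutes with $L$: $(\ell\cdot x)a=\ell\cdot(xa)$. Hence every $g\in\Gc(\Gamma)$ commutes with the $L$-action. Now define $\Phi_g\colon V\Lambda\to L$ by
\[ s(p)\cdot g=\Phi_g(p)\cdot s\bigl(p\cdot\bar g\bigr), \]
where $\bar g$ is the image of $g$ under the surjection $\Gc(\Gamma)\onto\Gc(\Lambda)$ of \cite[Proposition 2.5]{CFTR}. This is well defined because $\pi(s(p)g)=\pi(s(p))\bar g$. Then $(\ell\cdot s(p))g=\ell\,\Phi_g(p)\cdot s(p\bar g)$.

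Under the identification $V\Gamma\simeq L\times V\Lambda$, the action of $g$ is exactly $(\ell,p)\mapsto(\ell\cdot\Phi_g(p),\,p\cdot\bar g)$. This is the standard action of $(\Phi_g,\bar g)\in L\Wr_\Lambda\Gc(\Lambda)$ on $Y\times X$, where $L$ acts on $Y=L$ by right multiplication. We then set $g\mapsto(\Phi_g,\bar g)$. A direct check gives the cocycle identity $\Phi_{gh}(p)=\Phi_g(p)\Phi_h(p\bar g)$, matching the multiplication rule of the wreath product, so the map is a homomorphism. Injectivity is immediate: if $\Phi_g\equiv1$ and $\bar g=1$, then $g$ fixes every $\ell\cdot s(p)$, i.e.\ all of $V\Gamma$, and $\Gc(\Gamma)$ acts faithfully on $V\Gamma$ by definition.

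There is no real obstacle. The only care needed is the left/right bookkeeping: we must check that the commuting of $L$ with $\Gc(\Gamma)$ makes $\Phi_g$ act by right multiplication consistently with the paper's wreath product convention $(\Phi_1,g_1)(\Phi_2,g_2)=(\Phi_1(*)\Phi_2(*\cdot g_1),g_1g_2)$. One could alternatively invoke Theorem \ref{thm:Krasner_Kaloujnine} with blocks $\pi^{-1}(p)$. That route requires showing that the setwise stabiliser acts on a block through a subgroup isomorphic to $L$, which again uses that $\Gc(\Gamma)$ commutes with the regular $L$-action on each fibre. The direct construction above avoids this detour.
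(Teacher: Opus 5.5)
Your proof is correct. It differs from the paper's mainly in packaging.

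The paper invokes Theorem~\ref{thm:Krasner_Kaloujnine} for the blocks $\pi^{-1}(p)$. Its only work is to identify the image of $\SStab_{\Gc(\Gamma)}(B_0)$ in $\Sym(B_0)$. Writing $B_0=\{l x_0 : l\in L\}$ and $x_0 g = l_g x_0$, it shows $(l x_0)g = l\,l_g x_0$, i.e.\ right multiplication. It then uses connectedness of $\Gamma$ to see that every $l\in L$ occurs as some $l_g$, so the image is exactly $L$.

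Your section $s$ and cocycle $\Phi_g$ are precisely the coset-representative bookkeeping hidden inside the proof of Krasner--Kaloujnine. Both arguments rest on the same fact, that $\Gc(\Gamma)$ commutes with the $L$-action.

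Your route gives an explicit formula for the embedding and removes the transitivity hypothesis entirely. You only need the fibre permutations to lie in $L$, not to exhaust it, so $\Gamma$ need not be connected. The paper's route is shorter given the black box. It also shows that the local group of this imprimitive action is all of $L$, which is slightly sharper but not needed for the embedding itself.
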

\begin{proof}
Fix $x_0\in\Gamma$ and $B_0=\pi^{-1}(\pi(x_0))$ the associated imprimitivity block. We need to understand the action of the set-wise stabiliser
\[ \SStab_{\Gc(\Gamma)}(B_0) \coloneqq \bigl\{g\in \Gc(\Gamma)\mid B_0\cdot g=B_0 \bigr\} \]
on $B_0$. We identify $B_0$ with $L$ via $lx_0\leftrightarrow l$. We make two observations:
\begin{itemize}[leftmargin=6mm]
    \item Fix $x\in\Gamma$, $g\in\Gc(\Gamma)$ and $l\in L$. Let $w\in\Ac^*$ be a representative of $g$. We have a path $x\overset w\longto xg$. Since $L$ acts by automorphisms, we get another path $lx\overset w\longto l(xg)$. However, the end-point of the path starting at $lx$ and labelled by $w$ is by definition $(lx)g$. This proves that $l(xg)=(lx)g$.
    \item For every $g\in\SStab_{\Gc(\Gamma)}(B_0)$, there exists $l_g\in L$ such that $x_0g=l_gx_0$. Reciprocally, for every $l\in L$, there exists $g\in \Gc(\Gamma)$ such that $lx_0=x_0g$ (since $\Gamma$ is connected), and this $g$ belongs to $\SStab_{\Gc(\Gamma)}(B_0)$.
\end{itemize}
Finally, we look at the action of $g\in\SStab_{\Gc(\Gamma)}$ on $B_0$. For every $lx_0\in B_0$, we have $(lx_0)g=l(x_0g)=ll_gx_0$, i.e. $\SStab_{\Gc(\Gamma)}(B_0)$ acts by right multiplication, hence its image in $\Sym(B_0)$ is isomorphic to $L$.
\end{proof}

Another interesting case is when the sheets of the covering are glued along a \say{thin spine} in following sense: there exists a finite set $F\subseteq E(\Lambda)$ such that $\Lambda-F$ is connected and the restriction of $\pi|_C\colon C\to \Lambda-F$ is an isomorphism for each connected component $C$ of  $\Gamma-\pi^{-1}(F)$ (see Figure \ref{fig:spine}).
\begin{prop} \label{prop:cover_restricted}
    If $\Gamma\onto\Lambda$ has a thin spine, then $\Gc(\Gamma)\into \Sym(B)\wr_\Lambda\Gc(\Lambda)$.\vspace*{-1mm}
\end{prop}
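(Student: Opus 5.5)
We will apply the ``moreover'' part of the Krasner--Kaloujnine theorem (Theorem \ref{thm:Krasner_Kaloujnine}). We already know that $\Gc(\Gamma)\into\Sym(B)\Wr_\Lambda\Gc(\Lambda)$, with imprimitivity blocks the fibres $B_p=\pi^{-1}(p)$ for $p\in V\Lambda$. So it only remains to show the following: every $g\in\Gc(\Gamma)$ preserving each block $B_p$ moves points only inside finitely many blocks.

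The first step is to set up coordinates. Let $C_1,C_2,\ldots$ be the connected components of $\Gamma-\pi^{-1}(F)$ (the sheets). Since $\pi|_{C_k}\colon C_k\to\Lambda-F$ is an isomorphism, each vertex of $\Gamma$ is uniquely written as a pair $(p,k)$ with $p\in V\Lambda$ and $k$ a sheet index. Hence every block $B_p$ is identified with the same set of sheet indices, and this identification is compatible with all edges not lying over $F$. Concretely, for $a\in\Ac$ and an edge $p\overset a\to pa$ of $\Lambda$ not in $F$, we have $(p,k)\cdot a=(pa,k)$. Only edges lying over the finite set $F$ can change the sheet index. For $e\in F$ from $p$ to $q$, the edges above $e$ define a permutation $\sigma_e$ of sheet indices, with $(p,k)\mapsto (q,\sigma_e(k))$.

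The second step handles words. Take $g\in\Gc(\Gamma)$ fixing every block, represented by a word $w=a_1\cdots a_n$. Then $\pi(g)=1$ in $\Gc(\Lambda)$, so $w$ labels a closed path at every $p\in\Lambda$. For a vertex $(p,k)$, we have $(p,k)\cdot g=(p,\rho_p(k))$, where $\rho_p$ is the product of the permutations $\sigma_e^{\pm1}$ over the edges $e\in F$ crossed by the closed path in $\Lambda$ starting at $p$ labelled by $w$. If that path avoids $F$, then $\rho_p=\id$, so $g$ acts trivially on $B_p$. A path of length $n$ meets $F$ only if $p$ lies within distance $n$ of an endpoint of some edge of $F$. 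Because $\Lambda$ is locally finite (degree at most $2\abs{\Ac}$) and $F$ is finite, these $p$ form a finite set $S_g$. Thus $\supp(g)\subseteq\bigsqcup_{p\in S_g}B_p$, and the Krasner--Kaloujnine criterion gives $\Gc(\Gamma)\into\Sym(B)\wr_\Lambda\Gc(\Lambda)$.

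The main obstacle is the first step: checking that the sheet coordinates are well defined and compatible across blocks. In particular, we must verify that the map $(p,k)$ is a bijection. This uses that each $C_k$ maps isomorphically onto the connected graph $\Lambda-F$, so every sheet meets every fibre exactly once. It also uses that edges of $\Gamma$ outside $\pi^{-1}(F)$ stay inside a single sheet. Once these facts are checked, we identify all $\Sym(B_p)$ with one $\Sym(B)$ via the sheet labels, and the remaining argument is the locality computation above.
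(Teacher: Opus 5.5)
Your proposal is correct and follows the paper's proof: you verify the ``moreover'' condition of Krasner--Kaloujnine by observing that a block-preserving $g$, represented by $w$, can only move $x$ if the path $x\overset{w}\longto xg$ crosses $\pi^{-1}(F)$, which confines $\supp(g)$ to the finitely many fibres over $\bigcup_i \partial F\cdot w_i^{-1}$. The global sheet coordinates you flag as the main obstacle are not actually needed: the paper only uses that a path avoiding $\pi^{-1}(F)$ stays in the component $C_x$ of $x$, and that $C_x\cap\pi^{-1}(\pi(x))=\{x\}$ because $\pi|_{C_x}$ is injective.
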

\begin{proof}
Consider $g\in\Gc(\Gamma)$ such that $g$ fix the fibres of $\Gamma\onto\Lambda$ set-wise (i.e.\ $g$ maps to $1$ in $\Gc(\Lambda)$). We fix a word $w\in\Ac^*$ representing $g$.

For each $x\in\Gamma$, either the path $x\overset w\longto xg$ is included in $C_x$ the connected component of $\Gamma-\pi^{-1}(F)$ containing $x$, in which case
\[ xg \in C_x \cap \pi^{-1}(\pi(x)) = \{x\},\]
i.e.\ $xg=x$, or uses an edge from $\pi^{-1}(F)$. Therefore the support of $g$ satisfies
\[ \supp(g) \subseteq \bigcup_{i=0}^\ell \pi^{-1}(\partial F)\cdot w_i^{-1} = \bigcup_{p\in S_g} \pi^{-1}(p) \]
where $\partial F$ is the set of vertices incident to $F$, the words $w_0,w_1,\ldots,w_\ell\in\Ac^*$ are all the prefixes of $w$, and $S_g=\bigcup_{i=0}^\ell \partial F\cdot w_i^{-1}$. The statement now follows from Theorem \ref{thm:Krasner_Kaloujnine}.
\end{proof}
\begin{rem} \
    \begin{itemize}[leftmargin=6mm]
        \item If $\Gamma$ is context-free, then $L\acts\Gamma$ freely implies that $L$ is virtually free.
        \item If both conditions hold at the same time, then $\Gc(\Gamma)\into L\wr_\Lambda\Gc(\Lambda)$.
    \end{itemize}
\end{rem}
\begin{center}
    \import{Pictures/}{tikz_spine.tex}
    \captionsetup{font=small}
    \captionof{figure}{A covering $\pi\colon \Gamma\onto\Lambda$ with a thin spine. $F$ and $\pi^{-1}(F)$ in dashes.}
    \label{fig:spine}
\end{center}

\subsection{Preserving context-freeness} \label{sec:cover_CF}

Suppose that $\pi\colon\Gamma\onto\Lambda$ has a thin spine, as defined in the previous section. We construct a \say{spine} graph $\Sp(\Gamma,\Lambda)$ that fully encodes the covering:
\begin{itemize}[leftmargin=6mm]
    \item The vertex set is $\pi^{-1}(\partial F)$, recall that $\partial F$ is the set of vertices incident to $F$.
    \item Fix a total order on $\partial F$. We define an extended alphabet
    \[ \tilde\Ac=\Ac\sqcup\bigl\{k_{p,q}, k_{p,q}^{-1}\mid p,q\in\partial F,\; p\le q\bigr\}. \]
    \item The edge set consists of two types of edges:
    \begin{itemize}[leftmargin=4mm]
        \item Edges from $\pi^{-1}(F)$ with their original labels in $\Ac$, and
        \item If $x,y$ belong to the same component of $\Gamma-\pi^{-1}(F)$ and $\pi(x)\le \pi(y)$, we add an edge $x\to y$ (resp.\ $y\to x$) labeled by $k_{\pi(x),\pi(y)}$ (resp.\ $k_{\pi(x),\pi(y)}^{-1}$).
    \end{itemize}
\end{itemize}

\medskip

We characterise when such covers are context-free:
\begin{prop} \label{prop:cover_is_CF}
    Let $\Gamma,\Lambda$ be inverse $\Ac$-graphs as above. Suppose that $\Gamma,\Lambda$ are connected and complete. The following assertions are equivalent:
    \begin{enumerate}[leftmargin=8mm, label={\normalfont(\alph*)}]
        \item $\Gamma$ is context-free, and
        \item $\Sp(\Gamma,\Lambda)$ and $\Lambda$ are context-free.
    \end{enumerate}
\end{prop}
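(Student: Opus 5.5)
The plan is to use the Muller--Schupp characterisation (finitely many end-cone types) for $\Lambda$ and $\Sp(\Gamma,\Lambda)$, and the pushdown-automaton/grammar description for $\Gamma$. The basic geometric observation is this. Each connected component $C$ of $\Gamma-\pi^{-1}(F)$ is isomorphic to $\Lambda-F$, and $\Lambda-F$ is $\Lambda$ with finitely many edges removed. Hence $\Gamma$ is obtained from $\Sp(\Gamma,\Lambda)$ by replacing each $k$-clique (one for each sheet $C$) by a copy of $\Lambda-F$. Conversely, $\Sp(\Gamma,\Lambda)$ is obtained from $\Gamma$ by contracting each sheet onto its finitely many spine vertices $C\cap\pi^{-1}(\partial F)$. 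Concretely, the map $\Sp(\Gamma,\Lambda)\to\Gamma$ (identity on vertices, $k_{p,q}$-edges sent to a fixed path in $\Lambda-F$ from $p$ to $q$) is $1$-Lipschitz up to the constant $D=\max d_{\Lambda-F}(p,q)$. A quasi-inverse on the spine vertices is $1$-Lipschitz in the other direction.

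For (a)$\Rightarrow$(b), first show that $\Lambda$ is context-free. Fix a sheet $C$; then $\Lambda-F\cong C\subseteq\Gamma$. Choose the basepoint $x_0\in C$. Every end-cone of $\Lambda$ far from $\partial F$ (at distance $>R$, where $R$ bounds the distance from $x_0$ to $\partial F$) avoids $F$, so it coincides with an end-cone of $C$. Such an end-cone of $C$ is a union of connected components of end-cones of $\Gamma$, namely those not meeting $\pi^{-1}(F)$, and these already occur among the end-cones of $\Gamma$ with basepoint $x_0$. Hence $\Lambda$ has finitely many end-cone types. Alternatively, intersect $L(\Gamma,x_0)$ with the regular language of words whose path does not cross $\pi^{-1}(F)$. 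That condition is not regular a priori, so instead I would build a PDA that simulates the automaton for $\Gamma$ and tracks the finitely many positions of $F$-crossings. For $\Sp(\Gamma,\Lambda)$, a word $w$ over $\tilde\Ac$ labels a cycle at $x_0$ in $\Sp$ if and only if the word $\hat w\in\Ac^*$, obtained by substituting for each $k_{p,q}^{\pm1}$ the fixed path $\gamma_{p,q}^{\pm 1}$ in $\Lambda-F$, labels a cycle in $\Gamma$. This uses that $\gamma_{p,q}$ stays in the sheet, so it lifts uniquely to the same sheet. Thus $L(\Sp,x_0)$ is the inverse image of $L(\Gamma,x_0)$ under a monoid homomorphism, intersected with the regular constraint that $k_{p,q}$ is only read at a vertex of $\pi^{-1}(p)$. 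That constraint is checkable in the finite control, since the current projection to $\partial F$ evolves finitely along the spine. Context-free languages are closed under inverse homomorphism and intersection with regular languages, which gives context-freeness.

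For (b)$\Rightarrow$(a), I would build a PDA for $L(\Gamma,x_0)$ from PDAs $M_\Lambda$ and $M_{\Sp}$. While reading $w\in\Ac^*$, the automaton stays inside a sheet except when crossing edges of $\pi^{-1}(F)$. Between crossings it must know which vertex of $\Lambda-F$ it occupies, and this is unbounded data. However, only the entry point $p\in\partial F$ and the current exit point matter for the spine. The plan is to use the end-cone decomposition of $\Lambda-F$, which is context-free as a finite modification of $\Lambda$, and perform a run of $M_{\Sp}$ on the ``compressed'' word. A sub-word read inside a sheet, from $p$ to $q$, is replaced by $k_{p,q}$. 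The compressed word is a cycle in $\Sp$ exactly when $w$ is a cycle in $\Gamma$, except at the final segment, which must additionally return to $x_0$ within its sheet.

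The key point is that, in $\Lambda-F$, the set of words going from $p$ to $q$ is context-free for each pair of vertices. We therefore need a single PDA that both recognises these segments and feeds $M_{\Sp}$, and two stacks cannot be run naively. I expect this to be the main obstacle. The way around it is a grammar substitution: the language of words labelling a path $p\to q$ in $\Lambda-F$ is context-free, say $L_{p,q}$. Then $L(\Gamma,x_0)$ is obtained from $L(\Sp,x_0)$ (adjusted for basepoint segments) by the substitution $k_{p,q}\mapsto L_{p,q}\cap\{\text{words with no proper prefix reaching } \partial F \text{ via } F\}$, and context-free languages are closed under context-free substitution. One must check that this decomposition of a cycle into maximal sheet segments is unique and captures every cycle, which follows from the thin-spine property and determinism.
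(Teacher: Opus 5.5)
Your treatment of $\Sp(\Gamma,\Lambda)$ in (a)$\Rightarrow$(b) and your grammar substitution for (b)$\Rightarrow$(a) are the paper's arguments. In the latter, the extra intersection with ``no proper prefix reaching $\partial F$ via $F$'' is vacuous once $L_{p,q}$ is defined in $\Lambda-F$, and uniqueness of the decomposition is not needed.

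The gap is the proof that $\Lambda$ is context-free. You claim that a far end-cone of $\Lambda$ ``coincides'' with an end-cone of $C\cong\Lambda-F$ and is assembled from end-cones of $\Gamma$. This is false. End-cones are components of complements of balls, and $d_\Lambda$, $d_{\Lambda-F}$ and $d_\Gamma|_C$ are different metrics, even far from $\partial F$. For instance (ignoring labels), let $\Lambda$ be a ladder with rails $(a_k)$ and $(b_k)$, rungs $a_kb_k$, and $x_0$ joined to $a_0$ and $b_0$; take $F=\{x_0b_0\}$. The end-cones of $\Lambda$ are the blocks $\{a_j,b_j:j\ge k\}$ with frontier $\{a_k,b_k\}$. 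For $k\ge1$, the end-cones of $\Lambda-F$ are the staircases $\{a_j:j\ge k\}\cup\{b_j:j\ge k-1\}$ with frontier $\{a_k,b_{k-1}\}$. So nothing in your sketch bounds the number of types.

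Your fallback PDA ``tracking the $F$-crossings'' is not carried out. An arbitrary pushdown automaton for $L(\Gamma,x_0)$ does not record its current vertex, let alone whether that vertex lies in the finite set $C\cap\pi^{-1}(\partial F)$. Even if you built a vertex-encoding automaton that does this, you would only get $\Lambda-F$ and would still have to add $F$ back.

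The missing idea is to reuse the regular spine language. Take $p_0\in\partial F$ and $x_0\in\pi^{-1}(p_0)$. Let $\psi$ be your substitution $k_{p,q}^{\pm1}\mapsto\gamma_{p,q}^{\pm1}$. Let $R$ be the regular language of labels of paths in $\Sp(\Gamma,\Lambda)$ from $x_0$ back to the fibre $\pi^{-1}(p_0)$; it is recognised by the finite automaton on $\partial F$. Since $\Sp(\Gamma,\Lambda)$ is connected, $\pi^{-1}(p_0)=\{x_0\cdot\psi(r):r\in R\}$. Hence
\[ L(\Lambda,p_0)=\bigl\{w\in\Ac^*\;\big|\;\exists r\in R,\ w\,\psi(r)^{-1}\in L(\Gamma,x_0)\bigr\}, \]
which is a quotient of a context-free language by a regular one. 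This is exactly where finiteness of $F$ is used.

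The paper phrases the same step group-theoretically. The set $\{g\in\Gc(\Gamma): x_0g\in\pi^{-1}(p_0)\}$ is the product of $\Stab_{\Gc(\Gamma)}(x_0)$ with a rational set, hence recognisably context-free by Herbst's lemma.
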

\begin{proof}
    (a) $\Rightarrow$ (b): Fix $x_0\in \Sp(\Gamma,\Lambda)$. The paths in $\Sp(\Gamma,\Lambda)$ starting at $x_0$ and ending in the same fiber $\pi^{-1}(\pi(x_0))$ form a regular language \[ R(\Sp(\Gamma,\Lambda),x_0)\subseteq\tilde\Ac^* \] recognised by the following automaton:
    \begin{itemize}[leftmargin=6mm]
        \item The vertex set is $\partial F$,
        \item The edge set contains edges from $F$ with their original labels, together with a complete graph (with loops) on $\partial F$ labelled by $k_{p,q}^\pm$,
        \item The unique start and accept vertex is $\pi(x_0)$.
    \end{itemize}
    For each $p, q\in\partial F$ with $p\le q$, we fix a word $w_{p,q}\in\Ac^*$ labelling a path $p\to q$ in $\Lambda-F$. We define a homomorphism $\psi\colon\tilde\Ac^*\to\Ac^*$ sending each $a\in\Ac$ to itself, and $k_{p,q}^{\pm}$ to $w_{p,q}^{\pm}$.
    
    To prove that $\Sp(\Gamma,\Lambda)$ is context-free, we observe that
    \[ L\bigl(\Sp(\Gamma,\Lambda),x_0\bigr)=\psi^{-1}\Bigl(L(\Gamma,x_0)\cap \psi\bigl(R(\Sp(\Gamma,\Lambda),x_0)\bigl) \Bigr),\]
    which is context-free since $L(\Gamma,x_0)$ is context-free, and context-free language are closed under intersection with regular languages and inverse image by homomorphisms. (Note that $\Sp(\Gamma,\Lambda)$ is connected since $\Gamma$ is connected.)
    
    To prove that $\Lambda$ is context-free, fix $p_0\in\partial F$ and $x_0\in\pi^{-1}(p_0)$. Observe that
    \begin{align*}
        L(\Lambda,p_0)
        & = \bigl\{ w\in\Ac^* \mid p_0w=p_0 \text{ in }\Gc(\Lambda)\bigr\} \\
        & = \bigl\{ w\in\Ac^* \mid x_0w\in\pi^{-1}(p_0) \text{ in }\Gc(\Gamma)\bigr\}.
    \end{align*} 
    By definition, this language is context-free if and only if
    \[ \bigl\{g\in\Gc(\Gamma)\mid x_0g\in\pi^{-1}(p_0)\bigr\} = R\cdot \Stab_{\Gc(\Gamma)}(x_0) \le\Gc(\Gamma) \]
    is \emph{recognisably context-free}, where $R$ is the image of $R(\Sp(\Gamma,\Lambda),x_0)$ in $\Gc(\Gamma)$ (after applying $\psi$ and evaluating). This follows from \cite[Lemma 4.1]{Herbst1991}, since $R$ is rational and $\Stab_{\Gc(\Gamma)}(x_0)\le\Gc(\Gamma)$ is recognisably context-free.
    
    \medskip
    
   (b) $\Rightarrow$ (a): Fix $x_0\in \pi^{-1}(\partial F)$. The languages $L(\Sp(\Gamma,\Lambda),x_0)\subseteq\tilde\Ac^*$ and
   \[ K_{p,q} = \bigl\{w\in\Ac^* : p\overset w\longto q\text{ in }\Lambda- F \bigr\} \]
   are context-free, they are produced by grammars. We consider the union of these grammars, where we now consider the letter $k_{p,q}^\pm$ as non-terminals, and add production rules replacing $k_{p,q}$ (resp.\ $k_{p,q}^{-1}$) by the starting variable of the grammar producing $K_{p,q}$ (resp.\ $K_{q,p}$). This new grammar produces $L(\Gamma,x_0)\subseteq\Ac^*$ since every closed path in $\Gamma$ can be decomposed as a closed path in $\Sp(\Gamma,\Lambda)$, with each edge $k_{p,q}^\pm$ replaced by an excursion in a connected component of $\Gamma-\pi^{-1}(F)$. We conclude that $\Gamma$ is context-free.
\end{proof}

\medskip

\begin{rem}
    Note that we can drop the assumption \say{complete}, in which case $\Gc(\Gamma)$ is replaced by the inverse monoid $\Ic(\Gamma)$. The proof works verbatim.
\end{rem}
\begin{rem}
    For general coverings, we can always find a set $F\subseteq E(\Lambda)$ satisfying that $\Lambda-F$ is connected and the restriction of $\pi$ to each connected component of $\Gamma-\pi^{-1}(F)$ is an isomorphism. For instance, take $F$ the set of edges outside a spanning tree of $\Lambda$. However this set will not usually be finite (see eg.\ Figure \ref{fig:lamplighter}). This leads to the following question: \vspace*{1mm}
\begin{ques}
    What is the correct analogue of Proposition \ref*{prop:cover_is_CF} in the case that $F$ is infinite? The case of finite sheeted coverings is of special interest. \vspace*{-1mm}
\end{ques}
The result should be sufficiently general to treat the case of the Schreier graph $\Sch(H\times\D, V_{(H,\theta)})$ covering the graph $\Sch(\D,V)$.
\end{rem}


%% file: Pictures/tikz_spine.tex
\begin{tikzpicture}[every node/.style={circle, fill, inner sep=1.5pt}]
    
    \begin{scope}[shift={(0,0)}]
        \node[circle, fill, inner sep=1.5pt] (a0) at (0,1) {};
        \node[circle, fill, inner sep=1.5pt] (b0) at (0,2) {};
        \node[circle, fill, inner sep=1.5pt] (c0) at (0,3) {};
        \node[circle, fill, inner sep=1.5pt] (d0) at (0,4) {};
        \node[circle, fill, inner sep=1.5pt] (e0) at (0,5) {};

        \draw[purple, thick, bend left=20] (a0) to (b0);
        \draw[purple, thick, bend left=20] (c0) to (d0);
        \draw[purple, thick, bend left=20] (d0) to (e0);

        \draw[blue, thick, bend right=20] (a0) to (c0);
        \draw[blue, thick, bend right=20] (c0) to (d0);
        \draw[blue, thick, bend right=20] (d0) to (e0);
        \draw[blue, thick, loop, looseness=15, out=150, in=-150] (b0) to (b0);

        \draw[ForestGreen, thick, bend left=40] (b0) to (d0);
    \end{scope}
    
    \begin{scope}[shift={(2,0)}]
        \node[circle, fill, inner sep=1.5pt] (a1) at (0,1) {};
        \node[circle, fill, inner sep=1.5pt] (b1) at (0,2) {};
        \node[circle, fill, inner sep=1.5pt] (c1) at (0,3) {};
        \node[circle, fill, inner sep=1.5pt] (d1) at (0,4) {};
        \node[circle, fill, inner sep=1.5pt] (e1) at (0,5) {};

        \draw[purple, thick, bend left=20] (a1) to (b1);
        \draw[purple, thick, bend left=20] (c1) to (d1);
        \draw[purple, thick, bend left=20] (d1) to (e1);

        \draw[blue, thick, bend right=20] (a1) to (c1);
        \draw[blue, thick, bend right=20] (c1) to (d1);
        \draw[blue, thick, bend right=20] (d1) to (e1);
        \draw[blue, thick, loop, looseness=15, out=150, in=-150] (b1) to (b1);

        \draw[ForestGreen, thick, bend left=40] (b1) to (d1);
    \end{scope}
    
    \begin{scope}[shift={(4,0)}]
        \node[circle, fill, inner sep=1.5pt] (a2) at (0,1) {};
        \node[circle, fill, inner sep=1.5pt] (b2) at (0,2) {};
        \node[circle, fill, inner sep=1.5pt] (c2) at (0,3) {};
        \node[circle, fill, inner sep=1.5pt] (d2) at (0,4) {};
        \node[circle, fill, inner sep=1.5pt] (e2) at (0,5) {};

        \draw[purple, thick, bend left=20] (a2) to (b2);
        \draw[purple, thick, bend left=20] (c2) to (d2);
        \draw[purple, thick, bend left=20] (d2) to (e2);

        \draw[blue, thick, bend right=20] (a2) to (c2);
        \draw[blue, thick, bend right=20] (c2) to (d2);
        \draw[blue, thick, bend right=20] (d2) to (e2);
        \draw[blue, thick, loop, looseness=15, out=150, in=-150] (b2) to (b2);

        \draw[ForestGreen, thick, bend left=40] (b2) to (d2);
    \end{scope}

    \begin{scope}[shift={(6,0)}]
        \node[circle, fill, inner sep=1.5pt] (a3) at (0,1) {};
        \node[circle, fill, inner sep=1.5pt] (b3) at (0,2) {};
        \node[circle, fill, inner sep=1.5pt] (c3) at (0,3) {};
        \node[circle, fill, inner sep=1.5pt] (d3) at (0,4) {};
        \node[circle, fill, inner sep=1.5pt] (e3) at (0,5) {};

        \draw[purple, thick, bend left=20] (a3) to (b3);
        \draw[purple, thick, bend left=20] (c3) to (d3);
        \draw[purple, thick, bend left=20] (d3) to (e3);

        \draw[blue, thick, bend right=20] (a3) to (c3);
        \draw[blue, thick, bend right=20] (c3) to (d3);
        \draw[blue, thick, bend right=20] (d3) to (e3);
        \draw[blue, thick, loop, looseness=15, out=150, in=-150] (b3) to (b3);

        \draw[ForestGreen, thick, bend left=40] (b3) to (d3);
    \end{scope}

    \begin{scope}
        \clip (-1,.8) rectangle (7,5.2);
        \draw[purple, thick, dashed] (b0) to (c2);
        \draw[purple, thick, dashed] (b1) to (c0);
        \draw[purple, thick, dashed] (b2) to (8,3);
        \draw[purple, thick, bend left=20, dashed] (b3) to (c3);
        \draw[purple, thick, dashed] (-2,2) to (c1);

        \draw[ForestGreen, thick, bend left=15, dashed] (c0) to (c2);
        \draw[ForestGreen, thick, loop, looseness=15, out=30, in=-30, dashed] (c1) to (c1);
        \draw[ForestGreen, thick, bend left=15, dashed] (c3) to (8,3);
    \end{scope}
    
    \begin{scope}[shift={(10,0)}]
        \node[circle, fill, inner sep=1.5pt] (a) at (0,1) {};
        \node[circle, fill, inner sep=1.5pt] (b) at (0,2) {};
        \node[circle, fill, inner sep=1.5pt] (c) at (0,3) {};
        \node[circle, fill, inner sep=1.5pt] (d) at (0,4) {};
        \node[circle, fill, inner sep=1.5pt] (e) at (0,5) {};

        \draw[purple, thick, bend left=20] (a) to (b);
        \draw[purple, thick, dashed, bend left=20] (b) to (c);
        \draw[purple, thick, bend left=20] (c) to (d);
        \draw[purple, thick, bend left=20] (d) to (e);

        \draw[blue, thick, bend right=20] (a) to (c);
        \draw[blue, thick, bend right=20] (c) to (d);
        \draw[blue, thick, bend right=20] (d) to (e);
        \draw[blue, thick, loop, looseness=15, out=150, in=-150] (b) to (b);

        \draw[ForestGreen, thick, bend left=40] (b) to (d);
        \draw[ForestGreen, thick, dashed, loop, looseness=15, out=30, in=-30] (c) to (c);
    \end{scope}

    \draw[->>] (7.8,3) -- (8.8,3);
\end{tikzpicture}

%% file: Sections/sec5_limits.tex
The goal of this section is to replace a context-free graph $\Gamma$ by some set of \say{nicer} graphs, specifically $\Z$-invariant graphs, while keeping control on the associated transition groups. A key idea to get these nicer graphs is to take Chabauty limits of our starting graph in \say{periodic directions}.

\subsection{General case}

In this section, we prove the following result generalising \cite[\S8]{CFTR}:
\begin{prop} \label{prop:cutting}
    Let $\Gamma$ be a connected $\Ac$-graph and $F\subset V\Gamma$ be a finite set. Let $\Gamma^{(1)}$, ..., $\Gamma^{(k)}$ be the connected components of $\Gamma-F$. Consider connected complete $\Ac$-graphs $\Lambda^{(i)}$ such that $\Gamma^{(i)}\into \Lambda^{(i)}\preceq \Gamma$. Then
    \[ 1\longto \Bsc_F\longto \Gc(\Gamma)\longto H_F\longto 1 \]
    where $H_F$ is the image of the diagonal map from $F_\Ac\to \bigoplus_{i=1}^k \Gc(\Lambda^{(i)})$, and $\mathscr B_F$ is locally finite with $\ord(h)\le g\bigl(\abs F (\abs h_\Ac+1)\bigr)$ for all $h\in \mathscr B_F$, where $g$ is the Landau's function.
\end{prop}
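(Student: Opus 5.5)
The plan is to first check that the map $\Gc(\Gamma)\to H_F$ is well defined, and then to bound the support of every element of its kernel $\Bsc_F$. Both the local finiteness and the order bound will follow from that support bound.

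\emph{Step 1: the map is well defined and surjective.} I will show $L(\Gamma)\subseteq L(\Lambda^{(i)})$ for each $i$. Let $w\in L(\Gamma)$ and $y\in V\Lambda^{(i)}$, and put $R=\abs w$. Since $\Lambda^{(i)}\preceq\Gamma$, the rooted ball $(D_R^{\Lambda^{(i)}}(y),y)$ is isomorphic to some ball $(D_R^{\Gamma}(\xi),\xi)$. The word $w$ labels a cycle at $\xi$, and this cycle stays inside the ball of radius $R$. Transporting it by the isomorphism gives a cycle at $y$ labelled by $w$. Hence $w$ is trivial in every $\Gc(\Lambda^{(i)})$, which uses the identification $\Gc(\Gamma)\simeq F_\Ac/L(\Gamma)$. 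So the diagonal map $F_\Ac\to\bigoplus_i\Gc(\Lambda^{(i)})$ factors through $\Gc(\Gamma)$. It is onto $H_F$ by definition of $H_F$, and I set $\Bsc_F$ to be its kernel.

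\emph{Step 2: a support bound for $h\in\Bsc_F$.} Fix a word $w\in\Ac^*$ of length $\abs h_\Ac$ representing $h$, and let $x\in V\Gamma\setminus F$ lie in the component $\Gamma^{(i)}$. Suppose the path $x\overset{w}\longto xh$ in $\Gamma$ avoids $F$. Then it lies entirely in $\Gamma^{(i)}$, and its image under the label-preserving embedding $\Gamma^{(i)}\into\Lambda^{(i)}$ is a path in $\Lambda^{(i)}$ labelled by $w$. Since $\Lambda^{(i)}$ is deterministic, this path ends at the image of $x\cdot w$. But $w$ acts trivially on $\Lambda^{(i)}$, so its endpoint is the image of $x$ itself, and injectivity of the embedding gives $xh=x$.

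Therefore every point moved by $h$ lies on the path from some point of $F$ read backwards along a prefix of $w$. This gives
\[ \supp(h)\subseteq \bigcup_{u \text{ prefix of } w} F\cdot u^{-1}, \qquad\text{so}\qquad \abs{\supp(h)}\le \abs F\,(\abs h_\Ac+1). \]
A permutation moving at most $n$ points has order at most Landau's $g(n)$. This yields $\ord(h)\le g\bigl(\abs F(\abs h_\Ac+1)\bigr)$.

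\emph{Step 3: local finiteness.} Let $h_1,\ldots,h_m\in\Bsc_F$ and let $U=\bigcup_j\supp(h_j)$, which is finite by Step 2. Each $h_j$ maps its own support onto itself and fixes every point outside it, so $U$ is invariant under each $h_j$ and the complement of $U$ is fixed pointwise. Hence $\la h_1,\ldots,h_m\ra$ restricts faithfully to a subgroup of $\Sym(U)$, which is finite.

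The only delicate point is Step 2. It needs care that the path avoiding $F$ really stays within one component $\Gamma^{(i)}$: a path that avoids the vertex set $F$ cannot cross between components. It also needs the embedding $\Gamma^{(i)}\into\Lambda^{(i)}$ to be a label-preserving injective graph morphism, so that paths transfer and endpoints can be compared. Points of $F$ itself are already covered by the prefix $u=\varepsilon$.
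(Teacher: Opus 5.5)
Your proof is correct and follows essentially the same route as the paper. The paper first proves $L(\Gamma)\subseteq L(\Lambda)$ for $\Lambda\preceq\Gamma$ (Lemma \ref{lem:weak_contain_implies_quotient}); there it fixes a root $y_0$ and uses radius $d(y_0,y)+\abs w$, whereas you root at $y$ directly, a purely cosmetic difference. It then bounds $\supp(h)\subseteq\bigcup_i Fw_i^{-1}$ for $h\in\Bsc_F$ exactly as you do, and deduces both local finiteness and the Landau-function bound from that support estimate.
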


Before proving the result, we start with a lemma of independent interest.

\begin{lemma} \label{lem:weak_contain_implies_quotient}
    Let $\Gamma,\Lambda$ be connected $\Ac$-graphs such that $\Gamma\succeq \Lambda$, then \[ \Gc(\Gamma)\onto \Gc(\Lambda). \]
\end{lemma}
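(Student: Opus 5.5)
Both transition groups are quotients of the free group, $\Gc(\Gamma)\simeq F_\Ac/L(\Gamma)$ and $\Gc(\Lambda)\simeq F_\Ac/L(\Lambda)$. It therefore suffices to prove the inclusion of kernels $L(\Gamma)\subseteq L(\Lambda)$ inside $F_\Ac$. The surjection $\Gc(\Gamma)\onto\Gc(\Lambda)$ is then the map induced by the identity on $\Ac$, sending each generator $a\colon x\mapsto xa$ of $\Gc(\Gamma)$ to the generator $a$ of $\Gc(\Lambda)$.

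Fix a reduced word $w\in F_\Ac$ with $w\in L(\Gamma)$, meaning that every vertex $x\in V\Gamma$ satisfies $x\cdot w=x$: the path starting at $x$ labelled by $w$ exists and closes up. Fix an arbitrary vertex $\lambda\in V\Lambda$; we must show $\lambda\cdot w=\lambda$. The key point is locality. Whether the path from a vertex labelled by $w$ exists and returns to its start depends only on the labelled ball of radius $R=\abs w$ around that vertex. Since $\Lambda\preceq\Gamma$, and the definition allows any basepoint of $\Lambda$, the rooted graph $(\Lambda,\lambda)$ lies in the closure of $\{(\Gamma,\xi):\xi\in V\Gamma\}$. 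Hence there is $\xi\in V\Gamma$ with $(D_R^\Gamma(\xi),\xi)\cong(D_R^\Lambda(\lambda),\lambda)$ as rooted labelled graphs.

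The path from $\xi$ labelled by $w$ stays inside $D_R^\Gamma(\xi)$ and is a closed cycle there. Transporting it through the isomorphism gives a closed cycle at $\lambda$ labelled by $w$ in $D^\Lambda_R(\lambda)\subseteq\Lambda$. Since $\lambda$ was arbitrary, $w\in L(\Lambda)$.

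The only subtle point is the interplay between words in $\Ac^*$ (where the languages $L(\cdot)$ are defined) and elements of $F_\Ac$. I would work with words over $\Ac\cup\Ac^{-1}$ and use the inverse structure of the graphs: $\lambda(e^{-1})=\lambda(e)^{-1}$, together with determinism and co-determinism. This makes following a word with inverse letters well defined, and a ball isomorphism respects inverse edges because it preserves labels and orientation. When $\Lambda$ is not complete, the same locality argument shows that a word readable from $\xi$ is readable from $\lambda$. I would also remark that completeness transfers from $\Gamma$ to $\Lambda$ through balls of radius $1$, so $\Gc(\Lambda)$ is well defined whenever $\Gc(\Gamma)$ is. I expect no real obstacle beyond stating this locality observation cleanly.
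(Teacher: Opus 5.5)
Your proof is correct and follows the paper's argument: show $L(\Gamma)\subseteq L(\Lambda)$ by transporting the closed $w$-path through a rooted labelled ball isomorphism. The only cosmetic difference is the choice of root. The paper fixes one root $y_0\in V\Lambda$ and takes radius $R=d_\Lambda(y_0,y)+\abs{w}$ to reach an arbitrary vertex $y$. You instead re-root directly at $\lambda$ using the ``equivalently, all'' clause in the definition of $\preceq$, and that clause is justified by exactly this radius-enlarging trick.
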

\begin{proof}
Fix $y_0\in \Lambda$. We have $(\Lambda,y_0)\in\overline{\{(\Gamma,x):x\in V(\Gamma)\}}$. Let $w\in L(\Gamma)$ and $y\in V(\Lambda)$. We define $R=d_\Lambda(y_0,y)+\abs{w}_\Ac$. By assumption, there exists $x\in V(\Gamma)$ and a (rooted, labelled) isomorphism
\[ \psi\colon \bigl(D_R^\Lambda(y_0),y_0\bigr) \overset\sim\longto \bigl(D_R^\Gamma(x),x\bigr).\]
Since $w\in L(\Gamma)$, it labels a cycle $\psi(y)\to \psi(y)$ in $\Gamma$. Moreover this cycle is fully included in $D_R^\Gamma(x)$ as $R$ is large enough. We deduce that $w$ labels a cycle $y\to y$ in $\Lambda$. Since the argument holds for any $y\in V(\Lambda)$, we conclude that $w\in L(\Lambda)$. It follows that
$$\ker(F_\Ac\onto\Gc(\Gamma))=L(\Gamma)\subseteq L(\Lambda)=\ker(F_\Ac\onto\Gc(\Lambda)),$$
i.e. $\Gc(\Gamma)\onto\Gc(\Lambda)$ via the natural map.
\end{proof}

\medskip

We can now proceed with the proof:
\begin{proof}[Proof of Proposition \ref{prop:cutting}]
    Our last Lemma \ref*{lem:weak_contain_implies_quotient} implies that the epimorphism $F_\Ac\onto \Gc(\Lambda^{(i)})$ descends to $\Gc(\Gamma)\onto\Gc(\Lambda^{(i)})$. Therefore, we have a well-defined morphism $\Gc(\Gamma)\to H_F$, it only remains to study its kernel $\Bsc_F$.

    Let $h\in \Bsc_F$ and consider $w\in\Ac^*$ a representative for $h$ with $\abs w=\abs h_\Ac=\ell$. For each $x\in \Gamma$, either the path $x\overset w\longto xh$ is fully included in one of the $\Gamma^{(i)}\into \Lambda^{(i)}$, in which case $x=xh$ since $w=1$ in $\Gc(\Lambda^{(i)})$, or it intersects $F$. It follows that the support of $h$ is finite, and more precisely
    \[ \supp(h) \subseteq \bigcup_{i=0}^\ell Fw_i^{-1},\]
    where $w_0,w_1,\ldots,w_\ell$ are the prefixes of $w$. If $h_1,h_2,\ldots,h_m\in\Bsc_F$, then
    \[ \la h_1,h_2,\ldots,h_m\ra \le \Sym\left(\bigcup_{i=1}^m \,\supp(h_i)\right) \]
    is finite. Finally, we have $\ord(h)\le g\bigl(\abs{\supp(h)}\bigr) \le g\bigl(\abs F(\abs h_\Ac+1)\bigr)$.
\end{proof}

\subsection{Preserving context-freeness} \label{sec:limits_CF}

When $\Gamma$ is context-free, we explain how to construct a set $F$ and graphs $\Lambda^{(i)}$ as in Proposition \ref{prop:cutting}, with some additional properties. We do not assume that $\Gamma$ is complete.

\medskip

We isolate special end-cone types which allow to define \emph{periodic limits}:
\begin{defi} \label{prop:periodic_limit:fourre_tout}
    An end-cone type $\Gamma_i$ \emph{contains itself} if there exists $y,z\in \Gamma$ such that $\Gamma(y,x_0)\supsetneq\Gamma(z,x_0)$ and both end-cones have type $\Gamma_i$.
\end{defi}

\begin{prop} \label{prop:periodic_limits}
Let $\Gamma_i$ be an end-cone that contains itself. There exists an $\Ac$-graph $\vec\Gamma_i$ which is $\Z$-invariant, context-free, and satisfies $\Gamma_i\into \vec\Gamma_i \preceq \Gamma_i$. Moreover, if $\Gamma_i$ is complete at every vertex $v\notin\Delta_i$, then $\vec\Gamma_i$ is complete.
\end{prop}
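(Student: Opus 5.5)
The idea is to build $\vec\Gamma_i$ as a bi-infinite stacking of copies of the ``annulus'' between two nested end-cones of type $\Gamma_i$. Fix $y,z$ with $\Gamma(y,x_0)\supsetneq\Gamma(z,x_0)$, both of type $\Gamma_i$, and fix an end-isomorphism $\phi\colon\Gamma(y,x_0)\to\Gamma(z,x_0)$ mapping $\Delta(y,x_0)$ onto $\Delta(z,x_0)$. Let $A=\Gamma(y,x_0)\setminus\Gamma(z,x_0)$ together with the edges joining $A$ to $\Delta(z,x_0)$. Then $\Gamma(y,x_0)$ is obtained by gluing $A,\phi(A),\phi^2(A),\dots$ successively, with $\phi^n(A)$ glued to $\phi^{n+1}(A)$ along $\phi^{n+1}(\Delta(y,x_0))$.

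We define $\vec\Gamma_i$ as the direct limit of the chain $\Gamma(y,x_0)\xrightarrow{\phi^{-1}}\Gamma(y,x_0)\to\cdots$. Concretely, take copies $A_n$ of $A$ for $n\in\Z$ and glue $A_n$ to $A_{n+1}$ exactly as $\phi^n(A)$ is glued to $\phi^{n+1}(A)$. The shift $A_n\mapsto A_{n+1}$ is then a label-preserving automorphism, so $\vec\Gamma_i$ is $\Z$-invariant. The embedding $\Gamma_i\cong\Gamma(y,x_0)=\bigcup_{n\ge0}\phi^n(A)\into\vec\Gamma_i$ is the identification with $\bigcup_{n\ge0}A_n$.

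We verify the remaining properties in order.

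\emph{Determinism, and connectedness.} Locally, every vertex of $\vec\Gamma_i$ looks like a vertex of some $\phi^n(A)$ with $n\ge1$ inside $\Gamma(y,x_0)$, with both neighbouring layers present. Hence determinism and co-determinism are inherited. Connectedness uses the fact that end-cones are connected and each layer meets the next.

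\emph{Limit.} To show $\vec\Gamma_i\preceq\Gamma_i$ (equivalently $\preceq\Gamma$), take a root in $A_0$ and any radius $R$. The ball $D_R$ of that root only meets $A_n$ for $|n|\le N(R)$. This requires that each layer has positive thickness, i.e. $d$ grows by at least one per layer, which holds because distance to $x_0$ increases along nested cones. That ball is isomorphic to the ball around the corresponding vertex of $\phi^{N+1}(A)\subset\Gamma(y,x_0)$, provided we stay away from $\Delta(y,x_0)$. The isomorphism must respect the full edge set, not just the cone. Since vertices deep inside the cone, away from its frontier, have all their $\Gamma$-edges inside the cone, the balls agree as balls of $\Gamma$.

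\emph{Completeness.} If $\Gamma_i$ is complete at all vertices outside $\Delta_i$, then every vertex of $\vec\Gamma_i$ is an interior vertex of some copy, hence complete.

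\emph{Context-freeness.} This is the main obstacle. I would use the Muller--Schupp criterion with a basepoint in $A_0$. In the forward direction $n\to+\infty$, the end-cones far out are end-cones of $\Gamma(y,x_0)$ (suitably shifted), hence of finitely many types. In the backward direction, $\bigcup_{n\le0}A_n$ is a ``reversed'' copy, and its end-cones must also be shown to be of finitely many types. My first attempt would be to find a bounded-size frontier between consecutive layers, namely $\phi^n(\Delta(y,x_0))$, giving cut sets of uniformly bounded size at regular intervals. This makes $\vec\Gamma_i$ quasi-isometric to a line with finite attached context-free pieces, namely the end-cones of $\Gamma$ hanging off each layer. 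One can then either count end-cone types directly, or give a pushdown automaton that tracks the layer index on a counter and simulates $\Gamma$'s automaton on subtrees. If the direct end-cone count becomes messy, I would fall back on the grammar argument of Proposition \ref{prop:cover_is_CF}: view $\vec\Gamma_i$ as a $\Z$-cover of the finite-cut quotient $\vec\Gamma_i/\Z$ with a thin spine, and decompose cycles into excursions in context-free pieces.
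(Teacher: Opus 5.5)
Your construction of $\vec\Gamma_i$ matches the paper's graph on $Q\times\Z$: copies $A_n$ ($n\in\Z$) of the annulus, glued as $\phi^n(A)$ is glued to $\phi^{n+1}(A)$. Your arguments for $\Z$-invariance, for $\Gamma_i\into\vec\Gamma_i$, for $\vec\Gamma_i\preceq\Gamma_i$ (push the root deep into the positive layers) and for completeness are also the paper's. The gap is context-freeness, which is the real content of the statement: you list strategies but carry none of them out, and the picture behind the first ones is wrong. The pieces hanging off each layer are in general \emph{infinite} end-cones of $\Gamma$, so $\vec\Gamma_i$ is not quasi-isometric to a line; for instance, if $\Gamma$ is a regular tree, $\vec\Gamma_i$ is again a regular tree. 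A direct Muller--Schupp count from a basepoint in $A_0$ is also not just a matter of shifting end-cones of $\Gamma$, since distances from that basepoint may run through the negative layers. The finiteness of end-cone types in the ``backward'' direction is exactly what is left unproven.

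Your fallback, Proposition~\ref{prop:cover_is_CF}, is the tool the paper uses. However, its implication (b)$\Rightarrow$(a) needs the quotient $\Z\backslash\vec\Gamma_i$ to be context-free, which you never address; this is the missing idea. Choose $u\in\Ac^*$ labelling a path $y\to z$ inside $\Gamma(y,x_0)$, so that $u^m$ leads from the copy of $y$ in $A_0$ to its copy in $A_m$. Since any finite path can be shifted into $\bigcup_{n\ge0}A_n\cong\Gamma_i$, one gets
\[ L(\Z\backslash\vec\Gamma_i,y)=\{u\}^*\backslash L(\Gamma_i,y)/\{u^{-1}\}^*, \]
which is context-free because context-free languages are closed under quotients by regular languages. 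You must also check that the spine graph is context-free. Take for $F$ the projections of the edges from $A_0$ to $A_1$; there are finitely many, as they end in the copy of $\Delta_i$. First replace $\phi$ by $\phi^2$ so that $d(z,x_0)-d(y,x_0)\ge 2$. Then the vertices of a layer adjacent to the previous layer are disjoint from those adjacent to the next one. Consequently $\Sp(\vec\Gamma_i,\Z\backslash\vec\Gamma_i)$ is a periodic chain of complete graphs on copies of $\partial F$ joined by copies of $F$, which visibly has finitely many end-cone types. Alternatively, your ``excursions in context-free pieces'' could be made rigorous without the quotient. Each component of a layer is an end-cone of the context-free graph $\Gamma-\Gamma(z,x_0)$, so the excursion languages $K_{p,q}$ are context-free, and the grammar substitution from the proof of Proposition~\ref{prop:cover_is_CF} applies directly. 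This verification is also absent from your proposal.
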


\begin{proof}
    We have $y,z\in V\Gamma$ such that $\Gamma_i= \Gamma(y,x_0) \supsetneq \Gamma(z,x_0)$, and there exists an end-cone isomorphism $\psi_i \colon \Gamma(y,x_0) \to \Gamma(z,x_0)$. Up to replacing $\tilde z = \psi_i^2(y)$ and $\tilde\psi_i = \psi_i^2$, we may assume that $\psi_i(y)=z$ and $d(z,x_0)-d(y,x_0)\ge 2$.
    
    We first define the graph $\vec\Gamma_i$. We consider $Q = \Gamma(y,x_0) - \Gamma(z,x_0)$, and define $V(\vec\Gamma_i)=Q\times\Z$, together with three types of edges: \begin{itemize}[leftmargin=6mm]
    \item $(p,n)\overset a\longto (q,n)$ if $p\overset a\longto q$ in $\Gamma_i$,
    \item $(p,n)\overset a\longto (q,n+1)$ if $p\overset{a}\longto\psi_i(q)$ in $\Gamma_i$,
    \item $(p,n)\overset a\longto (q,n-1)$ if $\psi_i(p)\overset{a}\longto q$ in $\Gamma_i$.
\end{itemize}
By construction, we have a $\Z$-action $m\cdot (q,i)\coloneqq (q,m+i)$. Moreover, the map $\Psi_i\colon Q\times\Z_{\ge 0}\to \Gamma_i \colon (q,n)\mapsto \psi_i^n(q)$ is a graph isomorphism, hence $\Gamma_i\into\vec\Gamma_i$. Let us prove that $\vec\Gamma_i\preceq \Gamma_i$: for each $R\ge 0$, we have rooted isomorphisms
\[ \Bigl( D_R^{\vec\Gamma_i} (y,0), (y,0) \Bigr)
\simeq \Bigl( D_R^{\vec\Gamma_i} (y,R), (y,R) \Bigr)
\simeq \Bigl( D_R^{\Gamma_i} \bigl(\psi_i^R(y)\bigr), \psi_i^R(y) \Bigr) \]
using the two previous observations. Completeness follows similarly since
\[
D_1^{\vec\Gamma_i}(q,n)\simeq D_1^{\vec\Gamma_i}(q,1)\simeq D_1^{\Gamma_i}(\psi_i(q)) \quad\text{and}\quad \psi_i(q)\notin\Delta_i.
\]
Finally, we prove that $\vec\Gamma_i$ is context-free. We look at $\Z\backslash\vec\Gamma_i$. Let $u\in\Ac^*$ be a word labelling a path $y\overset u\longto z$ included in $\Gamma(y,x_0)$. A word $w\in\Ac^*$ belongs to $L(\Z\backslash\vec\Gamma_i,y)$ if and only if there exist $m,n\in\Z$ such that $(y,m)\overset w\longto (y,n)$.
Using $\Z$-invariance, we may suppose that $m,n\ge 0$. Therefore we have
\begin{align*}
    w\in L(\Z\backslash\vec\Gamma_i,y)
    & \iff \exists m,n\ge 0,\quad u^m w(u^{-1})^n \in L(\Gamma_i,y) \\
    & \iff w\in \{u\}^*\backslash L(\Gamma_i,y) /\{u^{-1}\}^*,
\end{align*}
where the quotient of two languages $K,L\subseteq\Ac^*$ is defined as
\[ K\backslash L = \bigl\{w\in \Ac^* \;\big|\; \exists v\in K,\; vw\in L \bigr\}. \]
Using that the quotient of a context-free language by a regular language is context-free (folklore, see eg.\ \cite[Problem 2.20]{S2013}), we conclude that $L(\Z\backslash\vec\Gamma_i,y)$ is context-free, i.e.\ the quotient graph $\Z\backslash\Gamma_i$ is context-free.

We use Proposition \ref{prop:cover_is_CF} to conclude. Consider the set $F\subset E(\Z\backslash\vec\Gamma_i)$ which are projections of edges $(p,0)\overset a\longto (q,1)$ in $\vec\Gamma_i$. By construction $q\in\Delta_i$, hence $F$ is finite: $\vec\Gamma_i$ has a thin spine. We also define
\[ \nabla_i \coloneqq \bigl\{p\in Q \;\big|\; \exists q\in Q,\; (p,0)\overset a\longto (q,1) \bigr\}. \]
Since $d(z,x_0)-d(y,x_0)\ge 2$, we have $\Delta_i\cap\nabla_i=\emptyset$. Therefore the spine graph $\Sp(\vec\Gamma_i,\Z\backslash\vec\Gamma_i)$ has a very specific structure, described in Figure \ref*{fig:spine_graph_for_limit}.
\begin{center}
    \import{Pictures/}{tikz_spine_limit.tex}
    \captionsetup{font=small}
    \captionof{figure}{$\Sp(\vec\Gamma_i, \Z\backslash\vec\Gamma_i)$, formed of copies of complete graphs and $F$.}
    \label{fig:spine_graph_for_limit}
\end{center}
$\Sp(\vec\Gamma_i,\Z\backslash\vec\Gamma_i)$ has finitely many end-cone types,  which concludes.
\end{proof}

\begin{rem}
    We call $\vec\Gamma_i$ a \emph{periodic limit} of $\Gamma_i$. The graph $\vec\Gamma_i$ depends on the choice of $y,z$, however we will not record this dependence in the notation.
\end{rem}

Going back to implementing Proposition \ref{prop:cutting}, if we had $F\subseteq V\Gamma$ finite such that all connected components of $\Gamma-F$ were end-cones containing themselves, we could take $\Lambda^{(i)}=\vec\Gamma_i$. We explain how to construct such a set $F$:

\begin{prop} \label{prop:find_F_for_CF}
    Let $\Gamma$ be a context-free graph. Let
    \[ W = \bigl\{ v\in V\Gamma \;\big|\; \Gamma(v,x_0) \text{ contains itself } \bigr\}. \]
    Then $F=V\Gamma-\bigcup_{v\in W} \Gamma(v,x_0)$ is finite, and the connected components of $\Gamma-F$ are end-cones that contain themselves.
\end{prop}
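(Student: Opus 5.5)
We work with the end-cone type graph $\mathcal D(\Gamma)$ and the tree of end-geodesics $\Lc$. Each vertex $v$ corresponds to the end-cone $\Gamma(v,x_0)$, i.e.\ to a word of $\Lc$ reading a path in $\mathcal D(\Gamma)$ from $\Gamma_0$. The first step is to characterise $W$ in these terms. The cone $\Gamma(v,x_0)$ contains itself if and only if its type $\Gamma_i$ lies on a cycle of $\mathcal D(\Gamma)$. Indeed, a cycle at $\Gamma_i$ gives a nonempty word $c$ with $u c$ an end-geodesic of the same type $\Gamma_i$ whenever $u$ is one. Since prefix order corresponds to containment of end-cones, the end-cone of $uc$ is strictly contained in that of $u$. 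Conversely, a strict containment $\Gamma(y,x_0)\supsetneq\Gamma(z,x_0)$ between two cones of type $\Gamma_i$ gives a nonempty path $\Gamma_i\to\Gamma_i$, hence a cycle. The property is intrinsic to the type, as in Definition \ref{prop:periodic_limit:fourre_tout}. Consequently $W$ is a union of fibres of $f$ over the words of $\Lc$ that end in a \emph{cyclic} vertex, i.e.\ a vertex lying on a cycle.

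The second step is finiteness of $F$. A vertex $x\in F$ is not contained in any cone $\Gamma(v,x_0)$ with $v\in W$. Its own cone $\Gamma(x,x_0)$ contains $x$, so $x\notin W$. Moreover, every cone containing $\Gamma(x,x_0)$ is a prefix of $f(x)$, so no prefix of $f(x)$ ends at a cyclic vertex. Thus $f(x)$ labels a path in $\mathcal D(\Gamma)$ from $\Gamma_0$ that visits only acyclic vertices. Such a path cannot repeat a vertex, so it has length at most $N+1$. Hence $F\subseteq D_{N+1}(x_0)$, which is finite since $\Gamma$ is locally finite (bounded degree, $\Ac$ finite).

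The third step identifies the components of $\Gamma-F$. Let $M$ be the set of minimal elements of $\{f(v): v\in W\}$ for the prefix order, and for each $m\in M$ let $C_m=\Gamma(v,x_0)$ with $f(v)=m$. Then $V\Gamma-F=\bigcup_{m\in M}C_m$, because every cone of a vertex in $W$ lies in the cone of its minimal cyclic prefix. Distinct minimal cones are disjoint, since end-cones are either nested or disjoint, and each is connected. It remains to see that there are no edges between $C_m$ and $C_{m'}$ for $m\neq m'$; this is the main obstacle. Two disjoint end-cones can be adjacent only along their frontiers at equal depth. The plan is the following: if $a\in C_m$ and $b\in C_{m'}$ are adjacent, then $|d(a,x_0)-d(b,x_0)|\le1$. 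Suppose $d(a,x_0)\le d(b,x_0)$ and set $n=d(a,x_0)$. Then $a$ and $b$ lie in the same component of $\Gamma\setminus D_{n-1}(x_0)$, namely a cone of depth $n$. This cone contains $b$, hence meets $C_{m'}$, so it is nested with $C_{m'}$. Since $d(b,x_0)\le n+1$ while the minimal depth in $C_{m'}$ is $|m'|$, nesting forces a comparison of prefixes. Combined with minimality of $m$ and $m'$ and the frontier condition, this should force $m=m'$, giving a contradiction. If this adjacency argument fails in borderline cases, the fallback is to enlarge $F$ by the finitely many frontier vertices $\Delta$ of the minimal cones whose depth is at most $N+2$. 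One then argues that the cones of the next level are still self-containing, since they are cones of vertices of $W$. This keeps $F$ finite and makes the components exactly the cones $C_m$, each of which contains itself by the first step.
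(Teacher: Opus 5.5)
Your finiteness argument is the paper's. The paper pigeonholes end-cone types along a geodesic from $x_0$ to a far vertex $y$. You phrase the same thing as: for $x\in F$, $f(x)$ is a simple path in $\mathcal D(\Gamma)$ through acyclic types. The paper's written proof stops at finiteness and never argues the second assertion. So your third step goes beyond it, and your decomposition $V\Gamma-F=\bigcup_{m\in M}C_m$ into pairwise disjoint, connected, self-containing cones is correct.

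However, the step you call the main obstacle is left unproved as written (``this should force $m=m'$''). Your claim that disjoint end-cones ``can be adjacent only along their frontiers at equal depth'' is also false: disjoint end-cones are never adjacent. This follows from the same argument that shows end-cones are nested or disjoint.

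Your own setup already finishes it. Since $a\in C_m$ and $d(a,x_0)$ is at least the depth of $C_m$, the connected set $\Gamma(a,x_0)$ lies in $\Gamma\setminus D_{n-1}(x_0)$, where $n$ is the depth of $C_m$, and it meets $C_m$. Hence $\Gamma(a,x_0)\subseteq C_m$. You observed that $\Gamma(a,x_0)$ contains $b$, so $b\in C_m\cap C_{m'}$, contradicting disjointness.

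Directly: if $C_m$ has depth $n$ not exceeding the depth of $C_{m'}$, then $b\notin D_{n-1}(x_0)$ and $b$ is adjacent to $a\in C_m$. So $b$ lies in the component $C_m$ of $\Gamma\setminus D_{n-1}(x_0)$. No comparison of prefixes and no frontier condition is needed.

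Drop the fallback of enlarging $F$ entirely. The proposition fixes $F$, so modifying it would prove a different statement, and it is unnecessary anyway.
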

\begin{proof}
    We prove that $F\subseteq D_\Gamma(x_0,N)$, where $N$ is the number of end-cone types. We argue by contraposition and take $y\in V\Gamma$ with $\ell\coloneqq d(y,x_0)\ge N$. Consider a geodesic path
    \[ x_0=v_0\sim  v_1 \sim \ldots \sim v_\ell =y. \]
    By the pigeonhole principle, there exist $0\le i<j\le \ell$ such that $\Gamma(v_i,x_0)$ and $\Gamma(v_j,x_0)$ have the same type. By construction $\Gamma(v_i,x_0)\supsetneq\Gamma(v_j,x_0)$, so that $\Gamma(v_i,x_0)$ contains itself, hence $y\in\Gamma(v_i,x_0)$ does not belong to $F$.
\end{proof}
\begin{rem}
We can read $F$ explicitly off the graph of end-cone types $\mathcal D(\Gamma)$.
\end{rem}

%% file: Pictures/tikz_spine_limit.tex
\begin{tikzpicture}
        {\scriptsize
        \node at (-2,0) {$\nabla_i\!\times\!\{-1\}$};
        \node at (0,0) {$\Delta_i\!\times\!\{0\}$};
        \node at (2,0) {$\nabla_i\!\times\!\{0\}$};
        \node at (4,0) {$\Delta_i\!\times\!\{1\}$};
        \node at (6,0) {$\nabla_i\!\times\!\{1\}$};
        \node at (8,0) {$\Delta_i\!\times\!\{2\}$};}

        {\footnotesize
        \node at (-1,2) {$F$};
        \node at (1,2) {$K_{\abs{\partial F}}$};
        \node at (3,2) {$F$};
        \node at (5,2) {$K_{\abs{\partial F}}$};
        \node at (7,2) {$F$};}

        \draw[thick, rounded corners] (-2.3,.3) rectangle (-1.7,1.7);
        \draw[thick, rounded corners] (-.3,.3) rectangle (.3,1.7);
        \draw[thick, rounded corners] (1.7,.3) rectangle (2.3,1.7);
        \draw[thick, rounded corners] (3.7,.3) rectangle (4.3,1.7);
        \draw[thick, rounded corners] (5.7,.3) rectangle (6.3,1.7);
        \draw[thick, rounded corners] (7.7,.3) rectangle (8.3,1.7);

        \node[draw, circle, inner sep=1.2pt] (cm1) at (-2,.7) {};
        \node[draw, circle, inner sep=1.2pt] (dm1) at (-2,1.3) {};
        
        \node[fill=Green, circle, inner sep=1.7pt] (y0) at (0,.6) {};
        \node[draw, circle, inner sep=1.2pt] (a0) at (0.13,1) {};
        \node[draw, circle, inner sep=1.2pt] (b0) at (-.13,1.4) {};

        \node[draw, circle, inner sep=1.2pt] (c0) at (2,.7) {};
        \node[draw, circle, inner sep=1.2pt] (d0) at (2,1.3) {};

        \node[fill=blue, circle, inner sep=1.2pt] (y1) at (4,.6) {};
        \node[fill=blue, circle, inner sep=1.2pt] (a1) at (4.13,1) {};
        \node[fill=blue, circle, inner sep=1.2pt] (b1) at (3.87,1.4) {};

        \node[fill=purple, circle, inner sep=1.2pt] (c1) at (6,.7) {};
        \node[fill=purple, circle, inner sep=1.2pt] (d1) at (6,1.3) {};
        
        \node[fill=blue, circle, inner sep=1.2pt] (y2) at (8,.6) {};
        \node[fill=blue, circle, inner sep=1.2pt] (a2) at (8.13,1) {};
        \node[fill=blue, circle, inner sep=1.2pt] (b2) at (7.87,1.4) {};

        \draw (cm1) -- (y0);
        \draw (cm1) -- (a0);
        \draw (dm1) -- (b0);

        \draw (y0) -- (a0);
        \draw (y0) -- (b0);
        \draw (y0) -- (c0);
        \draw (y0) -- (d0);
        \draw (a0) -- (b0);
        \draw (a0) -- (c0);
        \draw (a0) -- (d0);
        \draw (b0) -- (c0);
        \draw (b0) -- (d0);
        \draw (c0) -- (d0);

        \draw (c0) -- (y1);
        \draw (c0) -- (a1);
        \draw (d0) -- (b1);

        \draw (y1) -- (a1);
        \draw (y1) -- (b1);
        \draw (y1) -- (c1);
        \draw (y1) -- (d1);
        \draw (a1) -- (b1);
        \draw (a1) -- (c1);
        \draw (a1) -- (d1);
        \draw (b1) -- (c1);
        \draw (b1) -- (d1);
        \draw (c1) -- (d1);

        \draw (c1) -- (y2);
        \draw (c1) -- (a2);
        \draw (d1) -- (b2);
    \end{tikzpicture}

%% file: Sections/sec6_poly.tex
\subsection{Graphs of linear growth}

In the case of linear growth, we can provide refinement of Theorem \ref{thm:intro_main}, taking as \say{universal groups} the family of Houghton's groups $H_m$ instead of Thompson's group $V$.

\begin{thm} \label{thm:Houghton_universal} Let $G$ be a finitely generated group. The following are equivalent:
\begin{enumerate}[leftmargin=8mm, label={\normalfont(\alph*)}]
    \item $G=\Gc(\Gamma^{(1)}\sqcup\ldots\sqcup\Gamma^{(k)})$ for some context-free  graphs $\Gamma^{(i)}$ of linear growth.
    \item $G$ admits a finite-index subgroup which embeds in a Houghton group $H_m$.
\end{enumerate}
\end{thm}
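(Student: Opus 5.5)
For (a) $\Rightarrow$ (b), I would decompose each $\Gamma^{(i)}$ using the machinery of \S\ref{sec:limits}. A context-free graph of linear growth has an end-cone type graph $\mathcal D(\Gamma^{(i)})$ with no two co-accessible cycles and no chain of two cycles (Lemma \ref{lem:growth_of_CF_graphs}). So every end-cone that contains itself lies on a single simple cycle of $\mathcal D$, and its descendants only run around that cycle. With $F$ the finite set of Proposition \ref{prop:find_F_for_CF}, the components of $\Gamma^{(i)}-F$ are finitely many end-cones $\Gamma_j$ of this kind. Each has a periodic limit $\vec\Gamma_j$ (Proposition \ref{prop:periodic_limits}), which is $\Z$-invariant and context-free, and the linearity forces the quotient $\Z\backslash\vec\Gamma_j$ to be finite. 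Hence $\Gc(\vec\Gamma_j)\into L_j\Wr_{\Lambda_j}\Gc(\Lambda_j)$ with $L_j=\Z$ and $\Lambda_j$ finite (Proposition \ref{prop:cover_Linvariant}). In fact $\vec\Gamma_j$ is a finite-sheeted thin-spine cover of a finite graph, so by Proposition \ref{prop:cover_restricted} and the remark after it, $\Gc(\vec\Gamma_j)$ is virtually $\Z^{r}$ for a suitable $r$ (and at least virtually abelian). Proposition \ref{prop:cutting} then gives an exact sequence $1\to\Bsc_F\to\Gc(\Gamma^{(i)})\to H_F\to1$ with $\Bsc_F$ locally finite and supported near $F$, and $H_F\le\bigoplus_j\Gc(\vec\Gamma_j)$ virtually abelian.

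To land in a Houghton group, I would build an explicit permutation model rather than use the extension abstractly. Pass to the finite-index subgroup $G_0$ of $G$ that acts trivially on the finite quotients $\Lambda_j$, that is, preserves each $\Z$-fibre class. Then on each ray (end-cone $\Gamma_j$), an element of $G_0$ acts outside a finite set as a translation along the periodic direction, by some integer multiple of the period. Choose a bijection of $V(\Gamma^{(1)}\sqcup\dots\sqcup\Gamma^{(k)})$ with $\{1,\dots,m\}\times\N$ that sends the $|Q_j|$ parallel strands of each periodic end-cone to separate rays. Elements of $G_0$ then become eventual translations on each ray, i.e. elements of $H_m$, where $m$ is the total number of strands. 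Faithfulness comes for free, since $G$ is defined as a transition group acting on these vertices.

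For (b) $\Rightarrow$ (a), I would take $G_0\le H_m$ of finite index and the natural action on $\{1,\dots,m\}\times\N$, restricted to the finitely many orbits that meet the supports of the generators. This is a faithful action whose orbital Schreier graphs are context-free of linear growth: outside a finite set they are unions of rays carrying eventually periodic labels, so there are finitely many end-cone types and the growth is linear. To pass from $G_0$ to $G$, use the induced action on $G/G_0\times X$ (coinduction). It is still faithful, has finitely many orbits, and its Schreier graphs are finite covers of the previous ones, so they remain context-free of linear growth.

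I expect the main obstacle to be the second step of (a) $\Rightarrow$ (b): controlling the locally finite kernel $\Bsc_F$ and the non-translation behaviour inside finite fibres. The permutation of the strands within a $\Z$-period has to be killed by passing to finite index, and this must be done uniformly for all of $G$ and not just elementwise. I would first try to show that the map $G\to\prod_j\Sym(\Lambda_j)$ recording the strand permutations at infinity is a homomorphism, and take its kernel as $G_0$.
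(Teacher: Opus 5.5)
Your proof is correct. For (a)$\Rightarrow$(b) it follows a genuinely different route from the paper's.

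\textbf{The paper's route.} The paper encodes vertices by the regular language $\Fc(\Gamma)$ of well-formed words. Linear growth forces $\Fc(\Gamma)=\bigsqcup_i P_iu_i^*Q_i\sqcup R$ with $P_i,Q_i,R$ finite. Suffix-locality of the action (as in the proof of Proposition \ref{prop:embedding-tfg}) then gives $(p\,u_i^nq)\cdot g=p\,u_i^{n+\delta(g,q)}(q)\pi_{g,i}$ for $n\gg1$. Hence all of $\Gc(\Gamma)$ lies in $\QAut(\bigsqcup_{i=1}^m\N)\simeq H_m\rtimes\Sym(m)$, and one intersects with $H_m$.

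\textbf{Your route.} You find the rays geometrically, as the orbits $\{\psi_j^n(q)\}_{n\ge0}$, $q\in Q_j$, inside the periodic end-cones given by Propositions \ref{prop:find_F_for_CF} and \ref{prop:periodic_limits}. Linear growth makes $Q_j$, hence $\Z\backslash\vec\Gamma_j$, finite. You read off strand-wise translations from the $\Z$-cover via the computation in Proposition \ref{prop:cover_Linvariant}. You transfer them back to $\Gamma$ because $\Gamma_j$ and $\vec\Gamma_j$ agree on large balls far from $F$.

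\textbf{Comparison.} Your version makes the linear case an honest instance of the limit/cover decomposition behind Theorem \ref{thm:elementary_amenable}. There is no circularity, since \S\ref{sec:limits} does not use this theorem. The paper's argument is shorter, self-contained given the coding of \S\ref{subsection:CFTR-embedding}, and embeds all of $G$ in $H_m\rtimes\Sym(m)$ at once.

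\textbf{Remarks on your (a)$\Rightarrow$(b).} Proposition \ref{prop:cutting} and $\Bsc_F$ play no role in your final argument; elements of $\Bsc_F\cap G_0$ simply become finitary permutations. The obstacle you anticipate also dissolves. The map $G\to\prod_j\Gc(\Lambda_j)$ is the composite of $\Gc(\Gamma)\onto\Gc(\vec\Gamma_j)$, from Lemma \ref{lem:weak_contain_implies_quotient} since $\vec\Gamma_j\preceq\Gamma$, with the covering epimorphism $\Gc(\vec\Gamma_j)\onto\Gc(\Z\backslash\vec\Gamma_j)$. Its target is finite, so its kernel $G_0$ has finite index. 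Two small corrections: $\vec\Gamma_j\to\Z\backslash\vec\Gamma_j$ is infinite-sheeted, not finite-sheeted, and the all-finite case $m=0$ needs a separate line.

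\textbf{On (b)$\Rightarrow$(a).} Your strategy matches the paper's, which delegates the passage to subgroups and finite-index overgroups to \cite[Propositions 4.16 and 4.20]{CFTR}. However, your justification of the overgroup step is inaccurate. The Schreier graphs of the induced action are not covers of the $G_0$-Schreier graphs, since the generating sets differ. Moreover, context-freeness of Schreier graphs is not a quasi-isometry invariant, so that relation would not suffice anyway. The right reason is that $\{w\in\Ac^*: w\in\Stab_{G_0}(x)\}$ arises from the corresponding language over generators of $G_0$ by Reidemeister--Schreier rewriting, and context-free languages are closed under inverse gsm mappings. Likewise, context-freeness of the $G_0$-orbital graphs deserves an explicit argument. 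One option is a one-counter automaton that tracks the position on each ray, where all generators eventually act by translation.
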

\begin{proof}
(a) $\Rightarrow$ (b): Since $H_{m_1}\times\ldots\times H_{m_k}\le H_{m_1+\cdots+m_k}$, it suffices to prove the statement for $\Gc(\Gamma)$ with a $\Gamma$ context-free graph. Moreover, if $\Gamma$ is finite, then $\Gc(\Gamma)$ is a finite group and hence embeds in $H_1\simeq \FSym(\N)$. We may therefore assume that $\Gamma$ is infinite.

We reuse the language of well-formed words $\Fc(\Gamma)\subset\Ec^*$ (defined in \cite[\S7]{CFTR}, and used in Proposition \ref{prop:embedding-tfg}). Each word of $w_x\in\Fc(\Gamma)$ corresponds to a vertex $x\in V\Gamma$ with $\abs{w_x}=d(x,x_0)+1$. Since $\Gamma$ has linear growth, the regular language $\Fc(\Gamma)$ also has linear growth, hence it must be of a very specific form (see \cite{Tits_for_languages} or Lemma \ref{lem:growth_of_CF_graphs}).

Precisely, the automaton for $\Fc(\Gamma)$ has $c$ cycles which are not accessible from each other. We pick one state $\q_i$ on each cycle and define
\begin{itemize}[leftmargin=6mm]
    \item $P_i$ the set of words labelling \emph{simple} paths from the start vertex $v_0$ to $\q_i$
    \item $u_i$ the word labelling the cycle, read starting at $\q_i$,
    \item $Q_i$ the set of words labelling \emph{simple} paths from $\q_i$ to accept vertices, and
    \item $R$ the set of paths from $v_0$ to an accept vertex which do not pass through any $\q_i$.
\end{itemize}
Observe that the automaton recognising $\Fc(\Gamma)$ is based on $\Dc(\Gamma)$, hence each label is used exactly once. In particular, the words $u_i\in\Ec^*$ are not proper powers, and do not share any common letter. Moreover, the sets $P_i,Q_i,R\subset\Ec^*$ are finite. We can decompose
\[ \Fc(\Gamma) = \bigsqcup_{i=1}^c\bigl\{p\hspace{.5pt}u_i^n\hspace{.5pt}q \;\big|\; p\in P_i,\, n\ge 0,\, q\in Q_i \bigr\} \sqcup R.\]
We define a new graph structure on $\Fc(\Gamma)$, with edges $p\hspace{.5pt}u_i^n\hspace{.5pt}q \sim p\hspace{.5pt}u_i^{n+1}\hspace{.5pt}q$, so that $\Fc(\Gamma)$ is isomorphic to $m = \sum_{i=1}^c \abs{P_i} \abs{Q_i}$ copies of $\N$, together with $\abs R$ isolated points.

We now study how $g\in \Gc(\Gamma)$ acts on $\Fc(\Gamma)\simeq V\Gamma$. Recall from the proof of Proposition \ref{prop:embedding-tfg} that $w_x$ and $w_x\cdot g\coloneqq w_{xg}$ differ from a suffix of length at most $\abs g +1$, and the new suffix only depends on $g$ and the old suffix. In particular, for all $n\gg 1$, we have
\[ (p\,u_i^n\,q)\cdot g = p\,u_i^{n+\delta(g,q)}\,(q)\pi_{g,i}\]
with $\delta(g,q)\in\Z$ and $\pi_{g,i}\in\Sym(Q_i)$. Note that this is a quasi-automorphism of the graph defined on $\Fc(\Gamma)$, i.e.\ there exists a finite set $B_g\subset\Fc(\Gamma)^2$ such that
\[ \forall (v,w)\in \Fc(\Gamma)^2-B_g,\qquad \bigl(v\sim w\iff vg\sim wg\bigr). \]
This concludes since
\[ \Gc(\Gamma) \le \QAut\left(\bigsqcup_{i=1}^m\N\sqcup R\right) \simeq \QAut\left(\bigsqcup_{i=1}^m \N\right) \simeq H_m \rtimes \Sym(m).\]

\begin{center}
    \begin{tikzpicture}[scale=1]
        \clip (-2.8,-2) rectangle (5,2);
		\foreach \x in {1,...,4}{
			\node[circle, fill=black, inner sep=2pt] (P\x) at (\x,0) {};
            \node[circle, fill=black, inner sep=2pt] (Q\x) at (1-\x,{\x/2}) {};
            \node[circle, fill=black, inner sep=2pt] (R\x) at (1-\x,{-\x/2}) {};
            
			\draw[-latex, thick, blue, bend left] (P\x) to ({\x+.9},.05);
            \draw[-latex, thick, purple, bend right] (P\x) to ({\x+.9},-.05);
            
            \draw[-latex, thick, purple, looseness=10] (Q\x) to[out=20, in=90] (Q\x);
            \draw[-latex, thick, blue, looseness=10] (R\x) to[out=-20, in=-90] (R\x);
        }

		\draw[-latex, thick, blue] (Q1) to (P1);
        \draw[-latex, thick, blue] (Q2) to (Q1);
        \draw[-latex, thick, blue] (Q3) to (Q2);
        \draw[-latex, thick, blue] (Q4) to (Q3);
        
		\draw[-latex, thick, purple] (R1) to (P1);
        \draw[-latex, thick, purple] (R2) to (R1);
        \draw[-latex, thick, purple] (R3) to (R2);
        \draw[-latex, thick, purple] (R4) to (R3);

        \node at (2.5,1.3) {$\Omega_3$};
	\end{tikzpicture}
    \captionsetup{font=small}
    \captionof{figure}{A context-free graph $\Omega_3$ such that $H_3\simeq\Gc(\Omega_3)$.}
    \label{fig:Houghton_3}
\end{center}
(b) $\Rightarrow$ (a): This follows from the fact that $H_m$ is defined by a context-free graph of linear growth (see Figure \ref*{fig:Houghton_3}). Moreover, we can obtain context-free graphs defining first finite-index overgroups and then f.g.\ subgroups following the proofs of Proposition 4.16 and 4.20 in \cite{CFTR}, and these constructions do not increase the degree of growth.
\end{proof}

\begin{rem}
    It is likely one could prove Theorem \ref{thm:Houghton_universal} by adapting the arguments for Proposition \ref{prop:embedding-tfg}. Indeed, it is known that Houghton groups are topological full groups of edge shifts which are not irreducible \cite[Subsection 3.6.5]{Tarrot}, therefore an approach would be to use the same coding of Subsection \ref{subsection:CFTR-embedding}, but a different (non-irreducible) edge shift.
\end{rem}
\begin{rem}
    Passing to a finite-index subgroup in part (b) is important. For instance, the infinite dihedral group $D_\infty$ satisfies the first condition, but does not embed in any $H_m$. Indeed, we have a short exact sequence
    \[ 1 \longto \FSym(\Omega_m) \longto H_m \longto \Z^{m-1} \longto 1, \]
    Since \(\FSym(\Omega_m)\) is locally finite, every infinite finitely generated subgroup of \(H_m\) has infinite image in \(\Z^{m-1}\), and therefore admits a quotient isomorphic to \(\Z\). On the other hand, \(D_\infty^{\mathrm{ab}}\simeq C_2\times C_2\), so \(D_\infty\) has no quotient isomorphic to \(\Z\).
\end{rem}


\subsection{Elementary amenability}

We recall the definition of elementary amenable groups and their $\EA$-class.
\begin{itemize}[leftmargin=6mm]
    \item $\mathbf{EG}_0$ is the class of finite and abelian groups.
    \item $\mathbf{EG}_{\alpha+1}$ is the class of groups obtained as an extension of two groups in $\mathbf{EG}_\alpha$, or as a direct union of groups in $\mathbf{EG}_\alpha$.
    \item If $\alpha$ is a limit ordinal, then $\mathbf{EG}_\alpha=\bigcup_{\beta<\alpha}\mathbf{EG}_\beta$.
\end{itemize}
Finally, the class of \emph{elementary amenable} groups is $\bigcup_\alpha\mathbf{EG}_\alpha$. Given an elementary amenable group $G$, its EA-class is $\EA(G)=\min\{\alpha\mid G\in \mathbf{EG}_\alpha\}$.

\medskip

Recently, elementary amenable subgroups of $F$ with large EA-class were exhibited \cite{EA_in_Thompson_F}. In contrast, we show that transition groups of polynomially growing context-free graphs are elementary amenable, with finite EA-class.
\begin{thm} \label{thm:elementary_amenable}
    Let $\Gamma$ be a context-free graph of polynomial growth:
    \[\forall r\ge1,\quad \bar\beta_\Gamma(r) \coloneqq \sup_{y\in \Gamma}\#\bigl\{x\in\Gamma\;\big|\; d(x,y)\le r\bigr\}\le Cr^d. \vspace{-2mm} \]
    Then $\Gc(\Gamma)$ is elementary amenable and $\EA(\Gc(\Gamma))\le d+1$.
\end{thm}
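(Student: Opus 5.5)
We argue by induction on the degree $d$ of polynomial growth, using the cutting machinery of Section \ref*{sec:limits}. In the base case $d=0$ the graph $\Gamma$ is finite, so $\Gc(\Gamma)$ is finite and lies in $\EG_0$; the bound $\EA\le 1$ holds trivially. We may also assume $\Gamma$ complete, since transition groups are only defined for complete graphs. For $d\ge 1$, we apply Proposition \ref{prop:find_F_for_CF} to obtain a finite set $F\subseteq V\Gamma$ such that the connected components $\Gamma^{(1)},\ldots,\Gamma^{(k)}$ of $\Gamma-F$ are end-cones containing themselves. Proposition \ref{prop:periodic_limits} then gives periodic limits $\Lambda^{(i)}=\vec\Gamma_i$, which are complete, $\Z$-invariant and context-free, with $\Gamma^{(i)}\into\Lambda^{(i)}\preceq\Gamma$. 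Proposition \ref{prop:cutting} yields
\[ 1\longto\Bsc_F\longto\Gc(\Gamma)\longto H_F\longto 1, \]
where $\Bsc_F$ is locally finite, hence a directed union of finite groups and so in $\EG_1$. The group $H_F$ is a subgroup of $\bigoplus_i\Gc(\Lambda^{(i)})$.

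It remains to control each $\Gc(\Lambda^{(i)})$. Since $\Lambda^{(i)}\preceq\Gamma$, Lemma \ref{lem:A-q-trees_are_closed}(b) shows that $\Lambda^{(i)}$ still satisfies $\bar\beta\le Cr^d$. The idea is to use the $\Z$-action to reduce the degree. The quotient $\Z\backslash\Lambda^{(i)}$ is context-free (shown in the proof of Proposition \ref{prop:periodic_limits}), and we expect it to have growth degree at most $d-1$. The reason is that $\Lambda^{(i)}$ is a $\Z$-cover of it with a thin spine, and balls in $\Lambda^{(i)}$ contain roughly $r$ disjoint translates of balls of radius $r/2$ in the quotient. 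Propositions \ref{prop:cover_Linvariant} and \ref{prop:cover_restricted} together give
\[ \Gc(\Lambda^{(i)})\into\Z\wr_{\Z\backslash\Lambda^{(i)}}\Gc(\Z\backslash\Lambda^{(i)}). \]
The base group $\bigoplus\Z$ is abelian, and by induction $\Gc(\Z\backslash\Lambda^{(i)})$ has EA-class at most $d$. A subgroup of an extension of an abelian group by a group of class $\le d$ is in $\EG_{d+1}$, since subgroups of $\EG_\alpha$ groups remain in $\EG_\alpha$ by a standard induction. Finite direct sums and subgroups preserve the class. The extension by the locally finite $\Bsc_F$ must then be absorbed without raising the class beyond $d+1$.

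The growth-drop step is the main obstacle. The cleanest route is probably through Lemma \ref{lem:growth_of_CF_graphs}: show that the end-cone type graph of $\Z\backslash\vec\Gamma_i$ has a longest chain of cycles one shorter than that of $\Gamma$, because the cycle through $\Gamma_i$ is exactly what the $\Z$-quotient kills. The bookkeeping of EA-classes also needs care, so that we do not add $+1$ at both the $\Bsc_F$ step and the wreath step. There are two options. The first is to refine the inductive hypothesis to \say{locally-finite-by-(abelian-by-$\cdots$)} and merge the locally finite layers. The second is to note that $\Bsc_F$ lies in the base of the wreath product, since it is finitely supported, so that $\Gc(\Gamma)$ itself embeds in an abelian-by-locally-finite-by-$\EG_{d}$ structure. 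I would try the second option first.
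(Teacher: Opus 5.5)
Your skeleton is the paper's: cut, take periodic limits, pass to $\Z\backslash\vec\Gamma_i$, and induct on $d$. Your route to the growth drop via $\mathcal D(\Gamma)$ is also the right justification of a step the paper only asserts. With polynomial growth, the strongly connected component of $\Gamma_i$ is a single simple cycle, so the branches of $Q$ off the spine have chains of cycles of length at most $d-1$.

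The genuine gap is the EA-class bookkeeping. You leave it open, and two claims you make there fail.

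\emph{Direct sums.} $\EG_\alpha$ is not closed under finite direct sums. For instance, $\bigoplus_{\N}A_5\in\EG_1$ and $\Z\in\EG_0$, yet $\bigoplus_{\N}A_5\times\Z\notin\EG_1$. Indeed, every finite or abelian normal subgroup of it has an infinite non-abelian quotient. Also, any directed union exhausting it has a member containing a copy of $A_5\times\Z$. So $\Gc(\vec\Gamma_i)\in\EG_{d+1}$ does not give $H_F\in\EG_{d+1}$. Even granting it, $\Bsc_F$-by-$H_F$ only gives $\EG_{d+2}$.

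\emph{Your second option.} The option you would try first cannot work. $\Bsc_F=\ker(\Gc(\Gamma)\to H_F)$ acts trivially on every $\vec\Gamma_i$, so it has no image in the base of $\Z\wr\Gc(\Z\backslash\vec\Gamma_i)$. Moreover, $\Gc(\Gamma)$ itself does not embed in that wreath product. Finally, the locally finite layer sits at the bottom of the normal series, not above an abelian one.

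The fix is your first option, made precise as in the paper. Consider the composite $f\colon\Gc(\Gamma)\to H_F\to\bigoplus_i\Gc(\Z\backslash\vec\Gamma_i)$. Its kernel is (locally finite)-by-abelian, so the two bottom layers merge into one. The induction is then run in auxiliary classes closed under subgroups and finite direct products: $\mathbf C_2=$ (locally finite)-by-(virtually abelian), and $\mathbf C_{\alpha+1}=\mathbf C_2$-by-$\mathbf C_\alpha$. These satisfy $\mathbf C_\alpha\subseteq\EG_\alpha$. Then $\ker f\in\mathbf C_2$ and $\mathrm{im}(f)\in\mathbf C_d$ give $\Gc(\Gamma)\in\mathbf C_{d+1}$.

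The base case must be $d=1$, with conclusion (locally finite)-by-(virtually abelian). The paper takes this from Theorem \ref{thm:Houghton_universal}. Your cutting step also gives it if you group $\Gc(\Gamma)$ as $\Bsc_F$-by-$H_F$ with $H_F$ virtually abelian. Counting it instead as $\ker f$-by-finite only yields $\EG_3$.

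Alternatively, since $H_F=\Gc(\bigsqcup_i\vec\Gamma_i)$, you can avoid direct products altogether by inducting over finite disjoint unions of graphs.
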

\begin{proof}
    We define another sequence of classes $\mathbf C_\alpha$ (for $2\le \alpha<\omega$) which is better suited for our proof. We define recursively
    \begin{itemize}[leftmargin=6mm]
        \item $\mathbf C_2$ consists of (locally finite)-by-(virtually abelian) groups.
        \item $\mathbf C_{\alpha+1}$ is the class of $\mathbf C_2$-by-$\mathbf C_\alpha$ groups.
    \end{itemize}
    It is clear that $\mathbf C_\alpha\subset \EG_\alpha$. Moreover, an easy induction on $\alpha$ shows that
    \begin{itemize}[leftmargin=6mm]
        \item If $G_1,\ldots,G_k\in\mathbf C_\alpha$, then $G_1\times \ldots\times G_k\in\mathbf C_\alpha$.
        \item If $G\in\mathbf C_\alpha$ and $H\le G$, then $H\in\mathbf C_\alpha$.
    \end{itemize}

    We prove by induction on $d$ that $\Gc(\Gamma)\in\mathbf C_{d+1}$.
    
    \textbf{Base case:} If $d=1$, the group is (locally finite)-by-(virtually abelian) using Theorem \ref{thm:Houghton_universal}, hence belongs to $\mathbf C_2\subset\EG_2$.
    
    \textbf{Induction:} Using Propositions \ref{prop:cutting}, \ref{prop:periodic_limits}, \ref{prop:find_F_for_CF}, we get a short exact sequence
    \[ 1 \longto \Bsc_F \longto \Gc(\Gamma) \longto H_F \longto 1\]
    where $\Bsc_F$ is locally finite, and $H_F \le \bigoplus_{i=1}^k \Gc(\vec\Gamma_i)$ where each graph $\vec\Gamma_i$ is $\Z$-invariant, context-free, and has growth bounded by $Cr^d$ (Lemma \ref{lem:A-q-trees_are_closed}(b)).

    Using Propositions \ref{prop:cover_Linvariant}, \ref{prop:cover_restricted}, \ref{prop:cover_is_CF}, we have further short exact sequences
    \[ 1 \longto A  \longto \Gc(\vec\Gamma_i) \longto \Gc(\Z\backslash\vec\Gamma_i) \longto 1\]
    where $A\le\bigoplus_{\Z\backslash\vec\Gamma_i}\Z$ is abelian, and $\Z\backslash\vec\Gamma_i$ is context-free and has growth bounded by $r^{d-1}$. Combining the two observations gives a map
    \[ f\colon \Gc(\Gamma)\longto \bigoplus_{i=1}^k\Gc(\Z\backslash\vec\Gamma_i) \]
    where $\ker(f)$ is (locally finite)-by-abelian. By induction, $\Gc(\Z\backslash\vec\Gamma_i)$ hence $\bigoplus_{i=1}^k\Gc(\Z\backslash\vec\Gamma_i)$ and $\mathrm{im}(f)$ belongs to $\mathbf C_d$. We conclude that $\Gc(\Gamma)\in\mathbf C_{d+1}$.
\end{proof}

\medskip

\begin{rem}
    The reciprocal of Theorem \ref*{thm:elementary_amenable} does not hold: there exists an elementary amenable $G\le V$ with $\EA(G)=2$ which cannot be represented as $G=\Gc(\Gamma^{(1)}\sqcup\ldots\sqcup\Gamma^{(k)})$ where all the graphs $\Gamma^{(i)}$ have polynomial growth. An example is the BEN group $G=B(C_2)$ considered in \cite[\S 7]{Period}. This group is (locally finite)-by-$\Z$, hence satisfies $\EA(B(C_2))=2$. Moreover, given any \textbf{CF-TR} representation $B(C_2)\simeq\Gc(\Gamma^{(1)} \sqcup\ldots\sqcup \Gamma^{(k)})$, we get
    \[ \exp(n) \asymp p^{D_{\{2\}}}_{B(C_2)}(n) \preceq n\cdot \bar\beta_{\Gamma^{(1)} \sqcup\ldots\sqcup \Gamma^{(k)}}(n) \]
    using Theorem 7.1(a) and Proposition 7.5 of \cite{Period}.

    However, the following question remains open:
    \begin{ques}
        Consider a finitely generated \emph{solvable} subgroup $G$ of $V$. Can we represent $G=\Gc(\Gamma^{(1)}\sqcup\ldots\sqcup\Gamma^{(k)})$ where each of the graphs $\Gamma^{(i)}$ has polynomially growth?\vspace*{-2mm}
    \end{ques}
    A positive answer would be an important progress towards the classification of solvable subgroups of Thompson's $V$.
\end{rem}

\medskip

Theorem \ref{thm:elementary_amenable} can be used to partially address \cite[Question 6.3]{ET0L}.
\begin{cor} \label{cor:orbital_Schreier_are_not_CF}
    Let $G=\la\Ac\ra\le \Aut(T)$ be an automaton group of polynomial activity. Suppose that $G$ acts level-transitively on $T$, and that $G$ is not elementary amenable. Then no Schreier graph $\Sch(\xi,G;\Ac)$ with $\xi\in\partial T$ is context-free.
\end{cor}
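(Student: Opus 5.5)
We argue by contraposition: assume some $\xi\in\partial T$ has $\Gamma=\Sch(\xi,G;\Ac)$ context-free, and show that $G$ is elementary amenable.

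The first step is to show that $\Gamma$ has polynomial growth. For automaton groups of polynomial activity, orbital Schreier graphs on the boundary are known to have polynomial (uniform) growth. Concretely, the finite level Schreier graphs $\Sch(X^n,G;\Ac)$ have diameter bounded polynomially in terms of their size, and $\bar\beta$ is bounded by $Cr^d$ uniformly, with $d$ depending on the activity degree. Alternatively, we use the structure theory: $\Gamma$ is context-free, so by Lemma \ref{lem:growth_of_CF_graphs} its growth is either exponential or polynomial. Exponential growth is impossible, since polynomial-activity groups have Schreier graphs of subexponential growth: a ball of radius $r$ around $\xi$ is determined by the finitely many active sections along the $\xi$-path, giving a polynomial count. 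In either case we get $\bar\beta_\Gamma(r)\le Cr^d$, and this is the step I expect to be the main obstacle. It requires quoting or reproving the polynomial-growth bound for boundary Schreier graphs of bounded or polynomial activity automata; the bounded-activity case is classical, and the polynomial case I would handle by induction on activity degree.

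Next we apply Theorem \ref{thm:elementary_amenable} to conclude that $\Gc(\Gamma)$ is elementary amenable.

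The last step is to pass from $\Gc(\Gamma)$ to $G$. Here $\Gc(\Gamma)$ is the quotient of $G$ acting on the orbit $\xi G$, so we need this action to be faithful. Level-transitivity makes the orbit $\xi G$ dense in $\partial T$, because it meets every cylinder. If $g$ acts trivially on $\xi G$, then by continuity $g$ is trivial on $\partial T$, hence $g=1$ in $\Aut(T)$. So $G\simeq\Gc(\Gamma)$ is elementary amenable, contradicting the hypothesis. This proves that no $\Sch(\xi,G;\Ac)$ is context-free.
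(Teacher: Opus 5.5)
Your outline is the paper's: argue by contraposition, use the growth dichotomy for context-free graphs (Lemma \ref{lem:growth_of_CF_graphs}, which also covers $\bar\beta_\Gamma$) to get $\bar\beta_\Gamma(r)\le Cr^d$, apply Theorem \ref{thm:elementary_amenable}, and use minimality of the level-transitive action to get $G\simeq\Gc(\Gamma)$. The last two steps are fine as you wrote them.

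The problem is the growth step, where you mix a correct route with an incorrect one and misidentify the obstacle. Boundary Schreier graphs of polynomial-activity automata are \emph{not} known to have polynomial growth, and in general they do not. The available result is Bondarenko's sub-exponential bound \cite[Theorem 1]{bondarenko2012growth}. Here is a linear-activity example:
\begin{itemize}
\item Over $\{0,1\}$, let $a$ be the adding machine ($0w\mapsto 1w$, $1w\mapsto 0\,a(w)$), and let $b$ fix $1^\infty$ and send $1^k0w\mapsto 1^k0\,a(w)$, i.e.\ $b=(a,b)$.
\item Read sequences as $2$-adic integers and set $y=x+1$. The orbit of $1^\infty$ becomes $\Z$, with $a\colon y\mapsto y+1$ and $b\colon y\mapsto y+2^{\nu(y)+1}$ for $y\neq 0$, where $\nu$ is the $2$-adic valuation.
\item Go from $0$ to $2^i$, move along $2^i\cdot(\text{odd})$ using $b$, and then translate a path that only uses $b$ at valuations $<i$. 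This translation is legitimate from any point of valuation $\ge i$.
\item The resulting recursion gives $d(0,N)\le 2^{O(\sqrt{\log N})}$, so balls of radius $r$ contain at least $\exp(c\log^2 r)$ vertices.
\end{itemize}
Hence an ``induction on the activity degree'' proving polynomial growth cannot succeed. Likewise, your heuristic that a ball is controlled by the active sections along the $\xi$-path, ``giving a polynomial count'', is not a valid argument: its conclusion fails in this example.

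Commit to your second route, which is exactly the paper's proof. Cite sub-exponential growth from Bondarenko rather than deriving it from that heuristic. Then Lemma \ref{lem:growth_of_CF_graphs} excludes the exponential case, which forces $\bar\beta_\Gamma(r)\asymp r^d$ for some $d$, and the rest of your argument goes through. So the step you flagged as the main obstacle is not one: context-freeness, not the activity bound, upgrades sub-exponential growth to polynomial growth.
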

\begin{proof}
    Each hypothesis translates into an information on the Schreier graphs:
    \begin{itemize}[leftmargin=6mm]
        \item Since $G$ is defined by an automaton of polynomial activity, all graphs $\Sch(\xi,G;\Ac)$ with $\xi\in\partial T$ have sub-exponential growth, see \cite[Theorem 1]{bondarenko2012growth}.
        \item The action on $\partial T$ is faithful, by homeomorphism and most importantly minimal (i.e.\ all orbits are dense). This last point is equivalent to level-transitivity. This implies that the action on each orbit $\xi\cdot G$ is faithful.
    \end{itemize}
    Therefore, if a single one of those Schreier graphs was context-free, then its growth must be polynomial from Lemma \ref*{lem:growth_of_CF_graphs} and we would conclude that $G$ is elementary amenable by Theorem \ref*{thm:elementary_amenable}.
\end{proof}
\begin{rem}
    Most bounded automaton groups in the literature are either virtually abelian or not elementary amenable \cite{Jus}. Two notable exceptions are given in \cite[\S9.1, \S 9.3]{Jus}, neither of which acts level-transitively. Let us focus on the second example, isomorphic to $C_2\wr \Z$: 
    \begin{center}
       \begin{tikzpicture}[scale=.7, thick]	
		\node[state, minimum size=20pt, purple] (a) at (0,0) {$a$};
		\node[state, minimum size=20pt] (e) at (5,0) {$\id$};
		\node[state, minimum size=20pt, blue] (t) at (10,0) {$t$};
		
		\path (a)  edge[->] node [above, text width=20pt] {$0|2$ $2|0$ $3|3$} (e)
		(t) edge[->] node [above, text width=20pt] {$0|2$ $1|3$ $2|0$} (e)
		(a) edge [loop left] node [left] {$1|1$}  (a)
       (t) edge [loop right] node [right] {$3|1$}  (t);
	\end{tikzpicture}
   \captionsetup{font=small}
   \captionof{figure}{An automaton generating $C_2\wr \Z$ with $C_2=\la a\ra$ and $\Z=\la t\ra$.}
    \end{center}
    The orbital Schreier graphs are described as follows:
    \begin{itemize}[leftmargin=6mm]
        \item For $\xi\in\{1,3\}^\infty$, the graph $\Sch(\xi,G;\Ac)$ is a copy of $\Z$ formed by $t$-edges, with $a$-self-loops at every vertex.
        \item Otherwise, say the first digit from $\{0,2\}$ in $\xi$ is in position $m$. Then the graph $\Sch(\xi,G;\Ac)$ is a cycle of length $2^m$ with $t$-edges, two antipodal vertices linked by a $a$-bigon, and $a$-self-loops at every other vertex.
    \end{itemize}
    In particular, all the orbital Schreier graphs are context-free.
\end{rem}

\medskip

\begin{ques}
    Does there exists an explicit family of groups which serves as universal containers for transition groups of finite union of context-free graphs of polynomial growth, in the same sense as Theorem \ref*{thm:Houghton_universal}?
\end{ques}
\begin{ques}
    Can we characterise when transition groups of finite union of context-free graphs of polynomial growth are finitely presented? Of type $F_n$? Compare with \cite{cox2025finiteness}.
\end{ques}

\subsection{Intermediate growth}

In this section, we prove an alternative which should be compared with results due to Wolf and Chou for solvable and elementary amenable groups:
\begin{thm} \label{thm:strong_Wolf_alternative}
    Let $G=\Gc(\Gamma^{(1)} \sqcup \ldots \sqcup \Gamma^{(k)})$ be a \textbf{CF-TR} group / a finitely generated subgroup of $V$. Then either
\begin{itemize}[leftmargin=6mm]
    \item $G$ is virtually abelian, or
    \item $G$ contains a free non-abelian semigroup.
\end{itemize}
    The first case happens only if the quasi-trees $\Gamma^{(i)}$ are finite, $1$ or $2$-ended.
\end{thm}
This recovers a known result about finitely generated torsion subgroups of $V$, but also extends it. For instance, an immediate corollary is the following: 
\begin{cor} \label{cor:intermediate}
    Groups of intermediate growth do not embed in $V$.
\end{cor}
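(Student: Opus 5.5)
The plan is to derive the corollary directly from Theorem \ref{thm:strong_Wolf_alternative} (equivalently Theorem \ref{thm:alt_intro}). Growth is only defined for finitely generated groups, so let $G=\la\Ac\ra$ be a finitely generated group of intermediate growth. Assume for contradiction that $G$ embeds in $V$. Then $G$ is a finitely generated subgroup of $V$, and by Theorem \ref{thm:intro_main} it is \textbf{CF-TR}. Hence Theorem \ref{thm:strong_Wolf_alternative} applies, and I will rule out each of its two alternatives separately.

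\emph{Case 1: $G$ is virtually abelian.} Let $A\le G$ be abelian of finite index. Then $A$ is finitely generated, hence $A\simeq\Z^n\times T$ with $T$ finite. The growth function of $G$ is equivalent to that of $A$, since growth type is a quasi-isometry invariant and finite-index subgroups are quasi-isometric to the ambient group. This gives $\beta_G(r)\asymp r^n$, which is polynomial and so contradicts intermediate growth. Alternatively one can simply cite Gromov, but the elementary argument above suffices here.

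\emph{Case 2: $G$ contains a free non-abelian semigroup.} Let $a,b\in G$ generate a free semigroup and set $m=\max(\abs a_\Ac,\abs b_\Ac)\ge 1$. The $2^n$ positive words of length $n$ in $a,b$ represent pairwise distinct elements of $G$, and each has word length at most $mn$. Therefore $\beta_G(mn)\ge 2^n$, so $\beta_G(r)\succeq 2^{r/m}$, i.e.\ $G$ has exponential growth. This again contradicts intermediate growth.

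Both alternatives are therefore impossible, so no group of intermediate growth embeds in $V$. There is no genuine obstacle at the level of the corollary: all the difficulty lies in Theorem \ref{thm:strong_Wolf_alternative}. The only points to state explicitly are that growth is taken with respect to a finite generating set, and that the free semigroup in Case 2 yields exponential growth of the ambient group $G$ (not merely of the subsemigroup), which follows from the length comparison above.
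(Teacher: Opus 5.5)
Your proposal is correct and follows the paper's intended argument. The paper states the corollary as an immediate consequence of Theorem \ref{thm:strong_Wolf_alternative}: virtually abelian groups have polynomial growth, and groups containing a free non-abelian semigroup have exponential growth; your write-up just makes these two standard growth estimates explicit.
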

This also cover other groups, such as the free group in the variety $x^py^p=y^px^p$ for odd $p\ge 665$ which is not virtually abelian (surjects on the free Burnside group $B(2,p)$) and does not contain any free semigroup (positive law).

\begin{proof}[Proof of Theorem \ref*{thm:strong_Wolf_alternative}]
    We first prove the alternative when $G=\Gc(\Gamma)$, starting from the alternative presented in Lemma \ref{lem:growth_of_CF_graphs}.
    \begin{itemize}[leftmargin=6mm]
        \item If $\Gamma$ has polynomial growth, then $G$ is elementary amenable by Theorem \ref*{thm:elementary_amenable}. By a result of Chou \cite[Theorem 3.2']{Chou}, this implies that $G$ is either virtually nilpotent or contains a free non-abelian semigroup. Moreover, the only f.g.\ virtually nilpotent groups embedding in $V$ are virtually abelian \cite[Corollary 1.10]{Burillo_Cleary_Rover_2017}, since all other f.g.\ virtually nilpotent groups contain a copy of $H_3(\Z)$ and therefore a distorted cyclic subgroup.
        \item If $\Gamma$ has exponential growth, then we construct a free semigroup in $G$, adapting the proof of \cite[Proposition 5.7]{CFTR}. Since $\Gamma$ has exponential growth, there exist end-cones $\Gamma(y,x_0)$, $\Gamma(z_1,x_0)$ and $\Gamma(z_2,x_0)$ such that
        \begin{enumerate}[leftmargin=8mm, label=(\arabic*)]
            \item $\Gamma(y,x_0)\supsetneq\Gamma(z_1,x_0)$ and $\Gamma(y,x_0)\supsetneq\Gamma(z_2,x_0)$,
            \item $\Gamma(z_1,x_0)\cap\Gamma(z_2,x_0)=\emptyset$, and
            \item there exist isomorphisms $\psi_i\colon \Gamma(y,x_0)\to \Gamma(z_i,x_0)$ with $\psi_i(y)=z_i$.
        \end{enumerate}
        Consider two words $u_i\in\Ac^*$ labelling paths $y\overset{u_i}\longto z_i$ included in $\Gamma(y,x_0)$. Consider a word $w=u_{i_1}u_{i_2}\ldots u_{i_m}$ ($m\ge 1$). It labels a path from $y$ to
        \[ \psi_{i_1}\psi_{i_2}\ldots \psi_{i_m}(y) \in \Gamma(z_{i_1},x_0)\]
        Condition (1) implies $y\notin\Gamma(z_{i_1},x_0)$ hence $w\ne 1$ in $\Gc(\Gamma)$. More generally, if two words represent the same element $u_{i_1}u_{i_2}\ldots u_{i_m}=u_{j_1}u_{j_2}\ldots u_{j_n}$, then $i_1=j_1$ using condition (2). By induction, we conclude that the semigroup generated by $\{u_1,u_2\}$ is free.
    \end{itemize}

    \medskip
    
    More generally, if $G$ is a finitely generated subgroup of Thompson's $V$, then $G \simeq \Gc\bigl(\Gamma^{(1)} \sqcup \ldots \sqcup \Gamma^{(k)}\bigr)$ for some context-free graphs $\Gamma^{(1)},\ldots,\Gamma^{(k)}$. Recall that $G$ is a subdirect product of the $\Gc(\Gamma^{(i)})$. We split the proof into two cases:
    \begin{itemize}[leftmargin=6mm]
        \item If all the groups $\Gc(\Gamma^{(i)})$ are virtually abelian, then $G$ itself is virtually abelian since it embeds in $\bigoplus_{i=1}^k \Gc(\Gamma^{(i)})$.
        \item Otherwise, there exists $i$ such that $\Gc(\Gamma^{(i)})$ contains a free semigroup, then $G$ itself contains a free semigroup since $G\onto \Gc(\Gamma^{(i)})$.
    \end{itemize}

    \medskip
    
    Finally we prove that, if $G=\Gc(\Gamma)$ is virtually abelian, then the number of ends of $\Gamma$ is $0$, $1$ or $2$. We recall a folklore lemma, related to the invariance of the number of ends $E(\Gamma)\in\{0,1,2,\ldots,\infty\}$ under quasi-isometries.
\begin{lemma}
    Let $\Gamma$ and $\Lambda$ be locally finite graphs. If $\iota\colon \Lambda\into \Gamma$ is a coarsely surjective graph embedding, then $E(\Lambda)\ge E(\Gamma)$.
\end{lemma}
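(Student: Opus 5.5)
The plan is to compare the ends of $\Gamma$ and $\Lambda$ directly, using the embedding $\iota$ together with coarse surjectivity. Recall that $E(\Gamma)$ is the supremum, over finite subsets $K\subseteq V\Gamma$, of the number of infinite connected components of $\Gamma-K$. Fix $C\ge 0$ such that every vertex of $\Gamma$ lies within distance $C$ of $\iota(V\Lambda)$. Given a finite $K\subseteq V\Gamma$ for which $\Gamma-K$ has at least $n$ infinite components $U_1,\ldots,U_n$, I aim to produce a finite $K'\subseteq V\Lambda$ such that $\Lambda-K'$ has at least $n$ infinite components. Since $n$ is arbitrary below $E(\Gamma)$, this gives $E(\Lambda)\ge E(\Gamma)$.

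The key steps are as follows.

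\emph{Choice of $K'$.} Set $K'=\iota^{-1}\bigl(D_C(K)\bigr)$, where $D_C(K)$ is the $C$-neighbourhood of $K$. This set is finite: $\Gamma$ is locally finite, so $D_C(K)$ is finite, and $\iota$ is injective on vertices.

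\emph{Each $U_j$ contributes an infinite component.} Each $U_j$ is infinite, and each of its vertices lies within distance $C$ of $\iota(V\Lambda)$. By local finiteness of $\Gamma$, the set $\iota^{-1}(N_C(U_j))$ is infinite. Take vertices of $U_j$ at distance more than $2C$ from $K$, and their nearby $\iota$-preimages. There are infinitely many such preimages, and they avoid $K'$.

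\emph{Components of $\Lambda-K'$ do not mix the $U_j$.} Let $P$ be a component of $\Lambda-K'$. Since $\iota$ is a graph embedding, adjacent vertices of $P$ map to adjacent vertices of $\Gamma$. Because $P$ avoids $K'$, its image lies in the connected set $\Gamma-D_C(K)\subseteq\Gamma-K$. Hence $\iota(P)$ lies in a single component of $\Gamma-K$. Preimages attached (within distance $C$, away from $K$) to vertices of $U_j$ land in $U_j$ itself: a geodesic of length at most $C$ from a vertex $x$ with $d(x,K)>2C$ avoids $K$, so its endpoint stays in the component of $x$.

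\emph{Counting.} For each $j$, the infinite set $S_j$ of preimages lies in $\iota^{-1}(U_j)$. These sets are pairwise separated by $K'$, so they lie in distinct components of $\Lambda-K'$. Since $\Lambda$ is locally finite and $K'$ is finite, $\Lambda-K'$ has finitely many components meeting the neighbourhood of $K'$. Each component not meeting it must be all of a connected component of $\Lambda$; I will assume $\Lambda$ is connected, which is the intended setting.

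The remaining step is to show that an infinite subset of $\iota^{-1}(U_j)$ forces an infinite component. The components of $\Lambda-K'$ are finitely many, because $\Lambda$ is connected and locally finite and $K'$ is finite. Hence one of them contains infinitely many points of $S_j$, and that component maps into $U_j$. This yields at least $n$ distinct infinite components.

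I expect the main obstacle to be the bookkeeping around the constant $C$. The points of $S_j$ must be chosen far enough from $K$ so that coarse surjectivity places their preimages in the correct component $U_j$, and not in a neighbouring one across $K$. If this becomes messy, I would first simplify by enlarging $K$ to $D_{2C}(K)$, which does not decrease the number of infinite components (each $U_j$ still contains an infinite component of the complement). With this enlargement, all attachments happen well inside the components.
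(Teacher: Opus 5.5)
Your proof is correct and is essentially the paper's argument: pull back a finite cut set along $\iota$, use coarse surjectivity to see that each $U_j$ contains the images of infinitely many vertices of $\Lambda$ (you apply it to vertices of $U_j$ far from $K$, a detail the paper leaves implicit), and observe that each component of the complement in $\Lambda$ maps into a single component of $\Gamma-K$; the paper cuts along $\iota^{-1}(K)$ directly, which already suffices, so your enlargement to $\iota^{-1}(D_C(K))$ is harmless but unnecessary. Two small remarks: $\Gamma-D_C(K)$ is not connected in general (what you actually use is that $\iota(P)$ is connected and misses $K$), and you are right that $\Lambda$ must be connected, which the paper tacitly assumes and which holds in its application (otherwise the statement fails: the edgeless induced subgraph on the even vertices of a bi-infinite line has $0$ ends).
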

\begin{proof}
    Suppose that $E(\Gamma)\ge m$. There exists a finite subset $F\subset\Gamma$ such that $\Gamma-F$ admits at least $m$ infinite connected components $C_1,\ldots,C_m$. Since $\iota$ is coarsely surjective, the preimages $\iota^{-1}(C_i)$ must be infinite, and they lie in distinct components of $\Lambda-\iota^{-1}(F)$. We conclude that $E(\Lambda)\ge m$.
\end{proof}

    After possibly changing the generating set, we may suppose that $\Ac$ contains a subset $\Bc$ generating a finite-index abelian subgroup $H$ of $G$. We observe that $\Lambda=\Sch(x_0,H;\Bc)$ is the Cayley graph of an abelian quotient of $H$, it must therefore be $0$-, $1$- or $2$-ended. Moreover, there exists a finite transversal $T$ such that $G=HT$, and therefore
    \[ V(\Gamma)=x_0\cdot G=x_0H\cdot T = V(\Lambda)\cdot T. \]
    This implies that $\Lambda\into \Gamma$ is coarsely surjective, hence $E(\Gamma)\le E(\Lambda)\le 2$.
\end{proof}
\begin{center}
        \begin{tikzpicture}[scale=1.4]
		\foreach \x in {0,...,4}{
			\node[circle, fill=black, inner sep=2pt] (P\x) at (\x,0) {};
			\draw[-latex, thick, blue] ({\x+.05},0) -- ({\x+.95},0);
		      \node[circle, fill=black, inner sep=2pt] (Q\x) at (\x,1) {};
			\draw[latex-, thick, blue] ({\x+.05},1) -- ({\x+.95},1);}
            
		\foreach \x in {0,1,2,3,4}{
            \draw[latex-latex, thick, purple] (P\x) to (Q\x);}

        \draw[-latex, thick, blue, bend right=90] (-.07,1) arc (90:270:.5);
		
	\end{tikzpicture}
	\captionsetup{font=small, margin=18mm}
	\captionof{figure}{A $1$-ended context-free graph $\Gamma$ for $G=\Z\rtimes C_2$, together with a $2$-ended subgraph $\Lambda$ for $H=\Z$.}
    \end{center}
    
\begin{rem}
    There exist two-ended context-free graphs $\Gamma$ such that $\Gc(\Gamma)$ contains a free semigroup. For instance, the lamplighter group $C_2\wr\Z$ and the lampshuffler group $\FSym(\Z)\rtimes\Z$ are defined by the following graphs:
    \begin{center}
        \begin{tikzpicture}[scale=1.4]
		\foreach \x in {-3,...,4}{
			\node[circle, fill=black, inner sep=2pt] (P\x) at (\x,0) {};
			\draw[-latex, thick, blue] ({\x+.05},0) -- ({\x+.95},0);
		      \node[circle, fill=black, inner sep=2pt] (Q\x) at (\x,1) {};
			\draw[-latex, thick, blue] ({\x+.05},1) -- ({\x+.95},1);}
            
		\foreach \x in {-3,-2,-1,1,2,3,4}{
            \draw[latex-latex, thick, purple] (P\x) to (Q\x);}

        \draw[-latex, thick, purple, loop, below] (P0) to (P0);
        \draw[-latex, thick, purple, loop, above] (Q0) to (Q0);
        
		\draw[-latex, thick, blue] (-3.7,0) -- (-3.05,0);
		\draw[-latex, thick, blue] (-3.7,1) -- (-3.05,1);
	\end{tikzpicture}
	\captionsetup{font=small}
	\captionof{figure}{A context-free graph for the lamplighter group $C_2\wr\Z$.}
    \label{fig:lamplighter}
    \end{center}
    \begin{center}
    \begin{tikzpicture}[scale=1.4]
		\foreach \x in {-3,...,4}{
			\node[circle, fill=black, inner sep=2pt] (P\x) at (\x,0) {};
			\draw[-latex, thick, blue] ({\x+.05},0) -- ({\x+.95},0);}
		
		\foreach \x in {-3,-2,-1,2,3,4}{
			\draw[-latex, thick, purple, loop, above] (P\x) to (P\x);}
		
		\draw[-latex, thick, bend right, purple] (.05,-.05) to (.95,-.05);
		\draw[-latex, thick, bend right, purple] (.95,.05) to (.05,.05);
		\draw[-latex, thick, blue] (-3.7,0) -- (-3.05,0);
	\end{tikzpicture}
	\captionsetup{font=small}
	\captionof{figure}{Context-free graph for the Houghton group $H_2=\FSym(\Z)\rtimes\Z$.}
    \end{center}
\end{rem}

%% file: Sections/sec7_branch.tex
In this section, we combine the observation that Schreier graphs of subgroups of $V$ on the Cantor set are quasi-trees with the following lemma to deduce that many groups of dynamical origin \emph{do not} embed in Thompson's $V$.

\begin{lemma} \label{lem:free_in_cover}
    Suppose that $\Cay(G;\Ac)\onto \Gamma\overset\pi\onto \Sigma$ where $\Gamma$ is a quasi-tree and $\Sigma$ is not a quasi-tree. Then $G$ contains a free non-abelian subgroup.
\end{lemma}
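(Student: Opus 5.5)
The plan is to build two explicit elements of $G$, coming from a very fat $R_2$ minor in $\Sigma$, and to show that they generate a free group. The argument lifts paths from $\Sigma$ to $\Gamma$ and uses the tree-like geometry of $\Gamma$.

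\emph{Setup.} Since $\Gamma$ is a quasi-tree, the theorem following Proposition \ref{prop:quasitree_iff_no_fat_rose} gives constants $K_0$ and $C$ and a surjection $f\colon\Gamma\to T$ to an $\R$-tree with
\[ d_\Gamma(x,y)-C\le d_T\bigl(f(x),f(y)\bigr)\le d_\Gamma(x,y). \]
In particular $\Gamma$ has the bottleneck property: any two paths between the same endpoints stay within a uniform distance $K_1$ of each other.

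\emph{Step 1: choosing the two loops.} Since $\Sigma$ is not a quasi-tree, Proposition \ref{prop:quasitree_iff_no_fat_rose} gives a $K$-fat $R_2$ minor in $\Sigma$ for some $K\gg K_0,K_1,C$, to be fixed later. From the branch sets I would extract a base vertex $p\in\Sigma$ and two closed paths $p\overset{u}\longto p$ and $p\overset{v}\longto p$, with $u,v\in\Ac^*$. By construction of the minor, these loops have the following shape: each loop consists of two halves that are $K$-far apart away from a bounded neighbourhood of $p$, and the middle part of $u$ is $K$-far from the middle part of $v$.

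\emph{Step 2: reducing to endpoints of lifts.} Fix $x\in\pi^{-1}(p)$. Both $\Gamma$ and $\Sigma$ are Schreier graphs of $G$ covered by $\Cay(G;\Ac)$, so a word $W$ trivial in $G$ labels closed paths everywhere in $\Gamma$. Hence it suffices to show the following: for every nontrivial freely reduced word $W$ in $u^{\pm1}$ and $v^{\pm1}$, the lift to $\Gamma$ starting at $x$ of the concatenated loop in $\Sigma$ does not end at $x$. This gives $W\ne1$ in $G$, so $\langle u,v\rangle$ is free of rank $2$.

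The key input is that the covering $\pi$ is $1$-Lipschitz, so distances in $\Gamma$ are at least the corresponding distances in $\Sigma$. Each lifted segment, that is, a lift of $u^{\pm1}$ or $v^{\pm1}$, therefore inherits the fatness: its two halves are $K$-far apart in $\Gamma$ away from its endpoints.

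\emph{Step 3: the main obstacle, showing the lift is not closed.} For a single letter the argument is easy. A closed lift of $u$ would give a $K$-fat bigon in $\Gamma$, contradicting the bottleneck property once $K>2K_1$. So no lifted segment is closed, and its endpoints are at bounded distance from each other (both lie in a fibre).

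For longer words I would work in $T$. To each lifted segment attach the point $m_i$ of $T$ given by the image of its far middle portion. Fatness then forces the geodesic in $T$ from the segment's start to $m_i$ to be long, at least of order $K$. Reducedness of $W$ means that consecutive segments are never a loop followed by its inverse. Combined with the fact that the middles of $u$ and $v$ are $K$-far in $\Sigma$, this should imply that consecutive excursions in $T$ branch off in different directions, or at different depths, and never retrace each other by more than $O(K_1+C)$.

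The resulting sequence of points in $T$ is then a local quasi-geodesic at scale of order $K$. In an $\R$-tree, such a local quasi-geodesic is a global quasi-geodesic, so the final point of the lift is far from $f(x)$ and the lift is not closed.

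The delicate part is the case analysis for consecutive letters, such as $uu$, $uv$ and $uv^{-1}$. The cleanest route may be to first replace $u,v$ by suitable conjugates or powers so that each lifted segment is a long path between two distinct points in a fibre, going out to a far branch of $T$ and coming back. After that, one checks the no-backtracking condition for the finitely many letter pairs.
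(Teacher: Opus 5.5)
You have the right ingredients (a $K$-fat $R_2$ minor in $\Sigma$, two loops at a common base point, lifting to $\Gamma$, the $1$-Lipschitz covering, the $\R$-tree approximation $f$). There is a genuine gap, however. The decisive step, that the lift of $W(u,v)$ never closes up, is only asserted (``this should imply\ldots''), and the claims you would build it on do not hold as stated.

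The parts of $u$ and $v$ near the base point run inside the central branch set of the minor. That set has diameter at least $K$, because it meets four edge-paths that are pairwise $K$-far. So these parts are not confined to a bounded neighbourhood of $p$. In a product such as $uu$ or $uv$, the return part of one loop followed by the outgoing part of the next can backtrack in $T$ by an amount of order $K$ or more. Hence ``consecutive excursions never retrace each other by more than $O(K_1+C)$'' is not available, and the local quasi-geodesic argument has nothing to stand on.

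Two auxiliary claims are also wrong. The bottleneck property does not say that any two paths with the same endpoints are uniformly close; it only says that every path passes near each point of a geodesic. And two endpoints of a lift lying in the same fibre gives no bound on their distance. Your description of the loops also omits what mixed words such as $uv$ or $uv^{-1}$ actually need: the return edge-path of one loop must be $K$-far from the edge-paths of the other.

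The missing idea is to argue at the level of the pieces of the minor rather than whole loops, which is what the paper does. Write $u=x_1y_1z_1y_2x_2$ and $v=x_3y_3z_3y_4x_4$. Here the $x_i$ lie in the central branch set, $y_1,\ldots,y_4$ are the four edge-paths, and $z_1,z_3$ lie in the outer branch sets.

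For a reduced $W\ne 1$, the path $W(u,v)$ splits into segments $s_1,\ldots,s_\ell$ with $\ell\ge 5$. These alternate between branch-set pieces and edge-paths, with adjacent central pieces such as $x_2x_3$ merged into one segment. Then $s_i$ and $s_{i+2}$ are always non-incident pieces of the minor: either distinct branch sets, or distinct edge-paths, and the latter is exactly where reducedness is used. So they are $K$-far in $\Sigma$, hence in $\Gamma$, hence disjoint in $T$ once $K>C+1$. Arbitrary backtracking \emph{inside} a segment is then harmless.

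It remains to use a tree fact. Let $A_1,\ldots,A_\ell$ be compact connected subsets of an $\R$-tree with $A_i\cap A_{i+1}\ne\emptyset$ and $A_i\cap A_{i+2}=\emptyset$. Then $A_\ell\cap A_1=\emptyset$ for $\ell\ge 3$: the component of $T\setminus A_{j-1}$ containing $A_{j+1}$ decreases with $j$ and avoids $A_{j-2}$. So the endpoint of the lift differs from its start. This settles all reduced words at once, with no case analysis on letter pairs and no need to pass to powers or conjugates.
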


\begin{proof}
    Suppose that $\Gamma$ admits a non-expansive map $f\colon \Gamma\to T$ with
    \[ d_\Gamma(x,y)-C \le d_T\bigl(f(x),f(y)\bigr) \le d_\Gamma(x,y). \]
    Since $\Sigma$ is not a quasi-tree, it contains a $K$-fat $R_2$ minor for $K>C$. Consider the words $a=x_1y_1z_1y_2x_2$ and $b=x_3y_3z_3y_4x_4$ in $\Ac^*$ labelling the closed paths with common base point on Figure \ref*{fig:fat_minor} and $w\in F_2\setminus\{1\}$ (seen as a reduced word).
    \begin{center}
        \import{Pictures/}{tikz_fat_minor.tex}
        \captionsetup{font=small}
        \captionof{figure}{A $K$-fat $R_2$ minor, together with associated paths.}
        \label{fig:fat_minor}
    \end{center}
    Take the obvious path labelled by $w(a,b)$ in the minor, a lift in $\Gamma$, and consider its (continuous) image through $f$ in $T$. This latest path can be decomposed as a concatenation segments $s_1s_2\ldots s_\ell$ which alternates colors ($s_1=x_i^\pm$ is blue, $s_2=y_i^\pm$ is red, $s_3=z_j^\pm$ is blue, etc.). The distance between $s_i$ and $s_{i+2}$ in $\Sigma$ is at least $K$. Since $\pi\colon \Gamma\onto \Sigma$ is a graph covering, in particular is non-expanding, the same is true for their lift in $\Gamma$. Finally, by using the property of $d_T$, the distance between their images in $T$ is at least $K-C>0$. Let $q_i$ be the last intersection between $s_i$ and $s_{i+1}$ in $T$ (i.e., if you consider a parametrisation for each segment, this is $s_{i+1}(t)$ in $\mathrm{Im}(s_i)$ with $t$ maximal). 
    By induction, we have
    \[ d(q_i,q_0)\ge (i-1)(K-C) \]
    (see Figure \ref*{fig:quasi-tree}), so $w(a,b)\ne 1$ in $G$, i.e.\ $\la a,b\ra$ is a free subgroup in $G$.

\end{proof}
\begin{center}
    \import{Pictures/}{tikz_quasi_tree.tex}
    \captionsetup{font=small}
    \captionof{figure}{Image of the lifted path in $T$}
    \label{fig:quasi-tree}
\end{center}

\subsection{Some weakly branch groups} \label{sec:branch}

Consider a group $G$ acting on a rooted tree $T$. As shown in Corollary \ref{cor:orbital_Schreier_are_not_CF}, the Schreier graphs $\Sch(\xi,G;\Ac)$ (with $\xi\in\partial T$) are not context-free for a wide family of groups. Since these are the only \say{natural} Schreier graphs at our disposition, we expect that those groups will not embed in $V$. We prove this for some weakly branch groups, using the previous lemma. For a definition and some background on (weakly) branch groups, see \cite{Bartholdi2005BranchG}.
    
\begin{thm} \label{thm:big_loops}
    Let $T$ be a rooted tree and $G=\langle \Ac\rangle\le \Aut(T)$. Suppose that
    \begin{enumerate}[leftmargin=8mm, label={\normalfont(\alph*)}]
        \item $G$ is weakly branch and $\RStab_G(\ell)'$ is finitely generated for all $\ell$,
        \item $G$ does not contain a non-abelian free subgroup, and
        \item for all $\xi\in\partial T$, the Schreier graph $\Sch(\xi, G;\Ac)$ \emph{is not} a quasi-tree.
    \end{enumerate}
    Then, for every action $X\racts G$ such that $\Gamma=\Sch(X, G;\Ac)$ \emph{is} a quasi-tree, there exists $\ell\ge 0$ such that $\ker(X\racts G)\ge \RStab(\ell)'$.
\end{thm}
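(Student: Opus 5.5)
We work orbit by orbit and combine Lemma~\ref{lem:free_in_cover} with the rigidity of confined subgroups of weakly branch groups (Le Boudec--Matte Bon). Since $\ker(X\racts G)=\bigcap_{x\in X}\Stab_G(x)$, it suffices to find a single $\ell$ such that $\RStab_G(\ell)'\le\Stab_G(x)$ for every $x\in X$.

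\textbf{Step 1: stabilisers are confined.} Fix $x\in X$, put $H=\Stab_G(x)$ and $\Gamma_x=\Sch(x,G;\Ac)$; this graph is a quasi-tree because it is a component of $\Gamma$. Suppose $H$ is not confined. Then $\Cay(G;\Ac)\preceq\Gamma_x$. Suppose moreover that the quasi-tree constant of $\Gamma$ is uniform; this is automatic if we use the $K$-quasi-tree formulation with $K$ fixed for the whole graph $\Gamma$. Then Lemma~\ref{lem:A-q-trees_are_closed}(a) makes $\Cay(G;\Ac)$ a $K$-quasi-tree. Now take $\xi\in\partial T$. The Cayley graph covers $\Sch(\xi,G;\Ac)$, which is not a quasi-tree by hypothesis (c). Applying Lemma~\ref{lem:free_in_cover} with $\Gamma=\Sigma'=\Cay(G;\Ac)$ gives a free non-abelian subgroup, contradicting (b). So every stabiliser is confined, and the same argument shows that the closed set $\overline{\{\Stab_G(x):x\in X\}}\subseteq\Sub(G)$ avoids $\{1\}$.

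\textbf{Step 2: rigidity.} The key input is the Le Boudec--Matte Bon result for weakly branch groups \cite{Branch_confined}: a confined subgroup $H$ contains $\RStab_G(\ell)'$ for some level $\ell$, or alternatively the orbit closure of $H$ contains a subgroup of the form of a neighbourhood stabiliser. Concretely, it contains $\GStab$-type subgroups related to points $\xi\in\partial T$, which sit inside $\Stab_G(\xi)$. I would use the URS formulation. Pick a URS $\mathcal U$ inside the orbit closure of $H$. By \cite{Branch_confined}, $\mathcal U$ either consists of subgroups containing some $\RStab(\ell)'$, or is the stabiliser URS of the action on $\partial T$. In the second case, some $K'\in\mathcal U$ satisfies $K'\le\Stab_G(\xi)$. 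Its Schreier graph is a limit of the quasi-trees $\Gamma_x$, hence a $K$-quasi-tree, and it covers $\Sch(\xi,G;\Ac)$, which is not a quasi-tree. Lemma~\ref{lem:free_in_cover} again contradicts (b). Hence every limit subgroup contains some $\RStab(\ell)'$.

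\textbf{Step 3: uniform $\ell$.} Since $\RStab(\ell)'$ is finitely generated by (a), the condition $\{L: L\ge\RStab(\ell)'\}$ is open in $\Sub(G)$. These open sets increase with $\ell$ and, by Step 2, cover the compact closure of $\{\Stab_G(x)\}$. Compactness therefore gives a single $\ell$ with $\Stab_G(x)\ge\RStab(\ell)'$ for every $x$, which yields $\ker\ge\RStab(\ell)'$.

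The main obstacle is Step 2: extracting exactly the dichotomy needed from \cite{Branch_confined}, in particular handling the case where the limit is only contained in, not equal to, a point stabiliser on $\partial T$. A secondary point is that Step 1 needs the quasi-tree constant to be uniform across all orbits; if it is not, restricting to the closure of a single orbit avoids the issue.
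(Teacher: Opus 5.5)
Your route is the paper's: confinement via closedness of $K$-quasi-trees and Lemma~\ref{lem:free_in_cover}, then a URS in the orbit closure, then the Le Boudec--Matte Bon dichotomy, then Lemma~\ref{lem:free_in_cover} again to rule out the point-stabiliser case. The dichotomy says that each $\tilde H$ in the URS satisfies either $\tilde H\le\Stab_G(\xi)$ for some $\xi\in\partial T$, or $\tilde H\ge\RStab_G(\ell)'$.

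Your Step~1 is a harmless variant. You apply the lemma to the Cayley graph itself, while the paper notes that a weakly branch group is not virtually free, so $\Cay(G;\Ac)$ is not a limit of quasi-trees.

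The point you flag as the main obstacle is not one. Lemma~\ref{lem:free_in_cover} only needs the Schreier graph of $K'$ to cover $\Sch(\xi,G;\Ac)$, and that is exactly $K'\le\Stab_G(\xi)$.

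The real gap is the sentence ``Hence every limit subgroup contains some $\RStab(\ell)'$''. The dichotomy only constrains elements of the chosen URS $\mathcal U$, which is a minimal, typically proper, subset of the orbit closure. Nothing you wrote controls the other limit subgroups, or $\Stab_G(x)$ itself, so Step~3 has nothing to cover with.

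What is missing is the transfer from the URS back to the orbit. This is where the paper uses finite generation \emph{and} normality. The set $O_\ell=\{L\in\Sub(G): L\ge\RStab_G(\ell)'\}$ is clopen, and it is conjugation-invariant because $\RStab_G(\ell)'$ is normal in $G$. It meets $\overline{\{H^g:g\in G\}}$ at $\tilde H$, hence meets the dense orbit $\{H^g\}$, hence contains $H$. For a transitive action this already finishes, with no compactness: $\ker(X\racts G)=\bigcap_g H^g\ge\RStab_G(\ell)'$.

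To keep your Step~3 for non-transitive $X$ with a uniform $K$, you have two options.
\begin{itemize}
\item Apply this transfer to every $L\in Y=\overline{\{\Stab_G(x):x\in X\}}$. Its orbit closure lies in $Y$, so Steps~1--2 apply to it.
\item Note that $Y\setminus\bigcup_\ell O_\ell$ is closed and invariant. If nonempty, it would contain a URS, which Step~2 places inside some $O_\ell$, a contradiction.
\end{itemize}
With that repair, your compactness argument yields one $\ell$ for all orbits. This is slightly more than the paper's proof gives, since it treats $\Gamma$ as connected.
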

Condition (a) is known for branch groups \cite[Corollary A.5]{Francoeur}. Condition (b) is known for many groups acting on rooted trees, including automata groups of polynomial activity \cite{no_free_in_poly_activity}, and contracting groups \cite{no_free_in_contracting}. The proof largely follows the structure of \cite[Theorem 6.3]{Branch_confined}:
\begin{proof}
    Note that graphs in $\overline{\{(\Gamma,x): x\in V(\Gamma)\}}$ are quasi-trees (Lemma \ref{lem:A-q-trees_are_closed}). Since $G$ is weakly branch, it contains a direct sum of infinite groups
    $$\RStab_G(1) = \bigoplus_{v\in \Lc_1} \RStab_G(v)$$
    hence $G$ cannot be virtually free and its Cayley graph is not a quasi-tree: $\Cay(G;\Ac)\not\preceq \Gamma$. Equivalently $\{1\}\notin \overline{\{H^g:g\in G\}}$ where $H=\Stab_G(x)$ for any $x\in V(\Gamma)$. We consider an URS $\tilde H\in \overline{\{H^g:g\in G\}}$. Using \cite[Corollary 5.20]{Branch_confined}, we are left with two cases:
    \begin{itemize}[leftmargin=6mm]
        \item Either there exists $\xi\in\partial T$ s.t.\ $\tilde H\le \Stab_G(\xi)$. Equivalently, there exists $\tilde \Gamma\preceq \Gamma$ s.t.\
        $$ \Cay(G;\Ac)\onto \tilde\Gamma\onto \Sigma,$$
        where $\tilde \Gamma=\Sch(\tilde H\backslash G, G;\Ac)$ and $\Sigma=\Sch(\xi, G;\Ac)$. By hypothesis, $\tilde \Gamma$ is a quasi-tree (Lemma \ref{lem:A-q-trees_are_closed}(a)) while $\Sigma$ is not a quasi-tree, therefore $G$ contains a free non-abelian subgroup by Lemma \ref*{lem:free_in_cover}, a contradiction.

        \item Or there exists $\ell\ge 0$ such that $\tilde H\ge \RStab_G(\ell)'$. Since $\RStab_G(\ell)'$ is finitely generated (say generated by $h_1,\ldots,h_m$), the set
        \[ \Bigl\{ K\in \Sub(G) \;\Big|\; K\ge \RStab_G(\ell)'\Bigr\} = \Bigl\{K\in\Sub(G) \;\Big|\; K\ni h_1,\ldots,h_m\Bigr\} \]
        is a clopen which intersects $\overline{\{H^g:g\in G\}}$ (it contains $\tilde H$), hence it intersects the dense subset $\{H^g:g\in G\}$. We deduce that $H^g \ge \RStab_G(\ell)'$ for some $g$ and
        \[ \ker(X\racts G) = \bigcap_g H^g  \ge \RStab_G(\ell)',\]
        since $\RStab_G(\ell)'$ is normal. \qedhere
    \end{itemize}
\end{proof}
\begin{rem}
    Condition (a) could be weakened to \say{$\RStab(\ell)'=\bigcup_{i\in I}N_i$ where $N_i$ are finitely generated normal subgroups}. It is unclear if this allows to treat more examples.
\end{rem}
\begin{cor}
    The Hanoï Towers groups $H^{(m)}$ with $m\geq 3$, and all the groups $G_n$ with $n\geq 3$ described in \cite{Skipper_thesis}, do not embed in $V$.
\end{cor}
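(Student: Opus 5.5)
The plan is to argue by contradiction using Theorem \ref{thm:big_loops}. Suppose $G$ (a Hanoï Towers group $H^{(m)}$ with $m\ge 3$, or one of Skipper's groups $G_n$ with $n\ge 3$) embeds in $V$, with finite generating set $\Ac$.

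First I produce the action $X\racts G$ required by the theorem. By the first half of Theorem \ref{thm:intro_main} (more precisely, Proposition \ref{prop:Schreier_of_GleV_are_CF} combined with Lemma \ref{lem:finitely_many_isomorphism_type}), $G$ acts faithfully on a finite union $X=X_1\sqcup\ldots\sqcup X_k$ of orbits of eventually periodic points of $\Ck$. Each orbital graph $\Gamma_j=\Sch(X_j,G;\Ac)$ is context-free, hence a quasi-tree. Alternatively, Proposition \ref{prop:Schreier_of_GleV_are_qtrees} gives quasi-trees for all aperiodic orbits.

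Next I apply Theorem \ref{thm:big_loops} to each transitive action $X_j\racts G$ separately. This yields levels $\ell_j$ with $\ker(X_j\racts G)\ge \RStab_G(\ell_j)'$. Put $\ell=\max_j\ell_j$. Rigid level stabilisers decrease with the level, so $\RStab_G(\ell)'$ lies in every $\ker(X_j\racts G)$, hence in $\ker(X\racts G)=\{1\}$. Since $G$ is weakly branch, $\RStab_G(v)$ is non-abelian for every vertex $v$: a weakly branch group cannot have a non-trivial abelian rigid stabiliser, because commutators with conjugates moving the support inside $v$ would otherwise force triviality, which is standard in \cite{Bartholdi2005BranchG}. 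Therefore $\RStab_G(\ell)'\ne 1$, a contradiction.

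It remains to verify hypotheses (a)--(c) of Theorem \ref{thm:big_loops} for these groups.
\begin{itemize}[leftmargin=6mm]
    \item For (a), both families are known to be branch: Grigorchuk--Šunić for $H^{(m)}$, and Skipper's thesis for $G_n$. So \cite[Corollary A.5]{Francoeur} gives that $\RStab_G(\ell)'$ is finitely generated.
    \item For (b), both families are generated by bounded automata, hence have polynomial activity. So they contain no free non-abelian subgroup by \cite{no_free_in_poly_activity}; alternatively, they are contracting and \cite{no_free_in_contracting} applies.
    \item For (c), I need that $\Sch(\xi,G;\Ac)$ contains a $K$-fat $R_2$ minor for every $K$ and every $\xi\in\partial T$, and then invoke Proposition \ref{prop:quasitree_iff_no_fat_rose}.
\end{itemize}

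The main obstacle is (c), because it must hold for \emph{every} boundary point. For $H^{(m)}$, the finite level-$n$ Schreier graph $\Gamma_n$ is built recursively from $m$ copies of $\Gamma_{n-1}$ joined in a Sierpiński-gasket-like pattern. For $m=3$, two adjacent sub-gaskets of $\Gamma_n$ already give two cycles of length $\asymp 2^n$ sharing a single corner, at mutual distance $\asymp 2^n$ away from that corner, which is a $K$-fat $R_2$ minor once $2^n\gg K$.

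I would then show that every orbital graph $\Sch(\xi,G;\Ac)$ contains, for each $n$, a copy of $\Gamma_n$ or a large sub-gasket of it, whose distances are not shrunk by more than a bounded factor. This comes from the self-similar description: the orbital graph is an increasing union of pieces isomorphic to $\Gamma_n$ with one corner possibly removed, and these pieces are attached to the rest only through corner vertices. A corner-attached piece is isometrically embedded up to a bounded error, so the fat minor survives.

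For $G_n$ I would carry out the same analysis using the recursive description of its Schreier graphs in \cite{Skipper_thesis}: large cycles attached at cut vertices. I would try this verification first on the $H^{(3)}$ case, and then adapt it.
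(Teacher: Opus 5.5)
Your overall plan matches the paper's: apply Theorem~\ref{thm:big_loops} after checking (a)--(c). Several parts are correct and more explicit than the paper. You apply the theorem orbit by orbit to the finitely many orbits from Lemma~\ref{lem:finitely_many_isomorphism_type} and take $\ell=\max_j\ell_j$. You also note that $\RStab_G(\ell)'\neq 1$, because rigid stabilisers in a weakly branch group are non-abelian.

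Checking (c) with $K$-fat $R_2$ minors instead of Manning's bottleneck criterion is a legitimate alternative. The prefix map from an orbital graph to the level-$k$ Schreier graph is a non-expanding graph morphism. So each block $[n]^k\eta$ is isometrically embedded, and it suffices to find fat minors in the finite level graphs, uniformly in $\xi$.

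\textbf{The gap: $H^{(m)}$ with $m\ge 4$.} Your structural description is only true for $m=3$. You describe $m$ copies of $\Gamma_{n-1}$ glued at corners in a Sierpi\'nski pattern. For $m\ge 4$, the generator $a_{ij}$ changes the last letter of $uc$ (with $c\in\{i,j\}$) whenever $u\in([m]\setminus\{i,j\})^{n-1}$. So the blocks $[m]^{n-1}i$ and $[m]^{n-1}j$ are joined by $(m-2)^{n-1}$ edges, not through a corner.

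Distances then collapse. Already $d(1^n,2^n)$ is the optimal move count of the $m$-peg puzzle, which is subexponential in $n$, so there are no cycles of size $\asymp 2^n$. Neither your fat-minor picture nor the claim that pieces are ``attached only through corner vertices'' transfers, and you give no replacement.

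The missing idea is the paper's reduction. The subgroup $\langle a_{12},a_{13},a_{23}\rangle\le H^{(m)}$ is isomorphic to $H^{(3)}=G_3$: these generators never alter letters $\ge 4$, and they act on the subsequence of letters from $\{1,2,3\}$ exactly as $H^{(3)}$ does. Non-embeddability in $V$ passes to overgroups. This also makes your appeal to branchness of $H^{(m)}$ for $m\ge 4$ unnecessary.

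\textbf{Second, $G_n$ with $n\ge 4$:} your verification of (c) is only announced. It can be completed along your lines, because the corner structure does hold there. The blocks $[n]^kc$ are glued only at the vertices $i^kc$, along the cycles of $\sigma_i$ (not at cut vertices, as you suggest).

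The paper handles this case with Manning's criterion applied to $p=3^k2\,\xi_k$, $m=1^k2\,\xi_k$, $m'=1^k3\,\xi_k$, $q=2^k3\,\xi_k$. The key estimate is $d(1^k2\,\xi_k,[n]^k1\,\xi_k)\ge k+1$. In your approach, you would need the big cycle through $1^k2,1^k3,2^k3,2^k1,3^k1,3^k2$ together with that same estimate to certify that your cycles, and hence your $R_2$ minors, are $K$-fat.
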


\begin{proof}
We want to apply Theorem \ref{thm:big_loops}, starting with the groups $G_n$. Recall that the group $G_n\le\Aut(T_n)$ is generated by the automorphisms
\[
a_i=(1,\ldots,1,a_i,1,\ldots,1)\sigma_i
\]
for $i=1,\ldots,n$, where $a_i$ occupies the $i$-th section and 
\[
\sigma_i=(1,2,\ldots,i-1,i+1,\ldots,n-1,n)\in\Sym(n).
\]
By \cite{Skipper_thesis}, the groups $G_n$ are branch, so Condition~(a) holds. Moreover, they are generated by bounded automata, hence they do not contain non-abelian free subgroups, establishing Condition~(b). It remains to verify Condition~(c).

Recall that the orbit of $\xi\cdot G_n$ is the \emph{cofinality} class of $\xi$, i.e.\ the sequences coinciding with $\xi$ from some symbol onwards. We prove that $\Sch(\xi,G_n;\Ac)$ is not a quasi-tree, using Manning's bottleneck criterion \cite[Theorem 4.6]{manning2005geometry}:
\begin{lemma}
    A graph $\Sigma$ is a quasi-tree if and only if there exists $R\ge 0$ such that, for every $x,y,m\in \Sigma$ satisfying $d(x,m)= d(m,y)= \frac12 d(x,y)$, we have that all paths from $x$ to $y$ pass within a ball of radius $R$ around $m$.
\end{lemma}
We fix $R\ge 0$ and $k\ge R+1$. We observe that, for $a\in\Ac$, the point $\xi\in [n]^\omega$ differ by at most one symbol with its image $\xi\cdot a$. It follows that
\begin{itemize}[leftmargin=6mm]
    \item the distance between two points $\xi,\xi'$ in the Schreier graph is at least the Hamming distance between $\xi$ and $\xi'$, and
    \item for $u,v\in [n]^k$, all geodesics from $u\eta$ and $v\eta$ are included in $[n]^k\eta$.
\end{itemize}

Consider the points $p=3^k2\,\xi_k$, $m=1^k2\,\xi_k$, $m'=1^k3\,\xi_k$, $q=2^k3\,\xi_k$ where $\xi_k$ is the suffix of $\xi$ after the first $k+1$ symbols. Let $x,y$ be points on geodesics $[p,m]$ and $[m',q]$ such that $d(x,m)=d(m',y)+1=R+1$.
\begin{center}
\import{Pictures/}{tikz_Hanoi.tex}
\captionsetup{font=small, margin=20mm}
\captionof{figure}{Part of $\Sch(1^\omega,G_n;\Ac)$ witnessing the failure of the bottleneck criterion. (We omit all $\xi_k$ suffixes.)}
\label{fig:Hanoi}
\end{center}
Observe that $d(m,[n]^k1\,\xi_k)\ge k+1$. Indeed, in order to reach a point with a $1$ in $(k+1)$-th position from $m=1^k2\,\xi_k$, one first needs to reach a point of the form $s^k2\,\xi_k$ with $s\ne 1$ which is at distance $k$ of $m$. It follows that the path $x\leftrightarrow y$ (in purple in Figure \ref*{fig:Hanoi}) avoids the ball $D_R(m)$ (in gray).

Since $R$ is arbitrary, we conclude that $\Sch(\xi,G_n;\Ac)$ is not a quasi-tree, hence Condition~(c) holds, and the result for $G_n$ (with $n\ge 3$) follows from Theorem~\ref{thm:big_loops}. Finally, for the classical Hanoï Towers groups on $m\ge 4$ pegs, we have $G_3=H^{(3)}\into H^{(m)}$, hence $H^{(m)}\not\into V$.
\end{proof}

\medskip

Finally, we apply Theorem \ref{thm:big_loops} to one notable \emph{weakly} branch group: \vspace*{1mm}
\begin{cor}
    The Basilica group $\Bc$ does not embed in $V$. \vspace*{-1mm}
\end{cor}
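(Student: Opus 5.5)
We apply Theorem \ref{thm:big_loops} to $G=\Bc$ acting on the binary tree, with the standard generators $a=(1,b)$, $b=(1,a)\sigma$. Suppose for contradiction that $\Bc$ embeds in $V$, and write $\Bc=\la\Ac\ra\le V$. Then the action $\Ck\racts\Bc$ is faithful. We choose a single aperiodic point $\xi\in\Ck$ whose orbit is dense in $\supp(\Bc)$. The set of such points is comeagre, and aperiodic points are also comeagre. Since the action is by homeomorphisms, it is then faithful on the orbit $X=\xi\cdot\Bc$. By Proposition \ref{prop:Schreier_of_GleV_are_qtrees}, the graph $\Sch(X,\Bc;\Ac)$ is a quasi-tree.

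Once the hypotheses of Theorem \ref{thm:big_loops} are verified, it gives some $\ell$ with $\RStab(\ell)'\le\ker(X\racts\Bc)=1$. This contradicts weak branchness: $\RStab(\ell)$ contains nonabelian subgroups, because rigid stabilisers of a weakly branch group are non-virtually-abelian, so their derived subgroups are nontrivial.

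It remains to check the three hypotheses.
\begin{itemize}[leftmargin=6mm]
\item Condition (b) is known: $\Bc$ is contracting, so it contains no non-abelian free subgroup \cite{no_free_in_contracting}. It is also amenable.
\item Condition (a) requires $\Bc$ to be weakly branch, which is classical, together with finite generation of $\RStab_\Bc(\ell)'$. This is where I would use Lemma \ref{prop:Dominik_Basilica}, attributed to Francoeur. I expect it to state that $\RStab_\Bc(\ell)'$ is finitely generated, or at least a union of finitely generated normal subgroups as in the remark after Theorem \ref{thm:big_loops}. A plausible route goes through the branching structure $\Bc'\ge \Bc'\times\Bc'$ and the fact that $\Bc'$ has finite index in $\Bc$ with $\Bc/\Bc'\simeq\Z^2$. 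One shows that $\RStab(\ell)$ contains $\prod_{v\in\Lc_\ell}\Bc'$ with finite index, up to commensurability, and that $\Bc''$ is finitely generated because it has finite index in a finitely generated subgroup.
\item Condition (c) requires that every orbital Schreier graph $\Sch(\xi,\Bc;\Ac)$ with $\xi\in\partial T$ is not a quasi-tree. These graphs are well known: they are built from cycles of length $2^k$ glued in a tree-like (cactus) fashion, and they contain arbitrarily long simple cycles. A cactus with arbitrarily long cycles is not a quasi-tree. I would verify that it contains $K$-fat $R_2$ minors, exactly as in the proof of Proposition \ref{prop:quasitree_iff_no_fat_rose}, or I would fail Manning's bottleneck criterion on a long $a$- or $b$-cycle.
\end{itemize}

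Concretely for (c), at level $k$ the element $a^{2^{k}}$ or $b^{2^k}$ acts with orbits of length $\sim 2^{k}$ on the boundary points whose prefix is suitable. These orbits give simple cycles whose opposite points are at distance $\sim 2^{k-1}$ in the Schreier graph. I would check this distance via the self-similar description, in which the Schreier graph at level $n+1$ is obtained from two copies at level $n$. Such cycles appear in every orbital graph, since every point lies near such a configuration by cofinality.

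I expect the main obstacle to be (c) for all $\xi$ uniformly, including the special orbit of $0^\infty$ or $1^\infty$ with fewer ends. Arbitrarily large cycles should still be present there, because the cycle through a vertex $u\eta$ depends only on finitely many letters. A second difficulty is the finite generation in (a), which I delegate to Lemma \ref{prop:Dominik_Basilica}.
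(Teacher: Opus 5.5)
Your plan matches the paper's: apply Theorem~\ref{thm:big_loops} to $\Bc$. Condition (a) comes from weak branchness and Lemma~\ref{prop:Dominik_Basilica}, applied to the normal subgroup $\RStab_\Bc(\ell)'$. Condition (b) comes from contraction, and condition (c) from the long isometrically embedded cycles in the orbital Schreier graphs, which the paper takes from the literature.

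The gap is in producing a faithful \emph{transitive} action whose Schreier graph is a quasi-tree. You assert that the points whose orbit is dense in $\supp(\Bc)$ form a comeagre set. That requires the action to be topologically transitive on its support, and nothing guarantees this for an arbitrary embedding $\Bc\le V$. In fact, if an embedding $\iota$ existed, letting each $g$ act as $\iota(g)$ simultaneously on both halves $[0]\simeq\Ck\simeq[1]$ would give another embedding. In that embedding every orbit stays inside $[0]$ or inside $[1]$, so no orbit is dense in the support.

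Without a single faithful orbit, Theorem~\ref{thm:big_loops} only gives $\ker(\xi\Bc\racts\Bc)\ge\RStab(\ell_\xi)'$ orbit by orbit; the theorem is stated and proved for transitive actions, via $\ker=\bigcap_g H^g$. You also cannot simply intersect over all aperiodic orbits. The $\ell_\xi$ may be unbounded, and $\bigcap_\ell\RStab(\ell)'=1$.

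The repair is to work with finitely many orbits, as in the proof that finitely generated subgroups of $V$ are \textbf{CF-TR}:
\begin{itemize}
\item By Proposition~\ref{prop:Schreier_of_GleV_are_CF} and Lemma~\ref{lem:finitely_many_isomorphism_type}, there are finitely many orbits $X_1,\ldots,X_k$ of eventually periodic points on whose union $\Bc$ acts faithfully. Their Schreier graphs are context-free, hence quasi-trees.
\item Theorem~\ref{thm:big_loops} then gives $\ker(X_i\racts\Bc)\ge\RStab(\ell_i)'$ for each $i$.
\item Since $\RStab(\ell)$ decreases with $\ell$, you get $1=\bigcap_i\ker(X_i\racts\Bc)\ge\RStab(\max_i\ell_i)'\ne1$, a contradiction.
\end{itemize}
Alternatively, in a weakly branch group every nontrivial normal subgroup contains some $\RStab(\ell)'$. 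Hence nontrivial normal subgroups pairwise intersect nontrivially, and one of the $X_i$ is already faithful.

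Finally, your heuristic for (a) is wrong, though you do not rely on it. Since $\Bc/\Bc'\simeq\Z^2$, the subgroup $\Bc'$ has \emph{infinite} index in $\Bc$. Likewise $\Bc''$ has infinite index in $\Bc'$, because $\Bc'$ has infinite abelianisation. So finite generation of $\RStab_\Bc(\ell)'$ genuinely needs Francoeur's theorem, through Lemma~\ref{prop:Dominik_Basilica}.
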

\begin{proof}
    Condition (a) follows from \cite[Theorem 1(b)]{Basilica} and Lemma \ref{prop:Dominik_Basilica} below. Condition (b) follows from $\Bc$ being contracting \cite[Theorem 1(j)]{Basilica}. Finally, Condition (c) follows from the study of orbital Schreier graphs in \cite{Basilica_Schreier}.
\end{proof}
\begin{center}
    \import{Pictures/}{tikz_basilica.tex}
    \captionsetup{margin=20mm, font=small}
    \captionof{figure}{A typical orbital Schreier graph of $\Bc\acts\partial T_2$, with arbitrarily long isometrically embedded cycles.} \vspace*{1mm}
\end{center}
\begin{lemma} \label{prop:Dominik_Basilica}
    Normal subgroups of the Basilica group are finitely generated.\vspace*{-1mm}
\end{lemma}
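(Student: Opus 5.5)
The plan is to show that every nontrivial normal subgroup $N\trianglelefteq\Bc$ contains a finitely generated normal subgroup $K_n$ for which $\Bc/K_n$ is polycyclic. Since subgroups of polycyclic groups are finitely generated, $N/K_n$ is then finitely generated, hence so is $N$, as an extension of $K_n$ by $N/K_n$. The trivial subgroup is of course finitely generated.

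Write $\Bc=\la a,b\ra$ with wreath recursion $a=(1,b)$ and $b=(1,a)\sigma$. I will use the following facts from \cite{Basilica}.
\begin{itemize}[leftmargin=6mm]
\item $\Bc$ is weakly regular branch over $\Bc'$, i.e.\ $\Bc'\ge\Bc'\times\Bc'$ geometrically.
\item $\Bc/\Bc'\simeq\Z^2$.
\item $\Bc'/(\Bc'\times\Bc')\simeq\Z$, generated by the image of $[a,b]$.
\item $\Bc'$ is finitely generated.
\end{itemize}
If finite generation of $\Bc'$ is not explicitly available, I would derive it: $\Bc'$ is the normal closure of $c=[a,b]$, and the relation $\Bc'=\la c\ra(\Bc'\times\Bc')$ combined with conjugation by $a$ and $b$ should show that finitely many conjugates of $c$ suffice.

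Let $K_n$ be the geometric product of $2^n$ copies of $\Bc'$ over the vertices of level $n$. This subgroup is normal and finitely generated. Moreover $K_{n-1}/K_n\simeq\Z^{2^{n-1}}$, so by induction $\Bc/K_n$ is poly-$\Z$, starting from $\Bc/K_0=\Z^2$.

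The main obstacle is showing that a nontrivial $N$ contains some $K_n$. I would use the standard weakly branch argument. Pick $1\ne g\in N$ moving some vertex $v$ of level $n$ to $w\neq v$. For $h_1,h_2\in\RStab_\Bc(v)$, the commutator $[[g,h_1],h_2]$ lies in $N$ and equals $[h_1,h_2]$ acting at $v$, because $h_1^g$ is supported at $w$. Hence $N\ge\RStab_\Bc(v)'$, which contains $\Bc''$ placed at $v$.

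From there, I need $\Bc''$ to contain a copy of $\Bc'\times\Bc'$ at some deeper level, possibly after passing down one or two levels. I would try to establish this from the recursion. Since $\Bc'\ge \Bc'\times\Bc'$ and the image of $\Bc'$ in $\Z=\Bc'/(\Bc'\times\Bc')$ is generated by $c$, one checks that $[c,c^{b}]$ or similar commutators project onto generators at the next level. This would give $\Bc''\ge\gamma_3(\Bc)\times\gamma_3(\Bc)$ and then $\gamma_3(\Bc)\ge\Bc'\times\Bc'$, a Grigorchuk–Żuk-type computation. This is the step I expect to need explicit calculation.

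Granting this, conjugation by $\Bc$ moves $v$ over its whole level, since the action is level-transitive. So $N$ contains $\Bc'$ at every vertex of some level $m$, i.e.\ $N\ge K_m$, which finishes the proof.
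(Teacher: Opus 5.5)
There is a genuine gap, and it sits exactly at the step you flagged as needing computation. The claim that every nontrivial normal subgroup $N$ contains some $K_m=(\Bc')^{2^m}$ is false, so no computation can complete it.

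Already $\gamma_3(\Bc)\ge\Bc'\times\Bc'$ fails. Take $c=[a,b]$, with $b^2=(a,a)$ and $a=(1,b)$. Then $(1,c)=[b^2,a]$, which is congruent to $[b,a]^2=c^{-2}$ modulo $\gamma_3(\Bc)$. On the other hand, $\Bc/\gamma_3(\Bc)\cong F_2/\gamma_3(F_2)$ is the Heisenberg group. The reason is that every relator in the Grigorchuk--\.{Z}uk presentation has the form $[u,u^v]=[u,[u,v]]\in\gamma_3(F_2)$: they are endomorphic images of $[a,a^b]$ and $[a,a^{b^3}]$. Hence $c$ has infinite order modulo $\gamma_3(\Bc)$, and $(1,c)\notin\gamma_3(\Bc)$.

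Iterating makes this uniform in $m$. The copy of $c$ at the vertex $1^m$ equals $[b^2,a]$, $[a^2,b^2]$, $[b^4,a^2]$, \dots, so it is congruent to $c^{\pm 2^m}\neq 1$ modulo $\gamma_3(\Bc)$. Thus the nontrivial normal subgroup $\gamma_3(\Bc)$, and a fortiori $\Bc''\le\gamma_3(\Bc)$, contains no $K_m$ at all. Your inclusion $\Bc''\ge\gamma_3(\Bc)\times\gamma_3(\Bc)$ is in fact correct; it just cannot lead to any $K_m$.

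Your argument can be salvaged by replacing $K_m$ with a subgroup that every nontrivial normal subgroup really does contain. Your commutator trick plus level-transitivity gives $N\ge\RStab_\Bc(m)'$ for some $m$. The group $\RStab_\Bc(m)$ is finitely generated, because $K_m$ is and $\RStab_\Bc(m)/K_m\le\Bc/K_m$ is polycyclic. Hence $\Bc/\RStab_\Bc(m)'$ is polycyclic, and $N/\RStab_\Bc(m)'$ is finitely generated.

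The missing point, and the genuinely nontrivial one, is that $\RStab_\Bc(m)'$ itself is finitely generated. This does not follow from finite generation of $\RStab_\Bc(m)$, since derived subgroups of finitely generated groups need not be finitely generated. The paper does not prove this by hand. Instead it verifies the hypotheses of Francoeur's Theorem A.4:
\begin{itemize}
\item regular weak branching over $\Bc'$;
\item Noetherian (polycyclic) proper quotients;
\item finite generation of $\RStab_\Bc(\ell)$, obtained exactly as above from $\Bc'$ being finitely generated.
\end{itemize}
That theorem then delivers the conclusion.
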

\begin{proof}
We check the hypotheses of \cite[Theorem A.4]{Francoeur}. The Basilica group $\Bc$ is regularly weakly branch over $\Bc'$. Proper quotients of $\Bc$ (eg.\ $\Bc/\RStab_\Bc(\ell)'$) are polycyclic and therefore Noetherian \cite[Proposition 6]{Basilica}. (More strongly $\Bc$ is just not-virtually nilpotent \cite[Proposition 4.13]{Francoeur_Basilica}.)  It follows that
\[ \RStab_\Bc(\ell)/(\Bc')^{2^\ell}\le \Bc/(\Bc')^{2^\ell} \]
is finitely generated. Since $\Bc'$ is finitely generated \cite[Proposition 4.7]{Francoeur_Basilica} (and therefore $(\Bc')^{2^\ell}$ too), we deduce that $\RStab_\Bc(\ell)$ is finitely generated.
\end{proof}


\medskip

    It is unclear how far these results extend among (weakly) branch groups. It is known that infinite finitely generated torsion groups fail to embed in Thompson's group $V$ (\cite[Theorem 3]{Rover}, see also \cite[Corollary 5.8]{CFTR}). Together with the fact that groups of intermediate growth fail to embed in $V$ and \cref{thm:big_loops}, this covers many examples from the literature.

\begin{ques}
    Does any (weakly) branch group embed in Thompson's $V$?
\end{ques}

\bigbreak

\begin{rem}
    Very recent results of Matte Bon, Nekrashevych and Zheng prove that all Schreier graphs of certain branch groups $G$ have finitely many ends. For instance, \cite[Theorem B]{End_Schreier_branch} covers all contracting self-replicating groups, which includes the generalised Hanoï Towers groups $G_n$. Therefore, if $G=\Gc(\Gamma^{(1)}\sqcup \ldots\sqcup \Gamma^{(k)})$ for context-free graphs $\Gamma^{(i)}$, then all the graphs would need to have finitely many ends, hence be finite or have linear growth. This quickly leads to a contradiction: branch groups are not elementary amenable \cite[Corollary 8]{juschenko2018nonEA}, contradicting with Theorem \ref{thm:elementary_amenable}.
\end{rem}
    
In particular, we would like to highlight the following question:\vspace*{1mm}
\begin{ques}
    Let $G$ be a finitely generated branch group. Is it true that all Schreier graphs of $G$ have finitely many ends? Countably many ends? \vspace*{-1mm}
\end{ques}
A positive answer to either question would prove that Thompson's $V$ does not contain any finitely generated branch subgroup.

%% file: Pictures/tikz_fat_minor.tex
\begin{tikzpicture}[scale=1.1]
        \draw[rounded corners] (0,0) rectangle (1,2);
        \draw[rounded corners] (2.5,0) rectangle (4.5,2);
        \draw[rounded corners] (6,0) rectangle (7,2);

        \node[circle, fill=black, inner sep=1.5pt] (i1) at (3,0) {};
        \node[circle, fill=black, inner sep=1.5pt] (t1) at (.5,0) {};
        \node[circle, fill=black, inner sep=1.5pt] (i2) at (3,2) {};
        \node[circle, fill=black, inner sep=1.5pt] (t2) at (.5,2) {};    
        \node[circle, fill=black, inner sep=1.5pt] (i3) at (4,2) {};
        \node[circle, fill=black, inner sep=1.5pt] (t3) at (6.5,2) {};
        \node[circle, fill=black, inner sep=1.5pt] (i4) at (4,0) {};
        \node[circle, fill=black, inner sep=1.5pt] (t4) at (6.5,0) {};
        \node[circle, fill=Green, inner sep=1.5pt] (c) at (3.5,1) {};

        \begin{scope}[thick,
            decoration={markings, mark=at position 0.53 with {\arrow{>}}}] 
            \draw[purple, bend left, postaction={decorate}] (i1) to (t1);
            \draw[blue, out=110, in=-70, postaction={decorate}] (t1) to (t2);
            \draw[purple, bend left, postaction={decorate}] (t2) to (i2);
            
            \draw[blue, postaction={decorate}] (c) to (i1);
            \draw[blue, postaction={decorate}] (i2) to (c);
            \draw[blue, postaction={decorate}] (c) to (i3);
            \draw[blue, postaction={decorate}] (i4) to (c);
            
            \draw[purple, bend left, postaction={decorate}] (i3) to (t3);
            \draw[blue, out=-110, in=70, postaction={decorate}] (t3) to (t4);
            \draw[purple, bend left, postaction={decorate}] (t4) to (i4);

            \node[purple] at (1.75,-.7) {$y_1$};
            \node[purple] at (1.75,2.7) {$y_2$};
            \node[purple] at (5.25,2.7) {$y_3$};
            \node[purple] at (5.25,-.7) {$y_4$};

            \node[blue] at (2.83,.5) {$x_1$};
            \node[blue] at (2.83,1.5) {$x_2$};
            \node[blue] at (4.17,1.5) {$x_3$};
            \node[blue] at (4.17,.5) {$x_4$};
            
            \node[blue] at (0.3,1.5) {$z_1$};
            \node[blue] at (6.3,.5) {$z_3$};
            
        \end{scope}
\end{tikzpicture}

%% file: Pictures/tikz_quasi_tree.tex
\begin{tikzpicture}[scale=1.1]
    \begin{scope}[every node/.style={circle, inner sep=2.5pt}]
        \node[fill=Green, label=below:$\color{Green}q_0$] (q0) at (0,0) {};
        \node[fill=Green, label=left:$\color{Green}q_1$] (q1) at (-2,2.5) {};
        \node[fill=blue] (s1) at (-2.2,4) {};
        \node[fill=Green, label=above:$\color{Green}q_2$] (q2) at (0,2.8) {};
        \node[fill=purple] (s2) at (1.5,3.7) {};
        \node[fill=Green, label=below:$\color{Green}q_3$] (q3) at (2,2) {};
        \node[fill=purple] (s3) at (3.5,2.2) {};
    \end{scope}

        \draw[line width=2pt, blue, out=90, in=-70] (q0) to (q1);
        \draw[line width=2pt, blue, out=110, in=-90] (q1) to (s1);
        \draw[ultra thick, purple, dashed, out=110, in=-90] (q1) to (s1);
        \draw[line width=2pt, purple, out=0, in=-160] (q1) to (q2);
        \draw[line width=2pt, purple, out=20, in=-140] (q2) to (s2);
        \draw[ultra thick, blue, dashed, out=20, in=-140] (q2) to (s2);
        \draw[line width=2pt, blue, out=-60, in=-180] (q2) to (q3);
        \draw[line width=2pt, purple, out=0, in=-160] (q3) to (s3);

        \draw[thick, latex-latex, out=0, in=-160] (-1.85,2.37) to (-.1,2.63);
        \node[rotate=8] at (-.95,2.25) {\tiny $\ge\! K\!-\!C$};

        \draw[thick, latex-latex, out=-55, in=-180] (-.03,2.6) to (1.87,1.87);
        \node[rotate=-13] at (.7,1.8) {\tiny $\ge\! K\!-\!C$};

        \node[blue] at (-1,1) {$s_1$};
        \node[purple] at (-1.1,2.9) {$s_2$};
        \node[blue] at (1,2.4) {$s_3$};
        \node[purple] at (2.7,2.35) {$s_4$};
\end{tikzpicture}

%% file: Pictures/tikz_Hanoi.tex
\begin{tikzpicture}
    \node[circle, fill=black, inner sep=.5pt] (L) at (-3,0) {};
    \node[circle, fill=black, inner sep=1.5pt] (m) at (-1,0) {};
    \node at (-1,.45) {\footnotesize$m$}; 
    \node[circle, fill=black, inner sep=1.5pt] (mp) at (1,0) {};
    \node at (1,.45) {\footnotesize$1^k3$};
    \node[circle, fill=black, inner sep=.5pt] (R) at (3,0) {};
    
    \node[circle, fill=black, inner sep=1.5pt] (p) at (-2,{-sqrt(3)}) {};
    \node at (-2.4,{-sqrt(3)-.2}) {\footnotesize$3^k2$};
    \node[circle, fill=black, inner sep=1.5pt] (pp) at (-1,{-2*sqrt(3)}) {};
    \node at (-1.4,{-2*sqrt(3)-.05}) {\footnotesize$3^k1$};
    \node[circle, fill=black, inner sep=1.5pt] (q) at (2,{-sqrt(3)}) {};
    \node at (2.4,{-sqrt(3)-.2}) {\footnotesize$2^k3$};
    \node[circle, fill=black, inner sep=1.5pt] (qq) at (1,{-2*sqrt(3)}) {};
    \node at (1.4,{-2*sqrt(3)-.05}) {\footnotesize$2^k1$};
    \node[circle, fill=black, inner sep=.5pt] (D) at (0,{-3*sqrt(3)}) {};

    \begin{scope}
        \clip[looseness=.58] (L) to[out=60, in=115] (-1,0) to[out=-55, in=0] (p);
        \draw[gray, fill=gray, fill opacity=.2] (-1,0) circle (.8cm);
    \end{scope}
    \begin{scope}
        \clip[looseness=.58] (q) to[out=180, in=-125] (1,0) to[out=65, in=120] (R);
        \draw[gray, fill=gray, fill opacity=.2] (1,0) circle (.5cm);
    \end{scope}
    
    \draw[thick, looseness=.5] (L) to[out=60, in=120] (m) to[out=-60, in=0] (p) to[out=180, in=-120] (L);
    \draw[thick, looseness=.5] (mp) to[out=60, in=120] (R) to[out=-60, in=0] (q) to[out=180, in=-120] (mp);
    \draw[thick, looseness=.5] (pp) to[out=60, in=120] (qq) to[out=-60, in=0] (D) to[out=180, in=-120] (pp);

    \node[draw, rounded corners,fill=white] at (-3.2,.3) {\scriptsize$[n]^k2\hspace*{.5pt}$};
    \node[draw, rounded corners,fill=white] at (3.2,.3) {\scriptsize$[n]^k3\hspace*{.5pt}$};
    \node[draw, rounded corners,fill=white] at (0,{-3*sqrt(3)-.1}) {\scriptsize$[n]^k1\hspace*{.5pt}$};

    \draw[thick, -latex, out=30, in=150] (m) to (mp);
    \node at (0,.55) {\small$a_1$};
    \draw[thick, purple, latex-, out=-90, in=150] (p) to (pp);
    \node at (-2.05,-2.9) {\small$a_3$};
    \draw[thick, purple, latex-, out=-90, in=30] (q) to (qq);
    \node at (2.05,-2.9) {\small$a_2$};

    \node[circle, fill=black, inner sep=1.5pt, label=left:\footnotesize$x$] (x) at (-1.8,-.6) {};
    \node[circle, fill=black, inner sep=1.5pt, label=right:\footnotesize$y$] (y) at (1.5,-.5) {};
    
    \draw[thick, dashed, out=-150, in=50] (m) to (x);
    \draw[thick, dashed, purple, out=-130, in=90] (x) to (p);
    \draw[thick, dashed, purple, out=-30, in=-150] (pp) to (qq);
    \draw[thick, dashed, purple, out=90, in=-50] (q) to (y);
    \draw[thick, dashed, out=-30, in=130] (mp) to (y);
\end{tikzpicture}

%% file: Pictures/tikz_basilica.tex


	
	\begin{tikzpicture}[scale=.8]
		\clip (-2.85,-6.6) rectangle (12.6,6.6);
		\begin{scope}[shift={(-6,0)}]
			\foreach \x in {3,...,-2}{
				\node[circle, fill=black, inner sep=2pt] (A\x) at ({4*cos(\x*22.5)},{4*sin(\x*22.5)}) {};
				\draw[blue, thick, bend right=10, -latex] ({4*cos((\x-1)*22.5)},{4*sin((\x-1)*22.5)}) to (A\x);}
		\end{scope}
		
		\begin{scope}
			\foreach \x in {8,...,1}{
				\node[circle, fill=black, inner sep=2pt] (B\x) at ({2*cos(\x*45)},{2*sin(\x*45)}) {};
				\draw[purple, thick, bend right=16, -latex] ({2*cos((\x-1)*45)},{2*sin((\x-1)*45)}) to (B\x);}
		\end{scope}
		\node[circle, fill=black, inner sep=2pt] (B1p) at (1.8,2.5) {};
		\draw[blue, thick, bend right, -latex] (B1) to (B1p);
		\draw[blue, thick, bend right, -latex] (B1p) to (B1);
		\draw[purple, thick, looseness=25, out=52, in=82, -latex] (B1p) to (B1p);
		\node[circle, fill=black, inner sep=2pt] (B3p) at (-1.8,2.5) {};
		\draw[blue, thick, bend right, -latex] (B3) to (B3p);
		\draw[blue, thick, bend right, -latex] (B3p) to (B3);
		\draw[purple, thick, looseness=25, out=98, in=128, -latex] (B3p) to (B3p);
		\node[circle, fill=black, inner sep=2pt] (B5p) at (-1.8,-2.5) {};
		\draw[blue, thick, bend right, -latex] (B5) to (B5p);
		\draw[blue, thick, bend right, -latex] (B5p) to (B5);
		\draw[purple, thick, looseness=25, out=232, in=262, -latex] (B5p) to (B5p);
		\node[circle, fill=black, inner sep=2pt] (B7p) at (1.8,-2.5) {};
		\draw[blue, thick, bend right, -latex] (B7) to (B7p);
		\draw[blue, thick, bend right, -latex] (B7p) to (B7);
		\draw[purple, thick, looseness=25, out=278, in=308, -latex] (B7p) to (B7p);
		
		\begin{scope}[shift={(0,3)}]
			\foreach \x in {4,...,1}{
				\node[circle, fill=black, inner sep=2pt] (C\x) at ({cos(\x*90)},{sin(\x*90)}) {};
				\draw[blue, thick, bend right=35, -latex] ({cos((\x-1)*90)},{sin((\x-1)*90)}) to (C\x);}
		\end{scope}
		\draw[purple, thick, loop left, latex-] (C2) to (C2);
		\draw[purple, thick, loop right, latex-] (C4) to (C4);
		
		\node[circle, fill=black, inner sep=2pt] (Cp) at (0,4.8) {};
		\draw[purple, thick, bend right, -latex] (C1) to (Cp);
		\draw[purple, thick, bend right, -latex] (Cp) to (C1);
		\node[circle, fill=black, inner sep=2pt] (Cpp) at (0,5.6) {};
		\draw[blue, thick, bend right, -latex] (Cp) to (Cpp);
		\draw[blue, thick, bend right, -latex] (Cpp) to (Cp);
		\draw[purple, thick, loop above, latex-] (Cpp) to (Cpp);
		
		\begin{scope}[shift={(0,-3)}]
			\foreach \x in {4,...,1}{
				\node[circle, fill=black, inner sep=2pt] (D\x) at ({cos(\x*90)},{sin(\x*90)}) {};
				\draw[blue, thick, bend right=35, -latex] ({cos((\x-1)*90)},{sin((\x-1)*90)}) to (D\x);}
		\end{scope}
		\draw[purple, thick, loop left, latex-] (D2) to (D2);
		\draw[purple, thick, loop right, latex-] (D4) to (D4);
		
		\node[circle, fill=black, inner sep=2pt] (Dp) at (0,-4.8) {};
		\draw[purple, thick, bend right, -latex] (D3) to (Dp);
		\draw[purple, thick, bend right, -latex] (Dp) to (D3);
		\node[circle, fill=black, inner sep=2pt] (Dpp) at (0,-5.6) {};
		\draw[blue, thick, bend right, -latex] (Dp) to (Dpp);
		\draw[blue, thick, bend right, -latex] (Dpp) to (Dp);
		\draw[purple, thick, loop below, latex-] (Dpp) to (Dpp);
		
		\begin{scope}[shift={(4,0)}]
			\foreach \x in {8,...,1}{
				\node[circle, fill=black, inner sep=2pt] (E\x) at ({2*cos(\x*45)},{2*sin(\x*45)}) {};
				\draw[blue, thick, bend right=16, -latex] ({2*cos((\x-1)*45)},{2*sin((\x-1)*45)}) to (E\x);}
		\end{scope}
		\draw[purple, thick, looseness=25, out=30, in=60, -latex] (E1) to (E1);
		\draw[purple, thick, looseness=25, out=120, in=150, -latex] (E3) to (E3);
		\draw[purple, thick, looseness=25, out=210, in=240, -latex] (E5) to (E5);
		\draw[purple, thick, looseness=25, out=300, in=330, -latex] (E7) to (E7);
		
		\node[circle, fill=black, inner sep=2pt] (Ep) at (4,3) {};
		\draw[purple, thick, bend right, -latex] (E2) to (Ep);
		\draw[purple, thick, bend right, -latex] (Ep) to (E2);
		\node[circle, fill=black, inner sep=2pt] (Epp) at (4,4) {};
		\draw[blue, thick, bend right, -latex] (Ep) to (Epp);
		\draw[blue, thick, bend right, -latex] (Epp) to (Ep);
		\draw[purple, thick, loop above, latex-] (Epp) to (Epp);
		
		\node[circle, fill=black, inner sep=2pt] (Em) at (4,-3) {};
		\draw[purple, thick, bend right, -latex] (E6) to (Em);
		\draw[purple, thick, bend right, -latex] (Em) to (E6);
		\node[circle, fill=black, inner sep=2pt] (Emm) at (4,-4) {};
		\draw[blue, thick, bend right, -latex] (Em) to (Emm);
		\draw[blue, thick, bend right, -latex] (Emm) to (Em);
		\draw[purple, thick, loop below, latex-] (Emm) to (Emm);
		
		\begin{scope}[shift={(7,0)}]
			\foreach \x in {4,...,1}{
				\node[circle, fill=black, inner sep=2pt] (F\x) at ({cos(\x*90)},{sin(\x*90)}) {};
				\draw[purple, thick, bend right=35, -latex] ({cos((\x-1)*90)},{sin((\x-1)*90)}) to (F\x);}
		\end{scope}
		\node[circle, fill=black, inner sep=2pt] (Fp) at (7,2) {};
		\draw[blue, thick, bend right, -latex] (F1) to (Fp);
		\draw[blue, thick, bend right, -latex] (Fp) to (F1);
		\draw[purple, thick, loop above, latex-] (Fp) to (Fp);
		\node[circle, fill=black, inner sep=2pt] (Fm) at (7,-2) {};
		\draw[blue, thick, bend right, -latex] (F3) to (Fm);
		\draw[blue, thick, bend right, -latex] (Fm) to (F3);
		\draw[purple, thick, loop below, latex-] (Fm) to (Fm);
		
		\begin{scope}[shift={(9,0)}]
			\foreach \x in {4,...,1}{
				\node[circle, fill=black, inner sep=2pt] (G\x) at ({cos(\x*90)},{sin(\x*90)}) {};
				\draw[blue, thick, bend right=35, -latex] ({cos((\x-1)*90)},{sin((\x-1)*90)}) to (G\x);}
		\end{scope}
		\draw[purple, thick, loop above, latex-] (G1) to (G1);
		\draw[purple, thick, loop below, latex-] (G3) to (G3);
		
		\node[circle, fill=black, inner sep=2pt] (H) at (11,0) {};
		\draw[purple, thick, bend right, -latex] (G4) to (H);
		\draw[purple, thick, bend right, -latex] (H) to (G4);
		
		\node[circle, fill=black, inner sep=2pt] (I) at (12,0) {};
		\draw[blue, thick, bend right, -latex] (H) to (I);
		\draw[blue, thick, bend right, -latex] (I) to (H);
		\draw[purple, thick, loop right, latex-] (I) to (I);

	\end{tikzpicture}
	

%% file: bibliography.bib
@inbook{ThompsonV_clones,
    author={Rose Berns-Zieve and Dana Fry and Johnny Gillings and Hannah Hoganson and Heather Mathews},
    title={Groups with Context-free Co-word Problem and Embeddings into Thompson’s Group $V$},
    booktitle={Topological Methods in Group Theory},
    publisher={Cambridge University Press},
    year={2018},
    pages={19–37},
    place={Cambridge},
    series={London Mathematical Society Lecture Note Series}}

@article{CFTR,
author = {D'Angeli, Daniele and Matucci, Francesco and Perego, Davide and Rodaro, Emanuele},
title = {Context-free graphs and their transition groups},
journal = {Transactions of the London Mathematical Society},
volume = {13},
number = {1},
pages = {e70034},
doi = {10.1112/tlm3.70034},
year = {2026}
}

@article{ET0L,
author = {Bishop, Alex and D'Angeli, Daniele and Matucci, Francesco and Nagnibeda, Tatiana and Perego, Davide and Rodaro, Emanuele},
title = {On the ET0L subgroup membership problem in bounded automata groups},
journal = {Journal of the London Mathematical Society},
volume = {113},
number = {4},
year={2026},
pages = {e70538}}

@article{cloning,
  title={Thompson groups for systems of groups, and their finiteness properties},
  author={Witzel, Stefan and Zaremsky, Matthew C.B.},
  journal={Groups, Geometry, and Dynamics},
  volume={12},
  number={1},
  pages={289--358},
  year={2018}}

@article{Clone_guide,
    title = {A user's guide to cloning systems},
    author = {Zaremsky, {Matthew C.B.}},
    journal = {Topology Proceedings},
    year = {2018},
    volume = {52},
    pages = {13--33}}

@phdthesis{cloning_explained,
    author = {Bennett, Daniel},
    title = {On plausible counterexamples to Lehnert's conjecture},
    school = {University of St Andrews} ,
    year =  {2018}}

@article{bennett2016demonstrative,
  title={A dynamical definition of f.g. virtually free groups},
  author={Bennett, Daniel and Bleak, Collin},
  journal={International Journal of Algebra and Computation},
  volume={26},
  number={01},
  pages={105--121},
  year={2016},
  publisher={World Scientific}}

@article{Muller_Schupp,
    title = {The theory of ends, pushdown automata, and second-order logic},
    journal = {Theoretical Computer Science},
    volume = {37},
    pages = {51-75},
    year = {1985},
    author = {David E. Muller and Paul E. Schupp}}

@article{manning2005geometry,
  author  = {Manning, Jason Fox},
  title   = {Geometry of pseudocharacters},
  journal = {Geometry \& Topology},
  volume  = {9},
  pages   = {1147--1185},
  year    = {2005}
}

@article{CF_pairs,
    title = {Context-free pairs of groups I: Context-free pairs and graphs},
    journal = {European Journal of Combinatorics},
    volume = {33},
    number = {7},
    pages = {1449-1466},
    year = {2012},
    author = {Ceccherini-Silberstein, Tullio and Woess, Wolfgang}}

@phdthesis{lehnert,
    title={Gruppen von quasi-Automorphismen},
    author={Lehnert, J{\"o}rg},
    school={Goethe Universität},
    adress={Frankfurt am Main},
    year={2008}}

@inbook{Farley_FFS,
    title={Local Similarity Groups with Context-free Co-word Problem}, 
    author={Daniel Farley},
    booktitle={Topological Methods in Group Theory},
    publisher={Cambridge University Press},
    year={2018},
    pages={67–91},
    place={Cambridge},
    series={London Mathematical Society Lecture Note Series}, 
    collection={London Mathematical Society Lecture Note Series}}

@inbook{Burillo_Cleary_Rover_2017,
    series={London Mathematical Society Lecture Note Series},
    title={Obstructions for subgroups of Thompson’s group $V$},
    booktitle={Geometric and Cohomological Group Theory},
    publisher={Cambridge University Press},
    author={Burillo, José and Cleary, Sean and Röver, Claas E.},
    editor={Kropholler, Peter H. and Leary, Ian J. and Martínez-Pérez, Conchita and Nucinkis, Brita E. A.},
    year={2017},
    pages={1–4},
    collection={London Mathematical Society Lecture Note Series}}

@article{cox2025finiteness,
  title={Finiteness properties of Subgroups of Houghton Groups of full Hirsch length},
  author={Cox, Charles and Kropholler, Peter and Martino, Armando},
  journal={arXiv preprint arXiv:2508.07816},
  year={2025}}

@ARTICLE{Jus,
    author = "Kate Juschenko and Benjamin Steinberg and Phillip Wesolek",
     title = "On elementary amenable bounded automata groups",
   journal = "Indiana Univ. Math. J.",
  fjournal = "Indiana University Mathematics Journal",
    volume = 70,
      year = 2021,
     issue = 6,
     pages = "2479--2526"}

@article{bondarenko2012growth,
  title={Growth of Schreier graphs of automaton groups},
  author={Bondarenko, Ievgen V.},
  journal={Mathematische Annalen},
  volume={354},
  number={2},
  pages={765--785},
  year={2012},
  publisher={Springer}}

@article{Branch_confined,
  title={A commutator lemma for confined subgroups and applications to groups acting on rooted trees},
  author={Le Boudec, Adrien and Matte Bon, Nicol{\'a}s},
  journal={Transactions of the American Mathematical Society},
  volume={376},
  number={10},
  pages={7187--7233},
  year={2023}}

@article{aroca2022some,
  title={Some embeddings between symmetric R. Thompson groups},
  author={Aroca, Julio and Bleak, Collin},
  journal={Proceedings of the Edinburgh Mathematical Society},
  volume={65},
  number={1},
  pages={1--18},
  year={2022},
  publisher={Cambridge University Press}}

@article{Bartholdi2005BranchG,
  title={Branch groups},
  author={Laurent Bartholdi and Rostislav Grigorchuk and Zoran Sunik},
  journal={Mathematical Notes},
  year={2005},
  volume={67},
  pages={718-723}}

@article{Basilica,
    author = {Grigorchuk, Rostislav I. and \.{Z}uk, Andrzej},
    title = {On a torsion-free weakly branch group defined by a three state automaton},
    journal = {International Journal of Algebra and Computation},
    volume = {12},
    number = {01n02},
    pages = {223-246},
    year = {2002}}

@misc{MatteBonTFG,
      title={Rigidity properties of full groups of pseudogroups over the Cantor set}, 
      author={Matte Bon, Nicolás},
      year={2018},
      note={\url{https://arxiv.org/abs/1801.10133}}}

@article{kerr2023tree,
  title={Tree approximation in quasi-trees},
  author={Kerr, Alice},
  journal={Groups, Geometry, and Dynamics},
  volume={17},
  number={4},
  pages={1193--1233},
  year={2023}}

@article{no_free_in_poly_activity,
    AUTHOR = {Sidki, Said},
     TITLE = {Finite automata of polynomial growth do not generate a free
              group},
   JOURNAL = {Geom. Dedicata},
  FJOURNAL = {Geometriae Dedicata},
    VOLUME = {108},
      YEAR = {2004},
     PAGES = {193--204}}

@article{no_free_in_contracting,
    AUTHOR = {Nekrashevych, Volodymyr},
     TITLE = {Free subgroups in groups acting on rooted trees},
   JOURNAL = {Groups Geom. Dyn.},
  FJOURNAL = {Groups, Geometry, and Dynamics},
    VOLUME = {4},
      YEAR = {2010},
    NUMBER = {4},
     PAGES = {847--862}}

@article {Francoeur,
    author = {Dominik Francoeur},
    title = {Normal subgroups of finitely generated branch groups are finitely generated},
     note = {Appendix to \emph{Commensurated subgroups and micro-supported actions} by P.-E. Caprace and A. Le Boudec},
   JOURNAL = {J. Eur. Math. Soc. (JEMS)},
  FJOURNAL = {Journal of the European Mathematical Society (JEMS)},
    VOLUME = {25},
      YEAR = {2023},
    NUMBER = {6},
     PAGES = {2289--2292}}

@article{Francoeur_Basilica,
    title = {On maximal subgroups of infinite index in branch and weakly branch groups},
    journal = {Journal of Algebra},
    volume = {560},
    pages = {818-851},
    year = {2020},
    author = {Dominik Francoeur}}

@article{Rover,
title = {Constructing Finitely Presented Simple Groups That Contain Grigorchuk Groups},
journal = {Journal of Algebra},
volume = {220},
number = {1},
pages = {284-313},
year = {1999},
issn = {0021-8693},
doi = {https://doi.org/10.1006/jabr.1999.7898},
url = {https://www.sciencedirect.com/science/article/pii/S0021869399978985},
author = {Claas E. Röver}}

@article {Chou,
    AUTHOR = {Chou, Ching},
     TITLE = {Elementary amenable groups},
   JOURNAL = {Illinois J. Math.},
  FJOURNAL = {Illinois Journal of Mathematics},
    VOLUME = {24},
      YEAR = {1980},
    NUMBER = {3},
     PAGES = {396--407}}

@misc{hyde2026,
    title={Action graphs, semiconjugacy, and non-embedding in Thompson's group $V$}, 
    author={James Hyde and Rachel Skipper and Matthew C. B. Zaremsky},
    year={2026},
    note={\url{https://arxiv.org/abs/2605.20564}}}

@article{EA_in_Thompson_F,
    AUTHOR = {Bleak, Collin and Brin, Matthew G. and Moore, Justin Tatch},
     TITLE = {Complexity among the finitely generated subgroups of
              {T}hompson's group},
   JOURNAL = {J. Comb. Algebra},
  FJOURNAL = {Journal of Combinatorial Algebra},
    VOLUME = {5},
      YEAR = {2021},
    NUMBER = {1},
     PAGES = {1--58}}

@article{Herbst1991,
	author = {Herbst, Thomas},
	title = {On a subclass of context-free groups},
	journal = {RAIRO - Theoretical Informatics and Applications - Informatique Th\'eorique et Applications},
	pages = {255--272},
	volume = {25},
	number = {3},
	year = {1991}}

@article{Asymptotic_minors_intro,
    title={Asymptotic dimension of minor-closed families and Assouad--Nagata dimension of surfaces},
    author={Bonamy, Marthe and Bousquet, Nicolas and Esperet, Louis and Groenland, Carla and Liu, Chun-Hung and Pirot, Fran{\c{c}}ois and Scott, Alexander},
    journal={Journal of the European Mathematical Society},
    volume={26},
    number={10},
    pages={3739--3791},
    year={2023}}

@article{Asymptotic_minors_survey,
    title={Graph minors and metric spaces},
    author={Georgakopoulos, Agelos and Papasoglu, Panos},
    journal={Combinatorica},
    volume={45},
    number={3},
    pages={33},
    year={2025},
    publisher={Springer}}

@misc{Asymptotic_minors_cacti,
      title={A coarse-geometry characterization of cacti}, 
      author={Koji Fujiwara and Panos Papasoglu},
      year={2023},
      note={\url{https://arxiv.org/abs/2305.08512}}}

@phdthesis{Skipper_thesis,
  title={On a generalization of the Hanoi Towers Group},
  author={Skipper, Rachel},
  year={2018},
  school={State University of New York at Binghamton}}

@article{Hanoi_Schreier,
  title={Asymptotic aspects of Schreier graphs and Hanoi Towers groups},
  author={Grigorchuk, Rostislav and {\v{S}}unik, Zoran},
  journal={Comptes Rendus Mathematique},
  volume={342},
  number={8},
  pages={545--550},
  year={2006},
  publisher={Elsevier}}

@article{Basilica_Schreier,
 author = {D'Angeli, Daniele and Donno, Alfredo and Matter, Michel and Nagnibeda, Tatiana},
 title = {Schreier graphs of the {Basilica} group.},
 journal = {Journal of Modern Dynamics},
 volume = {4},
 number = {1},
 pages = {167--205},
 year = {2010}}

@misc{Deaconu,
    title={Higman-Thompson groups from self-similar groupoid actions}, 
    author={Valentin Deaconu},
    year = {2021},
    note = {\url{https://arxiv.org/abs/2108.01178}}}

@misc{ESS_groups,
    title={Eventually Self-Similar Groups acting on Fractals}, 
    author={Davide Perego and Matteo Tarocchi},
    year = {2024},
    note = {\url{https://arxiv.org/abs/2412.04138}}}

@article{Hughes,
  title = {Local similarities and the Haagerup property (with an appendix by Daniel S. Farley)},
  volume = {3},
  ISSN = {1661-7215},
  DOI = {10.4171/ggd/58},
  number = {2},
  journal = {Groups,  Geometry,  and Dynamics},
  author = {Hughes,  Bruce},
  year = {2009},
  month = {6},
  pages = {299–315}}

@misc{Zaremsky_Q,
    title = {Some open problems},
    author = {Matthew C.B. Zaremsky},
    note = {\url{https://zaremsky.github.io/open_problems.pdf}}}

@misc{Period,
    title={Period growth and co-context-free groups}, 
    author={Alex Bishop and Corentin Bodart and Letizia Issini and Davide Perego},
    year={2026},
    note = {\url{https://arxiv.org/abs/2601.13058}}}

@article{MatteBon_intermediate,
    title = {Topological full groups of minimal subshifts with subgroups of intermediate growth},
    journal = {Journal of Modern Dynamics},
    volume = {9},
    number = {1},
    pages = {67-80},
    year = {2015},
    author = {Nicolás Matte  Bon}}

@inbook{GLN_Lysenok_subshift,
    place={Cambridge},
    series={London Mathematical Society Lecture Note Series},
    title={Schreier Graphs of Grigorchuk’s Group and a Subshift Associated to a Nonprimitive Substitution},
    booktitle={Groups, Graphs and Random Walks},
    publisher={Cambridge University Press},
    author={Grigorchuk, Rostislav and Lenz, Daniel and Nagnibeda, Tatiana}, editor={Ceccherini-Silberstein, Tullio and Salvatori, Maura and Sava-Huss, Ecaterina},
    year={2017},
    pages={250–299},
    collection={London Mathematical Society Lecture Note Series}}

@misc{Grigorchuk_ThueMorse_subshift,
    title={Thue-Morse sequence and groups of intermediate growth}, 
    author={Rostislav Grigorchuk and Yaroslav Vorobets},
    year={2026},
    notes = {\url{https://arxiv.org/abs/2605.30605}}}

@article{Matui,
    title = {Étale groupoids arising from products of shifts of finite type},
    journal = {Advances in Mathematics},
    volume = {303},
    pages = {502-548},
    year = {2016},
    author = {Hiroki Matui}}

@phdthesis{Tarrot,
    author = {Matteo Tarocchi},
    title = {Rearrangement Groups of Fractals: Structure and Conjugacy},
    school = {University of Pavia},
    year = {2024},
    notes = {\url{https://arxiv.org/abs/2412.02339}}}

@book{S2013,
  address = {Boston, MA},
  author = {Sipser, Michael},
  edition = {Third},
  isbn = {113318779X},
  publisher = {Course Technology},
  refid = {814441519},
  title = {Introduction to the Theory of Computation},
  year = 2013
}

@article{Rodaro,
author = {Rodaro, Emanuele},
title = {Generalizations of the Muller–Schupp theorem and tree-like inverse graphs},
journal = {Journal of the London Mathematical Society},
volume = {109},
number = {5},
pages = {e12903},
doi = {10.1112/jlms.12903},
year = {2024}
}

@article{juschenko2018nonEA,
  title={Non-elementary amenable subgroups of automata groups},
  author={Juschenko, Kate},
  journal={Journal of Topology and Analysis},
  volume={10},
  number={01},
  pages={35--45},
  year={2018},
  publisher={World Scientific}}

@misc{End_Schreier_branch,
      title={Commensurating actions and self-similar groups}, 
      author={Nicolás Matte Bon and Volodymyr Nekrashevych and Tianyi Zheng},
      year={2026},
      note = {\url{https://arxiv.org/abs/2607.13776}}}

@article{Tits_for_languages,
  title={Growth functions of some classes of languages},
  author={Vladimir I. Trofimov},
  journal={Cybernetics},
  year={1981},
  volume={17},
  pages={727-731}}

@article{BleakMatucciNeunhoffer2016,
  author  = {Bleak, Collin and Matucci, Francesco and Neunh{\"o}ffer, Max},
  title   = {Embeddings into {T}hompson's group {$V$} and {$\mathrm{coCF}$} groups},
  journal = {J. Lond. Math. Soc. (2)},
  volume  = {94},
  number  = {2},
  year    = {2016},
  pages   = {583--597}
}

@article{BooneHigman1974,
  author  = {Boone, William W. and Higman, Graham},
  title   = {An algebraic characterization of groups with soluble word problem},
  journal = {J. Austral. Math. Soc.},
  volume  = {18},
  year    = {1974},
  pages   = {41--53}
}

@article{Higman1961,
  author  = {Higman, Graham},
  title   = {Subgroups of finitely presented groups},
  journal = {Proc. Roy. Soc. Ser. A},
  volume  = {262},
  year    = {1961},
  pages   = {455--475}
}

@article{HoltReesRoverThomas2005,
  author  = {Holt, Derek F. and Rees, Sarah and R{\"o}ver, Claas E. and Thomas, Richard M.},
  title   = {Groups with context-free co-word problem},
  journal = {J. London Math. Soc. (2)},
  volume  = {71},
  number  = {3},
  year    = {2005},
  pages   = {643--657}
}

@article{LehnertSchweitzer2007,
  author  = {Lehnert, J{\"o}rg and Schweitzer, Pascal},
  title   = {The co-word problem for the {H}igman--{T}hompson group is context-free},
  journal = {Bull. Lond. Math. Soc.},
  volume  = {39},
  number  = {2},
  year    = {2007},
  pages   = {235--241}}

@misc{Jaspars,
    author = {Henry Jaspars},
    title = {On context free subgroups and R. Thompson’s group $V$},
    note = {in preparation}}

@ARTICLE{anisimov,
  AUTHOR = {Anatoly V. Anisimov},
  year=1971,
  ISSN = {0023-1274},
  journal = {Kibernetika (Kiev)},
  NUMBER = {4},
  PAGES = {18--24},
  TITLE = {The group languages},
  VOLUME = {7},
}

@article{BH_survey,
  title = {Progress around the Boone–Higman conjecture},
  journal = {EMS Surveys in Mathematical Sciences},
  publisher = {European Mathematical Society - EMS - Publishing House GmbH},
  author = {Belk,  James and Bleak,  Collin and Matucci,  Francesco and Zaremsky,  Matthew C. B.},
  year = {2025},
  month = {6}}

@article{MatuiFP,
url = {https://doi.org/10.1515/crelle-2013-0041},
title = {Topological full groups of one-sided shifts of finite type},
author = {Hiroki Matui},
pages = {35--84},
volume = {2015},
number = {705},
journal = {Journal für die reine und angewandte Mathematik (Crelles Journal)},
doi = {doi:10.1515/crelle-2013-0041},
year = {2015}}
